\theoremstyle{plain} 
\newtheorem{theorem}{Theorem}[section]
\newtheorem{corollary}[theorem]{Corollary}
\newtheorem{proposition}[theorem]{Proposition}
\newtheorem{lemma}[theorem]{Lemma}
\theoremstyle{definition} 
\newtheorem{definition}[theorem]{Definition}
\newtheorem{remark}[theorem]{Remark}
\newtheorem{example}[theorem]{Example}
\numberwithin{equation}{section} 
\pgfplotsset{compat=newest}
\definecolor{new_blue}{rgb}{0,0,0.6}
\newcommand{\RR}{\mathbf{R}}
\newcommand{\CC}{\mathbf{C}}
\newcommand{\NN}{\mathbf{N}} 
\newcommand{\Hup}{\mathbf{C}^+} 
\newcommand{\DD}{\mathbf{D}}
\newcommand{\ee}{\varepsilon}
\newcommand{\di}[1]{\mathop{\mathrm{d}#1}}
\newcommand{\Prob}{\mathbb{P}}
\newcommand{\Dil}{D}   
\newcommand{\scp}{\mathrm{sc}} 
\newcommand{\acp}{\mathrm{ac}} 
\newcommand{\ppp}{\mathrm{pp}} 
\newcommand{\cl}[1]{\mathrm{cl}({#1})}
\newcommand{\iu}{\mathrm{i}}  
\newcommand{\conj}{\overline}  
\newcommand{\Aut}{\mathrm{Aut}}   
\newcommand{\inv}{{\langle-1\rangle}} 
\newcommand{\Arg}{\mathrm{Arg}\,}
\newcommand{\Argg}{\mathrm{Arg}}
\newcommand{\nontanglim}{\mathlarger{\sphericalangle}\!\lim} 
\newcommand{\nontangtou}{\underset{\mathlarger{\mathlarger{\sphericalangle}}}{\to}} 
\renewcommand\Delta{\mathlarger{\mathlarger{\rhd}}}  
\renewcommand\Gamma{\mathlarger{\mathlarger{\bigtriangledown}}} 
\newcommand{\multconv}{\boxtimes}
\newcommand{\addconv}{\boxplus}
\newcommand{\MP}{{\bm\pi}} 
\newcommand\sof{\sigma}   
\title{Free multiplicative convolution with an arbitrary measure on the real line}
\author{
    OCTAVIO ARIZMENDI\thanks{Centro de Investigaci\'on en Matem\'aticas. A.C., Jalisco S/N, Col. Valenciana CP: 36023 Guanajuato, Gto, M\'exico \\ 
Email address: octavius@cimat.mx}, 
    TAKAHIRO HASEBE\thanks{
Department of Mathematics, Hokkaido University. Science Bldg.~No.~3, Kita 10, Nishi 8, Kita-Ku, Sapporo, Hokkaido, 060-0810, Japan \\
    Email address:
thasebe@math.sci.hokudai.ac.jp}, AND 
    YU KITAGAWA\thanks{
Department of Mathematics, Hokkaido University. Science Bldg.~No.~3, Kita 10, Nishi 8, Kita-Ku, Sapporo, Hokkaido, 060-0810, Japan \\ Current address: Department of Mathematics, 
Faculty of Science, 
Kyoto University. Kitashirakawa Oiwake-cho, Sakyo-ku,
Kyoto 606-8502, Japan \\ Email address:
kitagawa.yu.57z@st.kyoto-u.ac.jp}
}
\begin{document}
\maketitle

\begin{abstract}
    We develop analytic tools for studying the free multiplicative convolution of any measure on the real line and any measure on the nonnegative real line. More precisely, we construct the subordination functions and the $S$-transform of an arbitrary probability measure. The important multiplicativity of $S$-transform is proved with the help of subordination functions. We then apply the $S$-transform to establish convolution identities for stable laws, which had been considered in the literature only for the positive and symmetric cases. Subordination functions are also used in order to extend Belinschi--Nica's semigroup of homomorphisms, and to establish regularity properties of free multiplicative convolution, in particular, the absence of singular continuous part and analyticity of the density. 
\end{abstract}

Key words: Free convolution; S-transform; subordination function; free stable law; Boolean stable law

MSC2020: 46L53; 46L54

\tableofcontents

\section{Introduction and statement of results}

Free independence was introduced by Voiculescu  \cite{V2006symmetries} from considerations in operator algebras. In this theory, two binary operations between probability measures are fundamental: free additive and free multiplicative convolutions that correspond to the addition and multiplication of freely independent operators. 
More precisely, for two probability measures $\mu,\nu$ on $\RR$, their \emph{free additive convolution} $\mu\addconv\nu$ is the law of $X+Y$ where $X,Y$ are self-adjoint operators that are freely independent, affiliated to a von Neumann algebra with a normal faithful tracial state, and have distributions $\mu,\nu$, respectively. Similarly, if moreover $Y$ is positive, or equivalently $\nu$ is supported on the nonnegative real line $\RR_{\ge0}$, then the \emph{free multiplicative convolution} $\mu\multconv\nu$ is defined as the law of $\sqrt{Y}X\sqrt{Y}$. For further details the reader is referred to \cite{BV1993free}. 

The free additive convolution can be calculated by the use of the \emph{$R$-transform} since it  serves as a linearizing transformation, in a similar way as the characteristic function is used to calculate the classical convolution. Although the $R$-transform is a fundamental tool to understand free convolution, one should note that, in practice, the calculation of the free convolution of two specified probability measures is very difficult since to access the $R$-transform one needs to find the inverse under composition of a certain analytic function. One way to circumvent this problem is to use \emph{subordination functions}, which can be obtained by establishing fixed point equations. 

For the case of multiplicative convolution on the nonnegative real axis, a similar situation happens: the \emph{$S$-transform} is multiplicative with respect to free multiplicative convolution and there exist subordination functions which satisfy a fixed point equation on a proper domain of the complex plane. However, until now, the general theory lacked a way to calculate the free multiplicative convolution of a measure on $\RR$ and a measure on $\RR_{\ge0}$, since neither an $S$-transform for general probability measures nor subordination functions are available.  

Our main results solve these two problems: we establish the existence of subordination functions for free multiplicative convolution and provide an $S$-transform for general measures which allow to calculate and analyze free multiplicative convolutions. We then address some applications of these analytic tools to stable laws and regularity properties of free convolution. 

In Sections \ref{sec:S-transform}--\ref{sec:R} below, we summarize the main results of this paper.

\subsection{Basic notations, concepts and transforms}\label{sec:basic_notation}
We introduce some basic notations  used throughout the paper. Let us denote by $\Prob(\RR)$ and $\Prob(\RR_{\ge0})$ the sets of probability measures on $\RR$ and on $\RR_{\ge0}$, respectively. For $z\in\CC \setminus\{0\}$, we define $\arg z$ such that $\arg z=0$ for $z\in(0,+\infty)$, and it increases counterclockwise up to $ 2\pi$. We write $\Hup\coloneqq\{z\in\CC\colon\Im z>0\}$ for the complex upper half-plane and $\CC^-\coloneqq\{z\in\CC\colon\Im z<0\}$ for the lower half-plane. Let $\cl{E}$ denote the topological closure of a subset $E\subseteq \CC$. 

  Given $x_0\in\RR$ and $0<\theta<\pi/2$, we define a domain $\Gamma_{x_0,\theta}\subseteq\Hup$ as 
      \begin{align}
          \Gamma_{x_0,\theta}\coloneqq \{z\in\Hup\setminus\{0\}\ \colon\ \theta<\arg(z-x_0)<\pi-\theta\}. \label{eq:nabla}
      \end{align}
For a function $f\colon\Hup\to\CC\cup\{\infty\}$ and $x_0\in\RR$, we say that \emph{$f$ has a nontangential limit $\zeta \in \CC\cup\{\infty\}$ at $x_0$} if $\lim_{z\to x_0, z\in\Gamma_{x_0,\theta}} f(z)=\zeta$ for any $\theta\in(0,\pi/2)$. The limit $\zeta$ is denoted by $\nontanglim_{z\to x_0}f(z)$ or $\lim_{z\nontangtou x_0}f(z)$. Similarly, we say that  $f$ has a nontangential limit $\zeta$ at $\infty$ if $g(z)\coloneq f(-1/z)$ has the nontangential limit $\zeta$ at $0$. For further information when $f$ is analytic, see Section \ref{sec:boundary}.

In order to state our results we also need to introduce some analytic functions which are often used in free probability. First, we define the \emph{Cauchy transform} of $\mu \in \Prob(\RR)$ as
      \begin{align}\label{eq:Cauchy}
          G_\mu(z)\coloneqq\int_{\RR}\frac{1}{z-x}\di{\mu(x)},\qquad z\in\CC\setminus\RR.
      \end{align}
      We often restrict the domain of $G_\mu$ to either $\Hup$ or $\CC^-$ depending on the context. Since the values on $\Hup$ and $\CC^-$ are just the reflection with respect to $\RR$, the two selections of domains are essentially the same. 
We will also need the following related transforms 
 \begin{align}
F_\mu(z)\coloneqq\frac{1}{G_\mu (z)},\qquad \psi_\mu(z)&\coloneqq\frac{1}{z}G_\mu\left(\frac1{z}\right)-1, \qquad
          \eta_\mu(z)\coloneqq1-\frac{z}{G_\mu\left(1/z\right)}. \label{eq:F_psi_eta}
      \end{align}
The last one will be referred to as the \emph{$\eta$-transform} of $\mu$. For further information on these transforms, see Section \ref{sec:Cauchy}.

\subsection{$S$-transform}\label{sec:S-transform}
As mentioned above, an important analytic tool for computing the free multiplicative convolution of two probability measures is Voiculescu's $S$-transform. It was introduced in \cite{V1987multiplication} for
distributions with nonzero mean and bounded support and was further studied by Bercovici and Voiculescu
\cite{BV1993free} in the case of probability measures on $\RR_{\ge0}$  with unbounded support. 
Later, Raj Rao and Speicher \cite{RS07} defined an $S$-transform in the case of measures having zero mean and all finite moments. Their approach is combinatorial by the use of Puiseux series and cannot be extended to general measures. 
Arizmendi and P\'{e}rez-Abreu \cite{AP2009transform} considered the case of symmetric measures with unbounded support;  we shall mention that although the approach was analytical, the authors relied heavily on an identity relating the Cauchy transforms of a symmetric measure and of its pushforward to the nonnegative real line by the map $x\mapsto x^2$, which is not suitable for the general case. There has been an attempt to define an $S$-transform for general probability measures on $\RR$ in Guerrero's PhD thesis \cite{thesisGuerrero}, but some desirable properties, especially the multiplicativity of $S$-transform, remained open.

In this paper we extend the $S$-transform to general probability measures on $\RR$ with possibly unbounded support. We follow an analytic approach that is very different from the previous ones.  The idea comes from the observation in \cite[Lemma 7.1]{AFPU} that for measures $\mu$ supported on the nonnegative real line the identity $G_{\mu^{\addconv 1/t}}(0) = -t S_\mu(-t)$ holds whenever $t\in (0,1-\mu(\{0\}))$; see Section \ref{sec:conv_powers} for the definition of $\mu^{\addconv 1/t}$. 
For our purposes, 
it is more useful to consider the reciprocal of the $S$-transform, which is called the \emph{$T$-transform}, first introduced in \cite{dykema2007multilinear}. We define the $T$-transform of any probability measure $\mu$ on $\RR$ by 
\begin{align}\label{eq:T-transform}
T_\mu(u) \coloneq u \lim_{\substack{z\to0\\z \in \CC^-}}F_{\mu^{\addconv (-1/u)}} (z), \qquad -1<u<0. 
\end{align}
As we see in Section \ref{sec:T} this limit  exists finitely for all $u$, and is nonzero if and only if $\mu(\{0\})-1 < u <0$. We then define the $S$-transform $S_\mu \coloneq 1/T_\mu$ on $(\mu(\{0\})-1,0)$, which is consistent with the definitions in the literature for restricted classes of $\mu$. An advantage of this definition is that $F_{\mu^{\addconv t}}, t>1$, is already well studied in the literature, in particular in \cite{BB2005partially}. We will establish the following. 

\begin{theorem} \label{thm:T}
\begin{enumerate}
\item\label{itemT0} For any $\mu \in \Prob(\RR)$, $T_\mu$ is a continuous map from $(-1,0)$ into $\Hup\cup\RR$.  
\item\label{itemT1} For any  $\mu \in \Prob(\RR)$ and $\nu \in \Prob(\RR_{\ge0})$ we have 
\[
T_{\mu\multconv \nu}(u) = T_\mu(u) T_\nu(u), \qquad  u \in(-1,0).
\] 
\item\label{itemT2} If $\lambda, \mu \in \Prob(\RR)$ and  $T_\lambda = T_\mu$ on $(- \ee,0)$ for some $0<\ee <1$, then $\lambda =\mu$. 
\item \label{itemT3} If $\mu\ne \delta_0$ then 
\[
\psi_\mu\left(\frac{u}{1+u} S_\mu(u)  \right) = u,\qquad u \in (\mu(\{0\})-1, 0).  
\]
In case $S_\mu(u) \in \RR$, the left hand side is to be interpreted as the nontangential limit of $\psi_\mu(z)$ as $z\nontangtou \frac{u}{1+u} S_\mu(u)~(z\in \Hup)$. 
\item\label{itemT4} If $\mu_n, \mu \in \Prob(\RR)~(n \in \NN)$ and $\mu_n $ converges to $\mu$ weakly, then $\lim_{n\to\infty } T_{\mu_n}(u) = T_\mu(u)$ for all $0<u<1$. 
\item\label{itemT5} Assume that $\mu_n \in \Prob(\RR)~(n \in \NN)$ and $T_{\mu_n}$ converges to a $\CC$-valued function $T$ pointwisely on $(-\ee,0)$ for some $\ee\in(0,1)$. Then $\mu_n$ converges weakly to some $\mu \in \Prob(\RR)$ and $T_\mu = T$ on $(-\ee,0)$. 
\end{enumerate}
\end{theorem}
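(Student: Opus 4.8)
The plan is to reduce all six assertions to the boundary behaviour of the reciprocal Cauchy transform of an additive convolution power. Writing $t \coloneqq -1/u \in (1,\infty)$ for $u \in (-1,0)$, the definition \eqref{eq:T-transform} reads $T_\mu(u) = u\, b_\mu(u)$ with $b_\mu(u) \coloneqq \lim_{z\to 0,\,z\in\CC^-} F_{\mu^{\addconv t}}(z)$. Since $t>1$, the measure $\mu^{\addconv t}$ is regular: by the theory of \cite{BB2005partially} its reciprocal Cauchy transform extends continuously to $\cl{\CC^-}$, so $b_\mu(u)$ exists finitely and equals the boundary value of $F_{\mu^{\addconv t}}$ at $0$. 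Because $F_{\mu^{\addconv t}}$ maps $\CC^-$ into $\CC^-$, these boundary values lie in $\cl{\CC^-}$, and multiplication by $u<0$ then places $T_\mu(u)$ in $\Hup\cup\RR$; this gives the range statement of part~\ref{itemT0}. Continuity in $u$ I would obtain from the joint continuity in $(t,z)$ of $F_{\mu^{\addconv t}}$ up to the boundary, i.e. from the regularity of the additive convolution semigroup for $t>1$.

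The identity in part~\ref{itemT3} is the keystone and I would prove it first, as it recasts $T_\mu$ as a functional inverse. For $t>1$ the additive power admits the subordination $F_{\mu^{\addconv t}}(z) = F_\mu(\omega_t(z))$ with $\omega_t(z) = \tfrac1t z + (1-\tfrac1t)F_{\mu^{\addconv t}}(z)$, which follows from the scaling $\varphi_{\mu^{\addconv t}} = t\,\varphi_\mu$ of the Voiculescu transform. Passing to the boundary at $z=0$ gives $\omega_t(0) = (1+u)b_\mu(u)$ and hence the fixed-point relation $b_\mu(u) = F_\mu\big((1+u)b_\mu(u)\big)$, where $F_\mu$ is read as a nontangential boundary value when $b_\mu(u)\in\RR$. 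Writing $w_\ast \coloneqq \tfrac{u}{1+u}S_\mu(u) = \big((1+u)b_\mu(u)\big)^{-1}$ and substituting into $\psi_\mu(w)=\tfrac1w G_\mu(1/w)-1$ collapses, via this fixed point, to $\psi_\mu(w_\ast) = (1+u)-1 = u$, which is exactly part~\ref{itemT3}.

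For multiplicativity in part~\ref{itemT1} I would exploit the reformulation furnished by part~\ref{itemT3}: setting $w \coloneqq \tfrac{u}{1+u}\in(-\infty,0)$ one has $w_\ast = \eta_\mu^{-1}(w)$ as a boundary value, so that $T_\mu(u)^{-1} = S_\mu(u) = w^{-1}\eta_\mu^{-1}(w)$. The claim $T_{\mu\multconv\nu} = T_\mu T_\nu$ is then equivalent to $\eta_{\mu\multconv\nu}^{-1}(w) = w^{-1}\eta_\mu^{-1}(w)\,\eta_\nu^{-1}(w)$, i.e. to the multiplicativity of Voiculescu's $\Sigma$-transform at the real argument $w$. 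I would read this off from the subordination functions for free multiplicative convolution: there exist $\omega_1,\omega_2$ with $\eta_{\mu\multconv\nu}(z) = \eta_\mu(\omega_1(z)) = \eta_\nu(\omega_2(z))$ and $\omega_1(z)\omega_2(z) = z\,\eta_{\mu\multconv\nu}(z)$, which at $z = \eta_{\mu\multconv\nu}^{-1}(w)$ yields $\eta_\mu^{-1}(w)\eta_\nu^{-1}(w) = w\,\eta_{\mu\multconv\nu}^{-1}(w)$ at once. The delicate point---and what I expect to be the main obstacle of the whole theorem---is that for $\mu\in\Prob(\RR)$ the functions $\eta_\mu^{-1}$ and the subordination maps $\omega_1,\omega_2$ must be controlled at the \emph{real} boundary value $w<0$ rather than on an interior domain; proving that these objects extend to, and satisfy the subordination identities at, such boundary points is the genuine analytic content, and is where boundary-regular subordination functions for $\multconv$ are indispensable.

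The remaining assertions I would derive from part~\ref{itemT3} together with standard convergence machinery. For injectivity in part~\ref{itemT2}, equality of $T_\lambda$ and $T_\mu$ on $(-\ee,0)$ forces $\psi_\lambda^{-1} = \psi_\mu^{-1}$ there, hence $\psi_\lambda = \psi_\mu$ along the arc $\{w_\ast(u):u\in(-\ee,0)\}$; when this arc meets $\CC^-$ nondegenerately the identity theorem gives $\psi_\lambda\equiv\psi_\mu$ and thus $\lambda=\mu$, while the case of a purely real arc requires a boundary-uniqueness (reflection) argument. For part~\ref{itemT4} I would use continuity of $\mu\mapsto\mu^{\addconv t}$ under weak convergence, so that $\mu_n^{\addconv t}\to\mu^{\addconv t}$ weakly, and then upgrade locally uniform convergence of the Cauchy transforms to convergence of the boundary values $F_{\mu_n^{\addconv t}}(0)\to F_{\mu^{\addconv t}}(0)$ via the uniform regularity of $\addconv$-powers with $t>1$ (here the stated range $0<u<1$ should read $-1<u<0$). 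Finally part~\ref{itemT5} is a converse continuity theorem: after proving tightness of $\{\mu_n\}$---which I regard as the hard step here and would attempt by bounding the tails of $\mu_n$ through the behaviour of $T_{\mu_n}$ near the endpoints of $(-\ee,0)$---I would extract a weakly convergent subsequence, identify its limit's $T$-transform with $T$ by part~\ref{itemT4}, and invoke the uniqueness in part~\ref{itemT2} to conclude that the full sequence converges weakly with $T_\mu=T$.
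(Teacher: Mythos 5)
Your treatment of parts \ref{itemT0} and \ref{itemT3} matches the paper: both rest on the continuity of the Denjoy--Wolff point $\sof_{\mu,t}(0)$ and on the additive subordination identity \eqref{eq:F}, and your derivation of $\psi_\mu(w_\ast)=u$ is exactly the computation in \eqref{eq:psi}. The serious gap is in part \ref{itemT1}. You reduce multiplicativity to evaluating the relations $\eta_{\mu\multconv\nu}=\eta_\mu\circ\omega_1=\eta_\nu\circ\omega_2$ and $\omega_1\omega_2=z\,\eta_{\mu\multconv\nu}$ at the boundary point $z_0=\eta_{\mu\multconv\nu}^\inv(w)$, $w=u/(1+u)<0$. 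You correctly flag that one must first extend $\omega_1,\omega_2$ continuously to such real points and verify the identities there, but you leave that unproved; in the paper this boundary regularity only appears in Section \ref{sec:cont_ext}, for nondegenerate measures and away from the origin, and it is not used for Theorem \ref{thm:T}. Worse, even granting the extension, $\eta_\mu(\omega_1(z_0))=w$ only says that $\omega_1(z_0)$ is \emph{some} preimage of $w$ under $\eta_\mu$; identifying it with the specific preimage $\tfrac{u}{1+u}S_\mu(u)$, i.e.\ the reciprocal of the DW-point $\sof_{\mu,1/s}(0)$, needs a separate uniqueness argument (automatic only if the preimage lies in the open half-plane, cf.\ Remark \ref{rem:calc_S}), and your scheme says nothing about the range $u\le(\mu\multconv\nu)(\{0\})-1$ where $T_{\mu\multconv\nu}=0$ and no $S$-transform exists. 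The paper sidesteps all of this by regularizing: it applies \eqref{eq:BN_free} and the key Lemma \ref{lem:T} to $\Dil_s(\mu^{\addconv 1/s})$ and $\Dil_s(\nu^{\addconv 1/s})$, which have no atom at $0$ and whose $F$-transforms extend continuously to $0$, and then evaluates the subordination relations in the limit $z\to\infty$, where the behaviour is governed by the atoms at $0$ via Proposition \ref{prop:prop_of_eta} \ref{eq:eta_at_infty_zero}. That regularization trick is the missing idea.

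Three further points. In \ref{itemT2} your argument via $\psi^{-1}$ collapses when $T_\lambda=T_\mu\equiv0$ on $(-\ee,0)$; the paper handles this case separately by showing $\sof_{\mu,t}(0)\equiv0$ forces $F_\mu=\mathrm{id}$, i.e.\ $\mu=\delta_0$, since otherwise the DW-point of $L_{\mu,t,0}$ escapes to $\infty$ as $t\to\infty$ (and in the purely real case the right tool is Privalov's theorem, which your ``reflection argument'' should be). In \ref{itemT4}, locally uniform convergence of Cauchy transforms does not by itself give convergence of the boundary values $F_{\mu_n^{\addconv t}}(0)$; the clean mechanism is Heins' continuity of DW-points (Theorem \ref{thm:conv_of_dwp}), not an unspecified ``uniform regularity''. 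Most importantly, in \ref{itemT5} your plan to prove tightness by bounding tails of $\mu_n$ through $T_{\mu_n}$ near the endpoints of $(-\ee,0)$ is not substantiated and I do not see how to carry it out; the paper avoids tightness altogether by viewing $\mu_n$ as measures on the compactification $\widehat{\RR}$, extracting a weak limit there, and excluding mass at infinity because such mass would force the DW-point $\sof_{\mu,t}(0)$ to be $\infty$ for large $t$, contradicting the finiteness of $(1-t)T(-1/t)$. (Your observation that the range ``$0<u<1$'' in \ref{itemT4} should read $-1<u<0$ is correct.)
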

The proof will be derived in Section \ref{sec:T} since it uses some properties of subordination functions developed in Sections  \ref{sec:ex_of_subord} and \ref{sec:subord2}. Although the above results are natural extensions of the known ones, we shall mention concerning \ref{itemT0} that $T_\mu$ is not necessarily analytic in $(-1,0)$ by contrast to the cases of positive and symmetric measures; see Example \ref{ex:SS}.  
The above convergence results \ref{itemT4} and  \ref{itemT5}, together with multiplicativity  \ref{itemT1}, give an alternative complex-analytic proof of the following weak continuity of free convolution, known e.g.~in \cite[Corollary 5.3.35]{AGZ2010introduction}.

\begin{corollary} \label{cor:wconti}
Free multiplicative convolution $\multconv\colon \Prob(\RR)\times \Prob(\RR_{\ge0})\to \Prob(\RR)$ is continuous with respect to weak convergence. 
\end{corollary}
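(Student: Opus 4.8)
The plan is to transport the statement to the level of $T$-transforms, where all the analytic work has already been packaged into Theorem~\ref{thm:T}. Since $\RR$ and $\RR_{\ge0}$ are Polish spaces, the sets $\Prob(\RR)$ and $\Prob(\RR_{\ge0})$ equipped with the weak topology are metrizable, and so is their product; hence it suffices to prove \emph{sequential} continuity. Accordingly I would fix sequences $\mu_n\to\mu$ weakly in $\Prob(\RR)$ and $\nu_n\to\nu$ weakly in $\Prob(\RR_{\ge0})$ (the latter convergence also holds when the $\nu_n$ are viewed inside $\Prob(\RR)$, since every $f\in C_b(\RR)$ restricts to an element of $C_b(\RR_{\ge0})$), and aim to show $\mu_n\multconv\nu_n\to\mu\multconv\nu$ weakly.

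First I would invoke the continuity property \ref{itemT4}: from $\mu_n\to\mu$ and $\nu_n\to\nu$ weakly I obtain the pointwise limits $T_{\mu_n}(u)\to T_\mu(u)$ and $T_{\nu_n}(u)\to T_\nu(u)$ for every $u$ in the interval $(-1,0)$ on which the $T$-transforms live. Next, the multiplicativity \ref{itemT1} gives the exact identity $T_{\mu_n\multconv\nu_n}(u)=T_{\mu_n}(u)\,T_{\nu_n}(u)$ for each $n$, so multiplying the two convergent numerical sequences yields
\[
T_{\mu_n\multconv\nu_n}(u)=T_{\mu_n}(u)\,T_{\nu_n}(u)\ \xrightarrow[n\to\infty]{}\ T_\mu(u)\,T_\nu(u)=T_{\mu\multconv\nu}(u),\qquad u\in(-1,0),
\]
the last equality being again \ref{itemT1}. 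By \ref{itemT0} each factor $T_\mu(u),T_\nu(u)$ lies in $\Hup\cup\RR$, so the limit function $u\mapsto T_\mu(u)T_\nu(u)$ is genuinely $\CC$-valued, which is exactly what the next step requires.

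This pointwise convergence of $T_{\mu_n\multconv\nu_n}$ to the $\CC$-valued function $T_{\mu\multconv\nu}$ on $(-\ee,0)$ (for any fixed $\ee\in(0,1)$) now lets me apply the reverse continuity statement \ref{itemT5}: the measures $\mu_n\multconv\nu_n$ converge weakly to \emph{some} $\rho\in\Prob(\RR)$ whose $T$-transform agrees with $T_{\mu\multconv\nu}$ on $(-\ee,0)$. Finally, the injectivity \ref{itemT2} forces $\rho=\mu\multconv\nu$, which identifies the weak limit and completes the argument.

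I do not anticipate a genuine obstacle, since the corollary is essentially a formal consequence of the toolkit in Theorem~\ref{thm:T}: \ref{itemT4} pushes weak convergence forward to $T$-transforms, \ref{itemT1} respects products, and \ref{itemT5} together with \ref{itemT2} pulls the convergence back to the weak topology while pinning down the limit. The only points needing a little care are the metrizability reduction to sequences and the verification (via \ref{itemT0}) that the limiting $T$-transform is finite-valued so that \ref{itemT5} is applicable. A more hands-on alternative would be to prove tightness of $\{\mu_n\multconv\nu_n\}$ directly and then identify each subsequential weak limit through its $T$-transform using \ref{itemT2}, but the route through \ref{itemT5} is cleaner and avoids a separate tightness estimate.
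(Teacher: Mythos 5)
Your proposal is correct and follows exactly the route the paper indicates for this corollary: the paper states that Theorem \ref{thm:T} \ref{itemT4}, \ref{itemT5}, together with multiplicativity \ref{itemT1}, yield the weak continuity, and your argument (push weak convergence to $T$-transforms, multiply, pull back via \ref{itemT5}, identify the limit via \ref{itemT2}) is precisely that proof, with the minor bookkeeping points (metrizability, viewing $\nu_n$ in $\Prob(\RR)$, finiteness of the limit function via \ref{itemT0}) handled correctly.
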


\subsection{Subordination functions}

A crucial concept for studying free convolutions is subordination relations for the Cauchy transform or $\eta$-transform, which were initially established in \cite{V1993analogue} for the free additive convolution of compactly supported probability measures and later extended to general cases \cite{B1998process, V2000coalgebra,V2002analytic,BSTV2015operator,BMS2017analytic}. The original approach used in the proofs was subsequently replaced by purely analytic methods  \cite{BB2007new,CG2011arithmetic}. In particular, these subordination functions can be characterized as fixed points (more precisely, the \emph{Denjoy--Wolff points}) of certain parametrized analytic functions. This observation plays a pivotal role in deriving regularity properties such as the Lebesgue decomposition of free convolution.

Our second main result is  the existence of subordination functions for $\eta_{\mu\multconv\nu}$, when $\mu\in\Prob(\RR)\setminus\{
    \delta_0\}$ and $ \nu\in\Prob(\RR_{\geq0})\setminus\{\delta_0\}$. For probability measures with compact support, it has already been established under a more general operator-valued setting \cite{BSTV2015operator}.

\begin{theorem}
    \label{thm:subordination}
    Consider any two probability measures $\mu\in\Prob(\RR)\setminus\{
    \delta_0\}, \nu\in\Prob(\RR_{\geq0})\setminus\{\delta_0\}$, and their $\eta$-transforms $\eta_\mu, \eta_\nu$. Then, there exists a unique pair of analytic functions $\omega_1\colon\Hup\to\Hup$ and $\omega_2\colon\Hup\to\CC\setminus[0,+\infty)$ satisfying the following:
    \begin{enumerate}
      \item $\eta_{\mu\multconv\nu}(z) =\eta_\mu(\omega_1(z))=\eta_\nu(\omega_2(z))= \omega_1(z)\omega_2(z)/z$,
      \item\label{item:sub2} $\arg z\leq \arg \omega_2(z)\leq\arg z+\pi$, or equivalently, $\omega_2(z)/z$ is a self-map of $\Hup$, 
      \item\label{item:sub3} $\nontanglim_{z\to 0}\omega_1(z)=\nontanglim_{z\to 0}\omega_2(z)=0$. 
  \end{enumerate}
\end{theorem}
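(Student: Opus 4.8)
The plan is to recover $\omega_1$ as the attracting fixed point of an analytic self-map of $\Hup$ depending analytically on the parameter $z\in\Hup$, and then to define $\omega_2$ algebraically. First I would reduce the three identities in item (1) to a single scalar equation. Using $\eta_{\mu\multconv\nu}(z)=\eta_\mu(\omega_1(z))=\omega_1(z)\omega_2(z)/z$, the second subordination function is forced to be $\omega_2=z\,\eta_\mu(\omega_1)/\omega_1$, and the only remaining requirement $\eta_\mu(\omega_1)=\eta_\nu(\omega_2)$ becomes
\[
\eta_\nu\!\left(\frac{z\,\eta_\mu(w)}{w}\right)=\eta_\mu(w),\qquad w=\omega_1(z).
\]
Equivalently, $\omega_1(z)$ is a fixed point of
\[
T_z(w)\coloneqq\frac{w}{\eta_\mu(w)}\,\eta_\nu\!\left(\frac{z\,\eta_\mu(w)}{w}\right),\qquad w\in\Hup ,
\]
and conversely every fixed point of $T_z$ lying off the zero set of $\eta_\mu$ yields, via $\omega_2=z\eta_\mu(\omega_1)/\omega_1$, a solution of item (1). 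This reformulation uses only the algebra of the three identities together with the mapping properties of $\eta_\mu,\eta_\nu$ recorded in Section~\ref{sec:Cauchy} and in the preceding subordination sections.

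Next I would establish that, for each fixed $z\in\Hup$, the map $T_z$ is a well-defined analytic self-map of $\Hup$ whose image is relatively compact (equivalently, that $T_z$ is eventually a strict hyperbolic contraction). By the Earle--Hamilton fixed point theorem, transported to $\Hup$ through its biholomorphism with the disk, this yields a \emph{unique} fixed point $\omega_1(z)\in\Hup$, which is attracting. Here the positivity of $\nu$ is essential: because $\nu\in\Prob(\RR_{\ge0})$, the transform $\eta_\nu$ has strong mapping properties (it sends $\Hup$ into $\Hup$ and $\CC\setminus[0,+\infty)$ into itself, with controlled argument), and these are what keep $T_z$ inside $\Hup$ and away from its boundary. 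Once the fixed point is known to be unique and interior, analyticity of $z\mapsto\omega_1(z)$ follows from a standard Vitali/normal-families argument: the iterates $T_z^{\circ n}(w_0)$ are jointly analytic in $(z,w_0)$ and converge locally uniformly to the $w_0$-independent limit $\omega_1(z)$, which is therefore analytic; alternatively one invokes the holomorphic implicit function theorem, the strict contraction guaranteeing $\partial_w T_z(\omega_1(z))\neq1$. Setting $\omega_2(z)\coloneqq z\,\eta_\mu(\omega_1(z))/\omega_1(z)$ then produces an analytic pair satisfying item (1).

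It remains to verify the normalizing conditions (2) and (3) and uniqueness. Since $\arg\omega_2=\arg z+\bigl(\arg\eta_\mu(\omega_1)-\arg\omega_1\bigr)$, condition (2) is equivalent to $0\le\arg\eta_\mu(\omega_1(z))-\arg\omega_1(z)\le\pi$; this I would read off from the structural estimate for $\eta_\mu$ of a general real measure proved in the preceding subordination sections, evaluated along the fixed-point locus, and it simultaneously certifies $\omega_2(z)\in\CC\setminus[0,+\infty)$ (as $\arg\omega_2$ then lies in $(0,2\pi)$). For the boundary condition (3), I would combine the vanishing $\eta_\mu(w),\eta_\nu(w)\to0$ as $w\to0$ with the fixed-point relations to show $\omega_1(z),\omega_2(z)\to0$ nontangentially as $z\to0$, using Julia--Wolff--Carath\'eodory/Lindel\"of-type control together with the explicit formula $\omega_2=z\eta_\mu(\omega_1)/\omega_1$. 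Uniqueness of the pair is then immediate: any $(\omega_1,\omega_2)$ meeting (1)--(3) makes $\omega_1(z)$ a fixed point of $T_z$ inside $\Hup$, and the attracting interior fixed point of such a contraction is unique, so it coincides with the constructed one, whence $\omega_2$ agrees as well.

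The main obstacle I anticipate is precisely the self-map and relative-compactness property of $T_z$ for a \emph{general} $\mu\in\Prob(\RR)$. Unlike the case of measures on $\RR_{\ge0}$ (or symmetric measures), $\eta_\mu$ does not map $\Hup$ into a fixed half-plane: writing $\psi_\mu(z)=\int \frac{zx}{1-zx}\,\di{\mu(x)}$, the integrand lies in $\Hup$ for $x>0$ but in $\CC^-$ for $x<0$, so $\psi_\mu$ and hence $\eta_\mu$ average contributions of opposite sign and one cannot simply quote the positive-case geometry. Establishing that $T_z$ nevertheless preserves $\Hup$ with relatively compact image, and pinning down the correct branch so that the argument inequality (2) holds, is the technical heart; I expect it to rest on the fine estimates for $\arg\eta_\mu$ developed in Sections~\ref{sec:ex_of_subord}--\ref{sec:subord2}, with the compactly supported operator-valued result of \cite{BSTV2015operator} available both as a consistency check and as the base case for any approximation argument.
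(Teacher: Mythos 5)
Your reduction of item (1) to a fixed-point equation is sound, and your map $T_z$ is exactly the paper's $f_z(w)=zh_\nu(zh_\mu(w))$ with $h_\mu=\eta_\mu(\cdot)/\cdot$. But the step you lean on — that $T_z$ has relatively compact image in $\Hup$, so Earle--Hamilton gives a unique attracting interior fixed point — is false in general, and it is precisely the point where the real work lives. The maps $f_z$ are analytic self-maps of $\Hup$ that are not conformal automorphisms, but nothing forces their image away from $\partial\Hup$ (e.g.\ $f_z(w)=\eta_\nu(zh_\mu(w))/h_\mu(w)$ can approach $\RR$ as $h_\mu(w)$ does), and a priori their Denjoy--Wolff point could sit on $\RR\cup\{\infty\}$. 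The paper's route is: (i) show $f_z$ is never an automorphism (Proposition \ref{prop:not_conf}, via Lemma \ref{lem:auto}); (ii) exhibit an \emph{interior} fixed point of $f_z$ for at least \emph{one} $z\in\Hup$ — in the compactly supported, nonzero-mean case by locally inverting $\eta_\mu,\eta_\nu$ near $0$, and in the general unbounded case by the genuinely topological Lemma \ref{lem:ex_of_fp}, which tracks the curve $\eta_\mu(\iu t)$ against the Jordan domain $\eta_\nu(\iu\Hup)$ and uses Fatou and Privalov to force an intersection with $\eta_\nu((0,\pm\iu s))$; and then (iii) invoke Theorem \ref{thm:dwfix} of Belinschi--Bercovici, which upgrades an interior fixed point at one parameter value to an analytic family of interior DW-points for all $z$. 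Your proposal acknowledges this as ``the technical heart'' but offers no argument for it, and the property you propose to prove (eventual strict contraction) is not the one that holds.

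There is a second gap: your single scalar equation $\eta_\nu\bigl(z\eta_\mu(w)/w\bigr)=\eta_\mu(w)$ encodes the consistency of the pair $(\omega_1,\omega_2)$ but never mentions $\mu\multconv\nu$, so even a complete fixed-point construction does not by itself identify the common value $\eta_\mu(\omega_1(z))=\eta_\nu(\omega_2(z))=\omega_1(z)\omega_2(z)/z$ with $\eta_{\mu\multconv\nu}(z)$. In the paper this identification is done first for compactly supported measures with nonzero mean via the local relation $z\,\eta_{\mu\multconv\nu}^{\inv}(z)=\eta_\mu^{\inv}(z)\eta_\nu^{\inv}(z)$ near the origin, and then transferred to the general case by approximating $\mu,\nu$ in Kolmogorov--Smirnov distance and using the continuity of Denjoy--Wolff points under pointwise convergence (Theorem \ref{thm:conv_of_dwp}) together with the weak continuity of $\multconv$ from \cite{BV1993free}. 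Your uniqueness argument and the derivation of (2) from $\omega_2=zh_\mu(\omega_1)$ are fine once these two gaps are filled, and your verification of (3) is in the right spirit, but as written the proof does not go through.
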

The proof is given in Section \ref{sec:ex_of_subord}. 
For measures $\mu \in \Prob(\RR)$ and $\nu \in \Prob(\RR_{\ge0})\setminus\{\delta_0\}$, one can see from Theorem  \ref{thm:subordination} \ref{item:sub2} and \ref{item:sub3} that
the subordination function $\omega_2$ is the $\eta$-transform of a unique probability measure on $\RR$. Let us denote this probability measure by $\mathbb{B}_{\nu}(\mu)$ (if $\mu=\delta_0$ then we simply set $\mathbb{B}_{\nu}(\mu)\coloneq\delta_0$). On the other hand, $\omega_1$ need not be an $\eta$-transform of a probability measure, see Remark  \ref{eq:omega_1_not_eta}. 

Regarding $\mathbb{B}_{\rho}(\cdot)$ as a self-map $\mathbb{B}_{\rho}\colon\Prob(\RR)\to \Prob(\RR)$ one can show  remarkable properties  which generalize the ones known for the Belinschi--Nica semigroup  \cite{BN2008remarkable}. This was done in \cite{AH2016free} when the domain of the map $\mathbb{B}_\rho$ is restricted to $\Prob(\RR_{\ge0})$. 
With the subordination functions above in hand, now we can show that the main results in \cite{AH2016free} are valid without any restriction as we state in the following theorems. 
 Recall from \cite{Fra09} that, for measures $\rho,\sigma \in \Prob(\RR_{\ge0})\setminus\{\delta_0\}$,  their \emph{multiplicative monotone convolution} $\rho \circlearrowright \sigma \in  \Prob(\RR_{\ge0})\setminus\{\delta_0\}$ is characterized by 
\begin{equation}\label{eq:monotone}
    \eta_{\rho \circlearrowright \sigma} = \eta_\rho \circ \eta_\sigma. 
\end{equation}
The following two results will be proved in Section \ref{sec:subordination_hom}. 
\begin{theorem} 
\label{thm:subordination_hom}
For $\mu \in \Prob(\RR)$, $\nu \in \Prob(\RR_{\ge0})$, $\rho,\sigma \in \Prob(\RR_{\ge0})\setminus\{\delta_0\}$,  we have
\begin{align}
&\mathbb{B}_{\rho}(\mathbb{B}_\sigma(\mu)) = \mathbb{B}_{\sigma \circlearrowright \rho}(\mu),  \label{eq:hom2} \\
&\mathbb{B}_{\rho}(\mu \multconv \nu) = \mathbb{B}_{\rho}(\mu) \multconv \mathbb{B}_{\rho}(\nu).  \label{eq:hom}
\end{align}
\end{theorem}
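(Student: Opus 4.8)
The plan is to reduce both identities to the uniqueness of subordination functions in Theorem~\ref{thm:subordination}, the central tool being the functional relation that is immediate from the definition of $\mathbb{B}_\nu(\mu)$: since $\eta_{\mathbb{B}_\nu(\mu)}=\omega_2$ and $\eta_\nu(\omega_2(z))=\eta_{\mu\multconv\nu}(z)$,
\begin{equation}\label{eq:starrel}
\eta_{\mu\multconv\nu}=\eta_\nu\circ\eta_{\mathbb{B}_\nu(\mu)},\qquad \mu\in\Prob(\RR),\ \nu\in\Prob(\RR_{\ge0})\setminus\{\delta_0\}.
\end{equation}
Comparing with \eqref{eq:monotone}, this exhibits $\mathbb{B}_\nu(\mu)$ as the left monotone deconvolution of $\mu\multconv\nu$ by $\nu$. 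Throughout I write $\omega_1^{(\alpha,\beta)},\omega_2^{(\alpha,\beta)}$ for the pair furnished by Theorem~\ref{thm:subordination} for two measures $(\alpha,\beta)$, so that $\eta_{\mathbb{B}_\beta(\alpha)}=\omega_2^{(\alpha,\beta)}$, and I recall that these are characterized as the unique solution of the fixed-point system $\omega_1(z)=z\,\eta_\beta(\omega_2(z))/\omega_2(z)$, $\omega_2(z)=z\,\eta_\alpha(\omega_1(z))/\omega_1(z)$ together with conditions \ref{item:sub2} and \ref{item:sub3}. After disposing of the trivial cases in which some measure is $\delta_0$ (where both sides reduce to $\delta_0$ by convention), I assume all measures nondegenerate, so that all invoked subordination pairs exist.

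For \eqref{eq:hom2} I would produce the subordination pair of $(\mu,\sigma\circlearrowright\rho)$ as a composition of known ones. Writing $\lambda\coloneqq\mathbb{B}_\sigma(\mu)$, $\alpha_i\coloneqq\omega_i^{(\mu,\sigma)}$ and $\beta_i\coloneqq\omega_i^{(\lambda,\rho)}$, I set $\omega_1^\ast\coloneqq\alpha_1\circ\beta_1$ and $\omega_2^\ast\coloneqq\beta_2$. Using the relations $\eta_\sigma\circ\eta_\lambda=\eta_{\mu\multconv\sigma}$ (a case of \eqref{eq:starrel}), $\eta_\mu\circ\alpha_1=\eta_{\mu\multconv\sigma}$, $\eta_\lambda\circ\beta_1=\eta_\rho\circ\beta_2=\eta_{\lambda\multconv\rho}$, $\beta_1\beta_2/z=\eta_{\lambda\multconv\rho}$, and the fixed-point equations for the pairs $(\mu,\sigma)$ and $(\lambda,\rho)$ individually, a direct computation verifies the two fixed-point equations for $(\omega_1^\ast,\omega_2^\ast)$ relative to $\eta_{\sigma\circlearrowright\rho}=\eta_\sigma\circ\eta_\rho$; these in turn force the three-way equality $\eta_\mu(\omega_1^\ast)=\eta_{\sigma\circlearrowright\rho}(\omega_2^\ast)=\omega_1^\ast\omega_2^\ast/z$. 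Condition \ref{item:sub2} for $\omega_2^\ast=\beta_2$ is inherited verbatim from Theorem~\ref{thm:subordination}\ref{item:sub2} applied to $(\lambda,\rho)$. Granting also \ref{item:sub3} (see below), the uniqueness in Theorem~\ref{thm:subordination} identifies the constructed pair with the subordination pair of $(\mu,\sigma\circlearrowright\rho)$; in particular $\omega_2^\ast=\omega_2^{(\mu,\sigma\circlearrowright\rho)}=\eta_{\mathbb{B}_{\sigma\circlearrowright\rho}(\mu)}$, while by construction $\omega_2^\ast=\beta_2=\eta_{\mathbb{B}_\rho(\lambda)}=\eta_{\mathbb{B}_\rho(\mathbb{B}_\sigma(\mu))}$, which is \eqref{eq:hom2}.

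Granting \eqref{eq:hom2}, I would derive \eqref{eq:hom} as follows. By \eqref{eq:starrel} and associativity of $\multconv$ over the $\RR_{\ge0}$-factors,
\[
\eta_\rho\circ\eta_{\mathbb{B}_\rho(\mu\multconv\nu)}=\eta_{(\mu\multconv\nu)\multconv\rho}=\eta_{\mu\multconv(\nu\multconv\rho)}=\eta_{\nu\multconv\rho}\circ\eta_{\mathbb{B}_{\nu\multconv\rho}(\mu)}.
\]
On the other hand, \eqref{eq:hom2} applied with inner subscript $\rho$ and outer subscript $\mathbb{B}_\rho(\nu)$, together with $\rho\circlearrowright\mathbb{B}_\rho(\nu)=\nu\multconv\rho$ (again \eqref{eq:starrel}), gives $\omega_2^{(\mathbb{B}_\rho(\mu),\mathbb{B}_\rho(\nu))}=\eta_{\mathbb{B}_{\mathbb{B}_\rho(\nu)}(\mathbb{B}_\rho(\mu))}=\eta_{\mathbb{B}_{\nu\multconv\rho}(\mu)}$, so that the subordination relation for $(\mathbb{B}_\rho(\mu),\mathbb{B}_\rho(\nu))$ yields
\[
\eta_\rho\circ\eta_{\mathbb{B}_\rho(\mu)\multconv\mathbb{B}_\rho(\nu)}=\eta_\rho\circ\eta_{\mathbb{B}_\rho(\nu)}\circ\omega_2^{(\mathbb{B}_\rho(\mu),\mathbb{B}_\rho(\nu))}=\eta_{\nu\multconv\rho}\circ\eta_{\mathbb{B}_{\nu\multconv\rho}(\mu)}.
\]
Hence $\eta_\rho\circ\eta_{\mathbb{B}_\rho(\mu)\multconv\mathbb{B}_\rho(\nu)}=\eta_\rho\circ\eta_{\mathbb{B}_\rho(\mu\multconv\nu)}$, and it remains to cancel $\eta_\rho$; I would justify this by checking that $\eta_{\mathbb{B}_\rho(\mu)\multconv\mathbb{B}_\rho(\nu)}$ satisfies the characterizing conditions \ref{item:sub2}--\ref{item:sub3} of the subordination function $\omega_2^{(\mu\multconv\nu,\rho)}$ and then invoking its uniqueness.

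The routine algebra above conceals the genuine difficulties, which are entirely of boundary-regularity type and are where conditions \ref{item:sub2}--\ref{item:sub3} enter. The hardest point is \ref{item:sub3} for $\omega_1^\ast=\alpha_1\circ\beta_1$: I must show $\nontanglim_{z\to0}\alpha_1(\beta_1(z))=0$, and since $\nontanglim_{w\to0}\alpha_1(w)=0$ is only a nontangential statement, this requires knowing that $\beta_1=\omega_1^{(\lambda,\rho)}$ carries a Stolz angle at $0$ into a Stolz angle at $0$; establishing such control on the argument and size of $\beta_1$ near $0$ is the crux, and I expect it to rest on the quantitative behaviour of subordination functions developed in Sections~\ref{sec:ex_of_subord}--\ref{sec:subord2}. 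The parallel obstacle for \eqref{eq:hom} is verifying the argument bound \ref{item:sub2} for $\eta_{\mathbb{B}_\rho(\mu)\multconv\mathbb{B}_\rho(\nu)}$ needed to license the cancellation of $\eta_\rho$, where naively composing two bounds of the form \ref{item:sub2} only yields $\arg z\le\arg(\cdot)\le\arg z+2\pi$, so a finer analysis is required. Modulo these estimates the whole scheme reproduces the arguments of \cite{AH2016free}, now with $\Prob(\RR_{\ge0})$ replaced by $\Prob(\RR)$ in the first slot.
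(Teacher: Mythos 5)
Your route is genuinely different from the paper's. The paper first proves both identities for compactly supported measures with nonvanishing means by manipulating $S$- or $\Sigma$-transforms as in \cite[Theorem 1.1]{AH2016free} (there $\eta$ has a local inverse at the origin and everything is an identity of germs), and then passes to the general case by approximation, using the weak continuity of $(\mu,\nu)\mapsto\mathbb{B}_\nu(\mu)$ (Proposition \ref{prop:conv_omega2}) and of $\multconv$ (Corollary \ref{cor:wconti0}). Your scheme instead works intrinsically with the uniqueness of subordination functions and needs no approximation; the algebra verifying the three-way equality for $(\alpha_1\circ\beta_1,\beta_2)$ relative to $\eta_{\sigma\circlearrowright\rho}=\eta_\sigma\circ\eta_\rho$ is correct, and the reduction of \eqref{eq:hom} to \eqref{eq:hom2} is sound. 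However, the two steps you yourself flag as open are genuine gaps as written, and in both cases the difficulty is not quite where you locate it.

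For \eqref{eq:hom2}, you are right that $\nontanglim_{z\to0}\alpha_1(\beta_1(z))=0$ cannot be obtained by composing nontangential limits (Lindel\"of's theorem runs in the wrong direction: knowing the nontangential limit of $\alpha_1$ at $0$ says nothing about its behaviour along the possibly tangential curve $t\mapsto\beta_1(\iu t)$). But you do not need to verify condition \ref{item:sub3} at all: the uniqueness in Theorem \ref{thm:subordination} is established in its proof at the level of equation \eqref{eq:omegaA} alone --- any pair with $\omega_1(z)\in\Hup$ and $\omega_2(z)/z\in\Hup$ satisfying the three-way equality consists of the Denjoy--Wolff points of $f_z$ and $g_z$ and hence coincides with the subordination pair. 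Since you have already checked the three-way equality and the mapping properties (note $\beta_2(z)/z=h_\lambda(\beta_1(z))\in\Hup$), condition \ref{item:sub3} for $\alpha_1\circ\beta_1$ is \emph{inherited} from the identification rather than checked, and \eqref{eq:hom2} is complete.

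For \eqref{eq:hom}, your proposed justification of the cancellation of $\eta_\rho$ --- verifying conditions \ref{item:sub2}--\ref{item:sub3} for $\eta_{\mathbb{B}_\rho(\mu)\multconv\mathbb{B}_\rho(\nu)}$ --- cannot work, because those two conditions are satisfied by the $\eta$-transform of \emph{every} probability measure on $\RR$ (Proposition \ref{prop:prop_of_eta}) and so do not single out $\omega_2^{(\mu\multconv\nu,\rho)}$; in particular the worry about an $\arg z+2\pi$ bound is moot, since $\mathbb{B}_\rho(\mu)\multconv\mathbb{B}_\rho(\nu)$ is itself a measure on $\RR$ and \ref{item:sub2} holds for its $\eta$-transform automatically. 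What does license the cancellation is injectivity of $\eta_\rho$ where it matters: writing $A\coloneq\eta_{\mathbb{B}_\rho(\mu)\multconv\mathbb{B}_\rho(\nu)}$ and $B\coloneq\eta_{\mathbb{B}_\rho(\mu\multconv\nu)}$, both satisfy $\arg z\le\arg(\cdot)\le\arg z+\pi$, so both map $\iu\RR_{>0}$ into $\cl{\iu\Hup}\setminus\{0\}$, on which $\eta_\rho$ is injective (see the discussion preceding Lemma \ref{lem:ex_of_fp}); hence $\eta_\rho\circ A=\eta_\rho\circ B$ forces $A=B$ on $\iu\RR_{>0}$ and then on $\Hup$ by the identity theorem. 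With these two repairs your argument goes through and yields an approximation-free alternative to the paper's proof.
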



\begin{theorem} 
\label{thm:pde_for_semigroup}
Let $\mu \in\Prob(\RR)$ and $(\nu_t)_{t\ge0} \subseteq \Prob(\RR_{\ge0})$ be a weakly continuous $\circlearrowright$-convolution semigroup with $\nu_0=\delta_1$. Let $z K(z)$ be the infinitesimal generator of the one-parameter compositional semigroup $(\eta_{\nu_t})_{t\ge0}$, i.e., $z K(z) \coloneq \left. \frac{\di{}}{\di t}\right|_{t=0} \eta_{\nu_t}(z)$. Let $\omega=\omega(t, z) \coloneq \eta_{\mathbb{B}_{\nu_t}(\mu)}(z)$. Then we have
 \begin{equation}\label{eq:omega1}
 \frac{\partial \omega}{\partial t} = K(\omega) \left( -\omega + z \frac{\partial \omega}{\partial z} \right),  
 \end{equation}
 or equivalently for $H = H(t,z) \coloneq z - F_{\mathbb{B}_{\nu_t}(\mu)}(z) = z \omega(t, 1/z)$ we have 
 \begin{equation*}\label{eq:H}
  \frac{\partial H}{\partial t}  =  - z K \left( \frac{H}{z}\right)   \frac{\partial H}{\partial z}. 
 \end{equation*}
\end{theorem}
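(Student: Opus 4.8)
The plan is to derive \eqref{eq:omega1} by exploiting the semigroup structure encoded in the homomorphism identity \eqref{eq:hom2} and reducing the whole problem to computing a single infinitesimal generator, namely $\partial_s|_{s=0}\eta_{\mathbb{B}_{\nu_s}(\lambda)}(z)$ for a fixed measure $\lambda$. Since $(\nu_t)_{t\ge 0}$ is a $\circlearrowright$-convolution semigroup, $\nu_t \circlearrowright \nu_s = \nu_{t+s}$, so \eqref{eq:hom2} with $\rho=\nu_s,\sigma=\nu_t$ gives $\mathbb{B}_{\nu_s}(\mathbb{B}_{\nu_t}(\mu)) = \mathbb{B}_{\nu_{t+s}}(\mu)$. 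Writing $\lambda_t \coloneq \mathbb{B}_{\nu_t}(\mu)$, this reads $\omega(t+s,z) = \eta_{\mathbb{B}_{\nu_s}(\lambda_t)}(z)$, whence $\partial_t\omega(t,z) = \partial_s|_{s=0}\eta_{\mathbb{B}_{\nu_s}(\lambda_t)}(z)$. Thus it suffices to prove, for an arbitrary fixed $\lambda \in \Prob(\RR)$,
\[
\partial_s|_{s=0}\eta_{\mathbb{B}_{\nu_s}(\lambda)}(z) = K(\eta_\lambda(z))\bigl(-\eta_\lambda(z)+z\,\eta_\lambda'(z)\bigr),
\]
since substituting $\lambda=\lambda_t$ and using $\eta_{\lambda_t}=\omega(t,\cdot)$ then yields \eqref{eq:omega1} directly.

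To compute this generator I would apply Theorem~\ref{thm:subordination} to the pair $(\lambda,\nu_s)$, obtaining subordination functions $\omega_1^{[s]},\omega_2^{[s]}$ with $\omega_2^{[s]}=\eta_{\mathbb{B}_{\nu_s}(\lambda)}$ and the threefold identity
\[
\eta_{\nu_s}\bigl(\omega_2^{[s]}(z)\bigr)=\eta_\lambda\bigl(\omega_1^{[s]}(z)\bigr)=\frac{\omega_1^{[s]}(z)\,\omega_2^{[s]}(z)}{z}.
\]
Because $\nu_0=\delta_1$ and $\eta_{\delta_1}=\mathrm{id}$, the uniqueness in Theorem~\ref{thm:subordination} forces the initial values $\omega_1^{[0]}(z)=z$ and $\omega_2^{[0]}(z)=\eta_\lambda(z)$. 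Differentiating the identity above in $s$ at $s=0$, using $\partial_s|_{s=0}\eta_{\nu_s}(w)=wK(w)$ (the definition of the generator) together with $\eta_{\nu_0}'=1$, produces a linear system for $\dot\omega_1\coloneq\partial_s|_{s=0}\omega_1^{[s]}(z)$ and $\dot\omega_2\coloneq\partial_s|_{s=0}\omega_2^{[s]}(z)$. Equating the first and third expressions gives $\dot\omega_1 = zK(\eta_\lambda(z))$, and feeding this into the equality of the first and second expressions yields exactly $\dot\omega_2 = K(\eta_\lambda(z))\bigl(z\eta_\lambda'(z)-\eta_\lambda(z)\bigr)$, which is the claim.

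Finally, the equivalent equation for $H$ follows by a routine change of variables: writing $\omega=\omega(t,u)$ and using $H(t,z)=z\,\omega(t,1/z)$, one computes $\partial_t H(t,z)=z\,(\partial_t\omega)(t,1/z)$ and $\partial_z H(t,z)=\omega(t,1/z)-\tfrac1z(\partial_u\omega)(t,1/z)$; substituting \eqref{eq:omega1} evaluated at $u=1/z$ and using $H/z=\omega(t,1/z)$ gives $\partial_t H=-z\,K(H/z)\,\partial_z H$. The main obstacle is analytic rather than algebraic: I must justify that $s\mapsto \omega_1^{[s]}(z),\omega_2^{[s]}(z)$ are differentiable at $s=0$ and that the first-order expansion $\eta_{\nu_s}(w)=w+s\,wK(w)+o(s)$ holds locally uniformly in $w$ near $\eta_\lambda(z)$, so that the chain and product rules used above are legitimate. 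I expect the latter from the weak continuity $\nu_s\to\delta_1$, which gives $\eta_{\nu_s}\to\mathrm{id}$ locally uniformly (hence $\eta_{\nu_0}'=1$ and analyticity of the generator via normal-family/Vitali arguments), and the differentiability of the subordination functions in the parameter from their fixed-point (Denjoy--Wolff) characterization together with analytic dependence on $s$; the key point making all limits interchangeable is that $\omega_2^{[s]}(z)$ stays in a fixed compact neighborhood of $\eta_\lambda(z)$ for small $s$.
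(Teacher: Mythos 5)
Your argument is correct, and it reaches \eqref{eq:omega1} by a genuinely different route than the paper. The paper works at a general time $t$: it differentiates the fixed-point equation $\eta_\mu\bigl(z\eta(t,\omega)/\omega\bigr)=\eta(t,\omega)$ with respect to both $t$ and $z$, forms the ratio of the two derivatives to eliminate $\eta_\mu'$, and then substitutes the generator relation $(\partial_t\eta)(t,z)=zK(z)(\partial_z\eta)(t,z)$ obtained from $\eta(t+s,z)=\eta(t,\eta(s,z))$. You instead invoke the homomorphism identity \eqref{eq:hom2} (via $\nu_t\circlearrowright\nu_s=\nu_{t+s}$) to shift the time derivative to $s=0$, where the subordination triple for $(\lambda,\nu_s)$ has the explicit initial data $\omega_1^{[0]}=\mathrm{id}$, $\omega_2^{[0]}=\eta_\lambda$, and then linearize the threefold identity of Theorem \ref{thm:subordination} there; your computations $\dot\omega_1=zK(\eta_\lambda(z))$ and $\dot\omega_2=K(\eta_\lambda(z))(z\eta_\lambda'(z)-\eta_\lambda(z))$ check out, as does the change of variables for $H$. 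What your approach buys is cleaner algebra (only the generator at $s=0$ is needed, and one never differentiates $\eta_\mu$ composed with a moving argument) plus the bonus formula for $\partial_s|_{s=0}\omega_1^{[s]}$; the cost is that it leans on Theorem \ref{thm:subordination_hom}, which is itself established by approximation, whereas the paper's proof of this theorem is self-contained given the fixed-point equation. Your remaining analytic caveats (differentiability of $s\mapsto\omega_i^{[s]}(z)$ at $s=0$ and local uniformity of the expansion $\eta_{\nu_s}(w)=w+s\,wK(w)+o(s)$) are real but no more serious than the unstated regularity assumptions in the paper's own two-line differentiation, and your proposed resolution via normal families and the Denjoy--Wolff characterization is the standard one.
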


\begin{remark} The infinitesimal generator $zK(z)$ exists by Berkson and Porta's theorem  \cite{BP1978semigroups}. The class of  functions $K(z)$ for compactly supported $(\nu_t)_{t\ge0}$ was characterized by Bercovici  \cite[Theorem 3.8]{B2005multiplicative}. In the general case, a characterization of $K$ is unknown. 
\end{remark}

\subsection{Boolean and free stable laws}

We now discuss the application of $S$-transform to free and Boolean stable laws, which were introduced in \cite{BV1993free} and \cite{SW1997Boolean}, respectively. Boolean stable laws surprisingly connect different kinds of convolutions and distributions. 
We consider the Cauchy transform $G_\mu$ as a function on $\CC^-$, so that its compositional inverse map $G_\mu^\inv(z) = F_\mu^\inv(1/z)$ can be defined on a subdomain $\Gamma_{0,\theta} \cap \{|z|<r\}$ of the upper half-plane  for some $\theta \in(0,\pi/2)$ and $r>0$ that depend on $\mu$; see \cite{BV1993free} for details. 
Let $C_\mu$ be the \emph{free cumulant transform} of $\mu$ defined by 
\[
C_\mu(z) = zG^\inv_\mu(z)-1,  \qquad z \in \Gamma_{0,\theta}\cap\{|z|<r\}. 
\]
Note that the \emph{Voiculescu transform} $\varphi_\mu$ in  \cite{BV1993free} and $R$-transform $R_\mu$ in \cite{V1986addition} are related to $C_\mu$ as $C_\mu(z)= z \varphi_\mu(1/z) =z R_\mu(z)$. 

Let $\mathfrak{A}$ be the set of admissible parameters
\[
\mathfrak{A}\coloneq((0,1] \times [0,1]) \cup \{(\alpha,\rho): 1 < \alpha \le 2, 1-1/\alpha \le \rho \le 1/\alpha \}. 
\]
We consider the \emph{free stable law $\mathbf{f}_{\alpha,\rho}$} and \emph{Boolean stable law $\mathbf{b}_{\alpha,\rho}~((\alpha,\rho) \in \mathfrak{A})$}  determined by 
\begin{equation}\label{eq:stable}
C_{\mathbf{f}_{\alpha,\rho}} (z) =  \eta_{\mathbf{b}_{\alpha,\rho}}(z) = -(e^{-\iu\rho\pi}z)^{\alpha},  \qquad  z \in \Hup,  
\end{equation}
where the power function $w\mapsto w^\alpha$ is defined on $\CC\setminus(-\infty,0]$ as the principal branch. The parameter $\alpha$ is called the stability index. The parameter $\rho$ stands for the asymmetry and appears as $\rho = \mathbf{f}_{\alpha,\rho} ([0,+\infty)) = \mathbf{b}_{\alpha,\rho} ([0,+\infty))$. Moreover, $\mathbf{f}_{\alpha,1-\rho}$ is the pushforward of $\mathbf{f}_{\alpha,\rho}$ by the reflection $x\mapsto -x$, and the same holds for $\mathbf{b}_{\alpha,1-\rho},\mathbf{b}_{\alpha,\rho}$. 
Except for $\mathbf{f}_{1,0}=\delta_{-1}$ and $\mathbf{f}_{1,1}=\delta_1$, the free stable laws are all Lebesgue absolutely continuous and the densities are investigated in  \cite{BP1999stable,D2011kanter,HK2014free,HSW20}.  The Boolean stable laws are investigated in  \cite{AH2014classical,AH2016classical,HS2015unimodality} in details. According to  \cite{HS2015unimodality}, for $0< \alpha < 1$ and $0 \le \rho \le1$, $\mathbf{b}_{\alpha,\rho}$ is Lebesgue absolutely continuous and its density function is given by 
\begin{equation}\label{eq:density_boole}
q_{\alpha,\rho}(x)\mathbf{1}_{(0,+\infty)}(x) + q_{\alpha,1-\rho}(-x)\mathbf{1}_{(-\infty,0)}(x), 
\end{equation}
where $q_{\alpha,\rho}(x)$ is the function
\begin{equation*}
q_{\alpha,\rho}(x) = \frac{\sin(\alpha\rho\pi)}{\pi}\cdot \frac{x^{\alpha-1}}{x^{2\alpha} + 2x^\alpha\cos(\alpha\rho\pi) +1}, \qquad x>0. 
\end{equation*}
Note that the definition of $\mathbf{b}_{1,\rho}$ differs from that of \cite{HS2015unimodality}; in our definition $\alpha \mapsto \mathbf{b}_{\alpha,\rho}$ is weakly continuous at $\alpha=1$, and consequently, $\mathbf{b}_{1,\rho}$ is a Cauchy distribution for $0<\rho <1$, $\mathbf{b}_{1,1}$ is the trivial measure $\delta_1$ and $\mathbf{b}_{1,0}$ is $\delta_{-1}$.  Finally, it holds that $\mathbf{f}_{1,\rho} = \mathbf{b}_{1,\rho}$.

The following formulas generalize results in \cite[Theorem 4.14]{AH2016classical} (reformulations of results in  \cite{AH2014classical,AP2009transform,BP1999stable}) where the asymmetry parameter $\rho$ was limited to the three values $0, \frac1{2}$ and $1$. The proof is just to use the $S$-transform, see Section \ref{sec:boole1}.

\begin{theorem}\label{thm:reproducing} For any $(\alpha,\rho) \in \mathfrak{A}$ and $0< \beta \le 1$ we have
\begin{align*}
&\mathbf{f}_{\alpha,\rho} \multconv (\mathbf{f}_{\beta,1})^{\multconv \frac{1}{\alpha}} = \mathbf{f}_{\alpha\beta,\rho},   \\
&\mathbf{b}_{\alpha,\rho} \multconv (\mathbf{b}_{\beta,1})^{\multconv \frac{1}{\alpha}} = \mathbf{b}_{\alpha\beta,\rho}.  
\end{align*}
Note that $(\mathbf{f}_{\beta,1})^{\multconv \frac{1}{\alpha}} = \mathbf{f}_{\frac{\alpha\beta}{1-\beta+\alpha\beta},1}$ and $(\mathbf{b}_{\beta,1})^{\multconv \frac{1}{\alpha}} = \mathbf{b}_{\frac{\alpha\beta}{1-\beta+\alpha\beta},1}$ that can be easily confirmed by calculating the $S$-transforms (see Section \ref{sec:conv_powers} for the definition of $\multconv {\frac{1}{\alpha}}$). 
\end{theorem}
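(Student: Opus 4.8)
The plan is to reduce both identities to computations with the $S$-transform (equivalently the $T$-transform), exploiting multiplicativity (Theorem~\ref{thm:T}\ref{itemT1}) and injectivity (Theorem~\ref{thm:T}\ref{itemT2}). Since $\mathbf{f}_{\beta,1}$ and $\mathbf{b}_{\beta,1}$ are supported on $\RR_{\ge0}$, so are their $\multconv$-powers $(\mathbf{f}_{\beta,1})^{\multconv 1/\alpha}$ and $(\mathbf{b}_{\beta,1})^{\multconv 1/\alpha}$; hence the second factor in each product lies in $\Prob(\RR_{\ge0})$ and Theorem~\ref{thm:T}\ref{itemT1} applies with $\mu = \mathbf{f}_{\alpha,\rho}$ (resp.\ $\mathbf{b}_{\alpha,\rho}$). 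So it suffices to show that the product of the $S$-transforms of the two left-hand factors equals the $S$-transform of the right-hand measure on a left neighborhood of $0$; injectivity then gives equality of the measures.

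First I would compute the $S$-transforms of the stable laws. For the Boolean family this is direct from $\eta_{\mathbf{b}_{\alpha,\rho}}(z) = -(e^{-\iu\rho\pi}z)^\alpha$: using $\psi_\mu = \eta_\mu/(1-\eta_\mu)$ together with Theorem~\ref{thm:T}\ref{itemT3}, the defining relation becomes $\eta_{\mathbf{b}_{\alpha,\rho}}(\tfrac{u}{1+u}S_{\mathbf{b}_{\alpha,\rho}}(u)) = \tfrac{u}{1+u}$, which I can solve explicitly. For the free family I would instead use the free cumulant transform: combining $C_\mu(z) = zG_\mu^\inv(z)-1$ with Theorem~\ref{thm:T}\ref{itemT3} yields the standard relation $C_\mu(uS_\mu(u)) = u$, and plugging in $C_{\mathbf{f}_{\alpha,\rho}}(z) = -(e^{-\iu\rho\pi}z)^\alpha$ lets me solve for $S_{\mathbf{f}_{\alpha,\rho}}(u)$. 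Writing $s = -u \in (0,1)$, this should give $S_{\mathbf{f}_{\alpha,\rho}}(u) = \tfrac1u e^{\iu\rho\pi}(-u)^{1/\alpha} = -e^{\iu\rho\pi}s^{1/\alpha-1}$ and the analogous $S_{\mathbf{b}_{\alpha,\rho}}(u) = -e^{\iu\rho\pi}(s/(1-s))^{1/\alpha-1}$.

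Next I would record that $S_{\mu^{\multconv t}} = (S_\mu)^t$, which follows from multiplicativity together with the semigroup property of $\multconv$-powers. Taking $1/\alpha$-th powers of the $S$-transforms of $\mathbf{f}_{\beta,1},\mathbf{b}_{\beta,1}$ and comparing with the positive-stable formula identifies the index and confirms the side formulas $(\mathbf{f}_{\beta,1})^{\multconv 1/\alpha} = \mathbf{f}_{\gamma,1}$ and $(\mathbf{b}_{\beta,1})^{\multconv 1/\alpha} = \mathbf{b}_{\gamma,1}$ with $\gamma = \alpha\beta/(1-\beta+\alpha\beta)$. It then remains to multiply the two $S$-transforms and simplify: in both cases the phase $e^{\iu\rho\pi}$ is carried entirely by the $\mathbf{f}_{\alpha,\rho}$/$\mathbf{b}_{\alpha,\rho}$ factor, while the exponents $1/\alpha-1$ and $(1-\beta)/(\alpha\beta)$ add to $1/(\alpha\beta)-1$, precisely the exponent associated with stability index $\alpha\beta$. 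Thus the product equals $S_{\mathbf{f}_{\alpha\beta,\rho}}$ (resp.\ $S_{\mathbf{b}_{\alpha\beta,\rho}}$), and Theorem~\ref{thm:T}\ref{itemT2} finishes the proof. Along the way I would check admissibility, i.e.\ $(\alpha\beta,\rho),(\gamma,1)\in\mathfrak{A}$, using $\alpha\beta\le\alpha$ and $1-1/(\alpha\beta)\le 1-1/\alpha\le\rho\le1/\alpha\le1/(\alpha\beta)$.

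The obstacles I anticipate are bookkeeping rather than conceptual. The first is the choice of branch when solving $-(e^{-\iu\rho\pi}z)^\alpha = (\text{real})$ for $z$: the target $S$-transforms are genuinely complex when $0<\rho<1$, which is exactly the regime the paper's general $S$-transform is designed to reach, so one must verify that the chosen solution respects $\arg S_\mu(u) = (1+\rho)\pi$, equivalently that $T_\mu=1/S_\mu$ lands in $\Hup\cup\RR$ as required by Theorem~\ref{thm:T}\ref{itemT0}, and handle $\rho\in\{0,1\}$ through the nontangential-limit convention. The second is justifying $S_{\mu^{\multconv t}} = (S_\mu)^t$ for $t=1/\alpha$, which may be smaller than $1$ when $\alpha>1$; here I would either invoke the definition of $\multconv$-powers from Section~\ref{sec:conv_powers}, or avoid fractional exponents altogether by checking the equivalent identity $(\mathbf{f}_{\gamma,1})^{\multconv \alpha} = \mathbf{f}_{\beta,1}$ and its Boolean analogue with the legitimate exponent $\alpha\ge1$.
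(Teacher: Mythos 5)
Your proposal is correct and follows essentially the same route as the paper: compute $S_{\mathbf{f}_{\alpha,\rho}}(u)=-e^{\iu\rho\pi}(-u)^{\frac{1-\alpha}{\alpha}}$ and $S_{\mathbf{b}_{\alpha,\rho}}(u)=-e^{\iu\rho\pi}\bigl(\tfrac{-u}{1+u}\bigr)^{\frac{1-\alpha}{\alpha}}$ (the paper does this in Examples \ref{ex:BS} and \ref{ex:FS} via the Denjoy--Wolff fixed point of $L_{\mu,t,0}$, which by Remark \ref{rem:calc_S} is equivalent to your $\psi_\mu$/$C_\mu$ equations, including the need you flag to discard the spurious boundary solution), then combine $S_{\nu^{\multconv t}}=S_\nu^t$ with the multiplicativity and injectivity of Theorem \ref{thm:T}. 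No gaps; the exponent bookkeeping $\tfrac{1-\alpha}{\alpha}+\tfrac{1-\beta}{\alpha\beta}=\tfrac{1-\alpha\beta}{\alpha\beta}$ checks out.
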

Yet more companion identities that connect Boolean, free and even classical stable laws will be given in Section \ref{sec:boole1}. 

The next result connects the free multiplicative convolution to the classical multiplicative convolution $\circledast$. To be precise, $\mu \circledast \nu$ is the law of the product $XY$, where $X$ and $Y$ are $\RR$-valued classical independent random variables having distributions $\mu$ and $\nu$, respectively. Moreover, we introduce the notation $\nu^\alpha~(\alpha>0)$ for the law of $Y^\alpha$ when $Y\ge0$ has distribution $\nu$. If $Y>0$, i.e., $\nu$ does not have an atom at 0, then we also allow $\alpha$ to be any real number. Under  the notation above, the measures of the form $\mathbf{b}_{\alpha,\rho} \circledast( \nu^{\frac1{\alpha}})$ will be referred to as \emph{Boolean stable mixtures}. In \cite[Proposition 4.3]{AH2016classical} it was shown that 
\begin{equation}\label{eq:eta_b}
\eta_{\mathbf{b}_{\alpha,\rho} \circledast (\nu^{\frac1{\alpha}})} (z) = \eta_\nu(-(e^{-\iu\rho\pi}z)^\alpha) = \eta_\nu(\eta_{\mathbf{b}_{\alpha,\rho}}(z)). 
\end{equation} 
Our $S$-transform allows us to prove the following remarkable formula that generalizes \cite[Theorem 4.5]{AH2016classical} to arbitrary $(\alpha,\rho)\in \mathfrak{A}$. The proof will be given in Section \ref{sec:boole1}.

\begin{theorem}\label{thm:mixture} Let $(\alpha,\rho) \in\mathfrak{A}$ and $\nu\in \Prob(\RR_{\ge0})$ be such that $\nu^{\multconv \frac1{\alpha}}$ exists as a probability measure on $\RR_{\ge0}$ (see Section \ref{sec:conv_powers} for the precise meaning). Then we have
\begin{equation*}\label{eq:identity}
\mathbf{b}_{\alpha,\rho} \circledast( \nu^{\frac1{\alpha}}) = \mathbf{b}_{\alpha,\rho} \multconv (\nu^{\multconv\frac1{\alpha}}). 
\end{equation*}
\end{theorem}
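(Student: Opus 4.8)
The plan is to identify the two sides through their $S$-transforms and then invoke the injectivity in Theorem~\ref{thm:T}\ref{itemT2}. Write $\sigma_1 \coloneq \mathbf{b}_{\alpha,\rho} \circledast (\nu^{1/\alpha})$ and $\sigma_2 \coloneq \mathbf{b}_{\alpha,\rho} \multconv (\nu^{\multconv 1/\alpha})$, both in $\Prob(\RR)$. The case $\nu = \delta_0$ is trivial (both sides equal $\delta_0$), so assume $\nu \neq \delta_0$; then $\mathbf{b}_{\alpha,\rho}$, $\nu$, and $\sigma_1$ (whose atom at $0$ is $\sigma_1(\{0\}) = \nu(\{0\}) < 1$) are all different from $\delta_0$, so the relation in Theorem~\ref{thm:T}\ref{itemT3} applies to each of them, and all the $T$-transforms in play are nonzero on a common interval $(-\ee,0)$. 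It therefore suffices to prove $S_{\sigma_1} = S_{\sigma_2}$ on $(-\ee,0)$.

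First I would compute $S_{\sigma_2}$. Since $\nu^{\multconv 1/\alpha} \in \Prob(\RR_{\ge0})$, multiplicativity (Theorem~\ref{thm:T}\ref{itemT1}) gives $S_{\sigma_2}(u) = S_{\mathbf{b}_{\alpha,\rho}}(u)\, S_{\nu^{\multconv 1/\alpha}}(u)$, and the $S$-transform characterization of free multiplicative convolution powers on $\Prob(\RR_{\ge0})$ (Section~\ref{sec:conv_powers}) yields $S_{\nu^{\multconv 1/\alpha}}(u) = S_\nu(u)^{1/\alpha}$, the power being unambiguous because $S_\nu(u) > 0$. Combining $\eta_\mu = \psi_\mu/(1+\psi_\mu)$ with Theorem~\ref{thm:T}\ref{itemT3} gives the inversion formula $S_\mu(u) = \frac{1+u}{u}\eta_\mu^{-1}\!\big(\frac{u}{1+u}\big)$; applied to $\eta_{\mathbf{b}_{\alpha,\rho}}(z) = -(e^{-\iu\rho\pi}z)^\alpha$ from \eqref{eq:stable} it produces $\eta_{\mathbf{b}_{\alpha,\rho}}^{-1}(w) = e^{\iu\rho\pi}(-w)^{1/\alpha}$, hence $S_{\mathbf{b}_{\alpha,\rho}}(u) = \frac{1+u}{u}e^{\iu\rho\pi}\big(\frac{-u}{1+u}\big)^{1/\alpha}$. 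Thus $S_{\sigma_2}(u) = \frac{1+u}{u}e^{\iu\rho\pi}\big(\frac{-u}{1+u}S_\nu(u)\big)^{1/\alpha}$.

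Next I would compute $S_{\sigma_1}$ from \eqref{eq:eta_b}, i.e. $\eta_{\sigma_1} = \eta_\nu \circ \eta_{\mathbf{b}_{\alpha,\rho}}$. Rather than invert this composition globally, I verify that the candidate $s(u) \coloneq \frac{1+u}{u}e^{\iu\rho\pi}\big(\frac{-u}{1+u}S_\nu(u)\big)^{1/\alpha}$ satisfies the characterizing relation of Theorem~\ref{thm:T}\ref{itemT3}. Setting $z_0 \coloneq \frac{u}{1+u}s(u) = e^{\iu\rho\pi}\big(\frac{-u}{1+u}S_\nu(u)\big)^{1/\alpha}$, it is enough to check $\eta_{\sigma_1}(z_0) = \frac{u}{1+u}$. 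A direct computation gives $e^{-\iu\rho\pi}z_0 = \big(\frac{-u}{1+u}S_\nu(u)\big)^{1/\alpha} > 0$, so $\eta_{\mathbf{b}_{\alpha,\rho}}(z_0) = -(e^{-\iu\rho\pi}z_0)^\alpha = \frac{u}{1+u}S_\nu(u)$, a negative real. Applying Theorem~\ref{thm:T}\ref{itemT3} to the positive measure $\nu$ then gives $\eta_\nu\big(\frac{u}{1+u}S_\nu(u)\big) = \frac{u}{1+u}$, so that $\eta_{\sigma_1}(z_0) = \eta_\nu(\eta_{\mathbf{b}_{\alpha,\rho}}(z_0)) = \frac{u}{1+u}$, as desired. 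Hence $S_{\sigma_1} = s = S_{\sigma_2}$ on $(-\ee,0)$, and Theorem~\ref{thm:T}\ref{itemT2} gives $\sigma_1 = \sigma_2$.

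The main obstacle I anticipate is the careful handling of branches and boundary values. The point $z_0$ has argument $\rho\pi$, so it lies in $\Hup$ only for $0 < \rho < 1$; for $\rho \in \{0,1\}$ it sits on $\RR$, and both \eqref{eq:eta_b} and the relation in Theorem~\ref{thm:T}\ref{itemT3} must then be read as nontangential boundary limits, exactly as the statement of \ref{itemT3} allows. One must also confirm that the principal branches of $w \mapsto w^\alpha$ and $w \mapsto w^{1/\alpha}$ compose to the identity on the positive real argument $\frac{-u}{1+u}S_\nu(u)$, and that $s(u)$ is genuinely $S_{\sigma_1}(u)$ rather than a spurious solution; the latter follows from the univalence of $\eta_{\sigma_1}$ on the truncated cone underlying the construction of $T_{\sigma_1}$. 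Finally one should verify that the interval $(-\ee,0)$ on which all the $S$-transforms are simultaneously defined and nonvanishing is nonempty, which holds precisely because none of $\mathbf{b}_{\alpha,\rho}$, $\nu$, $\nu^{\multconv 1/\alpha}$, $\sigma_1$, $\sigma_2$ equals $\delta_0$.
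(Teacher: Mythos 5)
Your proposal is correct and follows essentially the same route as the paper: both compute $S_{\mathbf{b}_{\alpha,\rho}\circledast(\nu^{1/\alpha})}$ by exhibiting the solution of $\psi(1/w)=u$ with $w$ in the open half-plane (your verification of the relation in Theorem~\ref{thm:T}\,\ref{itemT3} for the candidate $s(u)$ is the same computation), factor it as $S_{\mathbf{b}_{\alpha,\rho}}S_{\nu^{\multconv 1/\alpha}}$, and conclude by multiplicativity and injectivity of the $T$-transform. The only differences are in the bookkeeping: the paper rules out a spurious solution via the Denjoy--Wolff uniqueness argument of Remark~\ref{rem:calc_S} (rather than ``univalence on a truncated cone'', which is not quite how $T_\mu$ is constructed here), and it settles the boundary cases $\rho\in\{0,1\}$ by weak continuity of the $S$-transform (Theorem~\ref{thm:T}\,\ref{itemT4}) instead of nontangential limits.
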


We also give a characterization of Boolean stable mixtures in terms of a certain symmetry of $S$-transform, and investigate Boolean stable mixtures as compound free Poisson distributions, see Sections \ref{sec:characBoole} and  \ref{sec:compound} for details.

\subsection{Regularity of free convolution}\label{sec:R}
Despite many correspondences and analogous structures between classical and free probability, free independence exhibits several mathematical properties that are significantly different from those of classical independence.  One notable example is the regularity of free convolution. 
In the early stages of research, regularity results for free convolution were obtained concerning free entropy, the H\"{o}lder continuity of distribution functions, and the locations and weights of atoms  \cite{V1993analogue,BV1993free,BV1998regularity}. Over the course of time, these findings were extended in various directions, including the Lebesgue decomposition of free convolutions \cite{B2005complex,B2006note, B2008lebesgue,BW2008freely, B2014infty, J2021regularity,BBH2023regularity}, studies on the support of free convolution  \cite{H2015supports, BES2020support, MS2022support, M2024density}, and investigations into free L\'{e}vy processes  \cite{HW2022regularity}.

In Section \ref{sec:cont_ext}, we show that these regularity results related to the Lebesgue decomposition also hold for the product of positive and self-adjoint freely independent random variables.

\begin{theorem}
    \label{thm:leb_decomp}
    Suppose that $\mu \in \Prob(\RR)$ and $\nu \in \Prob(\RR_{\geq0})$ are  nondegenerate.
    \begin{enumerate}
        \item\label{item:leb_decomp1} $\omega_1$ and $\omega_2$ extend to continuous maps from $(\Hup\cup\RR)\setminus\{0\}$ into $\CC\cup\{\infty\}$.
        \item\label{item:leb_decomp2} Let $a \in \RR \setminus \{0\}$. We have $(\mu \multconv \nu) (\{a\}) > 0$ if and only if there exist $b, c \in \RR$ such that $a=bc$ and $\mu(\{b\})+\nu(\{c\})>1$. In this case, we have $(\mu\multconv\nu)(\{a\})=\mu(\{b\})+\nu(\{c\})-1$.
        \item\label{item:leb_decomp3} $(\mu\multconv\nu)(\{0\})=\max\{\mu(\{0\}),\nu(\{0\})\}$.
        \item\label{item:leb_decomp4} The singular continuous part of $\mu\multconv\nu$ is zero.
        \item\label{item:leb_decomp5} The absolutely continuous part of $\mu\multconv\nu$, denoted by $(\mu\multconv\nu)^{\acp}$, is nonzero, and its density function $\frac{\di{(\mu\multconv\nu)^{\acp}}}{\di{x}}$ can be chosen to be analytic  on $\RR\setminus\{0\}$ wherever it is positive and finite.
    \end{enumerate}
\end{theorem}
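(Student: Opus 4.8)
The plan is to transport Belinschi and Bercovici's program for the Lebesgue decomposition of free additive convolution to the $\eta$-transform picture governed by the subordination functions $\omega_1,\omega_2$ of Theorem~\ref{thm:subordination}. The entire argument rests on part~\ref{item:leb_decomp1}; granting the boundary continuity of $\omega_1,\omega_2$, parts~\ref{item:leb_decomp2}--\ref{item:leb_decomp5} are read off from the boundary values of the Cauchy transform through the identity
\[
G_{\mu\multconv\nu}(1/z)=\frac{z}{1-\eta_{\mu\multconv\nu}(z)}=\frac{z}{1-\eta_\mu(\omega_1(z))}=\frac{z}{1-\eta_\nu(\omega_2(z))},
\]
valid on $\Hup$ (boundary values on the two sides of $\RR$ being related by the reflection $\eta_\mu(\conj z)=\conj{\eta_\mu(z)}$), which exhibits $G_{\mu\multconv\nu}$ as a composition of the $\omega_i$ with the analytic transforms $\eta_\mu,\eta_\nu$.

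First I would prove~\ref{item:leb_decomp1}. In Sections~\ref{sec:ex_of_subord}--\ref{sec:subord2} the function $\omega_2$ is characterized as the Denjoy--Wolff fixed point of a $z$-parametrized analytic self-map $h_z$ of a suitable domain, while $\omega_1$ is recovered from $\omega_2$ by $\omega_1(z)=z\,\eta_\nu(\omega_2(z))/\omega_2(z)$, which is immediate from Theorem~\ref{thm:subordination}. As $z$ tends nontangentially to $x_0\in\RR\setminus\{0\}$ the maps $h_z$ converge locally uniformly to a limit map $h_{x_0}$, and I would invoke the stability of the Denjoy--Wolff point under locally uniform convergence---a normal-families argument on the Riemann sphere $\CC\cup\{\infty\}$---to obtain a continuous extension of $\omega_2$, hence of $\omega_1$. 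The hard part of the whole theorem lives here: one must control the degenerate regimes in which the fixed point escapes to $0$, to $\infty$, or to the boundary of the domain, where $h_z$ loses its contractive structure. Pinning down the limiting value on the sphere requires quantitative Julia--Wolff--Carath\'eodory estimates and the angle bound~\ref{item:sub2} of Theorem~\ref{thm:subordination}.

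With continuity in hand I would detect the atoms. Writing $\lambda=\mu\multconv\nu$, an atom at $a\neq0$ is read off from $\lambda(\{a\})=\lim_{y\to0^+}\iu y\,G_\lambda(a+\iu y)$; substituting the composition formula, this limit is positive precisely when the boundary values satisfy $\omega_1(1/a)=1/b$ and $\omega_2(1/a)=1/c$ for atoms $b$ of $\mu$ and $c$ of $\nu$ with $bc=a$, and comparing the rates at which $1-\eta_\mu(\omega_1)$ and $1-\eta_\nu(\omega_2)$ vanish at $1/a$ yields the weight relation $\lambda(\{a\})=\mu(\{b\})+\nu(\{c\})-1$, exactly as in the additive case; note that only finitely many such pairs $(b,c)$ can occur, since $\mu(\{b\})+\nu(\{c\})>1$ already forces one of the weights to exceed $1/2$. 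For the atom at $0$ in part~\ref{item:leb_decomp3} I would argue separately, using that $\eta_\mu(z)\to1-1/\mu(\{0\})$ as $z\to\infty$ (and analogously for $\nu$ and $\lambda$), so that the subordination relation evaluated in the limit $z\to\infty$ forces $\lambda(\{0\})=\max\{\mu(\{0\}),\nu(\{0\})\}$, consistent with the kernel-dimension count for $\sqrt{Y}X\sqrt{Y}$ and with the known positive-line formula.

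Finally, parts~\ref{item:leb_decomp4} and~\ref{item:leb_decomp5} follow from Stieltjes inversion. The absolutely continuous density equals $-\tfrac1\pi\Im G_\lambda(x+\iu0)$ wherever the boundary value is finite; because $G_\lambda$ is the composition of the now continuous $\omega_i$ with $\eta_\mu,\eta_\nu$, it extends continuously to $\RR\setminus\{0\}$ as a map into $\CC\cup\{\infty\}$, so the set $\{x:\,|G_\lambda(x+\iu0)|=\infty\}$ is closed, has Lebesgue measure zero, and---by the atom analysis above---contains no points other than the finitely many atoms; hence $\lambda$ carries no singular continuous part, giving~\ref{item:leb_decomp4}. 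Where the density is positive and finite the boundary values $\omega_i(x+\iu0)$ lie in the open upper half-plane, inside the domains of analyticity of $\eta_\mu,\eta_\nu$, so the density is a real-analytic function of $x$ there. The nonvanishing of the absolutely continuous part I would obtain by contradiction: combined with~\ref{item:leb_decomp4}, a zero a.c.\ part would make $\lambda$ finitely atomic, which is incompatible with the analytic structure of $\eta_{\mu\multconv\nu}$ when $\mu,\nu$ are nondegenerate, again mirroring the additive argument.
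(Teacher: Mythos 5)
Your overall architecture matches the paper's: boundary continuity of the subordination functions via stability of Denjoy--Wolff points under convergence of the parametrized self-maps, a Julia--Wolff--Carath\'eodory rate comparison to locate atoms, a countability argument to kill the singular continuous part, and analytic continuation through the implicit function theorem where the boundary values of the $\omega_i$ leave the real line. Two points, however, are genuine gaps. First, in part \ref{item:leb_decomp3} you assert that ``the subordination relation evaluated in the limit $z\to\infty$ forces $(\mu\multconv\nu)(\{0\})=\max\{\mu(\{0\}),\nu(\{0\})\}$.'' This does not follow from the relation alone: to extract anything from $\eta_{\mu\multconv\nu}=\eta_\mu\circ\omega_1=\eta_\nu\circ\omega_2$ as $z\to\infty$ you must first determine $\nontanglim_{z\to\infty}\omega_1(z)$ and $\nontanglim_{z\to\infty}\omega_2(z)$, and the asymmetric ``$\max$'' answer reflects precisely that one of these limits is $\infty$ while the other may be a finite point $w_0$ with $\eta_\nu(w_0)=1-1/(\mu\multconv\nu)(\{0\})$; establishing which is which is real work (the paper carries out a version of it only under the hypothesis $\mu(\{0\})=\nu(\{0\})=0$, in Steps 1--2 of Lemma \ref{lem:T}). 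The paper instead derives \ref{item:leb_decomp3} in one line from the $T$-transform: $T_{\mu\multconv\nu}=T_\mu T_\nu$ together with the fact that $T_\mu(u)\neq0$ exactly on $(\mu(\{0\})-1,0)$. If you want to stay inside the subordination picture you need to supply the missing boundary analysis of $\omega_1,\omega_2$ at infinity.

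Second, in part \ref{item:leb_decomp1} you gesture at ``degenerate regimes where $h_z$ loses its contractive structure'' but attribute the difficulty to Julia--Wolff--Carath\'eodory estimates. The actual obstruction is different: Heins' continuity theorem for Denjoy--Wolff points fails only when the limiting self-map is the identity, and the implicit-function-theorem step in \ref{item:leb_decomp5} fails whenever $F_x$ is a conformal automorphism (since then $|F_x'(\Omega_1(x))|=1$). The paper isolates this in Proposition \ref{prop:conf_auto}, showing that if $F_x$ is an automorphism for some real $x\neq0$ then $\mu$ and $\nu$ are both two-point measures, in which case $\Omega_1,\Omega_2$ satisfy explicit quadratic equations and everything can be checked by hand. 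Without some such argument your continuity and analyticity claims are not established in that case. A minor further inaccuracy: in \ref{item:leb_decomp4} the set of nonzero reals where $G_{\mu\multconv\nu}$ blows up is controlled by pairs of atoms with $\mu(\{b\})+\nu(\{c\})\ge1$ (equality allowed), so it is countable rather than ``the finitely many atoms''; countability still suffices to exclude a singular continuous part, which is exactly the paper's uncountable-versus-countable contradiction.
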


\begin{remark}\label{rem:regularity}
    Theorem  \ref{thm:leb_decomp} \ref{item:leb_decomp2} and \ref{item:leb_decomp3} have already been established in \cite[Corollary 6.6]{ACSY2024universality} by very different methods, but they are included here to exhibit that the general case can also be derived from the existence of subordinations functions and $T$-transforms. In particular, \ref{item:leb_decomp3} is an easy consequence of Theorem \ref{thm:T} \ref{itemT1} and the fact that $T_\mu(u)\ne 0$ if and only if $ \mu(\{0\})-1 < u <0$; see Proposition \ref{prop:omega} and discussion afterwards.
\end{remark}
Note that, in some special cases, part of the theorem above is known or can be proved from known results in the literature: the case of measures on $\RR_{\ge0}$  \cite{BV1993free,B2003Atoms,B2005complex,J2021regularity}; the case when $\mu$ is symmetric because the pushforward map $x\mapsto x^2$ can reduce the problem to the nonnegative real line \cite{AP2009transform}; the case when $\nu$ is the law of a projection because a formula for such a free multiplicative convolution exists \cite[Exercise 14.21]{NS2006lectures} and regularity properties follow from the work \cite{BB2004atoms,BB2005partially}; the case when $\nu$ is a Marchenko--Pastur law, because the free multiplicative convolution is then a compound free Poisson distribution and regularity properties can be retrieved from \cite{BB2004atoms,BB2005partially,HS2017unimodality}.

 \subsection*{Outline of the paper }The rest of the paper is organized as follows. In Section \ref{sec:pre}, we provide an overview of fundamental results in free probability and introduce analytic tools that are extensively used in the subsequent arguments. Section \ref{sec:ex_of_subord} establishes the existence of subordination functions (Theorem \ref{thm:subordination}) with the aid of classical function theory. In Section \ref{sec:subord2}, we prove the two remarkable properties of the second subordination functions in Theorems \ref{thm:subordination_hom} and a differential equation in Theorem \ref{thm:pde_for_semigroup}. Using subordination functions, we establish Theorem \ref{thm:T} in Section \ref{sec:T}, which extends the use of the $S$-transform to arbitrary probability measures. Section \ref{sec:S} investigates several applications of these results to stable laws and shows Theorems \ref{thm:reproducing},  \ref{thm:mixture} and some more results. Section \ref{sec:cont_ext} is devoted to the study of regularity properties of free multiplicative convolution and provides the proof of Theorem \ref{thm:leb_decomp}. In Appendix \ref{app:ext_at_origin}, we study the existence of a bounded and continuous  version of the absolutely continuous part of free  convolution. Finally, we present explicit formulas for free convolution of two-point measures in Appendix \ref{app:specific_example}.

\section{Preliminaries}
\label{sec:pre}

Section \ref{sec:basic_notation} has provided minimal tools and notation, mainly from complex analysis, needed to state the main results. Here we introduce more needed to prove the main results.

\subsection{Cauchy transform and $\eta$-transform}\label{sec:Cauchy}

To provide the analytic description of free convolution, we collect some fundamental results on the Cauchy transform \eqref{eq:Cauchy} and $\eta$-transform in \eqref{eq:F_psi_eta} that will be used in the subsequent analysis.

 The Cauchy transform allows us to investigate properties of measures through analytic methods. More precisely, there is a one-to-one correspondence between probability measures and a certain class of analytic functions from $\Hup$ to $\CC^- $ (see Proposition \ref{prop:characF} below). To recover the original measure, one needs to examine the behavior of its Cauchy transform near the real line in the nontangential way. The following results are referred to as the \emph{Stieltjes inversion}.   
For a proof, see, for instance, \cite[Lemma 1.10]{B2006note},  \cite[Proposition 8, Chapter 3]{MS2017free} and,  \cite[Theorem 11.6]{S2005trace}. Note that, by Theorem  \ref{thm:Lind} below, $\lim_{\ee\to0^+}G_\mu(x+\iu\ee)=\nontanglim_{z\to x}G_\mu(z)$ if either limit exists.
  
  \begin{theorem}
      \label{thm:ct_nontang}
      Let $\mu$ be a probability measure on $\RR$, and consider its Lebesgue decomposition and we write the singular continuous part as $\mu^{\scp}$ and the absolutely continuous part as $\mu^{\acp}$. 
      \begin{enumerate}
          \item For almost all $x\in\RR$ with respect to $\mu^{\scp}$, the nontangential limit of $G_\mu(z)$ at $x$ exists and equals $\infty$.
          \item\label{item:G2} For any $x\in\RR$, we have $\mu(\{x\}) = \nontanglim_{z\to x} (z - x) G_\mu(z)$.
          \item For almost all $x\in\RR$ with respect to $\mu^{\acp}$, we have $\pi\frac{\di{\mu_{\acp}}}{\di{x}}=-\nontanglim_{z\to x}\Im G_\mu(z)$.
      \end{enumerate}
  \end{theorem}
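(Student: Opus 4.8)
The plan is to reduce all three assertions to the boundary behaviour of a single object, the Poisson integral of $\mu$. Writing $z=x+\iu\ee$ with $\ee>0$ and separating the imaginary part in \eqref{eq:Cauchy}, one finds
\[
\Im G_\mu(x+\iu\ee) = -\int_\RR \frac{\ee}{(x-t)^2+\ee^2}\di{\mu(t)} = -\pi\,(P_\ee \ast \mu)(x),
\]
where $P_\ee(y)=\frac1\pi\frac{\ee}{y^2+\ee^2}$ is the Poisson kernel. Thus $-\frac1\pi\Im G_\mu(x+\iu\ee)$ is exactly the Poisson integral of $\mu$, and assertions (1) and (3) become statements about its boundary values. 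By the Lindel\"of-type result (Theorem \ref{thm:Lind}) the nontangential and vertical limits of $G_\mu$ agree wherever either exists, so it suffices to understand the Poisson integral $(P_\ee\ast\mu)(x_0+s)$ as $(s,\ee)\to(0,0)$ with $|s|\le C\ee$, i.e.\ over a cone $\Gamma_{x_0,\theta}$, and to control the off-centred kernel $P_\ee(x_0+s-t)$ uniformly on such a cone.

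For the atom formula \ref{item:G2} I would argue by dominated convergence. Writing $(z-x)G_\mu(z)=\int_\RR \frac{z-x}{z-t}\di{\mu(t)}$, one has on the cone $\Gamma_{x,\theta}$ the bound $|z-t|\ge\Im z\ge|z-x|\sin\theta$, so that $\left|\frac{z-x}{z-t}\right|\le 1/\sin\theta$ uniformly in $t$; meanwhile the integrand converges pointwise to $\mathbf{1}_{\{t=x\}}$ as $z\nontangtou x$. Dominated convergence then gives the nontangential limit $\mu(\{x\})$, which is the second assertion.

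For assertions (3) and (1) I would invoke the differentiation theory of measures: for Lebesgue-almost every $x$ the symmetric derivative $\lim_{r\to0}\mu((x-r,x+r))/(2r)$ exists and equals $\frac{\di{\mu^{\acp}}}{\di x}(x)$, whereas at $\mu^{\scp}$-almost every $x$ (indeed at every point singular with respect to Lebesgue measure) it equals $+\infty$ by the Besicovitch theorem. The bridge from these averages to the Poisson integral is the classical Fatou estimate: splitting $P_\ee$ over dyadic annuli $\{2^k\ee\le|x-t|<2^{k+1}\ee\}$ and using $P_\ee(y)\le C\ee/(y^2+\ee^2)$ shows that $(P_\ee\ast\mu)(x)$ converges nontangentially to the symmetric derivative at every point where the latter exists, including the value $+\infty$. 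Where the derivative is finite this yields $-\nontanglim_{z\to x}\Im G_\mu(z)=\pi\frac{\di{\mu^{\acp}}}{\di x}(x)$, valid $\mu^{\acp}$-a.e.\ since $\mu^{\acp}\ll$ Lebesgue, giving the third assertion. For the first assertion, at $\mu^{\scp}$-almost every $x$ the lower bound $P_\ee(y)\ge\frac{1}{2\pi\ee}\mathbf{1}_{\{|y|\le\ee\}}$ gives $(P_\ee\ast\mu)(x)\ge\frac{1}{2\pi\ee}\mu([x-\ee,x+\ee])\to+\infty$, whence $\Im G_\mu(x+\iu\ee)\to-\infty$ and in particular $|G_\mu(z)|\to\infty$, so the nontangential limit is $\infty$.

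The \textbf{main obstacle} is the passage from the symmetric derivative to the \emph{nontangential} limit of the Poisson integral, which is the substance of Fatou's theorem, and in particular making it work uniformly over cones and in the infinite-derivative case required for assertion (1). The cleanest route is to dominate the Poisson integral by the Hardy--Littlewood maximal function of $\mu$, so that the Lebesgue differentiation theorem and the Besicovitch result on infinite derivatives of singular measures transfer directly to boundary values; the one extra estimate that must be carried out carefully is the control of the off-centred kernel $P_\ee(x_0+s-t)$ for $(s,\ee)$ in a cone, rather than merely the centred kernel.
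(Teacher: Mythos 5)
The paper itself does not prove this theorem; it only cites \cite[Lemma 1.10]{B2006note}, \cite[Proposition 8, Chapter 3]{MS2017free} and \cite[Theorem 11.6]{S2005trace}, so your proposal is being measured against the standard textbook argument. Your treatment of \ref{item:G2} is correct and is exactly the classical proof: on $\Gamma_{x,\theta}$ one has $|z-t|\ge\Im z\ge|z-x|\sin\theta$, so $\bigl|\tfrac{z-x}{z-t}\bigr|\le 1/\sin\theta$ and dominated convergence applies. Assertion (1) also goes through: positivity of $\mu$ gives the off-centred lower bound $(P_\ee\ast\mu)(x+s)\ge c_\theta\,\ee^{-1}\mu([x-\ee,x+\ee])$ for $|s|\le C_\theta\ee$, so the symmetric derivative being $+\infty$ ($\mu^{\scp}$-a.e., by Besicovitch/de la Vall\'ee Poussin) forces $|\Im G_\mu|\to\infty$ on cones, and Lindel\"of's theorem applied to the meromorphic function $G_\mu$ upgrades this to the nontangential limit $\infty$.

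The genuine gap is in your bridge for assertion (3): the claim that $(P_\ee\ast\mu)$ \emph{converges nontangentially to the symmetric derivative at every point where the latter exists} is false. Take $\di{\mu(t)}=(1+\mathrm{sign}\,t)\,\di{t}$; at $x_0=0$ the symmetric derivative exists and equals $1$, but since $(P_y\ast\mathrm{sign})(x)=\tfrac{2}{\pi}\arctan(x/y)$, approaching $0$ along the ray $z=\alpha y+\iu y$ gives the limit $1+\tfrac{2}{\pi}\arctan\alpha$, which depends on $\alpha$: the vertical limit exists but the nontangential limit does not. The symmetric derivative controls only radial/vertical behaviour; and note that you cannot invoke Theorem \ref{thm:Lind} to pass from vertical to nontangential limits here, because $\Im G_\mu$ is harmonic, not meromorphic, so the reduction to vertical limits that works for $G_\mu$ in parts (1) and (2) is unavailable for the Poisson integral in part (3). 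The standard repair is the one your maximal-function remark gestures at but does not carry out: split $\mu=f\,\di{x}+\mu^{s}$, and at a \emph{Lebesgue point} of $f$ (not merely a point where the symmetric derivative exists) run the dyadic-annuli estimate on $|f(t)-f(x)|$ to get nontangential convergence of $P_\ee\ast(f\,\di{x})$ to $f(x)$; for the positive singular part use the kernel comparability $P_\ee(y-s)\le C_\alpha P_\ee(y)$ for $|s|\le\alpha\ee$, which shows that a vertical limit equal to $0$ (which holds at Lebesgue-a.e.\ $x$ since $D\mu^{s}=0$ a.e.) or to $+\infty$ automatically holds nontangentially — this one-sided comparability is exactly why your argument for (1) is safe while the finite nonzero case in (3) is not. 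Since Lebesgue points have full Lebesgue measure and $\mu^{\acp}\ll$ Lebesgue, the theorem's conclusion is recovered; the defect is local and fixable, but as written the bridging claim is wrong at the only place where real work is required.
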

  
  Moreover, the weak convergence of probability measures translates into the convergence of their Cauchy transforms (or $\eta$-transforms via \eqref{eq:F_psi_eta}), which can be easily verified by Vitali's theorem or Montel's theorem. For a detailed proof, see e.g.~\cite[Proposition 3.7]{HHM}. 
  \begin{theorem}
      \label{thm:ct_unifconv}
      Suppose that $\mu_n,\mu \in \Prob(\RR), n\in\NN$, and $A$ is a subset of $\Hup$ containing an accumulation point. Then the following are equivalent. 
 \begin{enumerate}     
    \item $\{\mu_n\}_{n=1}^\infty$ converges weakly to $\mu$, 
    \item $G_{\mu_n}(z)$ converges to $G_\mu(z)$ uniformly on every compact subset of $\Hup$, 
    \item $G_{\mu_n}(z)$ converges to $G_\mu(z)$ pointwisely on $A$. 
    \end{enumerate}
  \end{theorem}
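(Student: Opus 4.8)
The plan is to establish the cyclic chain $(1)\Rightarrow(2)\Rightarrow(3)\Rightarrow(1)$, combining an elementary normal-family argument for the analytic part with a vague-compactness argument to control escape of mass.

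For $(1)\Rightarrow(2)$, I would first note that for each fixed $z\in\Hup$ the function $x\mapsto 1/(z-x)$ lies in $C_0(\RR)$ (it is continuous and vanishes as $x\to\pm\infty$), so its real and imaginary parts are bounded continuous functions; weak convergence $\mu_n\to\mu$ therefore yields $G_{\mu_n}(z)\to G_\mu(z)$ pointwise on $\Hup$. To upgrade this to local uniform convergence I would use the uniform bound $|G_{\mu_n}(z)|\le(\Im z)^{-1}$, which holds since $|z-x|\ge\Im z$. This makes $\{G_{\mu_n}\}$ locally uniformly bounded, hence a normal family by Montel's theorem, and pointwise convergence then forces local uniform convergence to $G_\mu$ (equivalently, one invokes Vitali's theorem directly). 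The implication $(2)\Rightarrow(3)$ is immediate, since every singleton of $A\subseteq\Hup$ is a compact subset of $\Hup$.

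For $(3)\Rightarrow(1)$ I would first recover $(2)$ and then deduce weak convergence. The same bound $|G_{\mu_n}(z)|\le(\Im z)^{-1}$ gives local uniform boundedness on $\Hup$; since $A$ has an accumulation point in $\Hup$ and $G_{\mu_n}\to G_\mu$ pointwise on $A$, Vitali's theorem yields local uniform convergence of $G_{\mu_n}$ on $\Hup$ to an analytic function $g$ with $g=G_\mu$ on $A$, whence $g=G_\mu$ throughout $\Hup$ by the identity theorem. To pass to weak convergence I would invoke Helly's selection theorem: every subsequence of $\{\mu_n\}$ has a further subsequence converging vaguely to some sub-probability measure $\tilde\mu$ with $\tilde\mu(\RR)\le 1$. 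Because $1/(z-\cdot)\in C_0(\RR)$, vague convergence passes to the Cauchy transforms, giving $G_{\tilde\mu}=G_\mu$ on $\Hup$; the uniqueness part of the Stieltjes inversion (Theorem \ref{thm:ct_nontang}) then forces $\tilde\mu=\mu$. Hence every vague limit point equals $\mu$ and in particular has total mass one, so the vague convergence is in fact weak, and since every subsequence admits a further subsequence converging weakly to $\mu$, the full sequence does.

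The main obstacle is precisely the possible escape of mass to infinity in the direction $(3)\Rightarrow(1)$: convergence of the Cauchy transforms controls the $\mu_n$ only at the level of $C_0$ test functions, so a priori mass could leak to $\pm\infty$ and the vague limit could be a strict sub-probability measure. The decisive input resolving this is the standing hypothesis that the prospective limit $\mu$ is a genuine probability measure; combined with the uniqueness of the Cauchy transform, it forces each vague limit $\tilde\mu$ to coincide with $\mu$ and thus to carry full mass, ruling out any loss and promoting vague convergence to weak convergence.
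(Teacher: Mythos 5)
Your argument is correct and matches the route the paper itself indicates: the paper gives no written proof but attributes the result to "Vitali's theorem or Montel's theorem" with a citation to the literature, and your chain $(1)\Rightarrow(2)\Rightarrow(3)\Rightarrow(1)$ — local uniform boundedness $|G_{\mu_n}(z)|\le (\Im z)^{-1}$ plus Vitali for the analytic part, and Helly selection plus Stieltjes inversion to rule out escape of mass — is exactly the standard proof being referenced. No gaps; the only point worth stating explicitly is that vague (i.e.\ $C_c$-tested) convergence of a uniformly bounded sequence of positive measures extends to $C_0$ test functions, which justifies passing to the limit in $G_{\mu_{n_k}}$, and you have implicitly used this correctly.
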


The following proposition from \cite[Proposition 5.2]{BV1993free} characterizes the $F$-transform of probability measures.

\begin{proposition}\label{prop:characF} Let \( F\colon \Hup \to \Hup\cup\RR \) be an analytic function. Then the following assertions are equivalent.

\begin{enumerate}
    \item There exists a probability measure \( \mu \) on \( \RR \) such that \( F=F_{\mu} \) in \( \Hup \).
    
    \item \label{item:F2}
    $\nontanglim_{z\to\infty} F(z)/z = 1.
    $ 
    
    
    \item\label{item:F3} {\normalfont(Pick--Nevanlinna representation)} There exist \( a \in \RR \) and a finite measure \( \rho \) on \( \RR \) such that
\begin{equation}\label{eq:PN}    
    F(z) = a + z + \int_{-\infty}^{\infty} \frac{1 + tz}{t - z} \rho(\di{t}), \qquad z\in \Hup.
    \end{equation}
\end{enumerate}
\end{proposition}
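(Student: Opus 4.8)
The plan is to treat Proposition~\ref{prop:characF} as a repackaging of the classical Nevanlinna--Herglotz representation theorem for Pick functions, specialized to reciprocal Cauchy transforms, and to prove the cyclic chain $(1)\Rightarrow(2)\Rightarrow(3)\Rightarrow(1)$. A preliminary observation, used throughout, is that since $F$ is analytic on $\Hup$ with $\Im F\ge 0$, the open mapping theorem forces a dichotomy: either $F$ is a real constant, or $F(\Hup)\subseteq\Hup$, i.e.\ $F$ is a genuine Pick function. The constant case is incompatible with each of (1), (2), (3) (for instance, the representation in (3) always contains a nondegenerate linear term $z$), so in the substantive part of the argument I may assume $F\colon\Hup\to\Hup$.

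For $(1)\Rightarrow(2)$ I would argue directly from the definition: writing $zG_\mu(z)=\int_\RR(1-x/z)^{-1}\di{\mu(x)}$ and using that along a nontangential approach to $\infty$ one has $\Im z\ge(\sin\theta)\,|z|$, so $|z-x|\ge\Im z\ge(\sin\theta)\,|z|$, the integrand is dominated by the constant $1/\sin\theta$ and tends pointwise to $1$; dominated convergence then gives $\nontanglim_{z\to\infty}zG_\mu(z)=1$, whence $F_\mu(z)/z=(zG_\mu(z))^{-1}\to1$. For $(2)\Rightarrow(3)$ I would invoke the Nevanlinna representation of the Pick function $F$, namely $F(z)=a+bz+\int_\RR\frac{1+tz}{t-z}\di{\rho(t)}$ with $a\in\RR$, $b\ge0$ and $\rho$ a finite positive measure, and then identify $b=1$. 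Taking $z=\iu y$ with $y\to+\infty$, the term $a/(\iu y)$ vanishes and $\tfrac{1}{\iu y}\int\frac{1+\iu ty}{t-\iu y}\di{\rho(t)}\to0$, so $F(\iu y)/(\iu y)\to b$; comparing with (2) yields $b=1$, which is precisely the form asserted in (3).

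For $(3)\Rightarrow(1)$, the representation exhibits $F$ as a Pick function, since the kernel $\frac{1+tz}{t-z}$ maps $\Hup$ into $\Hup$ for each real $t$, and its linear coefficient $1$ forces, by the same limit computation, $\nontanglim_{z\to\infty}F(z)/z=1$. Setting $G\coloneqq 1/F\colon\Hup\to\CC^-$ one obtains $\nontanglim_{z\to\infty}zG(z)=1$, in particular $G(\iu y)\to0$. Applying the Nevanlinna representation to the Pick function $-G$, using $G(\iu y)\to0$ to kill its linear term and $\iu y\,G(\iu y)\to1$ to compute the total mass, I would conclude that $G=G_\mu$ for a probability measure $\mu$, so that $F=F_\mu$ and (1) holds.

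The step I expect to be the main obstacle is the uniform control of the integral term in the Nevanlinna representation as $z\to\infty$, that is, proving $\tfrac1z\int\frac{1+tz}{t-z}\di{\rho(t)}\to0$ and, in the last implication, extracting the exact total mass from $-G$. The subtlety is that the naive estimate of $\tfrac1z\cdot\frac{1+tz}{t-z}$ produces the quantity $\int|t|\di\rho$, which need not be finite for a merely finite measure $\rho$; the remedy is a case analysis on the size of $|t|$ relative to $|z|$ that yields a $\rho$-integrable (in fact uniformly bounded) dominating function, after which dominated convergence applies and the integral term contributes nothing to the growth. This careful passage to the limit is exactly what converts the soft structural representation into the sharp normalization distinguishing probability measures from arbitrary finite ones.
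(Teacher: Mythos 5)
The paper does not prove this proposition at all: it is imported verbatim from Bercovici--Voiculescu \cite[Proposition 5.2]{BV1993free}, so there is no in-paper argument to compare against. Your proposal is the standard proof of that classical result and is sound. The cycle $(1)\Rightarrow(2)\Rightarrow(3)\Rightarrow(1)$ is the right architecture; the dominated-convergence computation of $\nontanglim_{z\to\infty}zG_\mu(z)=1$ using $|z-x|\ge\Im z\ge(\sin\theta)|z|$ is exactly how one does $(1)\Rightarrow(2)$; and the identification of the linear coefficient $b$ via $F(\iu y)/(\iu y)\to b$ is correct, with the domination issue you flag resolved by the bound $\bigl|\tfrac{1+\iu ty}{\iu y(t-\iu y)}\bigr|=\sqrt{1+t^2y^2}\big/\bigl(y\sqrt{t^2+y^2}\bigr)\le1$ for $y\ge1$ (or by your split at $|t|\lessgtr|z|$ in the nontangential region). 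The only step I would urge you to write out in full is the extraction of the total mass in $(3)\Rightarrow(1)$: the clean route is to use $\Re\bigl(\iu y\,G(\iu y)\bigr)=y\,\Im(-G(\iu y))=b'y^2+\int\frac{(1+t^2)y^2}{t^2+y^2}\di{\rho'(t)}\to1$, which by monotone convergence simultaneously forces $b'=0$ and $\int(1+t^2)\di{\rho'(t)}=1$, so that $\mu(\di t)\coloneq(1+t^2)\rho'(\di t)$ is a probability measure; the additive constant is then killed by $G(\iu y)\to0$. This matches what you describe, so the proof is complete modulo that bookkeeping.
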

From part \ref{item:F3} in the previous proposition it is clear that $\Im F_\mu(z)\geq \Im z$ for $z\in \Hup$ and by symmetry $\Im F_\mu(z)\leq \Im z$ for $z\in\CC^-$. The inequalities are strict unless $\mu=\delta_a$ for some $a\in \RR$.
  
Now we present fundamental properties of the $\eta$-transform. Some of the following results have already been proved in \cite[Proposition 3.2]{AH2016classical}.

  \begin{proposition}\label{prop:prop_of_eta}
     For any probability measure $\mu\in\Prob(\RR)$ such that $\mu\neq\delta_0$, we have the following:
      \begin{enumerate}
          \item\label{item:eta1} $\eta_\mu$ is analytic on $\Hup$.
          
          \item\label{item:eta2} For any $z\in\Hup$, $0<\arg z\leq \arg\eta_\mu(z)\leq \arg z+\pi$. In particular, $\eta_\mu(\Hup)\subseteq \CC\setminus[0,+\infty)$. Furthermore,  $\arg z=\arg\eta_\mu(z)$ for some $z\in\Hup$ if and only if $\mu=\delta_a$ for some $a>0$, and $\arg\eta_\mu(z)=\arg z+\pi$ for some $z\in\Hup$ if and only if $\mu=\delta_a$ for some $a<0$.

       \item\label{eq:eta_at_infty_zero} 
       $\nontanglim_{z\to\infty}\eta_\mu(z)=1-1/\mu(\{0\})$ and $\nontanglim_{z\to0}\eta_\mu(z)=0.$  %
       
          
      \end{enumerate}
      
   Moreover, we have the following converse statement: if an analytic map $\eta \colon \Hup \to \CC \setminus [0,+\infty)$ satisfies $\arg z \le \arg \eta(z) \le \arg z + \pi$ and $\nontanglim_{z\to0}\eta(z)=0$, then there is a unique $\mu \in \Prob(\RR) \setminus\{\delta_0\}$ such that $\eta=\eta_\mu$.  
  \end{proposition}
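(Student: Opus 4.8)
The plan is to obtain the three asserted properties of $\eta_\mu$ by reducing everything to the facts about $F_\mu$ and $\psi_\mu$ recorded just above, and to obtain the converse by reflecting $\eta$ into the lower half-plane and invoking Proposition~\ref{prop:characF}. The computation driving the direct statements is the identity $\eta_\mu(z)/z = 1/z - F_\mu(1/z)$, immediate from \eqref{eq:F_psi_eta}; combined with the representation $\eta_\mu = \psi_\mu/(1+\psi_\mu)$ and $\psi_\mu(z) = \int_\RR \frac{xz}{1-xz}\,\di{\mu(x)}$ (both read off from \eqref{eq:F_psi_eta}), it reduces each item to a short argument.

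For \ref{item:eta1} I would note that for $z \in \Hup$ we have $1/z \in \CC^-$, and since $G_\mu$ maps $\CC^-$ into $\Hup$ it is nonvanishing there; hence $F_\mu(1/z)=1/G_\mu(1/z)$ is analytic on $\Hup$, and so is $\eta_\mu(z)=1-zF_\mu(1/z)$. For \ref{item:eta2}, writing $w=1/z\in\CC^-$ the identity above reads $\eta_\mu(z)/z = w - F_\mu(w)$, and the inequality $\Im F_\mu(w)\le \Im w$ on $\CC^-$ (noted right after Proposition~\ref{prop:characF}) gives $\Im(\eta_\mu(z)/z)\ge 0$. This inequality is strict unless $\mu=\delta_a$, and in the degenerate case one computes directly $\eta_{\delta_a}(z)=az$; in either case $\eta_\mu(z)\ne 0$, so $\arg\eta_\mu(z)$ is well defined and $\arg(\eta_\mu(z)/z)\in[0,\pi]$, i.e.\ $\arg z\le\arg\eta_\mu(z)\le\arg z+\pi$, whence $\arg\eta_\mu(z)\in(0,2\pi)$ and $\eta_\mu(z)\in\CC\setminus[0,+\infty)$. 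For the equality cases: an equality $\arg z=\arg\eta_\mu(z)$ at some $z$ forces $w-F_\mu(w)\in\RR$, hence $\mu=\delta_a$ by the strictness clause, and then $\eta_\mu(z)/z=a>0$; the right equality similarly forces $\mu=\delta_a$ with $a<0$, and both converse directions are immediate from $\eta_{\delta_a}(z)=az$.

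For \ref{eq:eta_at_infty_zero} I would argue by dominated convergence through $\psi_\mu$. On a cone $\Gamma_{0,\theta}$ the bound $|1-xz|\ge|\Im(xz)|=|x||z|\sin(\arg z)\ge|x||z|\sin\theta$ yields $\bigl|\tfrac{xz}{1-xz}\bigr|\le 1/\sin\theta$ uniformly in $x$ and in $z\in\Gamma_{0,\theta}$. As $z\to 0$ the integrand tends to $0$ pointwise, so $\psi_\mu(z)\to 0$ and $\eta_\mu(z)\to 0$; as $z\to\infty$ the integrand tends to $-1$ on $\{x\ne 0\}$ and to $0$ at $x=0$, so $\psi_\mu(z)\to\mu(\{0\})-1$ and hence $\eta_\mu(z)\to\frac{\mu(\{0\})-1}{\mu(\{0\})}=1-1/\mu(\{0\})$, read as $\infty$ when $\mu(\{0\})=0$.

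The converse is where I expect the only genuine bookkeeping. Given $\eta$ with the stated properties, I would set $\tilde F(w):=w\bigl(1-\eta(1/w)\bigr)$ for $w\in\CC^-$ (so $1/w\in\Hup$); writing $z=1/w$ gives $\tilde F(w)=1/z-\eta(z)/z$, and since $\Im(1/z)<0$ while $\Im(\eta(z)/z)\ge 0$ by hypothesis, $\Im\tilde F(w)<0$. Reflecting, $F(w):=\overline{\tilde F(\bar w)}$ is analytic and maps $\Hup$ into $\Hup$, while $\tilde F(w)/w=1-\eta(1/w)$ together with $\nontanglim_{z\to 0}\eta(z)=0$ yields $\nontanglim_{w\to\infty}F(w)/w=1$. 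Proposition~\ref{prop:characF} then supplies a unique $\mu\in\Prob(\RR)$ with $F=F_\mu$, so $\tilde F=F_\mu$ on $\CC^-$ by reflection, and unwinding gives $\eta(z)=1-zF_\mu(1/z)=\eta_\mu(z)$; since $\eta$ never takes the value $0$ we have $\mu\ne\delta_0$, and uniqueness is inherited from Proposition~\ref{prop:characF}. The main obstacle is purely technical: verifying that nontangential approach $z\to 0$ in $\Hup$ transports to nontangential approach $w=1/z\to\infty$ in the sense defined via $-1/z$, so that the hypothesis on $\eta$ at $0$ indeed delivers the limit at $\infty$ demanded by Proposition~\ref{prop:characF}.
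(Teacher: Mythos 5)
Your proposal is correct and follows essentially the same route as the paper: part \ref{item:eta1} from the definition, part \ref{item:eta2} from the inequality $\Im F_\mu(w)\le\Im w$ on $\CC^-$ together with its strictness for nondegenerate $\mu$, part \ref{eq:eta_at_infty_zero} by dominated convergence applied to $\psi_\mu$, and the converse by feeding $F(w)=w(1-\eta(1/w))$ (after reflection) into Proposition~\ref{prop:characF}. You have merely written out the details that the paper leaves as a sketch, and the ``technical obstacle'' you flag at the end is indeed harmless, since $w\mapsto 1/\bar w$ preserves the cones $\Gamma_{0,\theta}$.
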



    

      
      
          


  \begin{proof}
     \ref{item:eta1}  follows immediately from the definition of the $\eta$-transform, and  \ref{item:eta2} can be easily seen from the inequality $\Im z\ge\Im F_\mu(z) $ for $z\in\CC^-$. 
       \ref{eq:eta_at_infty_zero} is equivalent to 
        \begin{align}
    \nontanglim_{z\to\infty}\psi_\mu(z)&=\mu(\{0\})-1,\quad \nontanglim_{z\to0}\psi_\mu(z)=0,\label{eq:psi_at_infty_zero}
      \end{align} 
      which can be proved by the dominated convergence theorem. 
      The last converse statement is known in \cite[Proposition 3.2]{AH2016classical}, which follows easily by Proposition \ref{prop:characF} \ref{item:F2} applied to $F(z)\coloneqq z(1-\eta(1/z))$. 
  \end{proof}

 We now turn to the case of measures supported on the nonnegative real line. In this case, the $\eta$-transform is naturally defined on $\CC\setminus[0,+\infty)$. Correspondingly, in order to consider nontangential limits at $0$ and $\infty$,  the domain 
      \begin{align*}
          \Delta_{\theta}\coloneq \{z\in\CC\setminus\{0\}\ \colon\ \theta<\arg z< 2\pi-\theta\} \subseteq \CC\setminus[0,+\infty), \qquad \theta \in(0,\pi), 
      \end{align*}
is a proper replacement of $\Gamma_{0,\theta}$ defined in \eqref{eq:nabla}.   
  \begin{proposition} \label{prop:prop_of_eta0}
      For any probability measure $\nu\in\Prob(\RR_{\geq0})$ such that $\nu\neq\delta_0$, we have the following:
      \begin{enumerate}
          \item $\eta_\nu(z)$ extends to an analytic self-map of $\CC\setminus[0,+\infty)$ such that $\eta_\nu(\conj{z})=\conj{\eta_\nu(z)}$ holds. 
          \item For any $z\in\Hup$, $0<\arg z\leq \arg \eta_\nu(z)< \pi$. In particular, $\eta_\nu(\Hup)\subseteq\Hup$. 
          Moreover, there exists some $z\in\Hup$ such that $\arg z=\arg \eta_\nu(z)$ if and only if $\nu=\delta_a$ for some $a>0$. 
          \item\label{item:eta_positive3} For any $0<\theta<\pi$, we have $\lim_{z\to\infty,z\in\Delta_\theta}\eta_\nu(z)=1-1/\nu(\{0\})$ and $\lim_{z\to0,z\in\Delta_\theta}\eta_\nu(z)=0$.
      \end{enumerate}
  \end{proposition}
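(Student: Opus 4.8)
The plan is to reduce all three assertions to the auxiliary function $\psi_\nu$, which for $\nu$ supported on $\RR_{\ge0}$ has the explicit form
\[
\psi_\nu(z)=\int_{[0,+\infty)}\frac{xz}{1-xz}\di{\nu(x)},
\]
obtained by inserting the definition of $G_\nu$ into \eqref{eq:F_psi_eta}, together with the identity $\eta_\nu=\psi_\nu/(1+\psi_\nu)$ that also follows from \eqref{eq:F_psi_eta}. For each fixed $z\in\CC\setminus[0,+\infty)$ the integrand $x\mapsto xz/(1-xz)$ is continuous and bounded on $[0,+\infty)$ (its only pole $z=1/x$ would force $z\in(0,+\infty)$, and it tends to $-1$ as $x\to+\infty$), so $\psi_\nu$ is well defined and analytic on $\CC\setminus[0,+\infty)$; everything else is read off from this formula.

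For part (1), I would write the extension as $\eta_\nu(z)=1-z/G_\nu(1/z)$ and first check that $G_\nu$ does not vanish on $\CC\setminus[0,+\infty)$: on $\Hup$ (resp.\ $\CC^-$) one has $\Im G_\nu<0$ (resp.\ $\Im G_\nu>0$), while for $w\in(-\infty,0)$ every factor $1/(w-x)$ with $x\ge0$ is strictly negative, so $G_\nu(w)<0$. Since $z\mapsto 1/z$ maps $\CC\setminus[0,+\infty)$ into itself, this gives the analytic extension, and the reflection $\eta_\nu(\conj z)=\conj{\eta_\nu(z)}$ is immediate from $G_\nu(\conj w)=\conj{G_\nu(w)}$. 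To see that $\eta_\nu$ is a self-map, I would determine the range of $\psi_\nu$ on the three pieces of $\CC\setminus[0,+\infty)$: $\psi_\nu(\Hup)\subseteq\Hup$ and $\psi_\nu(\CC^-)\subseteq\CC^-$ (from the sign of $\Im\psi_\nu$ computed for part (2) below), and $\psi_\nu((-\infty,0))\subseteq(-1,0)$, where the value $0$ is excluded precisely because $\nu\ne\delta_0$ puts mass on $(0,+\infty)$. Pushing these through the Möbius map $w\mapsto w/(1+w)$, which preserves $\Hup$ and $\CC^-$ and sends $(-1,0)$ into $(-\infty,0)$, keeps the image inside $\CC\setminus[0,+\infty)$.

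For part (2) the new content is the strict bound $\arg\eta_\nu(z)<\pi$, i.e.\ $\eta_\nu(\Hup)\subseteq\Hup$. For $z\in\Hup$ a direct computation gives $\Im\psi_\nu(z)=\int_{[0,+\infty)}\frac{x\,\Im z}{|1-xz|^2}\di{\nu(x)}$, which is strictly positive since $\nu\ne\delta_0$ places mass on $(0,+\infty)$; hence $\psi_\nu(z)\in\Hup$, and applying $w\mapsto w/(1+w)=1-1/(1+w)$ yields $\eta_\nu(z)\in\Hup$ as well. The remaining bound $\arg z\le\arg\eta_\nu(z)$ and the equality characterization then follow by applying Proposition \ref{prop:prop_of_eta}\,(2) to $\nu\in\Prob(\RR)\setminus\{\delta_0\}$, noting that $\delta_a$ with $a<0$ is excluded from $\Prob(\RR_{\ge0})$.

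Part (3) is where the real work lies, and I expect the uniform domination to be the main obstacle. Pointwise, $xz/(1-xz)\to0$ as $z\to0$ and $\to-\mathbf{1}_{(0,+\infty)}(x)$ as $z\to\infty$, so formally $\psi_\nu\to0$ and $\psi_\nu\to\nu(\{0\})-1$; the difficulty is to justify these by dominated convergence uniformly over the whole cone $\Delta_\theta$ (which, unlike the nontangential limits of Proposition \ref{prop:prop_of_eta}, reaches into $\CC^-$ and around the negative axis) and over the unbounded parameter $x$. The key estimate is $\sup_{x\ge0,\,z\in\Delta_\theta}\bigl|xz/(1-xz)\bigr|=:M_\theta<\infty$: since $xz\in\Delta_\theta\cup\{0\}$ and $\mathrm{dist}(1,\cl{\Delta_\theta}\cup\{0\})=:c_\theta>0$, one has $|1-xz|\ge c_\theta$, and splitting according to $|xz|\le2$ or $|xz|>2$ (in the latter case $|1-xz|\ge|xz|/2$) produces a bound independent of $x$ and of $z\in\Delta_\theta$. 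With this constant dominating function, dominated convergence gives the two limits for $\psi_\nu$, and converting through $\eta_\nu=\psi_\nu/(1+\psi_\nu)$ gives $\eta_\nu\to0$ and $\eta_\nu\to1-1/\nu(\{0\})$, the latter understood as $\infty$ when $\nu(\{0\})=0$.
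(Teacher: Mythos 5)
Your proof is correct and follows essentially the same route as the paper: the paper disposes of (1) and (2) by citing \cite[Proposition 2.2]{BB2005partially} and proves (3) by exactly the dominated-convergence argument you give, using the integral representation $\psi_\nu(z)=\int_{[0,+\infty)}\frac{xz}{1-xz}\,\di{\nu(x)}$ and the relation $\eta_\nu=\psi_\nu/(1+\psi_\nu)$. Your write-up merely makes explicit what the paper leaves to the reference and to the phrase ``analogous to Proposition \ref{prop:prop_of_eta}\,\ref{eq:eta_at_infty_zero}'' --- in particular the uniform bound $|1-xz|\ge c_\theta>0$ on the cone $\Delta_\theta$, which is indeed the one point where the argument for $\Delta_\theta$ differs from the nontangential case --- and all the details check out.
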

  
  \begin{proof}
      The statements (1) and (2) were proved in  \cite[Proposition 2.2]{BB2005partially}, and one can check (3) by the dominated convergence theorem in an analogous way to Proposition   \ref{prop:prop_of_eta}  \ref{eq:eta_at_infty_zero}. 
  \end{proof}

\subsection{Free convolution powers and Boolean convolution powers}
\label{sec:conv_powers}

For any positive integer $n$,  convolution powers with respect to $\addconv$ and $\multconv$ are defined as 
\begin{align*}
    \mu^{\addconv n} \coloneqq \underbrace{\mu\addconv\mu\addconv\cdots\addconv\mu}_{n \text{ times}}, \qquad 
    \nu^{\multconv n}\coloneqq \underbrace{\nu\multconv\nu\multconv\cdots\multconv\nu}_{n \text{ times}}
\end{align*}
for arbitrary $\mu\in\Prob(\RR)$ and $\nu\in\Prob(\RR_{\geq 0})$.
These convolution powers can be interpolated and extended to semigroups $\{\mu^{\addconv t}:t\ge1\}\subset \Prob(\RR)$ \cite{NS1996multiplication,BB2004atoms} and $\{\nu^{\multconv t}: t\ge1\}\subset\Prob(\RR_{\ge0})$  \cite{BB2005partially}. These semigroups are characterized by 
\begin{equation*}
C_{\mu^{\addconv t}} = t C_\mu,  \qquad S_{\nu^{\multconv t}} = S_\nu^t\quad(\nu\ne\delta_0),  
\end{equation*}
where the equalities hold in the intersection of the domains of two functions.

If $\mu\in \Prob(\RR)$ can be written as $\mu=\lambda^{\addconv s}$ for some $\lambda \in \Prob(\RR)$ and $s>1$, then a measure $\mu^{\addconv t}$ exists for all  $t\ge1/s$ because we can define $\mu^{\addconv t}\coloneqq\lambda^{\addconv (ts)}$, and so we obtain a prolongation of the semigroup $\{\mu^{\addconv t}: t\ge1\}$. Formula $C_{\mu^{\addconv t}} = t C_\mu$ still holds for $0<t<1$ as long as $\mu^{\addconv t}$ exists. A similar observation holds for $\multconv$. 
The quantities $\inf\{t>0: \mu^{\boxplus t} \text{ exists in }\Prob(\RR) \}$ and $\inf\{t>0: \nu^{\boxtimes t} \text{ exists in }\Prob(\RR_{\ge0}) \}$ are studied in details in  \cite{BN2008remarkable,AH13semigroup}.

Similarly and more easily, one can define the Boolean convolution powers. Let $\mu^{\uplus n}$ be the $n$-fold Boolean convolution of $\mu\in \Prob(\RR)$, that is, 
\begin{align*}
    \mu^{\uplus n}\coloneqq\underbrace{\mu\uplus\mu\uplus\cdots\uplus\mu}_{n \text{ times}}, 
\end{align*}
 where $\uplus$ denotes the Boolean convolution \cite{SW1997Boolean}. This notion can also be extended to continuous parameters. Actually, we can define $\mu^{\uplus t}$ for all $t\ge0$ \cite{SW1997Boolean} as a unique probability measure on $\RR$ satisfying 
    \begin{align*}
        \eta_{\mu^{\uplus t}}(z)=t\eta_\mu(z),\qquad z\in\Hup.
    \end{align*}

\subsection{Boundary behaviors of general analytic functions}\label{sec:boundary}

  In fact, an analytic function $f\colon\Hup\to\Hup$ has a nontangential limit at $x\in\RR$ if there is a limit $\lim_{z\to x, z\in\Gamma_{x,\theta}} f(z)$ for \emph{some} $\theta\in(0,\pi/2)$. Furthermore, the following stronger statement holds. See  \cite[Theorem~2.20]{CL2004theory} for a proof.
  
  \begin{theorem}[Lindel\"{o}f's theorem]
      \label{thm:Lind}
      Let $f\colon \Hup\to\CC\cup \{\infty\}$ be a meromorphic function such that $(\CC\cup\{\infty\})\setminus f(\Hup)$ contains at least three distinct points. If there exists a continuous map $\gamma\colon[0,1)\to\Hup$ such that the limits $x \coloneq \lim_{t\to1}\gamma(t)\in\RR\cup\{\infty\}$ and $l\coloneq\lim_{t\to1}f(\gamma(t))\in\CC\cup\{\infty\}$ exist, then the nontangential limit of $f$ at $x$ exists and equals $l$.
  \end{theorem}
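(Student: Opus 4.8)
The statement is the classical theorem of Lindel\"of (in the Lehto--Virtanen form, for functions omitting three values), so my plan is to recover it from the theory of normal functions rather than to argue by hand near the boundary. First I would reduce to a normalized situation. The case $x=\infty$ is disposed of by replacing $f$ with $z\mapsto f(-1/z)$, exactly as in the definition of the nontangential limit at $\infty$, so I may assume $x\in\RR$. Composing with the Cayley transform turns $\Hup$ into the unit disk $\DD$ and carries $x$ to a boundary point, which I normalize to $1$; the curve $\gamma$ then ends at $1$, and the Stolz angles $\Gamma_{x,\theta}$ become the usual angular approach regions at $1$. Finally, since $f$ omits three values $a,b,c\in\CC\cup\{\infty\}$, I post-compose with a M\"obius transformation carrying $\{a,b,c\}$ to $\{0,1,\infty\}$. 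Because M\"obius maps are homeomorphisms of the sphere, both the curve limit and the sought nontangential limit transform covariantly, so it suffices to treat $f\colon\DD\to(\CC\cup\{\infty\})\setminus\{0,1,\infty\}$.

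The engine of the proof is that such an $f$ is a \emph{normal function}. By Montel's theorem the family $\{f\circ\psi:\psi\in\Aut(\DD)\}$ is normal, since every member omits $0,1,\infty$. Evaluating Marty's bound at the origin and using $|\psi'(0)|=1-|\psi(0)|^2$ for $\psi\in\Aut(\DD)$, normality of this family is equivalent to a uniform estimate $\sup_{z\in\DD}(1-|z|^2)f^{\sharp}(z)<\infty$ on the spherical derivative. Integrating this bound along hyperbolic geodesics yields the Lipschitz estimate $\chi(f(z),f(w))\le C\,d_{\DD}(z,w)$, where $\chi$ is the chordal metric and $d_{\DD}$ the hyperbolic distance on $\DD$. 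This single estimate is what lets hyperbolic geometry govern the boundary behaviour of $f$.

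With this in hand I would prove the theorem in two steps. Step one (the heart of the matter) is to upgrade the asymptotic value along $\gamma$ to a \emph{radial} limit, $\lim_{r\to1^-}f(r)=l$; step two transfers the radial limit into every Stolz angle. For step two I would use the hyperbolic translations $\phi_n(z)=(z+r_n)/(1+r_n z)$ with $r_n\to1^-$, which fix $\pm1$ and send $0$ to $r_n$. Each $g_n\coloneqq f\circ\phi_n$ lies in the normal family, and since $\phi_n(t)\to1$ along the positive real radius for every fixed $t\in(-1,1)$, step one forces $g_n(t)\to l$ on the real segment. Any subsequential locally uniform limit $g$ of $\{g_n\}$ therefore equals $l$ on $(-1,1)$, hence $g\equiv l$ by the identity theorem (this covers $l=\infty$ as well), so in fact $g_n\to l$ locally uniformly. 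Because a Stolz angle at $1$ lies within bounded hyperbolic distance $M_\theta$ of the radius $[0,1)$, given any sequence $w_n\to1$ in $\Gamma_{x,\theta}$ I can choose radial $r_n$ with $d_{\DD}(w_n,r_n)\le M_\theta$; then $\phi_n^{-1}(w_n)$ stays in a fixed compact hyperbolic ball about $0$, and $f(w_n)=g_n(\phi_n^{-1}(w_n))\to l$.

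The main obstacle is step one: passing from a limit along an \emph{arbitrary}, possibly tangential, curve to the radial limit. The Lipschitz estimate alone does not suffice here, since a tangential curve leaves every bounded hyperbolic neighbourhood of the radius, so one cannot connect radial points to nearby curve points at bounded hyperbolic cost. I would handle this by a second normal-family argument: taking $z_n=\gamma(t_n)\to1$ and automorphisms $\psi_n$ with $\psi_n(0)=z_n$, the locally uniform limits of $f\circ\psi_n$ must be the constant $l$, provided one controls that the pulled-back tails $\psi_n^{-1}(\gamma)$ do not degenerate hyperbolically. Making this last point precise is exactly the delicate hyperbolic-geometric step in the Lehto--Virtanen argument, and I would isolate it as a lemma (asymptotic value $\Rightarrow$ radial limit for normal functions), expecting to spend most of the effort there; the remaining ingredients are routine applications of Montel's and Marty's theorems, Schwarz--Pick geometry, and the identity theorem.
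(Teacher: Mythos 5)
The paper does not prove this statement itself; it cites \cite[Theorem~2.20]{CL2004theory}, so there is no in-paper argument to compare against. Your framework is the standard one for exactly this level of generality: a meromorphic function on $\DD$ omitting three values is normal (Montel/Marty), and Lindel\"of's theorem is then the Lehto--Virtanen theorem for normal functions. Your reductions (Cayley transform, M\"obius normalization of the omitted values) are fine, and your ``step two'' --- radial limit implies nontangential limit, via the translations $\phi_n(z)=(z+r_n)/(1+r_nz)$, normality, and the identity theorem --- is correct and complete as written.

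The genuine gap is step one, and it is not a peripheral lemma: passing from an asymptotic value along an \emph{arbitrary} curve to the radial (or angular) limit is the entire content of the theorem. You correctly observe that the Schwarz--Pick/Lipschitz estimate $\chi(f(z),f(w))\le C\,d_{\DD}(z,w)$ cannot bridge from a tangential curve to the radius, because the radius eventually leaves every bounded hyperbolic neighbourhood of such a curve. But the replacement you propose suffers from the same limitation. Taking $z_n=\gamma(t_n)$ and $\psi_n\in\Aut(\DD)$ with $\psi_n(0)=z_n$, one can indeed show that every locally uniform subsequential limit of $f\circ\psi_n$ is the constant $l$: the tail of $\gamma$ beyond $z_n$ is a continuum joining $z_n$ to the boundary, so its pullback contains a continuum of definite hyperbolic diameter through $0$ on which the limit equals $l$, and a nonconstant meromorphic limit cannot be constant on a continuum. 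However, the only conclusion this yields is that $\chi(f(w),l)\to0$ uniformly for $w$ within a fixed hyperbolic distance $M$ of the curve tail --- i.e., convergence on a hyperbolic tube around $\gamma$. For a tangential $\gamma$ that tube never captures a terminal segment of the radius, so you are back at the obstruction you yourself identified. What is missing is the step that propagates the bound $\chi(f,l)<\ee$ from the curve tail across the region of $\DD$ that the tail cuts off near $x$ (which is where the Stolz angles live); in the Lehto--Virtanen and Collingwood--Lohwater treatments this is done with a maximum-principle / two-constants argument on that region, not with a further normal-family rescaling. Until that argument is supplied, the proof establishes the theorem only for curves that stay within bounded hyperbolic distance of the radius, which is strictly weaker than the statement.
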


  Notably, the curve $\gamma$ treated in the theorem does not need to approach the point $x$ in a nontangential way. In other words, regardless of how arbitrarily the curve meanders, as long as it is continuous and approaches the boundary point, there is no problem.

  Although nontangential limits do not necessarily exist everywhere, the following theorem states that it exists at almost all points. See  \cite[Theorem~2.4]{CL2004theory} for a proof.
  
  \begin{theorem}[Fatou's theorem]
      \label{thm:fatou}
      Let  $f\colon\Hup\to\Hup$ be an analytic function. Then, for almost every $x\in\RR$, the nontangential limit of $f$ at $x$ exists in $\Hup\cup\RR\cup\{\infty\}$.
  \end{theorem}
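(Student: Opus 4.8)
The plan is to transport the problem to the unit disk $\DD$, where the classical Fatou theorem for bounded analytic functions is available, by conjugating $f$ with the Cayley transform. Concretely, I would set $\varphi(z) \coloneq (z-\iu)/(z+\iu)$, a conformal bijection $\Hup \to \DD$ with inverse $\varphi^{-1}(w) = \iu(1+w)/(1-w)$, and define $g \coloneq \varphi \circ f \circ \varphi^{-1} \colon \DD \to \DD$. Because $f$ takes values in $\Hup$ and $\varphi(\Hup)=\DD$, the function $g$ is an analytic self-map of $\DD$, hence bounded by $1$. The classical Fatou theorem then guarantees that $g$ has a nontangential limit, lying in $\cl{\DD}$, at almost every point of $\partial\DD$.

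The next step is to pull this conclusion back through $\varphi$. The Cayley transform extends to a homeomorphism of $\cl{\Hup}\cup\{\infty\}$ onto $\cl{\DD}$ carrying $\RR\cup\{\infty\}$ onto $\partial\DD$, with $\infty\mapsto 1$; restricted to $\RR$ it is a real-analytic diffeomorphism onto $\partial\DD\setminus\{1\}$ whose derivative neither vanishes nor blows up on compact subsets of $\RR$ (indeed $|\varphi'(x)| = 2/(x^2+1)$). Consequently $\varphi$ maps Lebesgue-null subsets of $\RR$ to arc-length-null subsets of $\partial\DD$ and conversely, so the exceptional set on $\partial\DD$ where $g$ fails to have a nontangential limit corresponds to a Lebesgue-null subset of $\RR$. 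It remains to check that nontangential approach is preserved: since $\varphi$ is a M\"obius map, it sends each truncated cone $\Gamma_{x_0,\theta}$ at $x_0\in\RR$ into a Stolz angle at $\varphi(x_0)$ and, symmetrically, sends Stolz angles at boundary points of $\DD$ into cones $\Gamma_{x_0,\theta'}$ of comparable aperture. Hence at almost every $x\in\RR$ the identity $f = \varphi^{-1}\circ g\circ\varphi$ shows that $f$ has a nontangential limit equal to $\varphi^{-1}$ applied to the boundary value of $g$; as $\varphi^{-1}$ maps $\cl{\DD}$ into $\Hup\cup\RR\cup\{\infty\}$ (with interior values going to $\Hup$ and $\partial\DD$ going to $\RR\cup\{\infty\}$), this limit lies in the asserted set.

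The main obstacle is the classical Fatou theorem on the disk itself, which is where the genuine analysis resides. If one wishes to prove it from scratch rather than cite it, the essential ingredients are the Poisson-integral representation of the bounded function $g$ (available since $g\in H^\infty\subseteq H^1$), the domination of the nontangential maximal function of $g$ by the Hardy--Littlewood maximal function of its boundary measure, and the Lebesgue differentiation theorem, which together yield a.e. nontangential convergence. Alternatively, one could argue directly in $\Hup$ from the Nevanlinna representation of the self-map $f$, extracting nontangential limits of $\Im f$ as a Poisson integral and then of $\Re f$ via the conjugate function. If instead one takes the disk version as known --- as is natural here --- the only remaining care is the bookkeeping near $\infty$, ensuring that the correspondence $\RR\cup\{\infty\}\leftrightarrow\partial\DD$ respects both null sets and the shape of the approach regions; this is routine but should be verified uniformly on compact pieces of $\RR$ rather than merely pointwise.
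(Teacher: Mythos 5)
Your proof is correct. The paper does not prove this statement at all --- it simply cites \cite[Theorem~2.4]{CL2004theory} --- so there is no in-paper argument to compare against; your reduction to the disk via the Cayley transform, followed by the classical Fatou theorem for bounded analytic self-maps of $\DD$, is the standard route, and the two points you flag (that $\varphi|_{\RR}$ is a diffeomorphism onto $\partial\DD\setminus\{1\}$ with locally bounded derivative in both directions, hence preserves null sets, and that M\"obius maps carry cones at boundary points to Stolz angles of comparable aperture) are exactly the bookkeeping needed to make the transfer legitimate.
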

  
Analytic functions are in fact uniquely determined by their nontangential limits on the boundary as stated below. See  \cite[Theorem~8.1]{CL2004theory} for a proof.
  
  \begin{theorem}[Privalov's theorem]
      \label{thm:priv}
      Let $f\colon\Hup\to\CC$ be an analytic function. If there exists a set $F\subseteq\RR$ of positive Lebesgue measure such that the nontangential limit of $f$ exists and equals $0$ at every $x\in F$, then $f\equiv0$.
  \end{theorem}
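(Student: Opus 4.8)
The plan is to transplant everything to the unit disk and reduce to the classical uniqueness theorem for bounded analytic functions. Via the Cayley transform $\varphi\colon\DD\to\Hup$, $\varphi(w)=\iu(1+w)/(1-w)$, put $g\coloneq f\circ\varphi$, an analytic function on $\DD$. A set $F\subseteq\RR$ of positive Lebesgue measure corresponds to a set $E\subseteq\partial\DD$ of positive arc-length measure, and the nontangential vanishing of $f$ on $F$ becomes the nontangential vanishing of $g$ on $E$; since $\varphi$ is conformal it suffices to prove $g\equiv0$. The essential difficulty is that $f$, hence $g$, is merely analytic and may be unbounded, so one cannot feed it directly into $H^\infty$- or Nevanlinna-class theory. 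The whole argument therefore turns on producing a subdomain of $\DD$, adjacent to a large piece of $E$, on which $g$ is bounded.

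To build such a domain I would first regularize the convergence. Fix an aperture $\alpha$ and let $S_\alpha(\zeta)$ denote the Stolz angle of opening $\alpha$ at $\zeta\in\partial\DD$; for $\zeta\in E$ one has $g\to0$ along $S_\alpha(\zeta)$. Setting $M_n(\zeta)\coloneq\sup\{|g(z)|\colon z\in S_\alpha(\zeta),\ |z-\zeta|\le 1/n\}$, we get $M_n\to0$ pointwise on $E$, so Egorov's theorem yields a closed subset $E_1\subseteq E$ with $|E_1|>0$ on which $M_n\to0$ uniformly; concretely, for every $\ee>0$ there is $\delta>0$ with $|g(z)|<\ee$ whenever $z\in S_\alpha(\zeta)$, $\zeta\in E_1$ and $|z-\zeta|<\delta$. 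Form the sawtooth domain $\Omega\coloneq\bigcup_{\zeta\in E_1}\bigl(S_\alpha(\zeta)\cap\{|z-\zeta|<\delta\}\bigr)$. The part of $\partial\Omega$ lying near $\partial\DD$ consists of points lying in some closed Stolz angle over $E_1$ within distance $\delta$, where $|g|<\ee$; the remaining part of $\partial\Omega$ is compactly contained in $\DD$, where $g$ is bounded by continuity. By the maximum modulus principle $g$ is therefore bounded on $\Omega$.

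Finally I would invoke the bounded uniqueness theorem on $\Omega$. Map $\Omega$ conformally onto $\DD$ by $\phi$ and set $\tilde g\coloneq g\circ\phi^{-1}\in H^\infty(\DD)$. If $g\not\equiv0$ then $\tilde g\not\equiv0$, and Jensen's inequality gives $\int_{\partial\DD}\log|\tilde g^\ast|\,dm>-\infty$, so the boundary function $\tilde g^\ast$ is nonzero almost everywhere. On the other hand, the boundary values of $\tilde g$ along $\phi^{-1}$ of $E_1$ vanish, because the corresponding approach stays inside $\Omega$ where $g\to0$; thus $\tilde g^\ast=0$ on $\phi(E_1)$, and this contradicts the previous line as soon as $\phi(E_1)$ has positive length, i.e. as soon as $E_1$ carries positive harmonic measure for $\Omega$. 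I expect this positivity of harmonic measure to be the main obstacle: it has to be extracted from $|E_1|>0$ together with the fact that $\Omega$ reaches each point of $E_1$ through a full cone, making these boundary points regular and nontangentially accessible. Granting it, $\tilde g\equiv0$, hence $g$ vanishes on the nonempty open set $\Omega$, and the identity theorem forces $g\equiv0$ on $\DD$ and $f\equiv0$ on $\Hup$.
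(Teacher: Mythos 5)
The paper does not actually prove this statement; it is quoted as a classical result with a pointer to Collingwood--Lohwater \cite{CL2004theory}, whose proof is precisely the Lusin--Privalov sawtooth construction you outline. So your architecture (Cayley transform, Egorov regularization on a closed subset $E_1$ of positive measure, sawtooth domain $\Omega$, maximum principle to get $g\in H^\infty(\Omega)$, then the bounded uniqueness theorem) is the right one and matches the cited source. Two small construction points: $M_n$ should be defined via a countable dense set of $z$'s to guarantee measurability before applying Egorov, and your truncated union $\bigcup_{\zeta\in E_1}\bigl(S_\alpha(\zeta)\cap\{|z-\zeta|<\delta\}\bigr)$ need not be connected or simply connected; the usual fix is to take untruncated Stolz angles all containing a common disk $\{|z|<r_\alpha\}$ (or to adjoin that disk), which makes $\Omega$ a Jordan domain while preserving the estimate $|g|<\ee$ near $\partial\DD$ inside $\Omega$.

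The step you ``grant'' is, however, a genuine gap and is the analytic heart of the theorem; it cannot be deduced from the qualitative remark that the points of $E_1$ are regular and nontangentially accessible. In general a boundary set can be accessible through full cones at each of its points and still be null for harmonic measure, so positivity must be extracted quantitatively from $|E_1|>0$. The standard way to close it: the boundary of the sawtooth domain is a \emph{rectifiable} Jordan curve, because the tent erected over a complementary arc of $E_1$ of length $\ell$ has boundary length at most $C_\alpha\,\ell$ and the complementary arcs have total length at most $2\pi$. For a Jordan domain with rectifiable boundary, the F.~and M.~Riesz theorem gives that $\phi'\in H^1$ and that harmonic measure and arclength on $\partial\Omega$ are mutually absolutely continuous; hence $\phi(E_1)\subseteq\partial\DD$ has positive Lebesgue measure. (Carath\'eodory's theorem also gives the continuous extension of $\phi^{-1}$ needed to see that $\tilde g$ has boundary value $0$ on $\phi(E_1)$, using that $g\to0$ uniformly as $z\to\partial\DD$ within $\Omega$.) With that lemma in place, $\log\lvert\tilde g^\ast\rvert\in L^1(\partial\DD)$ for a nonzero $H^\infty$ function forces the contradiction exactly as you say, and the identity theorem finishes the proof. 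Without it, the argument is incomplete.
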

  
  The next corollary follows immediately from Privalov's theorem.
  
  \begin{corollary}
      Let $f,g\colon\Hup\to\Hup$ be analytic functions. If there exists a set $F\subseteq\RR$ of positive Lebesgue measure such that, for every $x \in F$, the nontangential limits $\nontanglim_{z\to x}f(z)$ and $\nontanglim_{z\to x}g(z)$ both exist in $\Hup\cup\RR\cup\{\infty\}$ and are equal, then $f \equiv g$. 
  \end{corollary}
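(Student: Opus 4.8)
The plan is to reduce the claim to a single application of Privalov's theorem (Theorem~\ref{thm:priv}) after forming an auxiliary analytic function whose nontangential limit vanishes on a set of positive measure. The obvious candidate is the difference $h \coloneq f - g$, which is analytic on $\Hup$ because $f$ and $g$ are. At any $x \in F$ where the common nontangential limit $\nontanglim_{z\to x} f(z) = \nontanglim_{z\to x} g(z)$ is \emph{finite}, i.e.\ lies in $\Hup \cup \RR$, subtracting the limits yields $\nontanglim_{z\to x} h(z) = 0$.

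The only subtlety is that the common limit may equal $\infty$, in which case $h$ need not tend to $0$ (it is an indeterminate $\infty - \infty$). To handle this I would partition $F = F_{\mathrm{fin}} \sqcup F_{\infty}$ according to whether the common boundary value is finite or infinite. Both pieces are Lebesgue measurable, since for a fixed analytic function the set on which a nontangential limit exists and the value attained there constitute measurable data; hence $\lambda(F) > 0$ forces $\lambda(F_{\mathrm{fin}}) > 0$ or $\lambda(F_{\infty}) > 0$. If $\lambda(F_{\mathrm{fin}}) > 0$, then $h$ has nontangential limit $0$ on the positive-measure set $F_{\mathrm{fin}}$, so Privalov's theorem gives $h \equiv 0$, that is $f \equiv g$.

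If instead $\lambda(F_{\infty}) > 0$, I would pass to reciprocals. Since $f, g \colon \Hup \to \Hup$ take values in the open upper half-plane, they never vanish, so $1/f$ and $1/g$ are analytic $\CC$-valued functions on $\Hup$, and at each $x \in F_{\infty}$ one has $\nontanglim_{z\to x}(1/f)(z) = \nontanglim_{z\to x}(1/g)(z) = 0$. Thus $\tilde h \coloneq 1/f - 1/g$ is analytic on $\Hup$ with nontangential limit $0$ on $F_{\infty}$, and Privalov's theorem again yields $\tilde h \equiv 0$, whence $f \equiv g$. The argument presents no genuine obstacle; the only point demanding care is the $\infty$-valued case, resolved by the passage to reciprocals, which is legitimate precisely because the ranges of $f$ and $g$ avoid the origin.
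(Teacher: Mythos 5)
Your proof is correct and follows the route the paper intends: the paper offers no written proof beyond the remark that the corollary ``follows immediately from Privalov's theorem,'' and your argument is precisely the standard implementation of that, applying Theorem~\ref{thm:priv} to $f-g$ and handling the $\infty$-valued boundary points via the reciprocals $1/f-1/g$ (legitimate since $f,g$ omit $0$). The only detail worth noting is that the case $\lambda(F_\infty)>0$ could also be dismissed outright, since an analytic self-map of $\Hup$ has a finite nontangential limit at Lebesgue-almost every boundary point by the Pick--Nevanlinna representation, but your self-contained treatment is equally valid.
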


  \begin{remark}
  \label{rem:conf_equiv}
    The above propositions are stated for analytic functions on the upper half-plane, but they can naturally be translated into the results for functions defined on the unit disk or $\CC\setminus[0,+\infty)$. This observation follows from the conformal equivalences of these domains and is implicitly used in the following arguments.
  \end{remark}

 Lastly, we present an additional boundary behavior of analytic functions on the upper half-plane. For a proof, see \cite[Lemma 2.13]{B2008lebesgue}. It serves as a crucial tool for establishing regularity properties for free convolution, which will be explored in Section \ref{sec:cont_ext}. 

  \begin{theorem}[Part of the Julia--Wolff--Carath{\'e}odory theorem for $\Hup$]
  \label{thm:JC}
      Let $f$ be an analytic self-map of $\Hup$ and let $a\in\RR$. If 
     $\nontanglim_{z\to a}f(z)=c\in\RR$ 
      then 
      \begin{align*}
        \nontanglim_{z\to a}\frac{f(z)-c}{z-a}=\liminf_{z\to a}\frac{\Im f(z)}{\Im z} \in \CC\cup\{\infty\}. 
      \end{align*}
  \end{theorem}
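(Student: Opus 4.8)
The plan is to prove the identity by combining the Herglotz--Nevanlinna representation of $f$ with Lindel\"of's theorem (Theorem \ref{thm:Lind}). We may assume $f$ is non-constant, so that $\Im f(z)>0$ throughout $\Hup$; the constant case is trivial since then $c$ equals that constant and both sides vanish. First I would write the representation of the self-map $f$ as
\[
f(z)=\beta z+\alpha+\int_{\RR}\frac{1+tz}{t-z}\,\di{\rho(t)},\qquad \beta\ge0,\ \alpha\in\RR,\ \rho\ge0\text{ finite},
\]
which is of the same type as the Pick--Nevanlinna formula in Proposition \ref{prop:characF} \ref{item:F3}, now with the coefficient of $z$ allowed to be any $\beta\ge0$. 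A direct computation then gives the two formulas
\[
\frac{\Im f(z)}{\Im z}=\beta+\int_{\RR}\frac{1+t^2}{|t-z|^2}\,\di{\rho(t)},\qquad f'(z)=\beta+\int_{\RR}\frac{1+t^2}{(t-z)^2}\,\di{\rho(t)},
\]
which drive the whole argument.

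Next I would identify the right-hand side of the asserted identity. Setting $z=a+\iu y$ and letting $y\downarrow0$, the integrand $\tfrac{1+t^2}{(t-a)^2+y^2}$ increases monotonically, so monotone convergence yields
\[
\lim_{y\downarrow0}\frac{\Im f(a+\iu y)}{y}=\beta+\int_{\RR}\frac{1+t^2}{(t-a)^2}\,\di{\rho(t)}=:d\in(0,\infty].
\]
This already shows $\liminf_{z\to a}\tfrac{\Im f(z)}{\Im z}\le d$. For the reverse inequality I would apply Fatou's lemma along an arbitrary sequence $z_n\to a$: since $\tfrac{1+t^2}{|t-z_n|^2}\to\tfrac{1+t^2}{(t-a)^2}$ for each $t\ne a$, one gets $\liminf_n\tfrac{\Im f(z_n)}{\Im z_n}\ge d$. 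Hence $\liminf_{z\to a}\tfrac{\Im f(z)}{\Im z}=d$, and it remains to prove $\nontanglim_{z\to a}\tfrac{f(z)-c}{z-a}=d$.

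For the finite case $d<\infty$ I would show $\nontanglim_{z\to a}f'(z)=d$ by dominated convergence. The key nontangential estimate is $|t-z|\ge\sin\theta\,|z-a|$ for $z\in\Gamma_{a,\theta}$ and $t\in\RR$; splitting the integral for $f'$ into the region $|t-a|\ge2|z-a|$ (where $|t-z|\ge|t-a|/2$ gives the integrable dominating function $\tfrac{4(1+t^2)}{(t-a)^2}$, finite because $d<\infty$) and the region $|t-a|<2|z-a|$ (whose contribution is $O\big(\int_{|t-a|<2|z-a|}\tfrac{1+t^2}{(t-a)^2}\,\di{\rho}\big)\to0$) yields $f'(z)\to d$ as $z\nontangtou a$. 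Then, since the segment from $a+\iu\eta$ to any $z\in\Gamma_{a,\theta}$ stays in $\Gamma_{a,\theta}$ by convexity of the sector (both endpoints lie in the cone, which contains the vertical direction), integrating $f'$ along it and letting $\eta\downarrow0$ while invoking the hypothesis $f(a+\iu\eta)\to c$ gives $\tfrac{f(z)-c}{z-a}\to d$ nontangentially.

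The infinite case $d=\infty$ is where Lindel\"of's theorem enters, and the finite-case domination/integration above is the main obstacle to organize cleanly. Set $h(z)\coloneq\tfrac{f(z)-c}{z-a}$, analytic on $\Hup$. Because $\arg(f(z)-c)\in(0,\pi)$ (using $\Im f>0$ and $c\in\RR$) and $\arg(z-a)\in(0,\pi)$, the quotient satisfies $\arg h(z)\in(-\pi,\pi)$, so $h$ maps $\Hup$ into $\CC\setminus(-\infty,0]$ and omits more than three values. Along the vertical curve $\gamma(y)=a+\iu y$ one has $\Re h(a+\iu y)=\tfrac{\Im f(a+\iu y)}{y}\to d=\infty$, hence $h(\gamma(y))\to\infty$; Theorem \ref{thm:Lind} then forces $\nontanglim_{z\to a}h(z)=\infty=d$. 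The same Lindel\"of step also re-covers the finite case once one knows the vertical limit of $h$ equals the real number $d$ — which is precisely what the $f'$ computation of the previous paragraph supplies, so it is not a shortcut. The delicate points are thus the domination estimate and path argument in the finite case, and verifying the argument bound that places $h$ in the slit plane; everything else reduces to monotone, Fatou, and dominated convergence.
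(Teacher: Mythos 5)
The paper does not prove Theorem \ref{thm:JC} at all; it is quoted verbatim from \cite[Lemma 2.13]{B2008lebesgue}. So there is no internal proof to compare against, and your proposal should be judged as a self-contained argument. As such it is correct, and it follows the classical route to the Julia--Wolff--Carath\'eodory theorem: the Nevanlinna--Herglotz representation gives the two exact formulas for $\Im f(z)/\Im z$ and $f'(z)$; monotone convergence along the vertical ray plus Fatou's lemma along arbitrary sequences identifies the unrestricted $\liminf$ with the radial quantity $d=\beta+\int(1+t^2)(t-a)^{-2}\,\di{\rho(t)}$; when $d<\infty$ the split of the integral for $f'$ at $|t-a|=2|z-a|$, using $|t-z|\ge \sin\theta\,|z-a|$ in the cone, gives $\nontanglim_{z\to a}f'(z)=d$, and integrating along segments in the convex sector transfers this to the difference quotient; when $d=\infty$ the slit-plane bound $\arg h\in(-\pi,\pi)$ for $h(z)=(f(z)-c)/(z-a)$ together with $\Re h(a+\iu y)=\Im f(a+\iu y)/y\to\infty$ lets Lindel\"of's theorem (Theorem \ref{thm:Lind}) upgrade the radial divergence to a nontangential one. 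All of these steps check out, including the convexity of $\Gamma_{a,\theta}$ and the dominated-convergence details you flag as the delicate points. The only blemish is your dismissal of the constant case: a constant self-map of $\Hup$ takes a value with positive imaginary part, so the hypothesis $\nontanglim_{z\to a}f(z)=c\in\RR$ is simply never satisfied there (the case is vacuous), rather than ``both sides vanish'' --- as stated, the two sides would in fact disagree for a constant map. This does not affect the validity of the proof.
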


\subsection{Denjoy--Wolff point}  
  A key concept is the notion of a generalized absorbing fixed point, known as the Denjoy--Wolff point, which characterizes the asymptotic behavior of the iterates of an analytic self-map. The results are mostly stated for the unit disk $\DD\coloneqq\{z\in\CC\colon|z|<1\}$ centered at the origin. 

  \begin{definition}[Denjoy--Wolff point]
    Let $f\colon \DD\to\cl{\DD}$ be an analytic function. A point $w\in\cl{\DD}$ is called a Denjoy--Wolff point (DW-point for short) of $f$ if one of the following holds:
    \begin{enumerate}
      \item $|w|<1$ and $f(w)=w$, 
      \item $|w|=1$, $\nontanglim_{z\to w} f(z)=w$, and
      $\nontanglim_{z\to w}\frac{f(z)-w}{z-w}\leq 1.$ 
    \end{enumerate}
  \end{definition}
  \begin{remark}\label{rem:DW}
      One can also formulate the above definition for the upper half-plane. Specifically, we will use the following fact:  an analytic map $f\colon \Hup \to \Hup\cup\RR$ has a DW-point at $\infty$ if and only if $\nontanglim_{z\to\infty}f(z)/z \in [1,+\infty)$. 
     This can be proved by identifying $\Hup$ with $\DD$ and using the fact $\nontanglim_{z\to\infty}f'(z) = \nontanglim_{z\to\infty} f(z)/z \in [0,+\infty)$ that can be proved e.g.~by the Pick--Nevanlinna representation  \eqref{eq:PN} in which the linear term $z$ needs to be multiplied by some $b\ge0$.   
  \end{remark}

The following fact can be extracted from  \cite[Corollary 1.2.4, Theorems 1.7.3 and 1.8.4]{BCDM20}.

  \begin{theorem}[Denjoy--Wolff theorem] 
      \label{thm:denjoy_wolff}
      Any analytic function $f\colon\DD\to\cl{\DD}$, which is not the identity map, has a unique DW-point. If $w\in\DD$ is the DW-point of $f$, then $|f^\prime(w)|\leq 1$, where equality holds if and only if $f$ is a conformal automorphism. Moreover, if $f$ is not a conformal automorphism or a constant function taking a value in $\partial \DD$, then $\{f^{\circ n}\}_{n=1}^\infty$ converges to the DW-point of $f$ uniformly on each compact subset of $\DD$. 
      \end{theorem}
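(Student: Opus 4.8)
The plan is to split the analysis according to whether $f$ has a fixed point inside $\DD$, handling the two regimes by the Schwarz--Pick lemma and by Wolff's lemma respectively, and then to assemble existence, uniqueness, the derivative estimate, and the convergence of iterates. First I would dispose of the trivial cases. If $f$ is constant with value $c\in\cl{\DD}$, then $c$ is immediately the unique DW-point. If $f$ is nonconstant, the open mapping theorem makes $f(\DD)$ open, while the maximum modulus principle forbids interior points of modulus $1$; hence in fact $f\colon\DD\to\DD$, which is what lets the Schwarz--Pick machinery apply. Henceforth I assume $f$ is a nonconstant self-map of $\DD$ that is not the identity.

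Second, the interior-fixed-point case: suppose $f(w)=w$ with $w\in\DD$. Conjugating $f$ by a disk automorphism carrying $w$ to $0$, I may assume $w=0$, and the Schwarz lemma then gives $|f'(0)|\le 1$ and $|f(z)|\le|z|$, with equality occurring (at one point, hence everywhere) exactly when $f$ is a rotation, i.e.\ a conformal automorphism. This yields both the bound $|f'(w)|\le1$ and its stated equality case. Uniqueness of the interior fixed point follows because two distinct fixed points would force $f$, as a hyperbolic isometry fixing two points, to be the identity, which is excluded. When $f$ is not an automorphism one has $|f'(0)|<1$, and the strict Schwarz--Pick contraction of the hyperbolic metric together with a normal-families (Vitali) argument gives $f^{\circ n}\to 0$ uniformly on compact subsets.

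Third, and this is the technical heart, the no-interior-fixed-point case. I would run the classical approximation argument: choose $r_n\uparrow 1$ and set $f_n\coloneq r_n f$, so that $f_n(\cl{\DD})$ is a compact subset of $\DD$ and $f_n$ has a unique fixed point $w_n\in\DD$; passing to a subsequence, $w_n\to w\in\cl{\DD}$, and since $f$ has no interior fixed point, $w$ must lie on $\partial\DD$. Applying the Schwarz--Pick inequality to each $f_n$ at $w_n$ and passing to the limit produces Wolff's lemma: every horocycle internally tangent to $\partial\DD$ at $w$ is mapped into itself by $f$. The disk analogue of the Julia--Wolff--Carath\'eodory theorem (Theorem \ref{thm:JC}) then yields $\nontanglim_{z\to w}f(z)=w$ together with $\nontanglim_{z\to w}\frac{f(z)-w}{z-w}\le 1$, so $w$ is a boundary DW-point. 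For convergence of the iterates when $f$ is not an automorphism, I would use the self-invariance of the horocycles at $w$ to confine the orbit $\{f^{\circ n}(0)\}$, combine it with the monotonicity of hyperbolic distance under $f$ and the absence of an interior fixed point to force $f^{\circ n}(0)\to w$, and then upgrade to locally uniform convergence by normality: any subsequential limit $g$ is either open (hence $\DD$-valued, contradicting $g(0)=w\in\partial\DD$) or constant, so $g\equiv w$.

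Finally, uniqueness of the DW-point across the two cases: an interior DW-point is a genuine fixed point, unique by the Schwarz--Pick argument above; a boundary DW-point is unique because two of them would yield incompatible families of invariant horocycles under Wolff's lemma; and the two cases cannot coexist, since the existence of a boundary DW-point precludes an interior fixed point. I expect the main obstacle to be precisely the no-fixed-point regime: extracting Wolff's lemma from the approximating maps $f_n$, and then converting horocycle invariance into both the boundary angular-derivative estimate and the genuine convergence of the orbit to $w$, is where the real work (Julia-type estimates and careful hyperbolic-geometry bookkeeping) resides.
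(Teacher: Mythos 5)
This is one of the results the paper does \emph{not} prove: Theorem \ref{thm:denjoy_wolff} is quoted directly from \cite[Corollary 1.2.4, Theorems 1.7.3 and 1.8.4]{BCDM20}, so there is no in-paper argument to compare against. What you have written is the standard classical proof (Schwarz--Pick in the interior-fixed-point case; Wolff's approximation $f_n=r_nf$, horocycle invariance, and Julia--Wolff--Carath\'eodory in the fixed-point-free case), and as an outline it is correct; it is essentially the proof found in the reference the authors cite.

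Two places where your sketch is thinner than it looks. First, your uniqueness argument for boundary DW-points and your claim that an interior fixed point excludes a boundary DW-point both implicitly invoke convergence of the iterates to pin down a contradiction, but that convergence is exactly what the theorem withholds for conformal automorphisms; you therefore need to dispose of the automorphism trichotomy (elliptic, parabolic, hyperbolic) explicitly --- e.g.\ a hyperbolic automorphism has two boundary fixed points with angular derivatives $\alpha<1<1/\alpha$, so only one satisfies the DW condition, and a non-identity elliptic automorphism fixes no boundary point nontangentially. Second, the step ``force $f^{\circ n}(0)\to w$'' in the fixed-point-free case is the genuine crux: the clean route is to show that every locally uniform subsequential limit $g$ of $\{f^{\circ n}\}$ is constant (via the monotone sequence $d(f^{\circ n}(a),f^{\circ n}(b))$ and strict contraction for non-automorphisms), that a constant value in $\DD$ would be an interior fixed point, and that a boundary constant must equal $w$ because the orbit of $0$ is trapped in every horodisk $E(w,\lambda)$ with $\lambda>1$, whose closure meets $\partial\DD$ only at $w$. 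You gesture at all of this and correctly identify it as where the work lies, so I would call these expository gaps rather than errors.
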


If the DW-point is located on the boundary, any horodisk tangent at the point is known to be invariant under the function. A similar phenomenon can be observed when the DW-point in the interior of the unit disk, which is known as the Schwarz lemma. 
Relying on this invariance, Heins proved the continuity of the DW-point with respect to a sequence of analytic maps.  This continuity result, combined with the previous theorems, will be essential in the analysis of subordination functions and their boundary behavior below. 

\begin{theorem}[{\cite[Theorem P.3]{H1941iteration}}]
    \label{thm:conv_of_dwp}
    Let $f_n, f\colon\DD\to\cl{\DD}$ be analytic such that $f_n$ converges pointwisely to $f$.  If $f$ is not the identity map, then the DW-point of $f_n$ converges to that of $f$ as $n\to\infty$. 
\end{theorem}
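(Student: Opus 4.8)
The plan is to prove Theorem~\ref{thm:conv_of_dwp} by separating the behavior of the limit map $f$ according to whether its Denjoy--Wolff point lies inside or on the boundary of $\DD$. First I would normalize the setup. Since every $f_n$ takes values in $\cl{\DD}$, the family is uniformly bounded, so pointwise convergence $f_n\to f$ upgrades by Vitali's (or Montel's) theorem to locally uniform convergence on $\DD$, and hence $f_n'\to f'$ locally uniformly as well. By the maximum principle each $f_n$ is either a genuine self-map of $\DD$ or a constant of modulus one; I would dispose of the constant-of-modulus-one cases by a direct argument, and so assume $f$ is a non-constant, non-identity self-map of $\DD$. Writing $w_n$ and $w$ for the DW-points of $f_n$ and $f$, it suffices by compactness of $\cl{\DD}$ to show that every convergent subsequence $w_{n_k}\to w^\ast$ satisfies $w^\ast=w$; after one further extraction I may also assume that either all $w_{n_k}$ lie in $\DD$ or all lie on $\partial\DD$.

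In the interior case $w\in\DD$, $f(w)=w$, the key observation is that $g(z)\coloneq f(z)-z$ has a \emph{simple} zero at $w$: indeed $g'(w)=f'(w)-1\ne0$, because $f'(w)=1$ at an interior fixed point would force $f$ to be the identity by the Schwarz--Pick lemma, contradicting the hypothesis. Since $g_n(z)\coloneq f_n(z)-z\to g\not\equiv0$ locally uniformly, Hurwitz's theorem produces, for all large $n$, a unique zero $w_n$ of $g_n$ near $w$ with $w_n\to w$; for large $n$ this $w_n$ lies in $\DD$ and is thus an interior fixed point of $f_n$. As a non-identity self-map has at most one interior fixed point (Schwarz--Pick again), this $w_n$ is the DW-point of $f_n$, and the full sequence converges.

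In the boundary case $|w|=1$, which is the substantial part, I would follow Heins and exploit the horodisk invariance coming from Julia's lemma. For the limit map the DW-point satisfies
\[
\frac{|w-f(z)|^2}{1-|f(z)|^2}\le\frac{|w-z|^2}{1-|z|^2},\qquad z\in\DD,
\]
so $f$ preserves every horodisk tangent to $\partial\DD$ at $w$; the analogous inequality holds for each $f_n$ with $w_n$ in place of $w$ when $w_n\in\partial\DD$ (with coefficient equal to the angular derivative, which is $\le1$), while for interior $w_n$ the role is played by the Schwarz--Pick contraction $\rho(f_n(z),w_n)\le\rho(z,w_n)$ in the pseudohyperbolic metric $\rho$. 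Passing to the limit along $w_{n_k}\to w^\ast$, using $f_{n_k}(z)\to f(z)$ for fixed $z\in\DD$, I would first rule out $w^\ast\in\DD$ (an interior limit would, via the limiting Schwarz--Pick inequality evaluated at $z=w^\ast$, give an interior fixed point of $f$, impossible when the DW-point is on the boundary), so $w^\ast\in\partial\DD$ and $w^\ast$ inherits the horodisk-invariance inequality with coefficient $\le1$. Uniqueness then finishes the argument: iterating this inequality traps every orbit $f^{\circ m}(z_0)$ in a fixed closed horodisk tangent to $\partial\DD$ only at $w^\ast$, whereas $f^{\circ m}(z_0)\to w$ by the Denjoy--Wolff theorem (Theorem~\ref{thm:denjoy_wolff}; for the remaining parabolic and hyperbolic automorphisms this convergence is classical, and in any case $w$ is the unique boundary fixed point with angular derivative $\le1$). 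Hence $w=w^\ast$.

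I expect the main obstacle to be precisely the boundary case, and within it the \emph{degeneration} that occurs when the DW-points $w_{n_k}$ of $f_{n_k}$ are interior but drift to $\partial\DD$ (this genuinely happens, e.g.\ under perturbations of a parabolic automorphism). There the natural invariant objects pass from pseudohyperbolic disks with a finite hyperbolic center to horocycles with a boundary center, and a naive limit of the Schwarz--Pick inequality collapses to a trivial identity. Resolving this requires tracking the rate, i.e.\ rewriting everything in terms of the Julia quotient $\tfrac{1-|f_n(z)|^2}{1-|z|^2}$ and controlling the vanishing factor $1-|w_n|^2\to0$, so as to recover the correct horodisk inequality for $w^\ast$ in the limit. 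All of the above is phrased on $\DD$, but by the conformal equivalences noted in Remark~\ref{rem:conf_equiv} it transfers without change to the half-plane picture used elsewhere in the paper.
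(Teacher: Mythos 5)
Your argument is correct and is essentially the proof the paper points to: the paper itself gives no proof of Theorem~\ref{thm:conv_of_dwp}, citing Heins and describing his method exactly as "relying on this invariance" of horodisks, which is the route you take (interior case via Hurwitz plus uniqueness of interior fixed points, boundary case via Julia's inequality and uniqueness of the DW-point). The one step you flag as the main obstacle --- interior DW-points $w_n$ drifting to $\partial\DD$ --- in fact resolves immediately: writing the Schwarz--Pick contraction as $1-\rho(f_n(z),w_n)^2\ge 1-\rho(z,w_n)^2$ and using $1-\rho(a,w)^2=\frac{(1-|a|^2)(1-|w|^2)}{|1-\conj{w}a|^2}$, the factors $1-|w_n|^2$ cancel on both sides, and letting $w_n\to w^\ast\in\partial\DD$ yields precisely the Julia inequality at $w^\ast$ with coefficient $1$, so no rate-tracking is needed.
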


\begin{remark} 
\begin{enumerate}
\item[(i)] 
It is worth noting that an alternative proof of Heins' theorem is provided in  \cite[Theorem 1.1]{BBH2022convergence} in case $f( \DD)\subseteq \DD$.  

\item[(ii)]
By selecting and fixing a conformal isomorphism $\DD \simeq \Hup$, we can translate the result to holomorphic functions $f_n, f \colon \Hup \to \Hup \cup \RR \cup \{\infty\}$ (regarded as functions into the Riemann sphere). Note that if a holomorphic function $f\colon \Hup \to \Hup \cup \RR \cup \{\infty\}$ takes a value in the boundary $\RR \cup \{\infty\}$ then $f$ must be a constant function by the maximum principle. Actually, later we will often use the result in case $f$ takes a (constant) value in $\RR \cup \{\infty\}$. 
\end{enumerate}

\end{remark}

\section{Existence of subordination functions} 
\label{sec:ex_of_subord}

In this section, we first prove Theorem  \ref{thm:subordination} in the bounded case, and subsequently extend the result to the unbounded case. An important step is to find suitable analytic self-maps with parameter $z\in \Hup$, whose DW-points give the desired subordination functions of $z$. 

\subsection{Constructing parametrized analytic self-maps}

As in previous studies (e.g.,  \cite{BB2007new,J2021regularity}), the desired subordination functions appear as fixed points of parametrized analytic functions defined on the upper half-plane $\Hup$. The following theorem guarantees the general existence, uniqueness, and analyticity of such fixed points, which is utilized extensively throughout this section.

  \begin{theorem}[Theorem 2.4 of  \cite{BB2007new}]
      \label{thm:dwfix}
      Let $\Omega$ be an open subset of $\Hup$ and let $f\colon\Omega\times\Hup\to\Hup$ be an analytic function. Suppose that for some $z\in\Omega$, the analytic self-map $f_z(w)\coloneqq f(z,w)$ of $\Hup$ is not a conformal automorphism and has a fixed point in $\Hup$. Then there exists a unique analytic function $\omega\colon\Hup\to\Hup$ satisfying $f(z,\omega(z))=\omega(z)$ for $z\in\Omega$. Moreover, for any $w$, we have $\omega(z)=\lim_{n\to\infty} f_z^{\circ n}(w)$ uniformly on compact subsets of $\Hup$.
  \end{theorem}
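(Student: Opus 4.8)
The plan is to realize $\omega(z)$ as the Denjoy--Wolff point of the self-map $f_z$ and to upgrade the hypothesis, which concerns a single parameter, to an analytic statement valid on all of $\Omega$ by a normal-families argument. I assume $\Omega$ is connected; if not, one works on the connected component of the distinguished parameter $z_0$, which is the only component the single-parameter hypothesis controls. Let $w_0\in\Hup$ be the fixed point of $f_{z_0}$. Transporting Theorem \ref{thm:denjoy_wolff} from $\DD$ to $\Hup$ through the Cayley transform (Remark \ref{rem:conf_equiv}), the point $w_0$ is the unique Denjoy--Wolff point of $f_{z_0}$; since $f_{z_0}$ is not a conformal automorphism, this forces $|f_{z_0}'(w_0)|<1$, where $'$ denotes $\partial_w$, and it guarantees $f_{z_0}^{\circ n}(w)\to w_0$ for every $w\in\Hup$, uniformly on compact subsets.

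First I would spread the interior fixed point to a neighborhood of $z_0$. As $\partial_w\big(f(z,w)-w\big)$ equals $f_{z_0}'(w_0)-1\ne0$ at $(z,w)=(z_0,w_0)$, the holomorphic implicit function theorem produces an open set $V\ni z_0$ in $\Omega$ and an analytic map $\omega_0\colon V\to\Hup$ with $\omega_0(z_0)=w_0$ and $f(z,\omega_0(z))=\omega_0(z)$ on $V$. Shrinking $V$ preserves $|f_z'(\omega_0(z))|<1$, so for every $z\in V$ the map $f_z$ is again a non-automorphism whose unique interior fixed point is $\omega_0(z)$; applying Theorem \ref{thm:denjoy_wolff} fiberwise, $f_z^{\circ n}(w)\to\omega_0(z)$ for all $z\in V$ and all $w\in\Hup$, the limit being independent of $w$.

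Next I would globalize by Vitali's theorem. Fix $w\in\Hup$ and put $g_n(z)\coloneqq C(f_z^{\circ n}(w))$, where $C\colon\Hup\to\DD$ is the Cayley transform; since $f$ is jointly analytic, each $g_n$ is analytic on $\Omega$ and bounded by $1$. On $V$ the sequence $g_n$ converges pointwise to $C\circ\omega_0$, and $V$ has accumulation points, so Vitali's theorem yields a locally uniform limit $g\colon\Omega\to\cl{\DD}$. Because $g(z_0)=C(w_0)\in\DD$, the maximum modulus principle confines $g$ to the open disk, so $\omega\coloneqq C^{-1}\circ g\colon\Omega\to\Hup$ is analytic and $f_z^{\circ n}(w)\to\omega(z)$. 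Letting $n\to\infty$ in $f(z,f_z^{\circ n}(w))=f_z^{\circ(n+1)}(w)$ and using continuity of $f$ on $\Omega\times\Hup$ yields $f(z,\omega(z))=\omega(z)$. The limit does not depend on $w$: two such limits coincide with $\omega_0$ on $V$, hence on $\Omega$ by the identity theorem; the same reasoning, together with the uniqueness of the interior fixed point on $V$, shows that $\omega$ is the only analytic $\Hup$-valued solution of the fixed-point equation.

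The crux is precisely this globalization. The hypothesis provides an interior fixed point at one parameter only, and a priori the Denjoy--Wolff points of $f_z$ might drift to the boundary $\RR\cup\{\infty\}$ as $z$ leaves $z_0$, which would make the iterates converge to a boundary value and push $\omega$ out of $\Hup$. The argument survives because Vitali delivers the limit $g$ as a genuine analytic function, so the maximum modulus principle pins it inside the disk as soon as it takes one interior value; this is exactly where connectedness of $\Omega$ is used. A minor additional point is that no $f_z$ can secretly be an elliptic automorphism at isolated parameters, since the locally uniform convergence of $\{f_z^{\circ n}\}$ forced by Vitali is incompatible with the non-convergent iterates of such a map.
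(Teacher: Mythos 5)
The paper offers no proof of this statement: it is imported verbatim as Theorem 2.4 of Belinschi--Bercovici \cite{BB2007new} and used as a black box, so there is nothing internal to compare against. Judged on its own terms, your argument is correct and is essentially the standard proof of that theorem: Denjoy--Wolff at the distinguished parameter gives an attracting interior fixed point with multiplier of modulus strictly less than one, the holomorphic implicit function theorem spreads the interior fixed point to a neighbourhood $V$ of $z_0$, Vitali plus the maximum modulus principle globalizes the pointwise limits of the iterates to an analytic $\Hup$-valued function on all of $\Omega$, and the identity theorem yields independence of $w$ and uniqueness. Your observation that connectedness of $\Omega$ is genuinely needed (and that the hypothesis only controls the component of $z_0$) is apt --- as stated, with $\Omega$ an arbitrary open set, the conclusion would fail on other components. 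Two small points worth making explicit: first, the clause ``uniformly on compact subsets of $\Hup$'' in the statement (uniformity in $w$) requires one final application of the Denjoy--Wolff theorem at each $z\in\Omega$, which is legitimate only after ruling out that some $f_z$ is an elliptic automorphism or the identity; your closing remark supplies exactly this, since convergence of $f_z^{\circ n}(w)$ to a $w$-independent limit is incompatible with either possibility, so that remark is not merely ``minor'' but is the step that upgrades pointwise to locally uniform convergence in $w$. Second, in the maximum modulus step you should note that $|g(z_1)|=1$ at any single $z_1$ forces $g$ to be a unimodular constant on the connected $\Omega$, contradicting $g(z_0)\in\DD$; this is what you mean, and it is where connectedness enters a second time.
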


We prepare a lemma that will be used to check one of the assumptions of Theorem \ref{thm:dwfix}. 

 \begin{lemma} \label{lem:auto}
  Let $f,g$ be analytic self-maps of $\Hup$. If $f \circ g$ is a conformal automorphism of $\Hup$ then both $f$ and $g$ are conformal automorphisms of $\Hup$. 
  \end{lemma}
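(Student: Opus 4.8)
The plan is to run the whole argument through the Schwarz--Pick lemma, using the fact that every analytic self-map of $\Hup$ is a contraction for the hyperbolic (Poincar\'e) metric $\rho$, and that the conformal automorphisms are precisely the maps that achieve equality. Concretely, I would first record the two facts I need: (i) for any analytic $h\colon\Hup\to\Hup$ and any $z,w\in\Hup$ one has $\rho(h(z),h(w))\le\rho(z,w)$; and (ii) if equality holds for even a single pair of distinct points $z\ne w$, then $h$ is a conformal automorphism, and in that case equality holds for \emph{every} pair. This dichotomy is exactly the equality case of Schwarz--Pick, and it is the only external input.

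Writing $h = f\circ g$, the hypothesis that $h$ is a conformal automorphism gives $\rho(h(z),h(w))=\rho(z,w)$ for all $z,w\in\Hup$. The key step is to insert the intermediate point $g(\cdot)$ and chain the two contraction estimates:
\[
\rho(z,w)=\rho(h(z),h(w))=\rho\big(f(g(z)),f(g(w))\big)\le\rho\big(g(z),g(w)\big)\le\rho(z,w),
\]
where the first inequality is (i) applied to $f$ and the second is (i) applied to $g$. Since the two ends agree, both inequalities are forced to be equalities for all $z,w$.

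From the equality $\rho(g(z),g(w))=\rho(z,w)$ at any one distinct pair, fact (ii) applied to $g$ shows that $g$ is a conformal automorphism; in particular $g$ is surjective onto $\Hup$. Then, for arbitrary $a,b\in\Hup$, I would write $a=g(z)$ and $b=g(w)$ and read off from the first equality that $\rho(f(a),f(b))=\rho\big(f(g(z)),f(g(w))\big)=\rho\big(g(z),g(w)\big)=\rho(a,b)$, so $f$ is a $\rho$-isometry and hence, by (ii) again, a conformal automorphism.

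I do not anticipate a genuine obstacle, but the one point demanding care is the asymmetry of the argument for $f$ versus $g$. Injectivity of $g$ and surjectivity of $f$ (which follow immediately from those properties of $h$) are \emph{not} enough on their own, since an injective analytic self-map of $\Hup$ need not be an automorphism. One must instead establish that $g$ is an automorphism \emph{first}, and only then use its surjectivity to let the pairs $(g(z),g(w))$ exhaust all of $\Hup\times\Hup$, thereby transferring the isometry property to $f$. Respecting this order is the whole content of the proof.
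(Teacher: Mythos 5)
Your proof is correct and is essentially the paper's own argument: chain the Schwarz--Pick contraction through the intermediate point $g(z)$, observe that the automorphism hypothesis on $f\circ g$ forces both inequalities to be equalities, and invoke the equality case to conclude. (The paper uses the pseudohyperbolic distance rather than the hyperbolic metric, which is immaterial, and your surjectivity detour for $f$ is unnecessary given your own fact (ii) applied at the single distinct pair $(g(z),g(w))$ — but it is harmless.)
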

  \begin{proof} Let $d(z,w)$ be the pseudohyperbolic distance between $z,w \in \Hup$. It is known that $d(h(z),h(w)) \le d(z,w)$ for any analytic self-map $h$ of $\Hup$, where the equality holds for some $z\ne w$ if and only if $h$ is a conformal automorphism (e.g.,  \cite[Lemma 1.2]{G2006bounded} or  \cite[Theorem 1.3.7]{BCDM20}). Hence, 
  \[
   d(f(g(z)), f(g(z))) \le d(g(z),g(w)) \le d(z,w).
  \]
  By the assumption, the above inequalities are all equalities, so that $f$ and $g$ are conformal automorphisms. 
  \end{proof}

We then construct appropriate parametrized analytic self-maps, inspired by \cite[Theorem 1.27]{B2005complex} and \cite[Theorem 2.2]{BSTV2015operator}.
  
  \begin{proposition}
      \label{prop:not_conf}
We consider the transform
\begin{equation*}
h_\mu(z)\coloneqq\frac{\eta_\mu(z)}{z}=\frac{1}{z}-\frac{1}{G_\mu\left(1/z\right)}. 
\end{equation*}
     Let $\mu\in\Prob(\RR)$ and $\nu\in\Prob(\RR_{\geq0})$ be nondegenerate. Then, the following hold. 
      \begin{enumerate}
          \item\label{item:not_auto1} $h_\mu(\Hup) \subseteq \Hup$.
          \item\label{item:not_auto2} For $z, w \in \Hup$, we have $zh_\mu(w) \in \CC \setminus [0,+\infty)$ and  
          \[
          f_z(w)\coloneqq zh_\nu(zh_\mu(w))\in \Hup.
          \]
          
          \item\label{item:not_auto3} For $z, w \in \Hup$, we have $zh_\nu(zw) \in \Hup$ and \[
          g_z(w)\coloneqq h_\mu(zh_\nu(zw)) \in \Hup.\]
          \item\label{item:not_auto4}  For any $z\in\Hup$, the analytic self-maps $f_z, g_z\colon\Hup\to\Hup$ are not conformal automorphisms.
      \end{enumerate}
  \end{proposition}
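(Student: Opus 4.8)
The plan is to establish the four assertions in order, drawing only on the phase (argument) estimates for $\eta_\mu$ and $\eta_\nu$ recorded in Propositions~\ref{prop:prop_of_eta} and~\ref{prop:prop_of_eta0} for \ref{item:not_auto1}--\ref{item:not_auto3}, and then to reduce \ref{item:not_auto4} to a single nonautomorphism statement via Lemma~\ref{lem:auto}. For \ref{item:not_auto1} I would just write $\arg h_\mu(w)=\arg\eta_\mu(w)-\arg w$ and invoke Proposition~\ref{prop:prop_of_eta}~\ref{item:eta2}: since $\mu$ is nondegenerate, both inequalities $\arg w<\arg\eta_\mu(w)<\arg w+\pi$ are strict, so $\arg h_\mu(w)\in(0,\pi)$ and $h_\mu(w)\in\Hup$. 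For \ref{item:not_auto2} and \ref{item:not_auto3} the first step is to extend the phase bound for $\eta_\nu$ from $\Hup$ to the whole slit plane: the reflection $\eta_\nu(\conj{\zeta})=\conj{\eta_\nu(\zeta)}$ from Proposition~\ref{prop:prop_of_eta0}(1) gives $\pi<\arg\eta_\nu(\zeta)<\arg\zeta$ for $\zeta\in\CC^-$, while a short computation with $\psi_\nu$ shows $\eta_\nu(\zeta)<0$, i.e.\ $h_\nu(\zeta)>0$, for $\zeta\in(-\infty,0)$. With $z,w\in\Hup$ one checks that $zh_\mu(w)$ and $zw$ avoid $[0,+\infty)$ by adding arguments, so $\eta_\nu$ may be evaluated there. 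The computation is then streamlined by the identities $zh_\nu(zw)=\eta_\nu(zw)/w$ and $f_z(w)=\eta_\nu(zh_\mu(w))/h_\mu(w)$: computing $\arg$ of each and splitting into the cases $\arg z+\arg w\lessgtr\pi$ (with the boundary value $=\pi$ handled by $h_\nu>0$ on the negative axis), the phase bounds place the result in $(0,\pi)$ in every case, giving membership in $\Hup$ and hence \ref{item:not_auto2} and \ref{item:not_auto3}.

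For \ref{item:not_auto4} the key observation is the factorization
\[
f_z=\Phi_z\circ h_\mu,\qquad g_z=h_\mu\circ\Phi_z,\qquad\text{where }\Phi_z(w)\coloneqq zh_\nu(zw).
\]
By \ref{item:not_auto3}, $\Phi_z$ is a self-map of $\Hup$, and by \ref{item:not_auto1} so is $h_\mu$; hence by Lemma~\ref{lem:auto} it suffices to prove that $\Phi_z$ is \emph{not} a conformal automorphism, for then neither $f_z$ nor $g_z$ can be. It is essential that one cannot argue instead through $h_\mu$: for $\mu=\tfrac12(\delta_{-1}+\delta_1)$ one computes $\eta_\mu(w)=w^2$, so $h_\mu=\mathrm{id}$ is an automorphism. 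The obstruction must therefore be extracted from the $\nu$-block $\Phi_z$.

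To see that $\Phi_z$ is not an automorphism, I would argue by contradiction. If it were, it would be a real Möbius map, so $\Phi_z(w)=\conj{\Phi_z(\conj{w})}$. Computing the right-hand side with the reflection symmetry $\conj{h_\nu(\zeta)}=h_\nu(\conj{\zeta})$ gives $\conj{\Phi_z(\conj{w})}=\conj{z}\,h_\nu(\conj{z}\,w)$, and rewriting both sides through $\eta_\nu$ collapses the identity to $\eta_\nu(zw)=\eta_\nu(\conj{z}\,w)$ for all $w$ for which both sides are defined; equivalently $\eta_\nu(u)=\eta_\nu(\lambda u)$ with $\lambda=\conj{z}/z=e^{-2\iu\arg z}\ne1$. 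Thus $\eta_\nu$ would be invariant under a nontrivial rotation. Choosing $u$ with $\arg u\in(\arg z,\min(2\arg z,\pi))$ places $u\in\Hup$ but $\lambda u\in\CC^-$, whence $\Im\eta_\nu(u)>0>\Im\eta_\nu(\lambda u)$ by Proposition~\ref{prop:prop_of_eta0}(2) together with the reflection, contradicting the invariance. This proves \ref{item:not_auto4}.

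The phase bookkeeping in \ref{item:not_auto1}--\ref{item:not_auto3} is routine. The genuinely delicate point, and the heart of the proposition, is \ref{item:not_auto4}. The obstacle there is twofold: first, recognizing that the failure of the automorphism property must come from the $\nu$-factor $\Phi_z$ rather than from $h_\mu$, which may itself be an automorphism; and second, converting the "real-coefficient" constraint of a hypothetical automorphism into the rotation invariance $\eta_\nu(u)=\eta_\nu(\lambda u)$, which is then easily refuted using that $\eta_\nu$ strictly preserves the upper and lower half-planes. Some additional care is needed with the branch of $\eta_\nu$ and with the argument conventions near the boundary ray $[0,+\infty)$.
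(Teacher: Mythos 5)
Your proof is correct, and for parts (1)--(3) and the reduction step of (4) it follows essentially the same route as the paper: the same phase bookkeeping with $\arg\eta_\mu$ and $\arg\eta_\nu$ (the paper's two cases $0<\arg zw\le\pi$ and $\pi<\arg zw<2\pi$ are exactly your casework), and the same factorization $f_z=\Phi_z\circ h_\mu$, $g_z=h_\mu\circ\Phi_z$ combined with Lemma~\ref{lem:auto} to reduce \ref{item:not_auto4} to showing that $\Phi_z(w)=zh_\nu(zw)$ is not an automorphism. Your observation that the obstruction cannot be extracted from $h_\mu$ (which is the identity for the symmetric Bernoulli law) is exactly right and is implicit in the paper's choice to attack the $\nu$-block. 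Where you diverge is the final contradiction: the paper simply notes that a conformal automorphism of $\Hup$, being a real M\"obius map, takes values in $\RR\cup\{\infty\}$ on the real line, whereas $\Phi_z(w)=\eta_\nu(zw)/w\to\eta_\nu(z)\in\Hup$ as $w\to1$ --- one line, no continuation across $\RR$ needed. You instead exploit the reflection symmetry $\conj{\Phi_z(\conj{w})}=\Phi_z(w)$ of a real M\"obius map to derive the rotation invariance $\eta_\nu(u)=\eta_\nu(\lambda u)$ with $\lambda=\conj{z}/z\neq1$, refuted because $\eta_\nu$ strictly preserves $\Hup$ and $\CC^-$. This works, but it silently requires knowing that $\Phi_z$ restricted to $\Hup$ agrees with the formula $zh_\nu(z\,\cdot\,)$ on the reflected points $\conj{w}$; this is fine because $w\mapsto zh_\nu(zw)$ is analytic on the connected set $\{w:zw\notin[0,+\infty)\}\supseteq\Hup$ by Proposition~\ref{prop:prop_of_eta0}(1), so the identity theorem applies, but you should say so explicitly. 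The paper's boundary-value argument buys you brevity and avoids the continuation; your argument buys nothing extra here but is equally valid once the continuation step is made precise.
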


  \begin{proof}
      \ref{item:not_auto1}\, This follows immediately from the inequality $\Im z< \Im F_\mu(z)=\Im (1/G_\mu(z)) $ for $z\in\Hup$. 

      \vspace{2mm}
      \noindent
      \ref{item:not_auto2}, \ref{item:not_auto3}\,   We first observe that the fact $h_\mu(w) \in \Hup$ implies  $zh_\mu(w)\in\CC\setminus[0,+\infty)$ for $z,w\in\Hup$, so that the expression $zh_\nu(zh_\mu(w))$ is well-defined. It suffices to show that $k_z(w)\coloneqq zh_\nu(zw) \in \Hup$ for all $z,w \in \Hup$ since it ensures $f_z = k_z \circ h_\mu$ and $g_z=h_\mu\circ k_z$ are self-maps of $\Hup$. 
  
      \begin{enumerate}
          \item[(i)] When $0<\arg zw \leq\pi$, it holds that $\arg zw\leq\arg\eta_\nu(zw)\leq\pi$. Consequently, we have
          \begin{align*}
              \arg k_z(w)&=\arg \frac{\eta_\nu(zw)}{w} \in [\arg z, \pi-\arg w] \subseteq(0,\pi).
          \end{align*}

          \item[(ii)] When $\pi <\arg zw ~(<2\pi)$, it holds that  $\pi\le \arg \eta_\nu(zw)\le\arg zw$. Thus, we obtain
          \begin{align*}
             \arg k_z(w)&=\arg \frac{\eta_\nu(zw)}{w} \in [\pi - \arg w, \arg z] \subseteq(0,\pi). 
          \end{align*}
       
      \end{enumerate}

\vspace{2mm}  \noindent
\ref{item:not_auto4}\, Suppose that $f_z=k_z \circ h_\mu$ is a conformal automorphism of the upper half-plane. By Lemma \ref{lem:auto}, the map $k_z$ must be a conformal automorphism. Recall that the group of conformal automorphisms of $\Hup$ is given by
      \begin{align}
          \Aut(\Hup)=\left\{\frac{az+b}{cz+d}\ \colon a,b,c,d\in\RR,\ ad-bc>0\right\}. \label{eq:auto}
      \end{align}
      Observe that every conformal automorphism takes real values or infinity on the real line. However, letting $w\to 1$, we have $k_z(w)=\eta_\nu(zw)/w\to\eta_\nu(z)\in\Hup$ for $z\in\Hup$, which is a contradiction. The proof for $g_z$ is the same.  
  \end{proof}

  \subsection{Proof of Theorem { \ref{thm:subordination}}: the case of compact support}
  We first show the existence of subordination functions for compactly supported probability measures with nonzero expectation. Although this result has been established in a more general bounded operator-valued setting   \cite{BSTV2015operator}, we provide a simplified proof tailored to our scalar-valued case for the sake of completeness.
  
  \begin{proposition}
      \label{prop:bdd_subordination}
      For any nondegenerate compactly supported measures $\mu\in\Prob(\RR)$ and $\nu\in\Prob(\RR_{\geq0})$, each of which has nonzero expectation, there exist analytic functions $\omega_1\colon\Hup\to\Hup$ and $\omega_2\colon\Hup\to\CC\setminus[0,+\infty)$ satisfying the following at any point $z\in \Hup$:
      \begin{enumerate}
    
        \item $\eta_{\mu\multconv\nu}(z) =\eta_\mu(\omega_1(z))=\eta_\nu(\omega_2(z))= \omega_1(z)\omega_2(z)/z$,
       
        \item\label{item:cond2_bdd} $\arg z\leq \arg \omega_2(z)\leq \arg z+\pi$, or equivalently, $\omega_2(z)/z \in \Hup$, 
      
        \item $\omega_1(z)$ and $\omega_2(z)/z$ are the fixed points in $\Hup$ of the analytic self-maps $f_z$ and $g_z$ defined in Proposition  \ref{prop:not_conf}, respectively. 
    \end{enumerate}
    
\end{proposition}

\begin{remark}
    We really need two different fixed point equations because some $\omega_1$ does not satisfy $\omega_1(z)/z \in \Hup$, and some $\omega_2$ is not a self-map of $\Hup$; see Remark \ref{eq:omega_1_not_eta} for counterexamples.  
\end{remark}

  \begin{proof}
  Since the means of $\mu$ and $\nu$ are nonzero, we have $\eta_\mu^\prime(0) \neq 0 \neq \eta_\nu^\prime(0)=:m_\nu$, and by the inverse function theorem, $\eta_\mu$ and $\eta_\nu$ are univalent in some neighborhood of $0$. Let $\eta_\mu^\inv$ and $\eta_\nu^\inv$ denote their local inverses. In a neighborhood of $0$, we have $z \eta_{\mu \multconv \nu}^\inv(z) = \eta_\mu^\inv(z) \eta_\nu^\inv(z)$. We now define two analytic functions $\omega_1(z)$ and $\omega_2(z)$
      \begin{align*}
          \omega_1(z)\coloneqq\eta_\mu^\inv\circ\eta_{\mu\multconv\nu}(z),\quad\omega_2(z)\coloneqq\eta_\nu^\inv\circ\eta_{\mu\multconv\nu}(z)
      \end{align*}
      around the origin. In particular, they satisfy $z\eta_{\mu \multconv \nu}(z)=\omega_1(z)\omega_2(z)$. Following the calculations in  \cite[Section 2.2.1]{BSTV2015operator}, for sufficiently small $z$, we have 
      \begin{align*}
          f_z(\omega_1(z))&=zh_\nu(zh_\mu(\omega_1(z)))=\frac{\eta_\nu(zh_\mu(\omega_1(z)))}{h_\mu(\omega_1(z))}=\frac{\eta_\nu(z\eta_\mu(\omega_1(z))/\omega_1(z))\omega_1(z)}{\eta_\mu(\omega_1(z))}\\
          &= \frac{\eta_\nu(z\eta_{\mu\multconv\nu}(z)/\omega_1(z))\omega_1(z)}{\eta_{\mu\multconv\nu}(z)}=\frac{\eta_\nu(\omega_2(z))\omega_1(z)}{\eta_{\mu\multconv\nu}(z)}= \omega_1(z),
      \end{align*}
      which implies that $\omega_1(z)$ is a fixed point of $f_z$ in a sufficiently small neighborhood of $0$. Since $\eta_\mu^\inv(z) = \frac{z}{m_\mu}+O(z^2)$  and $\eta_{\mu\boxtimes\nu}(z) = m_{\mu\boxtimes\nu}z +O(z^2) = m_{\mu}m_\nu z +O(z^2)$ as convergent power series expansions, we obtain $\omega_1(z) = m_\nu z + O(z^2)$ as $z\to0$. The fact $m_\nu>0$ yields $\omega_1(\iu y) \in \Hup$ for sufficiently small $y >0$.  Together with Proposition \ref{prop:not_conf}, this implies that the assumptions of Theorem \ref{thm:dwfix} are fulfilled. Hence, $\omega_1(z)$ can be extended to an analytic self-map of $\Hup$ satisfying $\omega_1(z)=f_z( \omega_1(z))$ for $z\in\Hup$. Furthermore, the domain of $\omega_2(z)$ is also extendable to $\Hup$ by the relation $\omega_2(z)=zh_\mu(\omega_1(z))$, which also verifies that $\omega_2(z)/z \in \Hup$. These functions $\omega_1$ and $\omega_2$ satisfy $\eta_\mu(\omega_1(z))=\eta_\nu(\omega_2(z))=\eta_{\mu \multconv \nu}(z)$ and $g_z(\omega_2(z)/z)=\omega_2(z)/z$ in a neighborhood of $0$, and hence on $\Hup$. 
  \end{proof}

\subsection{Proof of Theorem { \ref{thm:subordination}}: the case of unbounded support}

We next discuss the free multiplicative convolution of probability measures with unbounded support. Unfortunately, in this case, the $\eta$-transforms of probability measures are not necessarily analytic around the origin, and the arguments in the bounded case do not work. We circumvent this difficulty by approximating probability measures with those with bounded support in an appropriate manner.

Lemma \ref{lem:ex_of_fp} below plays a crucial role in ensuring that the subordination functions emerge as fixed points (or DW-points) of $f_z$ and $g_z$: as we see in the proof of Theorem \ref{thm:subordination} below, this lemma guarantees the existence of the DW-point of $f_z$ in $\Hup$ for some $z\in\Hup$, thereby confirming one of the assumptions of Theorem \ref{thm:dwfix}. 

Before stating the next lemma, let us observe that for $\nu\ne\delta_0$ the function $\eta_\nu|_{\iu\Hup}$ extends to a continuous and injective function on the closed left half-plane $\cl{\iu\Hup}$, analytic on $\cl{\iu\Hup}\setminus\{0\}$, which maps $\iu\Hup$ onto a Jordan domain symmetric about the real line, and maps $\iu \RR$ onto the boundary of $\eta_\nu(\iu\Hup)$; see \cite[Proposition 6.2]{BV1993free}. In particular, $\eta_\nu|_{\cl{\iu\Hup}}$ has a  compositional inverse map that we denote by $\eta_\nu^\inv$. For the reader's convenience, here is an outline of the proof. By direct calculations we can see that $\Im[\psi_\nu']>0$ on $(\iu\Hup\cap \Hup) \cup \iu(0, +\infty)$ and so $\psi_\nu$ is injective on this region by Noshiro-Warschawski's theorem \cite[Theorem 3.1.3]{BCDM20}, and it takes values in $\Hup$. Also we can easily see that $(-\infty,0] \ni t\mapsto\psi_\nu(t) \in (-1+\nu(\{0\}),0]$ is a strictly increasing bijection. Therefore, $\psi_\nu$ is homeomorphic from $\cl{\iu\Hup\cap \Hup}$ onto its range. By symmetry, the same holds on $\cl{\iu\Hup}$.

  \begin{lemma} Let $\mu\in\Prob(\RR)$ and $\nu\in\Prob(\RR_{\geq0})$ be nondegenerate. 
      \label{lem:ex_of_fp}
      There exists a complex number $w\in\Hup$ satisfying
      \begin{enumerate}
          \item\label{item:cond1} $\eta_\mu(w)\in\eta_\nu(\cl{\iu\Hup}\setminus\{0\})$, and 
          \item\label{item:cond2} $z:= {w\eta_\nu^\inv(\eta_\mu(w))}/{\eta_\mu(w)} \in \Hup$, 
      \end{enumerate}
      where $\eta_\nu^\inv$ stands for the compositional inverse of $\eta_\nu$ described above. 
      The first condition is necessary for the second formula to be well-defined.
  \end{lemma}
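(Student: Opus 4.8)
The plan is to separate an algebraic reduction, which is clean, from a genuine existence step, which is the heart of the matter. I first record the geometry of $\eta_\nu$ on $\cl{\iu\Hup}$ described just before the lemma: it is a homeomorphism onto $\cl D$, where $D=\eta_\nu(\iu\Hup)$ is a Jordan domain symmetric about $\RR$ with $0\in\partial D$; by Proposition~\ref{prop:prop_of_eta0} it carries the negative half-line $(-\infty,0)\subseteq\iu\Hup$ monotonically onto the open interval $I_\nu\coloneq(1-1/\nu(\{0\}),0)\subseteq D\cap\RR$, so that $\eta_\nu^\inv$ maps $I_\nu$ back into $(-\infty,0)$. Now fix any $w\in\Hup$ with $\zeta\coloneq\eta_\mu(w)\in\cl D\setminus\{0\}$, put $\omega_2\coloneq\eta_\nu^\inv(\zeta)$ and $z\coloneq w\omega_2/\zeta$. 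Tracking arguments via the inequalities $\arg w<\arg\zeta<\arg w+\pi$ (Proposition~\ref{prop:prop_of_eta}) and $\arg\omega_2-\arg\zeta\in(-\pi/2,\pi/2)$, with the sign of $\arg\omega_2-\arg\zeta$ dictated by the half-plane containing $\zeta$ (Proposition~\ref{prop:prop_of_eta0} and reflection), I isolate two configurations that force $z\in\Hup$: either (c) $\zeta\in I_\nu$, whence $\omega_2\in(-\infty,0)$, $\omega_2/\zeta>0$ and $\arg z=\arg w$; or (a) $\zeta\in D\cap\Hup$ with $\arg w\in(\pi/2,\pi)$, whence $\arg z\in(\arg w-\pi/2,\arg w]\subseteq(0,\pi)$. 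In either case condition~\ref{item:cond1} is automatic, so the task reduces to producing a single $w\in\Hup$ realising (c) or (a).

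Configuration (c) is the most convenient and is available whenever $\eta_\mu$ takes a negative real value of small modulus, since $I_\nu$ contains a one-sided neighbourhood of $0$. When $\mu$ is symmetric this is immediate: $\eta_\mu(\iu y)\in(-\infty,0)$ for every $y>0$ and tends to $0$ as $y\to0^+$, so $w=\iu y$ works for small $y$. For general $\mu$, however, $\eta_\mu$ need not meet $(-\infty,0)$ at all — for instance if $\mu\in\Prob(\RR_{\ge0})$ then $\eta_\mu(\Hup)\subseteq\Hup$ — and one must instead target configuration (a): find $w$ in the second quadrant with $\eta_\mu(w)\in D\cap\Hup$.

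To realise (a) in general I would argue by approximation. Choose compactly supported nondegenerate $\mu_n\to\mu$ and $\nu_n\to\nu$ weakly, each with nonzero mean; for these Proposition~\ref{prop:bdd_subordination} already provides subordination functions, so for every $z\in\Hup$ the point $w_n\coloneq\omega_1^{(n)}(z)\in\Hup$ satisfies the analogues of~\ref{item:cond1}--\ref{item:cond2} for $(\mu_n,\nu_n)$. Fixing $z$ (say $z=\iu$) and passing to a limit, I would use Theorem~\ref{thm:ct_unifconv} (so that $\eta_{\mu_n}\to\eta_\mu$ and $\eta_{\nu_n}\to\eta_\nu$ locally uniformly, and the Jordan domains converge in the sense of kernels) together with the continuity of Denjoy--Wolff points (Theorem~\ref{thm:conv_of_dwp}) to conclude that a limit $w=\lim w_n\in\Hup$ inherits~\ref{item:cond1}--\ref{item:cond2} for $\mu,\nu$. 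A self-contained alternative is a winding/argument-principle computation showing directly that $\eta_\mu(\Hup)$ meets the good target region, exploiting the boundary values $\eta_\mu\to0$ at $0$ and $\eta_\mu\to1-1/\mu(\{0\})$ at $\infty$.

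The main obstacle is precisely this existence step, namely producing the good $w$ in the \emph{open} upper half-plane. In the approximation route the real difficulty is to prevent the seeds $w_n$ from escaping to the boundary $\RR\cup\{0,\infty\}$ of $\Hup$; this is exactly where the nondegeneracy of both $\mu$ and $\nu$ must be used, and where the merely nontangential control of $\eta_\mu$ at $0$ — together with the degeneration of its limit at $\infty$ to $\infty$ when $\mu(\{0\})=0$ — makes the estimates delicate. In the direct route the same difficulty resurfaces as the problem of controlling the unknown boundary values of $\eta_\mu$ on $\RR$ well enough to evaluate the relevant winding number.
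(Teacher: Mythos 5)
Your reduction to the two target configurations is sound and in fact matches the paper's own case analysis: your configuration (c) is the paper's Case 1 (a purely imaginary $w$ with $\eta_\mu(w)$ landing in $\eta_\nu(\cl{\iu\Hup})$), and your configuration (a) is its Case 2, where $w$ is sought on an arc in the second quadrant with $\eta_\mu(w)\in\eta_\nu((0,\iu s))$. But the existence step --- actually producing one $w\in\Hup$ realising either configuration --- is the entire content of the lemma, and your proposal does not establish it. The approximation route fails as stated: Theorem \ref{thm:conv_of_dwp} only guarantees that the Denjoy--Wolff points $w_n=\omega_1^{(n)}(\iu)$ converge to the Denjoy--Wolff point of the limiting map, which may perfectly well lie on $\RR\cup\{\infty\}$, and nothing in your argument rules this out; if it happened for every $z$, Theorem \ref{thm:dwfix} could not be invoked at all. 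You concede exactly this, so the proof is incomplete rather than wrong. Note also that the logical order matters: the lemma exists precisely to supply the single interior fixed point that Theorem \ref{thm:dwfix} requires in the unbounded case, and the paper's approximation argument enters only afterwards, to identify $\eta_\nu\circ\omega_2$ with $\eta_{\mu\multconv\nu}$ --- not to produce the fixed point.

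The paper closes the gap with a boundary-crossing argument that you gesture at (``winding/argument-principle'') but do not carry out. Using Fatou's and Privalov's theorems it finds real points $x>0$ and $y<0$ arbitrarily close to $0$ at which $\eta_\mu$ has nonzero nontangential limits, necessarily in $(\Hup\cup\RR)\setminus\{0\}$ at $x$ and in $(\CC^-\cup\RR)\setminus\{0\}$ at $y$ by the argument inequalities of Proposition \ref{prop:prop_of_eta}. If no purely imaginary $w$ works, the curve $\eta_\mu(\iu(0,\delta))$ is trapped in a Jordan domain $E\subseteq\Hup$ (or its reflection) bounded in part by $\eta_\nu((0,\iu s))$, an arc of a small circle, and a segment of $\iu\RR$; the image of the arc $\{|y|e^{\iu\theta}\colon\pi/2<\theta<\pi\}$ then starts in $E$ and must reach $\eta_\mu(y)\notin E$, and the inequalities $\arg z\le\arg\eta_\mu(z)\le\arg z+\pi$ forbid it from escaping through the other two boundary pieces, so it must cross $\eta_\nu((0,\iu s))$, yielding the desired $w$. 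Some such quantitative exploitation of the boundary behaviour of $\eta_\mu$ near $0$ is unavoidable here, and it is the ingredient missing from your proposal.
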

 \begin{proof}  
      Since $\lim_{z\to 0,z\in \iu\Hup}\eta_\nu(z)=0$ and $\lim_{z\to \infty,z\in \iu\Hup}\eta_\nu(z)=1-1/\nu(\{0\})$, the boundary of $\eta_\nu(\iu\Hup)$ consists of the smooth open simple curves $\{\eta_\nu(\iu t): t>0\} \subseteq\Hup\cap \iu\Hup$, $\{\eta_\nu(-\iu t): t>0\} \subseteq\CC^-\cap\iu\Hup$, and their common endpoints $0$ and $1-1/\nu(\{0\})$ (in the Riemann sphere if $\nu(\{0\})=0$).
      
      Choose and fix any positive real number $0<r< 1/\nu(\{0\})-1$ and set the disk $D:=\{z: |z| < r\}$ and the first hitting time $s\coloneqq\min\{t>0: \eta_\nu(\iu t) \in \partial D\}$. We also denote the first hitting point as $re^{\iu\phi}:= \eta_{\nu}(\iu s), \phi \in (\pi/2,\pi)$. 
      The three curves $\eta_\nu([0,\iu s])$, $\{re^{\iu\theta}: \pi/2 \le\theta \le \phi\}$ and $[0,\iu r]$ form a Jordan closed curve $\partial E$ that surrounds a bounded Jordan domain  $E\subseteq \Hup$, see Figure \ref{fig:E}. 

\begin{figure}[t]
    \centering
    \begin{tikzpicture}[scale=1.5]
        \begin{scope}
  \clip (0,0) circle[radius=1.3];
  \fill[gray!30]
    (0,0) -- (0,1.5) -- (-2.5,1.5)
    to[out=40,in=240] (-1.3,0.8)
    to[out=60,in=150,looseness=1.2] (-1,1.5)
    to[out=-30,in=120,looseness=1.2] (-0.5,0.7)
    to[out=-60,in=150] (0,0) -- cycle;
\end{scope}
    \draw (-2.5, 0) -- (1.5, 0); 
    \draw (0,-2) -- (0,2);
  \draw[thick] (0,0) to[out=150,in=-60] (-0.5,0.7)
    to[out=120,in=-30, looseness=1.2] (-1,1.5) to[out=150,in=60,looseness=1.2] (-1.3,0.8)
    to[out=240,in=40] (-2,0); 
     \draw (0,0) to[out=-150,in=60] (-0.5,-0.7)
    to[out=-120,in=30, looseness=1.2] (-1,-1.5) to[out=-150,in=-60,looseness=1.2] (-1.3,-0.8)
    to[out=-240,in=-40] (-2,0); 
    \node at (0,0) [below right] {$0$};
    \node at (-2,0) [below left] {\small $1-\frac1{\nu(\{0\})}$};
    \draw (0,0) circle [radius = 1.3];
    \node at (-0.2,0.7) {$E$};

     \node at (-0.66,1.1) {\small$\bullet$};
      \node at (-0.8,1.2) [above right] {$r e^{\iu \phi}$};
       \node at (0,1.3) [above right] {$\iu r$};
       \node at (0,1.3) {\small$\bullet$};
       \draw[->] (-2,1.5) -- (-1.35,0.85);
       \node at (-2,1.5) [above]{\small$\eta_\nu(\iu(0,+\infty))$};
     \end{tikzpicture}
           \caption{The domain $E$} \label{fig:E}
           \end{figure}

      For sufficiently small $\delta>0$ and for any $z$ with $0<|z|\le \delta$ and $ \pi-\phi \le \arg z \le \phi$, it holds that $\eta_\mu(z)\in D$, simply because $\nontanglim_{z\to 0}\eta_\mu(z)=0$. By the inequality $\arg z \le \arg\eta_\mu(z) \le \arg z +\pi$ and by Theorems \ref{thm:fatou} and \ref{thm:priv}, there exist real numbers $0<x<\delta$ and $-\delta<y<0$ such that the nontangential limit of $\eta_\mu(z)$ at $x$ exists in $(\Hup\cup\RR)\setminus\{0\}$, and at $y$ in $(\CC^-\cup\RR)\setminus\{0\}$.

   In the following $\Arg z$ denotes the argument of $z \in \CC\setminus\{0\}$ such that $\Arg z \in(-\pi,\pi]$.   

   \vspace{2mm}\noindent
  \textbf{Case 1.} 
    If $\eta_\mu(\iu t)\in\eta_\nu(\cl{\iu\Hup}\setminus\{0\})$ for some $0<t<\delta$, then $w\coloneqq\iu t$ satisfies the two conditions in the statement. Indeed, the first condition is clearly satisfied. For the second condition, we take  $\zeta\in \cl{\iu\Hup}\setminus\{0\}$ such that $\eta_\mu(\iu t) = \eta_\nu(\zeta)$. If $ \arg \zeta \in[\pi/2, \pi)$ then, since $\arg \eta_\nu(\zeta) \in (\arg \zeta, \pi)$, we have  
    \begin{align*}
              \Argg\left[\frac{\eta_\nu^\inv(\eta_\mu(\iu t))}{\eta_\mu(\iu t) }\right]= \Argg\left[\frac{\zeta}{\eta_\nu(\zeta)}\right] \in \left( -\frac{\pi}{2},0\right). 
          \end{align*}
 If $ \arg \zeta \in(\pi, 3\pi/2]$, by the symmetry $\eta_\nu(\conj{z})=\conj{\eta_\nu(z)}$ we have   
$ \Argg(\zeta/ \eta_\nu(\zeta)) \in ( 0,\frac{\pi}{2}).$ 
 If $\arg \zeta =\pi$ then $\Argg(\zeta/ \eta_\nu(\zeta))=0$.          
 Therefore, in any case the inequality           
          \begin{align}\label{eq:arg}
              -\frac{\pi}{2}<\Argg\left[\frac{\eta_\nu^\inv(\eta_\mu(\iu t))}{\eta_\mu(\iu t) }\right]<\frac{\pi}{2} 
          \end{align}
holds.  The fact $\arg \iu t=\pi/2$ and  \eqref{eq:arg}  immediately imply the second condition.

           \vspace{2mm}\noindent
   \textbf{Case 2.}  If $\eta_\mu(\iu t)\notin\eta_\nu(\cl{\iu\Hup}\setminus\{0\})$ for any $t\in(0,\delta)$, then the curve $C\coloneqq\{\eta_\mu(\iu t)\colon 0<t<\delta\}$ is entirely contained in either $E$ or $\conj{E}:=\{z:\conj{z} \in E \}$ because $C \subseteq \iu\Hup \cap D$. We discuss these cases separately. 

\begin{figure}[t]
    \centering
    \begin{tikzpicture}[scale=1.5]
    \draw (-2.5, 0) -- (1.5, 0); 
    \draw (0,-2) -- (0,2);
  \draw (0,0) to[out=150,in=-60] (-0.5,0.7)
    to[out=120,in=-30, looseness=1.2] (-1,1.5) to[out=150,in=60,looseness=1.2] (-1.3,0.8)
    to[out=240,in=40] (-2,0); 
     \draw (0,0) to[out=-150,in=60] (-0.5,-0.7)
    to[out=-120,in=30, looseness=1.2] (-1,-1.5) to[out=-150,in=-60,looseness=1.2] (-1.3,-0.8)
    to[out=-240,in=-40] (-2,0); 
    \node at (0,0) [below right] {$0$};
    \node at (-2,0) [below left] {\small $1-\frac1{\nu(\{0\})}$};
    \draw (0,0) circle [radius = 1.3];
    \draw[thick] (0,0)  to[out=110,in=-50] (-0.2,0.5) 
    to[out=130,in=-70, looseness=1.2] (-0.3,0.8);
    \node at (-0.3,0.8) {\small$\bullet$};
     \draw[thick] (-0.3,0.8)  to[out=200,in=70] (-0.9,0.2) 
    to[out=250,in=140, looseness=1.2] (-0.7,-0.4);
    \node at (-0.7,-0.4) {\small$\bullet$};
    \draw[->] (-1.7,1) -- (-0.9,0.3);
       \node at (-1.7,1) [above]{\small$\eta_\mu(\gamma)$};
     \node at (-0.66,1.1) {\small$\bullet$};
     \node at (-0.49,0.71) {$\star$};
      \node at (-0.8,1.2) [above right] {$r e^{\iu \phi}$};
       \node at (0,1.3) [above right] {$\iu r$};
       \node at (0,1.3) {\small$\bullet$};
       \draw[->] (1.5,1) -- (-0.1,0.4);
       \node at (1.5,1) [above]{\small$\eta_\mu((0,\iu|y|])$};
     \end{tikzpicture}
           \caption{In case 2 (a), the curve $\eta_\mu((0,\iu|y|])$ is contained in $E$. Then the curve $\eta_\mu(\gamma)$ traverses $\eta_\nu((0,\iu s))$ at a point denoted by $\star$.} \label{fig:E2}
           \end{figure}

   \begin{enumerate}[label=(\alph*),topsep=0.3em,itemsep=0.3em]
      \item   When $C$ is contained in $E$, one can find $w$ on the arc $\gamma\coloneqq\{|y|e^{\iu\theta}: \pi/2<\theta<\pi\}$ such that $\eta_\mu(w) \in\eta_\nu((0,\iu s))$ holds, as its image $\eta_\mu(\gamma)$ has to traverse the simple curve $\eta_\nu((0,\iu s))\subseteq\Hup\cap \iu\Hup$ to approach the nontangential limit $\eta_\mu(y)\in (\CC^-\cup\RR)\setminus\{0\}$ that is located outside $\cl{E}$, see Figure \ref{fig:E2}. 
          More precisely, we first consider the subarc $\gamma':= \{|y|e^{\iu\theta}: \pi/2<\theta\le \phi\}\subseteq \gamma$. If $\eta_{\mu}(\gamma')$ intersects $\eta_\nu((0,\iu s))$, we are done. Otherwise, $\eta_{\mu}(\gamma')$ is still  contained in $E$ as it intersects neither  $\{re^{\iu\theta}: \pi/2 \le\theta \le \phi\}$ nor $[0,\iu r]$ because $\eta_\mu(z) \in D$ and $\arg z \le \arg \eta_\mu(z) \le \arg z + \pi$ on $\gamma'$, respectively. 
          In this case we further look at $\gamma'':= \{|y|e^{\iu\theta}: \phi\le\theta <\pi\}$; the curve $\eta_\mu(\gamma'')$ starts from the point $\eta_\mu(|y|e^{\iu \phi})\in E$ and ends at $\eta_\mu(y)\notin\cl{E}$. Although $\eta_\mu(\gamma'')$ might not be contained in $D$ anymore, it cannot ``circumvent'' the curve $\eta_\nu((0,\iu s))$ to approach to $\eta_\mu(y)$, i.e., it intersects neither  $\{re^{\iu\theta}: \pi/2 \le\theta \le \phi\}$ nor $[0,\iu r]$ now because $\eta_\mu(|y|e^{\iu\phi})\in D$ and $\phi < \arg \eta_\mu(z) < 2\pi$ for all $z\in \gamma''$. Therefore, $\eta_\mu(\gamma'')$ must  intersect $\eta_\nu((0,\iu s))$ to approach to $\eta_\mu(y) \notin \cl{E}$. 
         
          Any $w$ on $\gamma$ such that $\eta_\mu(w) \in \eta_\nu((0,\iu s))$ also satisfies condition \ref{item:cond2}. Indeed, the fact $\arg w,\arg \eta_\mu(w) \in (\pi/2,\pi)$ and the general inequality $\arg \eta_\mu(w)\in (\arg w, \arg w +\pi)$ imply $\Argg (w/\eta_\mu(w))\in(-\pi/2,0)$, so that considering  $ \arg{\eta_\nu^\inv(\eta_\mu(w))}=\pi/2$ we obtain 
          \[
          \Argg\left[ \frac{w}{\eta_\mu(w)}\cdot \eta_\nu^\inv(\eta_\mu(w))\right]\in \left(0,\frac{\pi}{2}\right). 
          \]
          
          \item Similarly, in the case where $C$ is contained in $\overline{E}$, the arc $\{x e^{\iu\theta}: 0<\theta<\pi/2\}$ contains a point $w$ such that $\eta_\mu(w) \in \eta_\nu((0,-\iu s))$.  
          We can also check condition \ref{item:cond2} analogously to the previous case. 
  \end{enumerate}
    Combining the above arguments, we obtain the conclusion.
  \end{proof}

  \begin{proof}[\textbf{Proof of Theorem  \ref{thm:subordination}}] If $\mu$ or $\nu$ is degenerate then the statements are trivial; for example if $\mu=\delta_a~(a\ne0)$ then $\eta_\mu(z)=az, \eta_{\mu\multconv\nu}(z) =\eta_\nu(az)$, and so the subordination functions are uniquely given by $\omega_1(z)=\eta_\nu(az)/a, \omega_2(z) = a z$. We therefore assume that $\mu, \nu$ are nondegenerate.
     Let $f_z,g_z$ be the functions defined in Proposition  \ref{prop:not_conf}. We take $w,z$ as in Lemma  \ref{lem:ex_of_fp}. We have
      \begin{align*}
          f_z(w)&=zh_\nu(zh_\mu(w))=\frac{w\eta_\nu(z\eta_\mu(w)/w)}{\eta_\mu(w)}=\frac{w\eta_\nu(\eta_\nu^\inv(\eta_\mu(w)))}{\eta_\mu(w)}=w,
      \end{align*}
      so that $w$ is the DW-point of the analytic function $f_z\colon\Hup\to\Hup$.
      Thus, by Theorem  \ref{thm:dwfix}, together with the fact that $f_z(w)$ cannot be a conformal automorphism by Proposition  \ref{prop:not_conf}, there exists an analytic function $\omega_1(z)\colon\Hup\to\Hup$ such that $f_z(\omega_1(z))=\omega_1(z)$ for all $z\in\Hup$. Defining $\omega_2(z)\coloneqq zh_\mu(\omega_1(z))$, we have
      \begin{align}
          z\eta_\mu(\omega_1(z))=z\eta_\nu(\omega_2(z))=\omega_1(z)\omega_2(z), \label{eq:omegaA}
      \end{align}
      for all $z\in\Hup$. Indeed, by the definitions of $\omega_1(z)$ and $\omega_2(z)$, it follows that 
      \begin{align*}
          z\eta_\nu(\omega_2(z))&=zh_\nu(\omega_2(z))\omega_2(z)=zh_\nu(zh_\mu(\omega_1(z)))\omega_2(z) \notag \\
          &=f_z(\omega_1(z))\omega_2(z)=\omega_1(z)\omega_2(z), 
      \end{align*}
      and
      \begin{align*}
          z\eta_\mu(\omega_1(z))&=zh_\mu(\omega_1(z))\omega_1(z)=\omega_1(z)\omega_2(z) 
      \end{align*}
      for all $z\in\Hup$.
      Note that 
      \begin{align*}
          h_\mu(zh_\nu(\omega_2(z)))&=h_\mu(z\eta_\nu(\omega_2(z))/\omega_2(z))=h_\mu(\omega_1(z))\\
          &=\eta_\mu(\omega_1(z))/\omega_1(z)=\omega_2(z)/z, 
      \end{align*}
      and hence $\omega_2(z)/z\ (=h_\mu(\omega_1(z)))$ is an analytic self-map of $\Hup$ and is the DW-point of $g_z(w)= h_\mu(zh_\nu(zw))$. 
Conversely, for any analytic self-maps $\omega_1$ and $\omega_2$ satisfying equation \eqref{eq:omegaA}, the points $\omega_1(z)$ and $\omega_2(z)/z$ can be easily shown to be fixed points of $f_z$ and $g_z$, respectively, so the uniqueness follows by Theorem  \ref{thm:denjoy_wolff}.

      Next, we will prove that $\eta_\nu(\omega_2(z))=\eta_\mu(\omega_1(z))=\eta_{\mu\multconv\nu}(z)$. Let $\{\mu_n\}_{n=1}^\infty$ and $\{\nu_n\}_{n=1}^\infty$ be sequences of compactly supported probability measures with nonzero mean, converging to $\mu$ and $\nu$, respectively, with respect to the Kolmogorov--Smirnov distance, i.e.,
      \begin{align*}
          &\sup_{x\in\RR} |\mu_n((-\infty,x])-\mu((-\infty,x])|\longrightarrow0\quad(n\to\infty),\\
          &\sup_{x\in\RR} |\nu_n((-\infty,x])-\nu((-\infty,x])|\longrightarrow0\quad(n\to\infty).
      \end{align*} 
      Then, by Theorem  \ref{thm:ct_unifconv}, we obtain
      \begin{align*}
          f_z^{(n)}(w)\to f_z(w), g_z^{(n)}(w)\to g_z(w), 
      \end{align*}
      for all $w\in\Hup$. Since neither of $f_z(w),g_z(w)$ is the identity map, we have by Theorem  \ref{thm:conv_of_dwp}
      \begin{align*}
          \omega_1^{(n)}(z)\to\omega_1(z),\ \omega_2^{(n)}(z)\to\omega_2(z),
      \end{align*}
      where $\omega_1^{(n)}(z),\omega_2^{(n)}(z)$ denote the subordination functions associated with the probability measures $\mu_n,\nu_n$.
      By Theorem 4.11 of  \cite{BV1993free} and Theorem  \ref{thm:ct_unifconv}, we have
    \begin{align*}
        \eta_{\mu_n\multconv\nu_n}(z)\to\eta_{\mu\multconv\nu}(z),\quad (n\to\infty).
    \end{align*}
    Utilizing the relation $z\eta_{\mu_n\multconv\nu_n}(z)=\omega_1^{(n)}(z)\omega_2^{(n)}(z)$, we deduce that
    \begin{align*}
        z\eta_\mu(\omega_1(z))=z\eta_\nu(\omega_2(z))=\omega_1(z)\omega_2(z)=z\eta_{\mu\multconv\nu}(z)
    \end{align*}
    for all $z\in\Hup$.

    The relation $\omega_2(z)=zh_\mu(\omega_1(z))$ and Proposition  \ref{prop:not_conf} imply that $\arg z\leq\arg\omega_2(z)\leq\arg z+\pi$. 
    Observe that $f_z(w)\to 0$ as $z\nontangtou 0$. 
    Taking any sequence $\{z_n\}_{n=1}^\infty$ in a fixed domain $\Gamma_{0,\theta}$ such that $z_n\to0$ and applying the upper half-plane version of Theorem  \ref{thm:conv_of_dwp} to the functions $f_n:=f_{z_n}$ and $f\equiv0$, we deduce that  $\lim_{n\to\infty}\omega_1(z_n)=0$, and thus     $\nontanglim_{z\to0}\omega_1(z) = 0$. Using the facts that $\eta_\nu(z)\neq0$ on $\CC\setminus[0,+\infty)$, the subordination relation $\eta_\nu(\omega_2(z)) = \eta_{\mu\multconv\nu}(z)\to0$ as $z\nontangtou0$, $\eta_\nu(w)\to 1-1/\nu(\{0\})\ne0$ as $w\to \infty$ with $w \in \Delta_\theta$ for a fixed $\theta \in(0,\pi)$, together with the inequality $\arg z\leq \arg\omega_2(z)\leq\arg z+\pi$, we conclude that $\nontanglim_{z\to0}\omega_2(z)=0$, thereby completing the proof.
  \end{proof}

\section{Subordination functions as  $\eta$-transforms} \label{sec:subord2}

For $\mu \in \Prob(\RR)\setminus\{\delta_0\}$ and $\nu \in \Prob(\RR_{\ge0})\setminus\{\delta_0\}$, by the converse statement of Proposition  \ref{prop:prop_of_eta} and by Theorem \ref{thm:subordination}, 
the subordination function $\omega_2$ is the $\eta$-transform of some probability measure on $\RR$, which we denote as $\mathbb{B}_{\nu}(\mu)$. 
Recall from Section \ref{sec:ex_of_subord} that $\omega_2(z)/z = \eta_{\mathbb{B}_\nu(\mu)}(z)/z$ is the DW-point of the function $g_z(w) = h_\mu(z h_\nu(zw))$ if $\mu,\nu$ are nondegenerate. 
It is easy to generalize this to possibly degenerate measures as long as $\nu\ne\delta_0$. In particular, if $\mu=\delta_a$ then $g_z(w)\equiv a$, so that its DW-point is $\omega_2(z)/z\equiv a$, i.e., $\mathbb{B}_\nu(\delta_a)=\delta_a$. 
In this way, we can define $\mathbb B_\nu(\mu)$ for all $\mu \in \Prob(\RR)$ and $\nu \in \Prob(\RR_{\ge0})\setminus\{\delta_0\}$.  
We provide properties of $\mathbb{B}_{\nu}(\mu)$, in particular, generalizations of \cite[Theorems 1.1 and 1.2]{AH2016free}.

\begin{remark} \label{eq:omega_1_not_eta}

\begin{enumerate}[label=\rm(\roman*)]

\item  The subordination function $\omega_1$
for $\mu \coloneq \mathbf{b}_{\alpha,\rho}$ and $\nu\coloneq (\mathbf{b}_{\beta,1})^{\multconv \frac1{\alpha}}$ can be explicitly given as 
\begin{equation*}
\omega_1(z) = e^{\iu\rho\pi}(e^{-\iu\rho\pi}z)^\beta.  
\end{equation*}
As soon as $0<\rho <1$ and $0<\beta<1$, this is not an $\eta$-transform of a probability measure on $\RR$ since the inequality $\arg z \le \arg \omega_1(z) \le \arg z + \pi$ fails to hold. 
\item The function $\omega_2$ is not a self-map of  $\Hup$ in general. For example let $\mu=\frac1{2}(\delta_{-1}+\delta_1)$ and $\nu=\delta_1$. Then $\eta_\mu(z)=z^2$, $\eta_\nu(z)=z$ and $\eta_{\mu\boxtimes\nu}(z)=z^2$, so that $\omega_2(z) = z^2$ and we see $\omega_2(\Hup) = \CC\setminus [0,+\infty)$.

\end{enumerate}
\end{remark}

\subsection{Proofs of Theorems \ref{thm:subordination_hom} and \ref{thm:pde_for_semigroup}} \label{sec:subordination_hom}

As preparations, we first study the weak continuity. 

\begin{proposition}\label{prop:conv_omega2} The map $\Prob(\RR) \times (\Prob(\RR_{\ge0})\setminus\{\delta_0\})\to \Prob(\RR)$, $(\mu,\nu)\mapsto \mathbb{B}_{\nu}(\mu)$, is continuous  with respect to the weak topologies. 
\end{proposition}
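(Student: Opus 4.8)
The plan is to verify sequential continuity, which suffices since the weak topologies on $\Prob(\RR)$ and $\Prob(\RR_{\ge0})$ are metrizable. So I would fix a point $(\mu,\nu)$ with $\nu\neq\delta_0$ together with a sequence $(\mu_n,\nu_n)\to(\mu,\nu)$ weakly in which each $\nu_n\neq\delta_0$, and aim to prove $\mathbb{B}_{\nu_n}(\mu_n)\to\mathbb{B}_{\nu}(\mu)$ weakly. Writing $\omega_2^{(n)}=\eta_{\mathbb{B}_{\nu_n}(\mu_n)}$ and $\omega_2=\eta_{\mathbb{B}_{\nu}(\mu)}$ for the associated second subordination functions, the goal reduces to the pointwise convergence $\omega_2^{(n)}(z)\to\omega_2(z)$ on $\Hup$. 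Indeed, since $\eta_\rho(z)=1-z/G_\rho(1/z)$ and $\eta_\rho(z)=1$ would force $G_\rho(1/z)=\infty$, which is impossible for $z\in\Hup$, pointwise convergence of the $\eta$-transforms is equivalent to pointwise convergence of the Cauchy transforms on $\CC^-$; by Theorem \ref{thm:ct_unifconv} this forces the sought weak convergence of $\mathbb{B}_{\nu_n}(\mu_n)$.

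The key step is to realize each $\omega_2^{(n)}(z)/z$ as a Denjoy--Wolff point and to invoke Heins' continuity theorem. Recall from Section \ref{sec:ex_of_subord} that, writing $h_\rho(w)=\eta_\rho(w)/w$, the quantity $\omega_2^{(n)}(z)/z$ is the DW-point of the self-map $g_z^{(n)}(w)=h_{\mu_n}(zh_{\nu_n}(zw))$ of $\Hup$, and likewise $\omega_2(z)/z$ is the DW-point of $g_z(w)=h_\mu(zh_\nu(zw))$. By Theorem \ref{thm:ct_unifconv} (and its analogue on $\CC\setminus[0,+\infty)$ for the measures on $\RR_{\ge0}$) we have $G_{\mu_n}\to G_\mu$ and $G_{\nu_n}\to G_\nu$ locally uniformly, hence $h_{\mu_n}\to h_\mu$ and $h_{\nu_n}\to h_\nu$ locally uniformly, the reciprocals being harmless because $G_\rho(1/z)\neq0$ on $\Hup$. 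Fixing $z\in\Hup$ and using that the inner map $w\mapsto zh_{\nu_n}(zw)$ takes values in $\Hup$ (Proposition \ref{prop:not_conf}), composition yields $g_z^{(n)}\to g_z$ locally uniformly, in particular pointwise, in $w$. Transporting everything to the unit disk via the conformal equivalence of Remark \ref{rem:conf_equiv} and applying Heins' theorem (Theorem \ref{thm:conv_of_dwp}) then gives $\omega_2^{(n)}(z)/z\to\omega_2(z)/z$, whence $\omega_2^{(n)}(z)\to\omega_2(z)$ for every fixed $z\in\Hup$, as required.

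The single hypothesis of Heins' theorem still to be checked is that the limiting map $g_z$ is not the identity, and this is where the degenerate cases demand care. When $\mu$ and $\nu$ are both nondegenerate, $g_z$ is not a conformal automorphism by Proposition \ref{prop:not_conf} \ref{item:not_auto4}, so a fortiori not the identity. When $\mu=\delta_a$ or $\nu=\delta_a$ (allowing $a=0$ for $\mu$), one of $h_\mu,h_\nu$ is constant, so $g_z$ is a constant map, again not the identity; its DW-point lies in $\partial\Hup$ precisely when $\mu=\delta_0$ (where $g_z\equiv0$ and $\mathbb{B}_{\nu}(\delta_0)=\delta_0$), a situation Heins' theorem still covers since it is stated for maps into $\cl{\DD}$. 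Thus the argument runs uniformly over all admissible $(\mu,\nu)$, and I expect the main obstacle to be not analytic depth but the bookkeeping around degeneracy: making sure that $g_z^{(n)}$ is a genuine self-map of $\Hup$ (valued in $\cl{\Hup}$ when a $\delta_0$ appears), that the DW-point identification of $\omega_2^{(n)}(z)/z$ persists when degenerate measures occur along the sequence, and that the boundary-valued DW-point is admitted in the application of Theorem \ref{thm:conv_of_dwp}.
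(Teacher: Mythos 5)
Your proposal is correct and follows essentially the same route as the paper: identify $\omega_2(z)/z=\eta_{\mathbb{B}_\nu(\mu)}(z)/z$ as the Denjoy--Wolff point of $g_z$, check that $g_z$ is never the identity (splitting into the nondegenerate case, handled by Proposition \ref{prop:not_conf} \ref{item:not_auto4}, and the degenerate case, where $g_z$ is constant), apply Heins' theorem (Theorem \ref{thm:conv_of_dwp}) to the continuously varying family $g_z^{(n)}$, and convert pointwise convergence of the $\eta$-transforms into weak convergence via Theorem \ref{thm:ct_unifconv}. The paper's proof is just a terser version of the same argument; your additional bookkeeping around the boundary Denjoy--Wolff point when $\mu=\delta_0$ is consistent with the paper's convention $\mathbb{B}_\nu(\delta_0)=\delta_0$.
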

\begin{proof}
First we observe that $g_z$ is not the identity map for any $z\in \Hup$. Indeed, if $\mu,\nu$ are nondegenerate, then this is the case by Proposition  \ref{prop:not_conf}  \ref{item:not_auto4}. If $\mu$ or $\nu$ is degenerate, then $g_z(w)$ is a constant function of $w$. 

Since $g_z$ depends continuously on $(\mu, \nu)$, by Theorem  \ref{thm:conv_of_dwp}, its DW-point $\eta_{\mathbb{B}_\nu(\mu)}(z)/z$ depends continuously on $\mu$ and $\nu$ as well. Then Theorem  \ref{thm:ct_unifconv} finishes the proof. 
\end{proof}

\begin{remark}  There is no weakly continuous extension of  the map $(\mu,\nu)\mapsto \mathbb{B}_{\nu}(\mu)$ to the domain $\Prob(\RR) \times \Prob(\RR_{\ge0})$. Take $\mu = \mathbf{b}_{\alpha,1}, 0<\alpha<1$ and $\nu=\delta_s~(s>0)$, for example. Then $h_\mu(z) = (-z)^{\alpha-1}$ and $h_\nu(z)\equiv s$, so that $g_z(w) =  s^{\alpha-1} (-z)^{\alpha-1}$ and $\omega_2(z) =  - s^{\alpha-1} (-z)^{\alpha}$. As  $s\to0^+$, the measure $\nu$ goes to $\delta_0$, but $\omega_2$  diverges to $\infty$, so that $\mathbb B_\nu(\mu)$ does not converge to a probability measure.  
\end{remark}

Proposition  \ref{prop:conv_omega2} implies a partial result for Corollary  \ref{cor:wconti}, which is  enough to prove Theorem  \ref{thm:subordination_hom}. 
\begin{corollary} \label{cor:wconti0}
The restriction of free multiplicative convolution $\multconv\colon (\Prob(\RR)\setminus\{\delta_0\}) \times (\Prob(\RR_{\ge0})\setminus\{\delta_0\})\to\Prob(\RR)\setminus\{\delta_0\}$ is continuous with respect to the weak topology. 
\end{corollary}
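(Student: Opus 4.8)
The plan is to deduce the weak continuity of $\multconv$ from the weak continuity of the map $(\mu,\nu)\mapsto\mathbb{B}_\nu(\mu)$ (Proposition \ref{prop:conv_omega2}) by means of the subordination identity $\eta_{\mu\multconv\nu}(z)=\eta_\nu(\omega_2(z))$ with $\omega_2=\eta_{\mathbb{B}_\nu(\mu)}$ from Theorem \ref{thm:subordination}. Fix a point $(\mu,\nu)$ in the domain $(\Prob(\RR)\setminus\{\delta_0\})\times(\Prob(\RR_{\ge0})\setminus\{\delta_0\})$ and a sequence $(\mu_n,\nu_n)$ in the same domain converging weakly to $(\mu,\nu)$. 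Since the weak topology is metrizable, it suffices to treat sequences, and by Theorem \ref{thm:ct_unifconv} it is enough to show the pointwise convergence $\eta_{\mu_n\multconv\nu_n}(z)\to\eta_{\mu\multconv\nu}(z)$ for every $z\in\Hup$, because the $\eta$-transform and the Cauchy transform determine each other through \eqref{eq:F_psi_eta}, and $\Hup$ has accumulation points. Here I will write $\omega_2^{(n)}\coloneqq\eta_{\mathbb{B}_{\nu_n}(\mu_n)}$ and $\omega_2\coloneqq\eta_{\mathbb{B}_\nu(\mu)}$ for the relevant subordination functions.

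I would then assemble two convergence ingredients. First, Proposition \ref{prop:conv_omega2} gives $\mathbb{B}_{\nu_n}(\mu_n)\to\mathbb{B}_\nu(\mu)$ weakly, hence by Theorem \ref{thm:ct_unifconv} the functions $\omega_2^{(n)}$ converge to $\omega_2$ uniformly on compact subsets of $\Hup$; in particular $\omega_2^{(n)}(z)\to\omega_2(z)$ for each fixed $z$. Second, from $\nu_n\to\nu$ weakly I obtain $\eta_{\nu_n}\to\eta_\nu$ uniformly on compact subsets of $\CC\setminus[0,+\infty)$: indeed $\eta_{\nu_n},\eta_\nu$ extend analytically to $\CC\setminus[0,+\infty)$ by Proposition \ref{prop:prop_of_eta0}, the Cauchy transforms $G_{\nu_n}$ converge pointwise there (the integrand $x\mapsto(\zeta-x)^{-1}$ is bounded continuous on $[0,+\infty)$ for fixed $\zeta\notin[0,+\infty)$) and are locally uniformly bounded, so Vitali's/Montel's theorem applies exactly as in Theorem \ref{thm:ct_unifconv}.

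The crux, and the step I expect to be the main obstacle, is to combine these into convergence of the compositions $\eta_{\nu_n}\circ\omega_2^{(n)}$. Fix $z\in\Hup$; since $\omega_2(z)$ lies in the \emph{open} set $\CC\setminus[0,+\infty)$, I may choose a compact neighborhood $K\subseteq\CC\setminus[0,+\infty)$ of $\omega_2(z)$. Because $\omega_2^{(n)}(z)\to\omega_2(z)$, we have $\omega_2^{(n)}(z)\in K$ for all large $n$, so that
\[
\bigl|\eta_{\nu_n}(\omega_2^{(n)}(z))-\eta_\nu(\omega_2(z))\bigr|
\le \sup_{K}\bigl|\eta_{\nu_n}-\eta_\nu\bigr|
+\bigl|\eta_\nu(\omega_2^{(n)}(z))-\eta_\nu(\omega_2(z))\bigr|\longrightarrow0,
\]
the first term vanishing by the uniform convergence on $K$ and the second by continuity of $\eta_\nu$. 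Since $\eta_{\mu_n\multconv\nu_n}(z)=\eta_{\nu_n}(\omega_2^{(n)}(z))$ and $\eta_{\mu\multconv\nu}(z)=\eta_\nu(\omega_2(z))$ by Theorem \ref{thm:subordination}, this yields the desired pointwise convergence, and Theorem \ref{thm:ct_unifconv} gives $\mu_n\multconv\nu_n\to\mu\multconv\nu$ weakly. What makes this delicate is precisely that $\omega_2^{(n)}(z)$ may leave $\Hup$ and approach $[0,+\infty)$, so one must use the openness of $\CC\setminus[0,+\infty)$ and the locally uniform convergence of $\eta_{\nu_n}$ there rather than mere pointwise convergence.

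Finally, to land in the stated codomain $\Prob(\RR)\setminus\{\delta_0\}$, I would observe that $\mu\multconv\nu\neq\delta_0$ whenever $\mu,\nu\neq\delta_0$: by Theorem \ref{thm:subordination} we have $\eta_{\mu\multconv\nu}(z)=\eta_\mu(\omega_1(z))$ with $\omega_1(z)\in\Hup$, and since $\mu\neq\delta_0$, Proposition \ref{prop:prop_of_eta} \ref{item:eta2} gives $\eta_\mu(\Hup)\subseteq\CC\setminus[0,+\infty)$, which does not contain $0$. Hence $\eta_{\mu\multconv\nu}$ is not identically $0=\eta_{\delta_0}$, so $\mu\multconv\nu\neq\delta_0$, completing the argument.
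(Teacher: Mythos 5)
Your proposal is correct and follows essentially the same route as the paper: both deduce the result from Proposition \ref{prop:conv_omega2} via the subordination identity $\eta_{\mu_n\multconv\nu_n}=\eta_{\nu_n}\circ\omega_2^{(n)}$, the locally uniform convergence of $\eta_{\nu_n}$ on $\CC\setminus[0,+\infty)$, and the fact that $\omega_2(z)$ lies in that open set, finishing with Theorem \ref{thm:ct_unifconv}. You merely spell out the composition-convergence step and the verification that $\mu\multconv\nu\neq\delta_0$ in more detail than the paper does.
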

\begin{proof} Suppose that $\mu_n, \mu \in \Prob(\RR)\setminus\{\delta_0\}$ and $\nu_n,\nu\in \Prob(\RR_{\ge0})\setminus\{\delta_0\}$ such that $\mu_n \to \mu$ and $\nu_n\to\nu$. By Montel's theorem, $\eta_{\nu_n}$ converges to $\eta_\nu$ locally uniformly on $\CC\setminus [0,+\infty)$. Let $\omega_{2,n}, \omega_2$ be subordination functions for $\mu_n \multconv \nu_n, \mu\multconv \nu$, respectively. By Proposition  \ref{prop:conv_omega2}, $\omega_{2,n} (z)\to\omega_2(z)$. Since $\omega_2(z) \in \CC\setminus[0,+\infty)$ for all $z \in \Hup$, we conclude that $\eta_{\mu_n\multconv\nu_n}(z) = \eta_{\nu_n}(\omega_{2,n}(z))\to \eta_\nu(\omega_2(z))=\eta_{\mu\multconv\nu}(z)$. Then Theorem  \ref{thm:ct_unifconv} finishes the proof. 
\end{proof}

\begin{proof}[\textbf{Proof of Theorem  \ref{thm:subordination_hom}}]
If $\mu, \nu,\rho, \sigma$ have compact support and nonvanishing means, then formulas \eqref{eq:hom2} and \eqref{eq:hom} can be proved by means of $S$- or $\Sigma$-transform as in  \cite[Theorem 1.1]{AH2016free}. In the general case, formula \eqref{eq:hom2} can be proved by using Proposition  \ref{prop:conv_omega2} and by approximating $\mu,\rho, \sigma$ with measures having nonzero mean and compact support. If $\mu, \nu \ne\delta_0$, formula \eqref{eq:hom} can be proved by approximation arguments and Proposition \ref{prop:conv_omega2} and Corollary  \ref{cor:wconti0}. If $\mu=\delta_0$ or $\nu=\delta_0$ then the both hands sides of \eqref{eq:hom} are trivially $\delta_0$. 
\end{proof}

\begin{proof}[\textbf{Proof of Theorem  \ref{thm:pde_for_semigroup}}]
For notational convenience, we set $ \eta(t,z) = \eta_{\nu_t}(z)$. The fixed point equation for $\omega$ reads 
\[
\eta_\mu\left( \frac{z\eta(t,\omega)}{\omega} \right) = \eta(t,\omega). 
\]
Taking the derivative with respect to $z$ and $t$ yields the equality 
\[
\frac{\frac{\partial }{\partial t} \left[ \eta_\mu\left( \frac{z\eta(t,\omega)}{\omega} \right) \right]}{ \frac{\partial }{\partial z} \left[  \eta_\mu \left( \frac{z\eta(t,\omega)}{\omega} \right) \right] }   =   \frac{\frac{\partial }{\partial t}  \left[ \eta(t,\omega)  \right]}{\frac{\partial }{\partial z} \left[  \eta(t,\omega) \right]}. 
\]
After some calculations this equality can be simplified into 
\begin{equation}\label{eq:omega}
(\partial_z \eta)(t,\omega)  \partial_t \omega  = -(\partial_t \eta)(t,\omega) + \frac{z(\partial_t \eta)(t,\omega) }{\omega} \partial_z \omega.
\end{equation}
On the other hand, taking the derivative $\left. \frac{\di{}}{\di s}\right|_{s=0}$ of $\eta(t+s,z) = \eta(t, \eta(s,z))$ gives 
\begin{equation}\label{eq:eta}
(\partial_t\eta) (t, z)  = zK(z)(\partial_z \eta)(t, z). 
\end{equation}
Substituting \eqref{eq:eta} into \eqref{eq:omega} finishes the proof of the desired PDE \eqref{eq:omega1}. 
\end{proof}

\subsection{Three examples related to convolution powers}

Let $(\nu_t)_{t\ge0}$ be a weakly continuous $\circlearrowright$-convolution semigroup with $\nu_0=\delta_1$. Then the one-parameter family of self-maps $(\mathbb{B}_{\nu_t})_{t\ge0}$ on $\Prob(\RR)$ is a compositional semigroup consisting of homomorphisms with respect to $\multconv$.  
Three major examples are given in  \cite{AH2016free}, which we reproduce and extend below. For convolution powers appearing below, the reader is referred to Section \ref{sec:conv_powers} and references therein.

\begin{example}
If we set 
\[
\sigma_t \coloneq \frac{t}{1+t}\delta_0  + \frac{1}{1+t}\delta_{1+t}, \qquad t\ge0, 
\]
then $(\mathbb{B}_t)_{t\ge0}\coloneq(\mathbb{B}_{\sigma_t})_{t\ge0}$ coincides with the maps of Belinschi and Nica  \cite{BN2008remarkable}, i.e., 
\begin{equation}\label{eq:BN}
\mathbb{B}_{t} (\mu) = (\mu^{\addconv(1+t)})^{\uplus \frac{1}{1+t}}. 
\end{equation} 
This fact can be proved via the other two examples below. 
\end{example}

\begin{example}\label{ex:B}
The second example is $(\rho_t)_{t>0} = (\delta_{1/t})_{t>0}$. This  is not a $\circlearrowright$-convolution semigroup; $(\rho_{e^{-t}})_{t\ge0}$ or  $(\rho_{e^{t}})_{t\ge0}$ is a precise $\circlearrowright$-convolution semigroup. One can easily see that 
\begin{equation}\label{eq:B}
\mathbb{B}_{\rho_t} (\mu) = \Dil_{\frac1{t}}(\mu^{\uplus t}), 
\end{equation}
where $D_s(\mu)$ is the dilation of $\mu$ by $s\in\RR$, i.e., the pushforward of $\mu$ by the map $x\mapsto sx$. 
Hence, \eqref{eq:hom} reads 
\[
 \Dil_{\frac1{t}}((\mu \multconv \nu) ^{\uplus t})  =  \Dil_{\frac1{t}}(\mu^{\uplus t}) \multconv  \Dil_{\frac1{t}}(\nu^{\uplus t}), \qquad t > 0, 
\]
which was known in \cite[Proposition 3.5]{BN2008remarkable}. 
\end{example}

\begin{example}The third example is $(\tau_{t})_{t\ge1}$ consisting of $\tau_t = (1-\frac1{t})\delta_0 + \frac{1}{t} \delta_1$ which is again not exactly a $\circlearrowright$-convolution semigroup but $(\tau_{e^{t}})_{t\ge0}$ is. One can see that
\begin{equation}\label{eq:free_conv}
\mathbb{B}_{\tau_t}(\mu) =  \Dil_{\frac1{t}}(\mu^{\addconv t}). 
\end{equation}
One can prove this first by using $\Sigma$- or $S$-transforms  when $\mu$ has compact support and nonvanishing mean (formulas (3.11) and (2.15) in  \cite{BN2008remarkable} are helpful), and then resort to approximation when $\mu$ is arbitrary.  
Formula \eqref{eq:hom} reads
\begin{equation}\label{eq:BN_free}
 \Dil_{\frac1{t}}((\mu \multconv \nu) ^{\addconv t})  =  \Dil_{\frac1{t}}(\mu^{\addconv t}) \multconv  \Dil_{\frac1{t}}(\nu^{\addconv t}), \qquad t\ge1, 
\end{equation}
which was also known in  \cite[Proposition 3.5]{BN2008remarkable}. Notice that this identity also makes sense for $t$ smaller than $1$ whenever $\mu^{\boxplus t} \in \Prob(\RR)$ and $\nu^{\boxplus t} \in \Prob(\RR_{\ge0})$ exist. 
\end{example}

Coming back to the first example, one can check from \eqref{eq:monotone} that $\tau_{1+t} \circlearrowright \rho_{1/(1+t)} =\sigma_t$, and so formula \eqref{eq:hom2}, together with \eqref{eq:B} and \eqref{eq:free_conv}, implies \eqref{eq:BN}. 

As shown in \cite[Theorem 1.5]{BN2008remarkable}, the function $h(t,z) \coloneq z - F_{\mathbb{B}_{t}(\mu)}(z)$ satisfies the complex Burgers equation
\[
\frac{\partial h}{\partial t} = -h(t,z) \frac{\partial h}{\partial z}, \qquad h(0,z) = z - F_\mu(z).  
\]
This corresponds to the case $K(z)=z$ in Theorem  \ref{thm:pde_for_semigroup}. Some more examples are provided in \cite{AH2016free}.

\section{$T$-transform and $S$-transform}  \label{sec:T} 
In this section, we consider $F_\mu$ on $\CC^-$ for the sake of  consistency with other transforms related via  \eqref{eq:F_psi_eta}. 
We begin by restating the definition of $T$-transform in \eqref{eq:T-transform}, confirming that the limit indeed exists. 

For $\mu \in \Prob(\RR)$, $t\ge1$ and $z \in \CC^-\cup \RR$, the function
\[
L_{\mu,t,z}(w) \coloneq  z +(t-1)(F_\mu(w)-w) 
\]
 is an analytic map from $\CC^-$ into $\CC^-\cup \RR$. If $z \in \CC^-$ or $\mu$ is nondegenerate and $t>1$ then $L_{\mu,t,z}$ is a self-map of $\CC^-$. Let $\sof_{\mu,t}(z)$ be  the DW-point of $L_{\mu,t,z}$. Note that $\sof_{\mu,1}(z) =z$. 
It is shown in  \cite[Section 5]{BB2005partially} that $\sof_{\mu,t}$ is a continuous self-map of $\CC^-\cup \RR$ and its restriction to $\CC^-$ is an analytic self-map of $\CC^-$. 
Moreover, by Theorem  \ref{thm:conv_of_dwp}, the map 
\begin{equation}\label{eq:conti}
\begin{split}
\Prob(\RR) \times [1,+\infty) \times (\CC^-\cup \RR) &\to  \CC^-\cup \RR, \\
 (\mu,t,z) &\mapsto \sof_{\mu,t}(z)
\end{split}
\end{equation}
 is also continuous. The function $\sof_{\mu,t}$ satisfies  \cite[Section 5]{BB2005partially}
\begin{equation}\label{eq:F}
F_{\mu^{\addconv t}} (z) = F_\mu (\sof_{\mu,t}(z)) = \frac{t\sof_{\mu,t}(z) -z}{t-1},  \qquad t>1, ~z \in \CC^-, 
\end{equation}
which allows us to extend $F_{\mu^{\addconv t}}$ to a continuous self-map of $\CC^-\cup \RR$, still denoted by the same symbol. For $x \in \RR$ such that $\sof_{\mu,t}(x) \in \RR$, by Lindel\"of's theorem and \eqref{eq:F}, $F_\mu$ has a nontangential limit at $\sof_{\mu,t}(x)$ and the limit equals $\frac{t\sof_{\mu,t}(x) -x}{t-1}$. 
Thus \eqref{eq:F} can be extended to all $z \in \CC^- \cup \RR$.

\begin{definition}
The \emph{$T$-transform of $\mu \in \Prob(\RR)$} and \emph{$S$-transform of $\mu\in \Prob(\RR)\setminus\{\delta_0\}$} are defined by 
\begin{align}
&T_\mu(-s) \coloneq - s F_{\mu^{\addconv 1/s}} (0) = - F_{\Dil_s(\mu^{\addconv 1/s})} (0)= \frac{s \sof_{\mu,1/s}(0)}{s-1}, &\hspace{-10mm} 0<s<1, \label{def:T} \\ 
&S_\mu(-s) \coloneq \frac{1}{T_\mu(-s)},&\hspace{-10mm} 0<s<1-\mu(\{0\}).  \label{def:S}
\end{align}
Recall that $D_s$ is the  dilation operator introduced in Example \ref{ex:B}. 
\end{definition}

\subsection{Proof of Theorem \ref{thm:T} and some properties}
We begin with some basic properties of the $T$-transform that arises immediately from the definition and known facts. 

\begin{proposition}\label{prop:omega} Let $\mu\in \Prob(\RR)\setminus\{\delta_0\}$.  
\begin{enumerate}
\item\label{itemO1} $ \sof_{\mu,t}(0) =0$ for $1\le t \le 1/(1-\mu(\{0\}))$ and $ \sof_{\mu,t}(0)\ne 0$ for $t > 1/(1-\mu(\{0\}))$. 
\item\label{itemO2} The function $t\mapsto \sof_{\mu,t}(0)$ is continuous on $[1,+\infty)$ and injective on $[1/(1-\mu(\{0\})),+\infty)$. 
\item\label{itemO3} For any $z \in \CC^- \cup \RR$, $\lim_{t\to+\infty} \sof_{\mu,t}(z) =\infty$. 
\end{enumerate}
\end{proposition}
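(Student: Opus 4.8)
I would first dispose of the degenerate case $\mu=\delta_a$ with $a\ne0$, which is completely explicit: since $\mu^{\addconv t}=\delta_{ta}$, one gets $\sof_{\mu,t}(z)=z-(t-1)a$ directly from \eqref{eq:F}, and assertions \ref{itemO1}--\ref{itemO3} are then immediate (note $\mu(\{0\})=0$ here). So assume $\mu$ is nondegenerate. The whole plan is to exploit the fixed-point form of \eqref{eq:F}: writing $w_t:=\sof_{\mu,t}(z)$,
\[
F_\mu(w_t)-w_t=\frac{w_t-z}{t-1},
\]
together with the Pick--Nevanlinna representation $F_\mu(w)=a+w+\int_\RR\frac{1+sw}{s-w}\,\rho(\di{s})$ from \eqref{eq:PN}, whose measure $\rho$ is finite and nonzero by nondegeneracy. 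Taking imaginary parts there gives the workhorse identity $\Im F_\mu(w)=\Im w\,\bigl(1+\int_\RR\frac{1+s^2}{|s-w|^2}\,\rho(\di{s})\bigr)$.

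For \ref{itemO1} I would treat $t>1$ (the value $t=1$ being trivial) and study the self-map $L_{\mu,t,0}(w)=(t-1)(F_\mu(w)-w)$ of $\CC^-$. Stieltjes inversion (Theorem \ref{thm:ct_nontang} \ref{item:G2}) gives $\nontanglim_{w\to0}wG_\mu(w)=\mu(\{0\})$, hence $\nontanglim_{w\to0}F_\mu(w)/w=1/\mu(\{0\})\in(1,+\infty]$. When $\mu(\{0\})>0$ this shows $0$ is a boundary fixed point of $L_{\mu,t,0}$ whose angular derivative is $(t-1)\bigl(1/\mu(\{0\})-1\bigr)$, so by the Denjoy--Wolff characterization $\sof_{\mu,t}(0)=0$ exactly when this is $\le1$, i.e.\ when $t\le1/(1-\mu(\{0\}))$, and $\sof_{\mu,t}(0)\ne0$ otherwise. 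When $\mu(\{0\})=0$ the angular derivative is forced to be infinite (or $0$ is not even a fixed point), so $\sof_{\mu,t}(0)\ne0$ for every $t>1$, matching the threshold $1$.

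For \ref{itemO2}, continuity of $t\mapsto\sof_{\mu,t}(0)$ follows at once from the continuity of the map \eqref{eq:conti}. For injectivity on $[1/(1-\mu(\{0\})),+\infty)$ I would put $z=0$ in the displayed relation, which rearranges to $F_\mu(w_t)/w_t=t/(t-1)$ whenever $w_t\ne0$ (reading $F_\mu(w_t)$ as a nontangential limit if $w_t\in\RR$). Since $s\mapsto s/(s-1)$ is strictly decreasing on $(1,+\infty)$, two parameters beyond the threshold producing the same nonzero value of $\sof_{\mu,t}(0)$ must coincide; as $\sof_{\mu,t}(0)=0$ only at the left endpoint, injectivity holds on the closed half-line.

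For \ref{itemO3}, the case $z\in\CC^-$ is easy: taking imaginary parts in the displayed relation yields $\Im w_t\le\Im z<0$, so any bounded subfamily $\{w_{t_n}\}$ with $t_n\to\infty$ clusters at some $w^*\in\CC^-$ with $F_\mu(w^*)=w^*$, contradicting $\Im F_\mu<\Im$ on $\CC^-$; hence $w_t\to\infty$. The main obstacle is $z\in\RR$, where the DW-point can now drift onto the real axis and cannot be kept in the open half-plane. There I would instead extract the single estimate
\[
\int_\RR\frac{1+s^2}{|s-w_t|^2}\,\rho(\di{s})\le\frac{1}{t-1},
\]
obtained from the workhorse identity when $w_t\in\CC^-$, and from the boundary angular-derivative inequality $\le1$ combined with the Julia--Wolff--Carath\'eodory bound (Theorem \ref{thm:JC}) for $F_\mu'$ when $w_t\in\RR$. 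If a subfamily stayed bounded, $w_{t_n}\to w^*\in\RR$, then Fatou's lemma applied to the left-hand side would force $\int_\RR(1+s^2)|s-w^*|^{-2}\,\rho(\di{s})=0$, i.e.\ $\rho=0$, contradicting nondegeneracy. Hence $\sof_{\mu,t}(z)\to\infty$, and I expect this real-boundary analysis to be the delicate point of the whole proposition.
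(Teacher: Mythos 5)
Your proof is correct, but parts \ref{itemO1} and \ref{itemO3} follow genuinely different routes from the paper. For \ref{itemO1} the paper simply cites \cite[Proposition 5.1]{BB2005partially} (which states $F_{\mu^{\addconv t}}(0)=0$ iff $\mu(\{0\})\ge(t-1)/t$, equivalent to your threshold), whereas you re-derive this by computing the angular derivative $(t-1)(1/\mu(\{0\})-1)$ of $L_{\mu,t,0}$ at the boundary fixed point $0$ and invoking the Denjoy--Wolff characterization; this is self-contained and is essentially the mechanism behind the cited result. Part \ref{itemO2} is identical to the paper's argument. For \ref{itemO3} the paper has a one-line proof: since $L_{\mu,t,z}(w)\to\infty$ pointwise as $t\to+\infty$ (because $\Im(F_\mu(w)-w)<0$ on $\CC^-$ for nondegenerate $\mu$), Heins' continuity of Denjoy--Wolff points (Theorem \ref{thm:conv_of_dwp}) forces $\sof_{\mu,t}(z)\to\infty$, uniformly in whether $z$ is real or not. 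Your argument instead extracts the estimate $\int_\RR(1+s^2)|s-w_t|^{-2}\,\rho(\di{s})\le 1/(t-1)$ from the Pick--Nevanlinna representation and rules out bounded cluster points by Fatou's lemma; this is more laborious but entirely elementary and avoids Heins' theorem. One small imprecision: in the real-boundary case you assert that a bounded cluster point $w^*$ of $\{w_{t_n}\}$ lies in $\RR$, while it could a priori lie in $\CC^-$; this is harmless, since your Fatou argument yields $\int_\RR(1+s^2)|s-w^*|^{-2}\,\rho(\di{s})=0$ and hence $\rho=0$ for any finite cluster point $w^*$, real or not. In short, both proofs are valid; the paper's buys brevity by leaning on Heins' theorem and an external citation, yours buys self-containedness at the cost of the boundary angular-derivative analysis.
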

\begin{proof}{\bf \ref{itemO1}} This is a direct consequence of  \cite[Proposition 5.1]{BB2005partially} showing that, for $t>1$, $F_{\mu^{\addconv t}} (0) = 0$ if and only if $0$ is an atom of $\mu$ with $\mu(\{0\})\ge (t-1)/t$.   

\vspace{2mm}
\noindent
{\bf  \ref{itemO2}} Continuity is a special case of the continuity of the map \eqref{eq:conti}. For injectivity, suppose that $\sof_{\mu,t_1} (0) =\sof_{\mu,t_2}(0)$, $t_1, t_2 > 1/(1-\mu(\{0\}))$. By \eqref{eq:F} we obtain  
\[
\frac{t_1 \sof_{\mu,t_1}(0)}{t_1-1} = F_\mu(\sof_{\mu,t_1} (0)) = F_\mu(\sof_{\mu,t_2} (0)) =\frac{t_2 \sof_{\mu,t_2}(0)}{t_2-1}. 
\]
This implies $t_1/(t_1-1) = t_2/(t_2-1)$, i.e., $t_1 = t_2$ as desired. 

\vspace{2mm}
\noindent
{\bf \ref{itemO3}} By Theorem  \ref{thm:conv_of_dwp}, it suffices to show that $L_{\mu,t,z}(w) \to \infty$ as $t\to+\infty$. 
If $\mu=\delta_a ~(a\ne0)$ then $L_{\mu,t,z}(w) = z - (t-1)a \to \infty$. If $\mu$ is nondegenerate then $F_\mu(w)-w$ has negative imaginary parts on $\CC^-$, and so $L_{\mu,t,z}(w) \to \infty$. 
\end{proof}

Proposition \ref{prop:omega} implies that $T_\mu(u)\ne0$ for $\mu(\{0\})-1<u<0$,  $T_\mu$ and $S_\mu$ are continuous functions into $\Hup\cup\RR$ and $(\CC^- \cup \RR)\setminus\{0\}$, respectively, and additionally 
\begin{equation}\label{eq:Tat0}
\lim_{u\to0^-}\frac{T_\mu(u)}{u}=\infty\qquad \text{and} \qquad \lim_{u\to0^-} uS_\mu(u)=0,\qquad \mu\ne \delta_0. 
\end{equation}
On the other hand, one can easily see $\sof_{\delta_0,t}(z) =z$ and so $T_{\delta_0}\equiv0$.

We give a proof of Theorem  \ref{thm:T} below. The continuity \ref{itemT0} is already proved. The most important \ref{itemT1}, the multiplicativity $T_{\mu\multconv\nu}=T_\mu T_\nu$,  is based on the following crucial lemma. 

\begin{lemma}\label{lem:T} 
Suppose that $\mu \in \Prob(\RR), \nu \in \Prob(\RR_{\ge0})$ such that $\mu(\{0\})=\nu(\{0\})=0$ and  $F_\mu, F_\nu$ extend to continuous functions from $\CC^-\cup\{0\}$ into $\CC^- \cup \RR$, still denoted as $F_\mu, F_\nu$. Then $F_{\mu \multconv \nu}$ has a nontangential limit at $0$, denoted as $F_{\mu \multconv \nu}(0)$, and 
\begin{equation}\label{eq:inverse_mean}
F_{\mu \multconv \nu}(0) = - F_\mu(0) F_\nu(0). 
\end{equation}
\end{lemma}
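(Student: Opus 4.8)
\emph{Plan.} Throughout I write $h_\rho(z)=\eta_\rho(z)/z=1/z-F_\rho(1/z)$ as in Proposition~\ref{prop:not_conf}. The first move is a reformulation. If $z\to\infty$ inside $\Hup$, then $1/z\to0$ from within $\CC^-$, where $F_\rho$ is continuous up to $0$ by hypothesis, so $F_\rho(1/z)\to F_\rho(0)$ along \emph{every} path to $\infty$ in $\Hup$; consequently $h_\rho(z)\to -F_\rho(0)$. Since also $F_{\mu\multconv\nu}(1/z)=1/z-h_{\mu\multconv\nu}(z)$ with $1/z\to0$, the lemma is equivalent to the single statement
\[
\nontanglim_{z\to\infty} h_{\mu\multconv\nu}(z)=F_\mu(0)\,F_\nu(0),
\]
so the whole problem is transferred to the behaviour of $h_{\mu\multconv\nu}$ at $\infty$.

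The engine is the subordination identity. From the first relation in Theorem~\ref{thm:subordination}, $\eta_\mu(\omega_1(z))=\omega_1(z)\omega_2(z)/z$ gives $h_\mu(\omega_1(z))=\omega_2(z)/z$, and symmetrically $h_\nu(\omega_2(z))=\omega_1(z)/z$; multiplying these yields the key factorisation
\[
h_{\mu\multconv\nu}(z)=h_\mu(\omega_1(z))\,h_\nu(\omega_2(z)),\qquad z\in\Hup.
\]
Thus, once I show that $\omega_1(z)\to\infty$ and $\omega_2(z)\to\infty$ as $z\to\infty$ nontangentially, the paragraph above gives $h_{\mu\multconv\nu}(z)\to(-F_\mu(0))(-F_\nu(0))=F_\mu(0)F_\nu(0)$; Lindel\"of's theorem (Theorem~\ref{thm:Lind}), applied to $F_{\mu\multconv\nu}$, lets me upgrade convergence along a single path to a genuine nontangential limit if needed.

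It remains to control the subordination functions at $\infty$. Since $\mu(\{0\})=\nu(\{0\})=0$ we have $(\mu\multconv\nu)(\{0\})=0$ (cf.\ \cite{ACSY2024universality}), whence $\eta_{\mu\multconv\nu}(z)\to\infty$ nontangentially by Proposition~\ref{prop:prop_of_eta}. For $\omega_2$ the argument is clean: by Theorem~\ref{thm:subordination}~\ref{item:sub2}, $\omega_2(z)$ stays in a cone $\Delta_\theta$ avoiding $[0,+\infty)$; on that cone $\eta_\nu$ is finite at every finite nonzero point (as $G_\nu$ has no zeros on $\CC\setminus[0,+\infty)$) and tends to $0$ only at the origin, so $\eta_\nu(\omega_2(z))=\eta_{\mu\multconv\nu}(z)\to\infty$ forces $\omega_2(z)\to\infty$. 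A short check (splitting the cases $\int_0^\infty t^{-1}\di{\nu(t)}<\infty$ and $=\infty$ via dominated convergence) shows $F_\nu(0)$ is real and that $F_\nu$ extends continuously to $0$ over the whole cone, so $\omega_1(z)/z=h_\nu(\omega_2(z))\to -F_\nu(0)$. In particular, \emph{when $F_\nu(0)\neq0$} one gets $|\omega_1(z)|\sim|F_\nu(0)|\,|z|\to\infty$; combined with $\omega_1(z)\in\Hup$ and continuity of $F_\mu$ at $0$ this yields $h_\mu(\omega_1(z))\to -F_\mu(0)$, and the proof is complete.

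The genuine obstacle is $\omega_1(z)\to\infty$, since $\eta_\mu$ can blow up at finite real boundary points (where $G_\mu$ vanishes), so the cone argument used for $\omega_2$ is \emph{not} available here. The one remaining case is the degenerate one $F_\nu(0)=0$ (equivalently $\int_0^\infty t^{-1}\di{\nu(t)}=\infty$), where only $\omega_1(z)=o(z)$ is known and $\omega_1$ could a priori approach a finite pole of $\eta_\mu$. There the target value $F_\mu(0)F_\nu(0)$ equals $0$, and I expect to dispatch it separately: eliminating $\omega_1,\omega_2$ from the subordination identities produces the quadratic relation $F_{\mu\multconv\nu}(w)^2+P(w)\,F_{\mu\multconv\nu}(w)-w\,P(w)=0$ with $P=h_\mu(\omega_1)\,h_\nu(\omega_2)$, whose roots force any nontangential limit of $F_{\mu\multconv\nu}$ at $0$ to lie in $\{0,-F_\mu(0)F_\nu(0)\}=\{0\}$; Lindel\"of's theorem then supplies existence of that limit. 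Pinning down this degenerate case rigorously is the step I anticipate to be the main difficulty.
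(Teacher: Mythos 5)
Your reduction to $\nontanglim_{z\to\infty}h_{\mu\multconv\nu}(z)=F_\mu(0)F_\nu(0)$ and the factorisation $h_{\mu\multconv\nu}(z)=h_\mu(\omega_1(z))\,h_\nu(\omega_2(z))$ are correct and coincide, up to rewriting, with the paper's final computation; your argument that $\omega_2(z)\to\infty$ is also essentially the paper's (though you import $(\mu\multconv\nu)(\{0\})=0$ from \cite{ACSY2024universality}, which the paper deliberately refuses to do so that Theorem \ref{thm:leb_decomp} \ref{item:leb_decomp3} can later be derived from this very lemma). The problem is the step you yourself flag: you only obtain $\omega_1(z)\to\infty$ from $\omega_1(z)/z=h_\nu(\omega_2(z))\to -F_\nu(0)$ when $F_\nu(0)\neq0$. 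The case $F_\nu(0)=0$ is not degenerate or avoidable under the stated hypotheses --- take $\nu$ uniform on $[0,1]$, for which $G_\nu(z)=\log\bigl((z-1)/z\bigr)\to\infty$ as $z\to0$ in $\CC^-$, so $F_\nu$ extends continuously with $F_\nu(0)=0$ and $\nu(\{0\})=0$ --- and your proposed repair does not close it: the ``quadratic relation'' has coefficient $P=h_\mu(\omega_1)h_\nu(\omega_2)=h_{\mu\multconv\nu}$, which is exactly the quantity whose boundary limit is in question, so the argument is circular; moreover knowing that a limit, \emph{if it exists}, lies in $\{0\}$ does not by itself produce existence (Lindel\"of needs a limit along some curve as input).

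The missing idea, which is how the paper handles it, is to get $\omega_1(z)\to\infty$ directly from the Denjoy--Wolff machinery rather than from the asymptotics of $\omega_2$: since $\omega_1(z)$ is the DW-point of $f_z(w)=zh_\nu(zh_\mu(w))=\eta_\nu(zh_\mu(w))/h_\mu(w)$, and for each fixed $w\in\Hup$ one has $zh_\mu(w)\to\infty$ in a cone, the hypothesis $\nu(\{0\})=0$ together with Proposition \ref{prop:prop_of_eta0} \ref{item:eta_positive3} gives $f_z(w)\to\infty$ as $z\nontangtou\infty$; Heins' continuity theorem (Theorem \ref{thm:conv_of_dwp}) then yields $\nontanglim_{z\to\infty}\omega_1(z)=\infty$ with no case distinction on $F_\nu(0)$. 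This also delivers $(\mu\multconv\nu)(\{0\})=0$ as a byproduct (via $\eta_{\mu\multconv\nu}=\eta_\mu\circ\omega_1$ and Lindel\"of), removing the need for the external citation. With that step supplied, the rest of your argument goes through.
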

\begin{remark}
\begin{enumerate}

\item[(i)] If we use Theorem \ref{thm:leb_decomp} \ref{item:leb_decomp3} then the proof can be shortened as Step 1 below would be unnecessary. In addition, the use of Theorem \ref{thm:leb_decomp} \ref{item:leb_decomp3} would allow us to drop the assumption $\mu(\{0\})=\nu(\{0\})=0$: if $\mu$ or $\nu$ has a positive mass at 0 then both sides of \eqref{eq:inverse_mean} are zero. However,  in order to give a self-contained proof of Theorem  \ref{thm:leb_decomp} \ref{item:leb_decomp3} as addressed in Remark \ref{rem:regularity}, we avoid using Theorem \ref{thm:leb_decomp} \ref{item:leb_decomp3} at this moment. \item[(ii)] If $\mu$ and $\nu$ have no mass in a neighborhood of $0$, then the claim is much easier to show: we take freely independent self-adjoint operators $X,Y$ that have distributions $\mu,\nu$ respectively, and affiliated to a $W^*$-probability space $(\mathcal A, \tau)$. The assumption implies that $X^{-1}, Y^{-1}$ are bounded and $- G_\mu(0) = \tau(X^{-1}), -G_\nu(0)=\tau(Y^{-1}), -G_{\mu\boxtimes\nu}(0) = \tau((\sqrt{X} Y \sqrt{X})^{-1})$. Using the equality $(\sqrt{X} Y \sqrt{X})^{-1} = \sqrt{X^{-1}} Y^{-1}\sqrt{X^{-1}}$, we obtain $\tau(\sqrt{X} Y \sqrt{X})^{-1}) = \tau(X^{-1}Y^{-1})=\tau(X^{-1})\tau(Y^{-1})$, which is the desired \eqref{eq:inverse_mean}. We are not sure if this kind of operator-theoretic argument or an approximation argument works to deal with the general setting of Lemma \ref{lem:T}.  
\end{enumerate}
\end{remark}

\begin{proof}

Let $\omega_1, \omega_2$ be the subordination functions for $\eta_{\mu \multconv \nu}$ as in Theorem \ref{thm:subordination}. 

{
\vspace{2mm}\noindent
{\bf Step 1.} We show $(\mu\boxtimes\nu)(\{0\})=0$. 
Recall that $\omega_1(z)$ is the DW-point of $f_z$ defined in Proposition \ref{prop:not_conf}. Observe that 
\[
f_{z}(w) = z h_\nu(z h_\mu(w)) = \frac{\eta_\nu(z h_\mu(w))}{h_\mu(w)} \to \infty   \quad \text{as} \quad z\nontangtou \infty,  
\]
where the last convergence holds by the assumption $\nu(\{0\})=0$ and Proposition  \ref{prop:prop_of_eta0}  \ref{item:eta_positive3}. Together with this fact, applying Theorem \ref{thm:conv_of_dwp} to the functions $f_n:=f_{z_n}$ and $f\equiv \infty$ for an arbitrarily selected sequence $z_n\nontangtou \infty$ in $\Hup$ shows 
\begin{equation}\label{eq:subord1_infty}
\nontanglim_{\substack{z\to \infty \\ z \in \Hup}} \omega_1(z)  =\infty. 
\end{equation}

We take any continuous curve $\{\gamma(t)\}_{t\in[0,1)} \subseteq \Hup$ such that $\gamma(t)\nontangtou \infty$ as $t\uparrow1$. 
Then we have, by Proposition  \ref{prop:prop_of_eta}  \ref{eq:eta_at_infty_zero},   
\begin{equation}\label{eq:atom0}
1-\frac{1}{(\mu\boxtimes\nu)(\{0\})}=\lim_{t\uparrow1}\eta_{\mu\boxtimes\nu}(\gamma(t)) =\lim_{t\uparrow1}\eta_{\mu}(\omega_1(\gamma(t))).  
\end{equation}
Since   $\omega_1(\gamma(t))\to\infty$  as shown in \eqref{eq:subord1_infty},    
by the Lindel\"of theorem, the above limits \eqref{eq:atom0} also equal $\nontanglim_{z\to\infty}\eta_\mu(z)$, which is $\infty$ by the assumption $\mu(\{0\})=0$. Therefore, we have $(\mu\boxtimes\nu)(\{0\})=0$. 
}


\vspace{2mm}
\noindent
{\bf Step 2.} We prove 
\begin{equation}\label{eq:subord_infty} 
\nontanglim_{\substack{z\to \infty \\ z \in \Hup}} \omega_2(z) =\infty. 
\end{equation}
Suppose $\omega_2(z_n) \to c \in \CC \cup \{\infty\}$ for some sequence $\{z_n\} \subset \Hup$ tending to $\infty$ nontangentially. It suffices to show $c =\infty$. 
Note first that $c \in\{\infty\} \cup \{0\}\cup \Delta_\theta$ for some $\theta\in(0,\pi)$ since $\arg z \le \arg \omega_2(z) \le \arg z +\pi$.  The case $c \ne \infty$ leads to a contradiction: in the identity $\eta_{\mu\multconv \nu}(z_n) = \eta_\nu(\omega_2(z_n))$, the left hand side would tend to infinity because $\mu\multconv \nu$ has no atom at 0 and by Proposition  \ref{prop:prop_of_eta}  \ref{eq:eta_at_infty_zero}, while the right hand side would converge to the finite value $\eta_{\nu}(c)$. 

\vspace{2mm}
\noindent
{\bf Step 3.}  Recalling from \eqref{eq:F_psi_eta} that 
\[
F_\mu\left(\frac1{z} \right) = \frac{1-\eta_\mu(z)}{z}, \qquad \psi_\mu(z) = \frac{\eta_\mu(z)}{1-\eta_\mu(z)},  \qquad z\in \Hup
\]
we obtain 
\begin{align*}
F_\mu\left(\frac1{\omega_1(z)} \right)F_\nu\left(\frac1{\omega_2(z)} \right) \psi_{\mu\multconv \nu}(z)
&= \frac{1-\eta_\mu(\omega_1(z))}{\omega_1(z)} \cdot \frac{1-\eta_\nu(\omega_2(z))}{\omega_2(z)}  \cdot \frac{\eta_{\mu\multconv \nu}(z)}{1-\eta_{\mu\multconv \nu}(z)} \\
&= \frac{1-\eta_{\mu\multconv \nu}(z)}{z} = F_{\mu\multconv\nu}\left(\frac1{z}\right). 
\end{align*}
As $z\nontangtou\infty~(z\in \Hup)$, by \eqref{eq:psi_at_infty_zero}, we have $\psi_{\mu\multconv \nu}(z)\to -1+(\mu\multconv \nu)(\{0\})=-1$, so that the desired \eqref{eq:inverse_mean} follows from \eqref{eq:subord1_infty} and \eqref{eq:subord_infty}. 
\end{proof}

\begin{proof}[\textbf{Proof of Theorem \ref{thm:T}  \ref{itemT1}}] If either $\mu$ or $\nu$ equals $\delta_0$ then $\mu\boxtimes\nu =\delta_0$, so that the identity $T_{\mu\boxtimes\nu}=T_\mu T_\nu\equiv0$ holds obviously. We henceforth assume $\mu,\nu\ne\delta_0$ and set $c:= \max\{\mu(\{0\}),\nu(\{0\})\} \in[0,1)$. By \cite[Proposition 5.1]{BB2005partially}, $\mu^{\boxplus 1/s}(\{0\}) = \nu^{\boxplus 1/s}(\{0\})=0$ for all $0< s < 1-c$. 
Using the identity
\eqref{eq:BN_free} and applying Lemma \ref{lem:T} for measures $\Dil_s(\mu^{\addconv 1/s})$ and $\Dil_s(\nu^{\addconv 1/s})$, we deduce 
\begin{align}
T_{\mu\multconv\nu}(-s) 
&= - F_{\Dil_s((\mu\multconv\nu)^{\addconv 1/s})} (0) 
=  -  F_{\Dil_s(\mu^{\addconv 1/s})\multconv \Dil_s(\nu^{\addconv 1/s})} (0) \notag \\ 
&= F_{\Dil_s(\mu^{\addconv 1/s})}(0) F_{\Dil_s(\nu^{\addconv 1/s})} (0) = T_\mu(-s) T_\nu(-s) \label{eq:multiplicativity_T}
\end{align}
for all $0<s < 1-c$. 

If $c=0$, we are already done. If $c>0$, then by the continuity of $T$-transform (Proposition \ref{prop:omega} \ref{itemO2}), the formula \eqref{eq:multiplicativity_T} also holds for $s=1-c$ and the value is 0 by Proposition \ref{prop:omega} \ref{itemO1}. The same proposition also implies that \eqref{eq:multiplicativity_T}  holds for $1-c <s <1$ and the value is also 0.  
\end{proof}

\begin{proof}[\textbf{Proof of Theorem  \ref{thm:T}  \ref{itemT2}}] 
Assume that $T_\lambda = T_\mu$ on $(-\ee,0)$ for some $\ee \in(0,1)$. This is equivalent to $\sof_{\lambda,t}(0) = \sof_{\mu,t}(0) =:\sof(t)$ for $t>1/\ee$. Moreover, \eqref{eq:F} entails $F_\lambda(\sof(t)) = F_\mu(\sof(t))$,  in the sense of nontangential limits from $\CC^-$ if $\sigma(t) \in \RR$. 

\vspace{2mm}
\noindent
{\bf Case 1:} $T_\mu=0$. This means $\sof_{\mu,t}(0)=0$ for all $t>1/\ee$.  On the other hand, recall that $\sof_{\mu,t}(0)$ is the DW-point of $L_{\mu, t,0}(w) = (t-1)(F_{\mu}(w)-w)$. If $F_{\mu} \ne \text{id}$ then $L_{\mu,t,0}(w)\to \infty$ as $t\to+\infty$, so that  $\sof_{\mu,t}(0) \to \infty$, a contradiction. Therefore, it must hold that $F_{\mu} = \text{id}$, i.e., $\mu =\delta_0$. 

\vspace{2mm}
\noindent
{\bf Case 2:} $T_\mu\ne0$ and there exists $t >1/\ee$ such that $\sof(t) \in \CC^-$. By Proposition  \ref{prop:omega}  \ref{itemO2}, the set $\{\sof(t): t>1/\ee\}\cap \CC^-$ contains an accumulative point in $\CC^-$ and so by the identity theorem $F_\lambda = F_\mu$ on $\CC^-$, which implies $\lambda=\mu$. 

\vspace{2mm}
\noindent
{\bf Case 3:} $T_\mu\ne0$ and $\sof(t) \in \RR$ for all $t>1/\ee$. 
By Proposition  \ref{prop:omega}  \ref{itemO2}, the trajectory $\{\sof(t): t>1/\ee\}$ contains an open interval $I \subseteq \RR$. By the Privalov theorem, $F_\lambda = F_\mu$ on $\CC^-$, so that $\lambda=\mu$. 
\end{proof}

\begin{proof}[\textbf{Proof of Theorem  \ref{thm:T}  \ref{itemT3}}]
Using the relations
\[
\frac{-s}{1-s} S_\mu(-s) = \frac1{\sof_{\mu,1/s}(0)}, \qquad \psi_\mu(z) = \frac1{z F_\mu(1/z)}-1~~(z\in \Hup)
\]
and \eqref{eq:F}, we obtain 
\begin{align}\label{eq:psi}
\psi_\mu\left( \frac{-s}{1-s} S_\mu(-s)\right)  &= \psi_\mu\left( \frac{1}{\sof_{\mu,1/s}(0)}\right) = \frac{\sof_{\mu,1/s}(0) }{F_\mu(\sof_{\mu,1/s}(0))}-1 =-s. 
\end{align}
\end{proof}

\begin{proof}[\textbf{Proof of Theorem \ref{thm:T}  \ref{itemT4} and  \ref{itemT5}}]
Assertion \ref{itemT4} is a special case of the continuity of the map \eqref{eq:conti}. As preparations for the proof of  \ref{itemT5}, let $\widehat{\RR}=\RR \cup\{\infty\}$ be the one-point compactification of $\RR$ and let $G_\lambda$ for $\lambda \in \Prob(\widehat{\RR})$ be defined as $G_{\lambda}(z) = \int_{\RR} \frac1{z-x}\lambda(\di x)$. Then $\lambda \mapsto G_\lambda$ is injective, $F_\lambda \coloneq 1/G_\lambda$ maps $\CC^-$ into $\CC^-\cup\{\infty\}$, and 
 \[
\nontanglim_{\substack{w\to\infty \\ w\in \CC^-}} \frac{F_{\lambda}(w)}{w} = \frac{1}{\lambda(\RR)} \in [1,+\infty]. 
\]

We regard $\{\mu_n: n\ge1\}$ as probability measures on $\widehat{\RR}$. Since $\{\mu_n: n\ge1\}$ is a uniformly bounded family of finite measures on the compact space $\widehat{\RR}$, it is tight and so by Prokhorov's theorem it has a subsequence (denoted as $\mu_{n'}$) that weakly converges to some $\mu \in \Prob(\widehat{\RR})$.  Since for each $z\in \CC^-$ the function $x\mapsto 1/(z-x)$ can be extended to a bounded continuous function on $\widehat{\RR}$, we have 
\[
\lim_{n'\to\infty} G_{\mu_{n'}}(z) = G_{\mu}(z), \qquad z \in \CC^-.  
\]
For each $t>1/\ee$ and $z\in \CC^-\cup \RR, w\in \CC^-$, we have $L_{\mu_{n'},t,z}(w) \to L_{\mu, t,z}(w)$, so that $\sof_{\mu_{n'},t}(z)$ converges to the DW-point $\sof_{\mu,t}(z)$ of $L_{\mu, t,z}$. By the assumption, we obtain $\sof_{\mu_{n'}, t}(0) = (1-t) T_{\mu_{n'}} (-1/t) \to (1-t) T(-1/t)  = \sof_{\mu,t}(0)$.  
If $\mu$ charged a positive mass at $\infty$, then it would entail  $\nontanglim_{w\to\infty}F_\mu(w)/w = 1/\mu(\RR)\in(1,+\infty]$, so that by Remark \ref{rem:DW}, $L_{\mu,t,0}(w) =(t-1)(F_\mu(w)-w)$ would have DW-point at $\infty$ whenever $(t-1)(1/\mu(\RR)-1)\ge1$.  
However, this would contradict the fact that the DW-point $\sof_{\mu,t}(0)= (1-t) T(-1/t)$ is finite. Hence, $\mu$ must be a probability measure on $\RR$. Because of $T_{\mu} = T$ on $(-\ee,0)$ and the already established  \ref{itemT2}, the limiting measure $\mu$ is independent of a choice of a convergent subsequence of $\{\mu_n: n\ge1\}$. The standard compactness argument shows that the original sequence $\mu_n$ weakly converges to $\mu$.   
\end{proof}

The following identity generalizes that in  \cite[Lemma 7.1]{AFPU} since the Cauchy distribution $\mathbf{c}_{a,b}$ converges weakly to $\delta_a$ as $b\to 0^+$. 
\begin{proposition}  Let $a\in \RR$, $b>0$, and $\mathbf{c}_{a,b}$ be the Cauchy distribution with density 
\begin{equation}\label{eq:cauchy_distribution}
\frac{\di{\mathbf{c}_{a,b}}}{\di{x}}=\frac{1}{\pi} \cdot \frac{b}{(x-a)^2 +b^2}, \qquad x \in \RR. 
\end{equation}
For any  $\mu\in \Prob(\RR)$ we have 
\[
S_{\mu\addconv \mathbf{c}_{a,b}}(-t) = - G_{\Dil_{t}(\mu^{\addconv \frac1{t}})}(-a - \iu b), \qquad 0<t<1. 
\]
\end{proposition}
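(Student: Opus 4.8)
The plan is to reduce the statement to the elementary fact that free additive convolution with a Cauchy law merely translates the Cauchy transform, and then to substitute this into the definition of the $T$-transform and invert.

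First I would record the two facts about $\mathbf{c}_{a,b}$ that drive everything. A residue computation gives
\[
G_{\mathbf{c}_{a,b}}(z) = \frac{1}{z-a-\iu b}, \qquad z\in\CC^-,
\]
so that $F_{\mathbf{c}_{a,b}}(z)=z-a-\iu b$ and the free cumulant transform $C_{\mathbf{c}_{a,b}}$ is linear. From additivity of $C$ under $\addconv$ together with $C_{\rho^{\addconv s}}=sC_\rho$, two consequences follow: the Cauchy family is $\addconv$-stable of index $1$, i.e. $\mathbf{c}_{a,b}^{\addconv 1/t}=\mathbf{c}_{a/t,\,b/t}$; and for every $\rho\in\Prob(\RR)$ one has the translation identity
\[
F_{\rho\addconv\mathbf{c}_{a,b}}(z)=F_\rho(z-a-\iu b), \qquad z\in\CC^-.
\]
The latter is first obtained on the truncated cone where $C_{\rho\addconv\mathbf{c}_{a,b}}=C_\rho+C_{\mathbf{c}_{a,b}}$ holds and then propagated to all of $\CC^-$ by the identity theorem, both sides being analytic there.

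Next I would exploit that $1/t>1$, so that $\mu^{\addconv 1/t}$ exists, and that free convolution powers distribute over $\addconv$: again by additivity of $C$,
\[
(\mu\addconv\mathbf{c}_{a,b})^{\addconv 1/t}=\mu^{\addconv 1/t}\addconv\mathbf{c}_{a,b}^{\addconv 1/t}=\mu^{\addconv 1/t}\addconv\mathbf{c}_{a/t,\,b/t}.
\]
Substituting this into $T_{\mu\addconv\mathbf{c}_{a,b}}(-t)=-t\lim_{z\to0,\,z\in\CC^-}F_{(\mu\addconv\mathbf{c}_{a,b})^{\addconv 1/t}}(z)$ and applying the translation identity (with $a/t,b/t$ in place of $a,b$) turns the function under the limit into $F_{\mu^{\addconv 1/t}}(z-a/t-\iu b/t)$. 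Since $a,b$ are fixed and $b/t>0$, the argument tends to $(-a-\iu b)/t\in\CC^-$ as $z\to0$, an interior point of $\CC^-$, so the nontangential limit is simply the value there and
\[
T_{\mu\addconv\mathbf{c}_{a,b}}(-t)=-t\,F_{\mu^{\addconv 1/t}}\!\left(\frac{-a-\iu b}{t}\right).
\]

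Finally I would invert. Because $\mathbf{c}_{a,b}$ is absolutely continuous, $\mu\addconv\mathbf{c}_{a,b}$ has no atom at $0$, so $S_{\mu\addconv\mathbf{c}_{a,b}}(-t)=1/T_{\mu\addconv\mathbf{c}_{a,b}}(-t)$ is defined for all $0<t<1$; then $F=1/G$ and the dilation rule $G_{\Dil_t\rho}(z)=\tfrac1t G_\rho(z/t)$ applied with $\rho=\mu^{\addconv 1/t}$ yield
\[
S_{\mu\addconv\mathbf{c}_{a,b}}(-t)=\frac{-1}{t\,F_{\mu^{\addconv 1/t}}\!\left(\frac{-a-\iu b}{t}\right)}=-\frac1t\,G_{\mu^{\addconv 1/t}}\!\left(\frac{-a-\iu b}{t}\right)=-G_{\Dil_t(\mu^{\addconv 1/t})}(-a-\iu b),
\]
which is the desired formula. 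I expect the \textbf{main difficulty} to be purely bookkeeping: justifying additivity of $C$ and the power-distributivity on their genuine domains (a truncated cone near the origin) and extending the resulting identities to all of $\CC^-$ by analytic continuation, while consistently tracking the half-plane so that the $z\to0$ evaluation is an honest interior value rather than a boundary limit.
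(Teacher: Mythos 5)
Your proof is correct and follows essentially the same route as the paper: both arguments rest on the $\addconv$-stability of the Cauchy family (the paper phrases this as $\Dil_t(\mathbf{c}_{a,b}^{\addconv 1/t})=\mathbf{c}_{a,b}$, you as $\mathbf{c}_{a,b}^{\addconv 1/t}=\mathbf{c}_{a/t,b/t}$) together with the fact that free convolution with a Cauchy law translates the $F$-transform, then unwind the definition of $T_\mu$ and invert. The extra care you take with domains of the cumulant transform and with the absence of an atom at $0$ is sound but not needed beyond what the paper already records.
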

\begin{proof} 
This follows from the definition of $S$-transform and the facts $\Dil_{t}(\mathbf{c}_{a,b}^{\addconv \frac1{t}}) = \mathbf{c}_{a,b}$ and  
\[
G_{\mu \addconv  \mathbf{c}_{a,b}} (w) = G_{\mu \ast  \mathbf{c}_{a,b}} (w)  = G_{\mu} (w -a - \iu b), \qquad w \in \CC^-. 
\]
\end{proof}

\subsection{Positive and symmetric cases}

We characterize measures supported on $\RR_{\ge0}$ and symmetric measures on $\RR$, and then observe that our definition coincides with the previous ones in the literature.  

\begin{proposition}\label{prop:PS} 
Let $\mu \in \Prob(\RR)$. 
\begin{enumerate} 
\item\label{itemP}
$\mu$ is supported on $\RR_{\ge0}$ if and only if $T_\mu \ge0$ on $(-1,0)$. 
\item\label{itemS} $\mu$ is symmetric with respect to the origin if and only if $-\iu T_\mu \ge  0$ on $(-1,0)$.   
\end{enumerate}
\end{proposition}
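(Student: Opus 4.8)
The plan is to reduce both equivalences to the location of $\sof_{\mu,1/s}(0)$. By \eqref{def:T} one has $T_\mu(-s)=\frac{s}{s-1}\sof_{\mu,1/s}(0)$ for $0<s<1$, and the scalar $\frac{s}{s-1}$ is real and strictly negative. Hence $T_\mu(-s)\in[0,\infty)$ iff $\sof_{\mu,1/s}(0)\in(-\infty,0]$, while $-\iu T_\mu(-s)\in[0,\infty)$ iff $\sof_{\mu,1/s}(0)\in-\iu[0,\infty)$. So I would show: $\mu$ is supported on $\RR_{\ge0}$ iff the DW-point $\sof_{\mu,t}(0)$ of $L_{\mu,t,0}(w)=(t-1)(F_\mu(w)-w)$ lies on $(-\infty,0]$ for every $t>1$; and $\mu$ is symmetric iff $\sof_{\mu,t}(0)$ lies on the negative imaginary axis for every $t>1$. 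Throughout I use that $\sof_{\mu,t}(0)$ obeys $F_\mu(\sof_{\mu,t}(0))=\frac{t}{t-1}\sof_{\mu,t}(0)$ by \eqref{eq:F}, read as a nontangential limit via Lindel\"of's theorem when $\sof_{\mu,t}(0)$ is real. The degenerate cases $\mu=\delta_0,\delta_a$ are checked by direct computation.

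\textbf{Forward directions.} For $\mu\in\Prob(\RR_{\ge0})\setminus\{\delta_0\}$ the function $F_\mu$ extends analytically to $\CC\setminus\RR_{\ge0}$ and is real on $(-\infty,0)$. For $w\in\CC^-$ the points $(w-x)^{-1}$ ($x\ge0$) all lie in a convex cone of opening $<\pi$ whose extremal ray is attained only at $x=0$; this gives $\arg F_\mu(w)<\arg w$ for all $w\in\CC^-$, so the relation $F_\mu(\sigma)=\frac{t}{t-1}\sigma$ (which forces $\arg F_\mu(\sigma)=\arg\sigma$) has no solution in the open lower half-plane, whence $\sof_{\mu,t}(0)\in\RR$. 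A direct estimate yields $F_\mu(x)-x\le0$ and $F_\mu'(x)\ge1$ (Cauchy--Schwarz) on $(-\infty,0)$, so $L_{\mu,t,0}$ restricts to an increasing self-map of $(-\infty,0]$ and the iterates $L_{\mu,t,0}^{\circ n}(0)$ decrease to the real, finite DW-point, giving $\sof_{\mu,t}(0)\le0$ and $T_\mu\ge0$. For symmetric $\mu$, $F_\mu$ is odd, hence preserves the negative imaginary axis, and $\Im F_\mu(w)\le\Im w$ shows $L_{\mu,t,0}$ maps $-\iu[0,\infty)$ into itself; since this axis meets the \emph{interior} $\CC^-$, iterating from an interior axis point and applying Theorem~\ref{thm:denjoy_wolff} confine $\sof_{\mu,t}(0)$ to $-\iu[0,\infty)$, giving $-\iu T_\mu\ge0$.

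\textbf{Converse directions.} Assuming the sign condition, $\sigma(t):=\sof_{\mu,t}(0)$ is negative real (resp.\ negative imaginary) for all $t$. By Proposition~\ref{prop:omega}, $t\mapsto\sigma(t)$ is continuous, vanishes for $t\le 1/(1-\mu(\{0\}))$, is injective afterwards, and tends to $\infty$ as $t\to\infty$. In the positive case the intermediate value theorem shows $\{\sigma(t)\}$ exhausts $(-\infty,0)$; at each such $\sigma$, $F_\mu$ has nontangential limit $\frac{t}{t-1}\sigma\in\RR\setminus\{0\}$, so $G_\mu=1/F_\mu$ has a finite real nontangential boundary value at every point of $(-\infty,0)$. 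By Stieltjes inversion (Theorem~\ref{thm:ct_nontang}) this excludes atoms, absolutely continuous density and singular continuous mass there, so $\mu((-\infty,0))=0$, i.e.\ $\mu\in\Prob(\RR_{\ge0})$. In the symmetric case $\{\sigma(t)\}$ is a nondegenerate arc inside $-\iu(0,\infty)\subseteq\CC^-$, and the analytic function $H(z):=F_\mu(z)+\overline{F_\mu(-\bar z)}$ satisfies $H(\sigma)=2\Re F_\mu(\sigma)=0$ on it because $F_\mu(\sigma)=\frac{t}{t-1}\sigma\in\iu\RR$. By the identity theorem $H\equiv0$ on $\CC^-$, which is precisely the oddness $F_\mu(-z)=-F_\mu(z)$, i.e.\ $\mu$ is symmetric.

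\textbf{Main obstacle.} The genuinely delicate point will be the sign $\sof_{\mu,t}(0)\le0$ in the forward direction of \ref{itemP}: unlike the symmetric case, the invariant set $(-\infty,0]$ lies on the \emph{boundary} of $\CC^-$, so Theorem~\ref{thm:denjoy_wolff} does not apply directly to an interior orbit and one must certify that the monotone boundary orbit actually converges to the DW-point. I would handle this by first forcing the DW-point to be real (via the argument inequality $\arg F_\mu(w)<\arg w$) and then exploiting the monotonicity of $L_{\mu,t,0}$ on $(-\infty,0]$; alternatively one may simply invoke that $S_\mu$ coincides with the classical $S$-transform of a positive measure, which is known to be positive.
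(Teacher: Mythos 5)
Your reduction of both parts to the location of the Denjoy--Wolff point $\sof_{\mu,1/s}(0)$, and your two converse directions, essentially match the paper; the symmetric converse even uses a slightly different device (the identity theorem for $F_\mu(z)+\conj{F_\mu(-\conj{z})}$ on the arc $\{\sof_{\mu,t}(0)\}\subseteq\iu\RR_{<0}$, where the paper instead notes that $\psi_\mu$ is real on $\iu\RR_{>0}$ and applies Schwarz reflection), and both work. The genuine gap is in the forward direction of \ref{itemP}, exactly where you flag it, and your proposed fix does not close it. The orbit $L_{\mu,t,0}^{\circ n}(0)$ need not converge to the DW-point: if $\mu(\{0\})>0$ then $F_\mu(0^-)=0$, so $0$ is a boundary fixed point of $L_{\mu,t,0}$ and the orbit is constant there, while by Proposition \ref{prop:omega} the DW-point is strictly negative once $t>1/(1-\mu(\{0\}))$. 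More generally, a convergent \emph{boundary} orbit of a self-map of $\CC^-$ does not identify the DW-point, and knowing only that $\sof_{\mu,t}(0)$ is real does not exclude a positive value: the fixed-point equation $F_\mu(w)=\tfrac{t}{t-1}w$ genuinely admits positive real solutions (e.g.\ $w=\sqrt{3}$ for $\mu=\tfrac12(\delta_1+\delta_3)$ and $t=2$, sitting in the gap of the support), so one must argue which real solution is the DW-point. The paper's resolution is to regard $L_{\mu,t,z}$ as a self-map of the \emph{larger} domain $\CC\setminus[0,+\infty)$, in which $(-\infty,0)$ is interior, take $z<0$ strictly so that the intermediate value theorem yields a genuine interior fixed point in $(-\infty,0)$ (hence automatically the DW-point), and then let $z\to0^-$ using the continuity of DW-points (Theorem \ref{thm:conv_of_dwp}). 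Your fallback of quoting positivity of the classical $S$-transform via the identification in \cite[Lemma 7.1]{AFPU} is not circular, but it imports externally the very fact being re-derived here.

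There is a second, smaller gap in the forward direction of \ref{itemS}: ``iterating from an interior axis point and applying Theorem \ref{thm:denjoy_wolff}'' requires $L_{\mu,t,0}$ not to be a conformal automorphism of $\CC^-$, and this hypothesis can fail. For the symmetric Bernoulli law $\mu=\tfrac12(\delta_{-1}+\delta_1)$ one gets $L_{\mu,t,0}(w)=-(t-1)/w$, an involutive automorphism whose iterates do not converge (the paper itself notes that this measure defeats a lemma in Guerrero's thesis). The conclusion still holds there, since the interior fixed point $-\iu\sqrt{t-1}$ lies on the axis, but your argument as written does not cover the case. The paper's route --- perturb to $z\in\iu\RR_{<0}$, produce an interior fixed point on $\iu\RR_{<0}$ by the intermediate value theorem, and pass to the limit $z\to0$ --- avoids the issue entirely, because an interior fixed point is the DW-point regardless of whether the map is an automorphism.
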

\begin{proof}   We assume that $\mu$ is nondegenerate because the statement is obvious otherwise.

\vspace{2mm}
\noindent
{\bf ``Only if'' part of  \ref{itemP}:} We fix $t>1$.  Suppose that $\mu$ is supported on $\RR_{\ge0}$.   Then $F_\mu$ extends to an analytic function in $\CC\setminus [0,+\infty)$ with $F_\mu(\conj{z}) = \conj{F_\mu(z)}$, $F_\mu(0-) \le0$ (see e.g.~\cite[Proposition 2.5]{Has10}) and $x \mapsto F_\mu(x) -x$ is strictly increasing on $(-\infty,0)$; these facts can be proved from 
the Pick--Nevanlinna representation \eqref{eq:PN}
and the Stieltjes inversion.  
Hence,  for any $x<0$ and $z <0$, we have $L_{\mu,t,z}(x)<0$. This shows $L_{\mu,t,z}$ is an analytic self-map of $\CC\setminus [0,+\infty)$ for each $z \in \CC\setminus [0,+\infty)$. Moreover, for $z<0$ we have $\lim_{x\to-\infty} (L_{\mu,t,z}(x) -x) =+\infty$ and $\lim_{x\to0^-} (L_{\mu,t,z}(x) -x) \le z <0$, showing that $L_{\mu,t,z}$, as a self-map of $\CC\setminus [0,+\infty)$, has a DW-point $\sof_{\mu,t}(z)$ in $(-\infty,0)$.  Passing to the limit $z\to 0^-$ and again by Theorem  \ref{thm:conv_of_dwp}, we get $\sof_{\mu,t}(0) \in (-\infty,0]$ and so $T_\mu(-1/t) = \sof_{\mu,t}(0)/(1-t) \ge0$. 

\vspace{2mm}
\noindent
{\bf ``If'' part of  \ref{itemP}:} Recall from Proposition \ref{prop:omega} that $\sof_{\mu,1}(0)=0, \lim_{t\to+\infty}\sof_{\mu,t}(0)=\infty$ and $\sof_{\mu,t}(0)$ is continuous on $[1,+\infty)$. Together with assumption $T_\mu \ge0$, this yields $\{\sof_{\mu,t}(0): t\ge1\} = (-\infty,0]$. From \eqref{eq:F} $F_\mu$ has nontangential limits on $(-\infty,0)$ that are nonzero real values, so that by the Stieltjes inversion (Theorem \ref{thm:ct_nontang}), $\mu((-\infty,0))=0$.

\vspace{2mm}
\noindent
{\bf ``Only if'' part of  \ref{itemS}:} We fix $t>1$. Suppose that $\mu$ is symmetric with respect to $0$. For all $w \in \iu \RR_{<0}$,  $F_\mu(w) -w$ lies in $\iu \RR_{<0}$, so that for all  $ z \in \iu \RR_{\le 0}$ and $w  \in \iu \RR_{<0}$, we have $L_{\mu,t,z}(w) \in \iu \RR_{<0}$. Moreover, for $z \in \iu \RR_{<0}$, we have $\lim_{y\to - \infty} (L_{\mu,t,z}(\iu y) -\iu y) / \iu=+\infty$ and $\lim_{y\to 0^-} (L_{\mu,t,z}(\iu y) -\iu y) / \iu = \Im(z) +(t-1)\Im[F_\mu(\iu 0^-)]<0$, so that there is a fixed point of $L_{\mu,t,z}$ in $\iu \RR_{<0}$, which must be $\sof_{\mu,t}(z)$. Passing to the limit $z\to0~(z\in \iu\RR_{<0})$, we conclude that $\sof_{\mu,t}(0) \in \iu \RR_{\le 0}$ and hence $T_\mu(-1/t) \in  \iu \RR_{\ge 0}$.

\vspace{2mm}
\noindent
{\bf ``If'' part of  \ref{itemS}:} From reasonings similar to the proof of  \ref{itemP}, $\psi_\mu$  takes real values  on $\iu \RR_{>0}$. By the Schwarz reflection principle, $\psi_\mu(-x + \iu y) = \psi_\mu(x +\iu y)$ holds for all $x \in \RR$ and $y>0$. The Stieltjes inversion formula implies that $\mu$ is symmetric. 
\end{proof}

Using Theorem \ref{thm:T}  \ref{itemT3}, Equation \eqref{eq:Tat0} and Proposition  \ref{prop:PS}, we can prove that our $S$-transform coincides with the former definitions in special cases: 
\begin{itemize}[itemsep=0.2em,topsep=0.3em]
\item measures with compact support and nonvanishing mean  \cite{V1987multiplication}, 
\item measures with compact support and vanishing mean  \cite{RS07}, 
\item measures supported on $\RR_{\ge0}$  \cite{BV1993free}, 
\item  measures symmetric with respect to the origin  \cite{AP2009transform}. 
\end{itemize} 
In fact, for measures on $\RR_{\ge0}$, Arizmendi et al.~\cite[Lemma 7.1]{AFPU} already proved that our definition of $S$-transform coincides with that of   \cite{BV1993free}.

\subsection{Examples of $S$-transform}

Before calculating examples, we note a useful general method.

\begin{remark}[Calculations of $S$-transform]  \label{rem:calc_S}
It is easy to see that the equation $L_{\mu,t,0}(w) =w$ is equivalent to  $\psi_\mu(1/w) = -1/t$ (with understanding that $L_{\mu,t,0}(w)$ and $\psi_\mu(1/w)$ are nontangential limits when $w\in \RR$).  From Theorem   \ref{thm:denjoy_wolff}, the equation $\psi_\mu(1/w) = -1/t$ has at most one solution in $\CC^-$ and, if a solution exists, it coincides with the DW-point $\sof_{\mu,t}(0)$. On the other hand, if $\psi_\mu(1/w) = -1/t$ has no solutions in $\CC^-$ and has more than one solutions $w$ in $\RR$, then  we need further reasonings to identify the DW-point on the boundary.  
\end{remark}

\begin{example} \label{ex:BS} For the Boolean stable law \eqref{eq:stable}, we have 
\begin{align*}
L_{\mathbf{b}_{\alpha,\rho}, t, z} (w)  
&= z+ (t-1) (F_{\mathbf{b}_{\alpha,\rho}}(w)-w) = z -(t-1) w \eta_{\mathbf{b}_{\alpha,\rho}}\left(\frac1{w}\right)\\
&=  z + (t-1) w \left(\frac{e^{-\iu \rho\pi}}{w}\right)^\alpha, \qquad w \in \CC^-\cup\RR,~ t>1,~z \in \CC^-\cup\RR. 
\end{align*}
The fixed point equation $L_{\mathbf{b}_{\alpha,\rho}, t, 0} (w) =w$ has solutions  
$
w =0, e^{-\iu \rho \pi} (t-1)^\frac{1}{\alpha}
$ 
 in $\CC^-\cup\RR$ ($w=0$ is a solution only if $0<\alpha<1$). The solution $0$ is not a DW-point because $\sof_{\mathbf{b}_{\alpha,\rho}, t}(0)$ must be continuous on $[1,+\infty)$ and is not identically zero.  Hence, $\sof_{\mathbf{b}_{\alpha,\rho}, t}(0) = e^{-\iu \rho \pi} (t-1)^\frac{1}{\alpha}$
and  
\[
S_{\mathbf{b}_{\alpha,\rho}} (u)  = - \frac{1+u}{-u} \cdot \frac{1}{\sigma_{\mathbf{b}_{\alpha,\rho}, -1/u}(0)} = -e^{\iu \rho \pi}  \left(\frac{-u}{1+u} \right)^\frac{1-\alpha}{\alpha}, \qquad -1 < u < 0. 
\]
\end{example}

\begin{example}\label{ex:FS} For notational brevity, let $ G\coloneq G_{\mathbf{f}_{\alpha,\rho}}$. Recall from \eqref{eq:stable} that the free stable law $\mathbf{f}_{\alpha,\rho}$ is characterized by  
$C_{\mathbf{f}_{\alpha,\rho}}(z) = z G^\inv(z) - 1 =  - (e^{-\iu \rho \pi}z)^\alpha.$ 
Putting  $z= G(w)$ yields  
\begin{equation}\label{eq:FS1}
wG(w) - 1 =  -(e^{-\iu \rho \pi}G(w))^\alpha, \qquad  w \in \CC^-\cup\RR. 
\end{equation}
Note that $G$ extends to a continuous map from $\CC^-\cup\RR$ to $\Hup \cup \RR\cup\{\infty\}$ since $\mathbf{f}_{\alpha,\rho}$ is freely infinitely divisible, so that the above equation can be considered for $ w \in \CC^-\cup\RR$. 
The DW-point $\sigma_{\mathbf{f}_{\alpha,\rho},t}(0)$ of $L_{\mathbf{f}_{\alpha,\rho}, t,0}$ is a solution $w$ to the equation $w = (t-1)(1/G(w)-w)$, so that $G(w) = (t-1)/(tw)$. Hence, we have either $w=0$ or, by substituting $G(w) = (t-1)/(tw)$ into \eqref{eq:FS1},  
\[
\frac{1}{t} = \left(e^{-\iu\rho\pi} \frac{t-1}{t w} \right)^\alpha, \qquad w\ne0,  
\]
and so $w= e^{-\iu \rho \pi} t^{\frac{1-\alpha}{\alpha}} (t-1)$. Because $w=0$ can be excluded for the same reason as Example \ref{ex:BS}, we have $\sof_{\mathbf{f}_{\alpha,\rho},t}(0)= e^{-\iu \rho \pi} t^{\frac{1-\alpha}{\alpha}} (t-1)$ and so 
\[
S_{\mathbf{f}_{\alpha,\rho}}(u) = - \frac{1+u}{-u}\cdot \frac{1}{ \sigma_{\mathbf{f}_{\alpha,\rho},-1/u}(0)} = - e^{\iu \rho \pi} (-u)^{\frac{1-\alpha}{\alpha}}, \qquad -1 < u <0. 
\]  
\end{example}

The following example shows that the $S$-transform need not be analytic in the domain. 

\begin{example}\label{ex:SS}
Let $\mu$ be the semicircle law of mean $a \ge0$ and variance $1$, i.e., 
\[
\mu(\di x)= \frac{1}{2\pi}\sqrt{4-(x-a)^2} \mathbf{1}_{(-2+a,2+a)}(x)\di{x}. 
\]
Using $F_\mu(w) = [w-a + \sqrt{(w-a)^2-4}]/2$, we can solve the equation $L_{\mu,t,0}(w) =w$ as 
\[
w = \frac{t-1}{2t}(-a t \pm \sqrt{a^2t^2-4t}).  
\]
Since the DW-point is in $\CC^-\cup\RR$,  continuous and tends to $\infty$ as $t\to+\infty$,  the correct choice of the square root and the sign in front leads to 
\[
\sigma_{\mu,t}(0)= 
\begin{cases}
\frac{t-1}{2t}(-a t - \sqrt{a^2t^2-4t})& \text{if~}  t> \max\{1,4/a^2\},  \\ 
\frac{t-1}{2t}(-a t - \iu \sqrt{4t-a^2t^2})& \text{if~} 0< a < 2~\text{and}~1< t \le 4/a^2,  \\
-\iu (t-1)/\sqrt{t} & \text{if~} a=0~\text{and}~t>1,  
\end{cases}
\]
where $\sqrt{x} \ge0$ is the standard square root when $x\ge0$. From this we can find the $S$-transform $S_\mu(u) = (1+u)/(u \sigma_{\mu,-1/u}(0))$ as 
\[
S_{\mu}(u)= 
\begin{cases}
\frac{1}{2u}(-a+ \sqrt{a^2+4u})& \text{if~}  -\min\{1,a^2/4\}<u<0,  \\
\frac{1}{2u}(-a + \iu \sqrt{-a^2-4u})& \text{if~} 0< a < 2~\text{and}~-1<u\le-a^2/4,  \\
-\iu/\sqrt{-u}&  \text{if~} a=0~\text{and}~-1<u<0.   
\end{cases}
\] 
The case $a<0$ can be handled similarly. Notice that  $S_\mu(u)$ is not analytic at the point $u=-a^2/4$ in case $|a|<2$.  
\end{example}

\section{Boolean and free stable laws}   \label{sec:S}

With the $S$-transform in hand, we are now in a position to prove Theorems \ref{thm:reproducing} and \ref{thm:mixture}. Then we apply the results to study the free L\'evy measure of Boolean stable mixtures. One can also use subordination functions in many cases, but the $S$-transform makes the proofs simpler. 

\subsection{Proofs of Theorems  \ref{thm:reproducing},  \ref{thm:mixture} and related identities} \label{sec:boole1}

\begin{proof}[\textbf{Proof of Theorem  \ref{thm:reproducing}}] 
Examples  \ref{ex:BS} and  \ref{ex:FS} readily yield $S_{\mathbf{b}_{\alpha\beta,\rho}}(u)= S_{\mathbf{b}_{\alpha,1}}(u) S_{\mathbf{b}_{\beta,1}}(u)^\frac1{\alpha}$ and $S_{\mathbf{f}_{\alpha\beta,\rho}}(u)= S_{\mathbf{f}_{\alpha,1}}(u) S_{\mathbf{f}_{\beta,1}}(u)^\frac1{\alpha}$, so that Theorem  \ref{thm:T} implies the desired formulas.  
\end{proof}

\begin{proof}[\textbf{Proof of Theorem  \ref{thm:mixture}}]
Since the formula is obvious for $\nu=\delta_0$, we assume that $\nu \ne \delta_0$. We begin with computing the $S$-transform of $\mathbf{b}_{\alpha,\rho} \circledast (\nu^{\frac1{\alpha}})$. Recall from \eqref{eq:eta_b} that 
\begin{equation*} 
\psi_{\mathbf{b}_{\alpha,\rho} \circledast (\nu^{\frac1{\alpha}})} (1/w) = \psi_\nu(-(e^{-\iu\rho\pi}/w)^\alpha).  
\end{equation*}
The function $\psi_\nu$ restricts to a bijection from $(-\infty,0)$ onto $(-1+\nu(\{0\}), 0)$ and the inverse map $\psi_\nu^\inv(u)$ is given by $\frac{u}{1+u}S_\nu(u)$. 
So, if $0<\rho<1$ and $u \in (-1+\nu(\{0\}), 0)$ then the equation $\psi_{\mathbf{b}_{\alpha,\rho} \circledast (\nu^{\frac1{\alpha}})} (1/w) = u$ has a solution $w = e^{-\iu \rho \pi}(- \psi^\inv_\nu(u))^{-1/\alpha}\in\CC^-$. From Remark  \ref{rem:calc_S}, the DW-point of $L_{\mathbf{b}_{\alpha,\rho} \circledast (\nu^{\frac1{\alpha}}), -1/u,0}$ is $w$, and so 
\begin{equation}\label{eq.Stranmixture}
S_{\mathbf{b}_{\alpha,\rho} \circledast (\nu^{\frac1{\alpha}})}(u) = -\frac{1+u}{-uw} = -e^{\iu \rho \pi} \frac{1+u}{-u}\left(\frac{-u}{1+u}\right)^{\frac1{\alpha}} S_\nu(u)^\frac1{\alpha} = S_{\mathbf{b}_{\alpha,\rho}}(u)S_{\nu^{\multconv\frac1{\alpha}}}(u).  
\end{equation}
This formula still holds for $\rho=0,1$ because the $S$-transform is continuous with respect to the weak convergence, see Theorem  \ref{thm:T} \ref{itemT4}. 
Then Theorem  \ref{thm:T}  \ref{itemT1} and  \ref{itemT2} finish the proof. 
\end{proof}

The special case $\alpha=1$ of Theorem \ref{thm:mixture} yields the following ``folklore formula''.  
\begin{corollary} For any $\nu \in \Prob(\RR_{\ge0})$ we have 
\[
\mathbf{c}_{a,b} \circledast \nu = \mathbf{c}_{a,b}\multconv \nu, 
\]
where $\mathbf{c}_{a,b}$ is the Cauchy distribution defined in \eqref{eq:cauchy_distribution}. 
\end{corollary}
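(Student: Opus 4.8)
The plan is to specialize Theorem~\ref{thm:mixture} to $\alpha=1$ and then strip off the normalization by a single dilation. First I would observe that at $\alpha=1$ the ordinary power $\nu^{\frac1{\alpha}}$ equals $\nu$ and the free power $\nu^{\multconv\frac1{\alpha}}=\nu^{\multconv 1}$ equals $\nu$ as well, so the existence hypothesis in Theorem~\ref{thm:mixture} is automatic. Hence the theorem gives, for every $\rho\in(0,1)$ and every $\nu\in\Prob(\RR_{\ge0})$,
\begin{equation*}
\mathbf{b}_{1,\rho}\circledast\nu=\mathbf{b}_{1,\rho}\multconv\nu .
\end{equation*}

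Next I would identify $\mathbf{b}_{1,\rho}$ explicitly as a Cauchy law. A direct computation from \eqref{eq:F_psi_eta} (using $G_{\mathbf{c}_{a,b}}(w)=1/(w-a-\iu b)$ on $\CC^-$) yields $\eta_{\mathbf{c}_{a,b}}(z)=(a+\iu b)z$ for $z\in\Hup$, while \eqref{eq:stable} gives $\eta_{\mathbf{b}_{1,\rho}}(z)=-e^{-\iu\rho\pi}z=(-\cos(\rho\pi)+\iu\sin(\rho\pi))z$. Since the $\eta$-transform determines the measure (Proposition~\ref{prop:prop_of_eta}), this identifies $\mathbf{b}_{1,\rho}=\mathbf{c}_{-\cos(\rho\pi),\,\sin(\rho\pi)}$. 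As $\rho$ ranges over $(0,1)$ the pair $(a,b)$ traces exactly the upper unit semicircle $a^2+b^2=1,\ b>0$, so the displayed identity is established only for the modulus-one Cauchy laws.

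It then remains to reach an arbitrary $\mathbf{c}_{a,b}$ with $b>0$. Writing $a+\iu b=re^{\iu\theta}$ with $r=\sqrt{a^2+b^2}>0$ and $\theta\in(0,\pi)$, and setting $\rho=1-\theta/\pi\in(0,1)$, the previous paragraph gives $\mathbf{b}_{1,\rho}=\mathbf{c}_{a/r,\,b/r}$, whence $\mathbf{c}_{a,b}=\Dil_r(\mathbf{b}_{1,\rho})$, since a positive dilation scales both the location and the scale of a Cauchy law by $r$. Both convolutions are homogeneous under positive dilation: from the operatorial descriptions of $\circledast$ (replace $X$ by $rX$ in $XY$) and $\multconv$ (replace $X$ by $rX$ in $\sqrt{Y}X\sqrt{Y}$) one has $\Dil_r(\lambda)\circledast\nu=\Dil_r(\lambda\circledast\nu)$ and $\Dil_r(\lambda)\multconv\nu=\Dil_r(\lambda\multconv\nu)$ for $r>0$. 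Combining these with the $\alpha=1$ identity gives
\begin{equation*}
\mathbf{c}_{a,b}\circledast\nu=\Dil_r(\mathbf{b}_{1,\rho})\circledast\nu=\Dil_r(\mathbf{b}_{1,\rho}\circledast\nu)=\Dil_r(\mathbf{b}_{1,\rho}\multconv\nu)=\Dil_r(\mathbf{b}_{1,\rho})\multconv\nu=\mathbf{c}_{a,b}\multconv\nu .
\end{equation*}

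I do not expect a serious obstacle here, as all the analytic content is already carried by Theorem~\ref{thm:mixture}; the proof is essentially a change of variables. The only points demanding care are that the family $(\mathbf{b}_{1,\rho})_{\rho\in(0,1)}$ realizes only the normalized Cauchy laws, so a dilation is genuinely needed to cover an arbitrary scale, and that one must check the (elementary) homogeneity of $\circledast$ and $\multconv$ under $\Dil_r$.
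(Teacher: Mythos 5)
Your proof is correct and follows essentially the same route as the paper: specialize Theorem~\ref{thm:mixture} to $\alpha=1$, identify $\mathbf{b}_{1,\rho}$ with a unit-modulus Cauchy law, and then use the homogeneity of $\circledast$ and $\multconv$ under positive dilations to reach arbitrary $\mathbf{c}_{a,b}$. (Minor remark: your $\eta$-transform computation gives $a=-\cos\rho\pi$, while the paper's proof writes $a=\cos\rho\pi$; your sign is the one consistent with \eqref{eq:stable}, and the discrepancy is immaterial since either parametrization sweeps the whole unit semicircle as $\rho$ ranges over $(0,1)$.)
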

\begin{proof}
Notice that $\mathbf{c}_{a,b} = \mathbf{b}_{1,\rho}$ for $0 < \rho <1$ and $a=\cos\rho\pi, b=\sin\rho\pi$, so that the desired formula is exactly Theorem  \ref{thm:mixture} for $\alpha=1$ in this case. The general case follows from the fact that $\Dil_{r}(\mathbf{c}_{a,b}) = \mathbf{c}_{ar,br}$ and $\Dil_r (\mu \multconv \nu) = (\Dil_r \mu)\multconv\nu$, $r>0$. 
\end{proof}

Another related convolution formula is obtained as follows. 

\begin{proposition}\label{cor:bs-fs} Let $0<\alpha \le 1, 0\le \rho \le 1$. Then 
$\mathbf{b}_{\alpha,\rho} =  \mathbf{f}_{\alpha,\rho} \multconv (\MP^{\multconv \frac{1-\alpha}{\alpha}})$, 
where $\MP$ is the standard Marchenko--Pastur law with density 
\begin{align} \label{eq:MP}
   \frac{\di{\MP}}{\di{x}}= \frac{1}{2\pi}\sqrt{\frac{4-x}{x}} \mathbf{1}_{(0,4)}(x). 
\end{align}

\end{proposition}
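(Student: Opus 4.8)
The plan is to prove the identity at the level of $S$-transforms and then appeal to the multiplicativity and injectivity of the $T$-transform (Theorem \ref{thm:T} \ref{itemT1} and \ref{itemT2}). From Examples \ref{ex:BS} and \ref{ex:FS} the two stable $S$-transforms are already in hand: for $-1<u<0$,
\[
S_{\mathbf{b}_{\alpha,\rho}}(u) = -e^{\iu\rho\pi}\left(\frac{-u}{1+u}\right)^{\frac{1-\alpha}{\alpha}}, \qquad S_{\mathbf{f}_{\alpha,\rho}}(u) = -e^{\iu\rho\pi}(-u)^{\frac{1-\alpha}{\alpha}},
\]
so the only missing ingredient is the $S$-transform of $\MP$ and of its free multiplicative powers.

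First I would compute $S_\MP$. Since $\MP$ (with density \eqref{eq:MP}) is supported on $\RR_{\ge0}$ with all moments equal to the Catalan numbers, its $\psi$-transform is $\psi_\MP(z)=\frac{1-2z-\sqrt{1-4z}}{2z}$, and a direct inversion gives $\psi_\MP^\inv(u)=\frac{u}{(1+u)^2}$. Combined with Theorem \ref{thm:T} \ref{itemT3} in the classical form $S_\mu(u)=\frac{1+u}{u}\psi_\mu^\inv(u)$ (valid for $\mu\in\Prob(\RR_{\ge0})$), this yields the well-known value $S_\MP(u)=\frac{1}{1+u}$ on $(-1,0)$. Then $S_{\MP^{\multconv t}}=S_\MP^{\,t}$ gives $S_{\MP^{\multconv\frac{1-\alpha}{\alpha}}}(u)=(1+u)^{-\frac{1-\alpha}{\alpha}}$, where $1+u>0$ so the power is the positive-real branch.

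With these in place the computation closes immediately. Using Theorem \ref{thm:T} \ref{itemT1} in the form $S_{\mu\multconv\nu}=S_\mu S_\nu$ (applicable since $\mathbf{f}_{\alpha,\rho}\in\Prob(\RR)$ and $\MP^{\multconv\frac{1-\alpha}{\alpha}}\in\Prob(\RR_{\ge0})$), together with the branch identity $(-u)^{\frac{1-\alpha}{\alpha}}(1+u)^{-\frac{1-\alpha}{\alpha}}=\bigl(\tfrac{-u}{1+u}\bigr)^{\frac{1-\alpha}{\alpha}}$, valid because $\tfrac{-u}{1+u}>0$ for $u\in(-1,0)$, one obtains
\[
S_{\mathbf{f}_{\alpha,\rho}\multconv\MP^{\multconv\frac{1-\alpha}{\alpha}}}(u)=-e^{\iu\rho\pi}\left(\frac{-u}{1+u}\right)^{\frac{1-\alpha}{\alpha}}=S_{\mathbf{b}_{\alpha,\rho}}(u),\qquad -1<u<0.
\]
Since both $S$-transforms are finite and nonzero on $(-1,0)$, the corresponding $T$-transforms agree there, and Theorem \ref{thm:T} \ref{itemT2} forces $\mathbf{b}_{\alpha,\rho}=\mathbf{f}_{\alpha,\rho}\multconv\MP^{\multconv\frac{1-\alpha}{\alpha}}$.

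The one genuinely delicate point is not the algebra but the existence of the fractional power $\MP^{\multconv\frac{1-\alpha}{\alpha}}$ as a probability measure on $\RR_{\ge0}$: when $\tfrac12<\alpha<1$ the exponent $\frac{1-\alpha}{\alpha}$ lies in $(0,1)$, a regime where the general semigroup construction does not directly guarantee a measure. I would resolve this by invoking the $\multconv$-infinite divisibility of the free Poisson law, which ensures that the whole family $\{\MP^{\multconv t}\}_{t\ge0}$ consists of probability measures; concretely this can be checked from the explicit $\Sigma$-transform $\Sigma_{\MP^{\multconv t}}(z)=(1-z)^t$ and the Lévy--Khinchine characterization of $\multconv$-infinitely divisible measures on $\RR_{\ge0}$. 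Everything else reduces to the routine $S$-transform bookkeeping above.
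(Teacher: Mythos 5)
Your proof is correct and follows essentially the same route as the paper, which likewise deduces the identity from the $S$-transforms in Examples \ref{ex:BS} and \ref{ex:FS}, the formula $S_\MP(u)=1/(1+u)$, and the multiplicativity and injectivity of the $T$-transform as in the proof of Theorem \ref{thm:reproducing}. The only addition is your explicit justification that $\MP^{\multconv\frac{1-\alpha}{\alpha}}$ exists for exponents in $(0,1)$, a point the paper leaves implicit (it is also covered by the identification $\MP^{\multconv\frac{1-\alpha}{\alpha}}=(\mathbf{f}_{\alpha,1})^{-1}$ recalled in Remark \ref{rem:classical}); this is a reasonable piece of diligence, not a divergence in method.
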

\begin{proof}
Similar to Theorem  \ref{thm:reproducing}, this follows from Examples \ref{ex:BS},   \ref{ex:FS} and the well-known formula $S_\MP(u) = 1/(1+u)$. 
\end{proof}

\begin{remark}[Connection to classical stable laws]\label{rem:classical}
As mentioned in \cite[Proposition 4.12]{AH2016classical}, the measure $ \MP^{\multconv \frac{1-\alpha}{\alpha}}$ equals $ (\mathbf{f}_{\alpha,1})^{-1}$, i.e., the law of $1/Y$ when a random variable $Y$ has distribution $\mathbf{f}_{\alpha,1}$. Hence, Proposition \ref{cor:bs-fs} combined with \cite[Proposition 4.12]{AH2016classical} yields 
\[
\mathbf{b}_{\alpha,\rho} =  \mathbf{f}_{\alpha,\rho} \multconv (\mathbf{f}_{\alpha,1})^{-1} = \mathbf{n}_{\alpha,\rho} \circledast (\mathbf{n}_{\alpha,1})^{-1}, \qquad 0<\alpha \le 1,\quad 0\le \rho \le 1,  
\]
where $\mathbf{n}_{\alpha,\rho}$ is the classical stable law characterized by 
\[
\int_\RR e^{zx} \mathbf{n}_{\alpha,\rho}(\di x) = \exp\left[- (e^{-\iu\rho\pi}z)^\alpha\right], \qquad z\in \iu \RR_{>0}. 
\]
\end{remark}

\subsection{Characterizing Boolean stable mixtures in terms of $S$-transform} \label{sec:characBoole}
As a generalization of Proposition  \ref{prop:PS}, we can get a characterization of Boolean stable mixtures in terms of the behaviour of the $S$-transform. We need the following result of J. R. Guerrero \cite[Theorem 4]{thesisGuerrero}. Since the original proof contained an error (the symmetric Bernoulli law gives a counterexample to \cite[Lemma 4]{thesisGuerrero}), we give another proof below. 

\begin{lemma} \label{prop:characBoolMix} Let $\mu \in \Prob(\RR)\setminus\{\delta_0\}$ and $\rho \in (0,1)$. We denote $\mathcal{L}_{\rho}\coloneqq \{e^{\iu\rho\pi}t: t>0\}$ and 
\[
\alpha(\rho)\coloneq \max\{\alpha: (\alpha,\rho) \in \mathfrak{A}\}= \min\left\{\frac1{\rho},\frac1{1-\rho}\right\}.\]
The following are equivalent. 

\begin{enumerate}

\item\label{item:BM1} There exists some $\nu \in \Prob(\RR_{\ge0})$ such that $\mu= \mathbf{b}_{\alpha(\rho),\rho}\circledast (\nu^{1/\alpha(\rho)})$. 

\item\label{item:BM2} $\psi_{\mu}(\mathcal{L}_{\rho})\subseteq(-1,0)$. 

\end{enumerate}
\end{lemma}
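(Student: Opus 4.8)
The plan is to recast both conditions through the $\eta$-transform. By \eqref{eq:F_psi_eta} one has $\psi_\mu=\eta_\mu/(1-\eta_\mu)$, and the Möbius map $\zeta\mapsto\zeta/(1-\zeta)$ sends $(-\infty,0)$ bijectively onto $(-1,0)$; hence \ref{item:BM2} is equivalent to $\eta_\mu(\mathcal{L}_\rho)\subseteq(-\infty,0)$, while by \eqref{eq:eta_b} assertion \ref{item:BM1} says precisely that $\eta_\mu=\eta_\nu\circ\eta_{\mathbf{b}_{\alpha,\rho}}$ on $\Hup$ for some $\nu\in\Prob(\RR_{\ge0})$, where $\alpha\coloneqq\alpha(\rho)$. (Equivalently, using Theorem~\ref{thm:T}~\ref{itemT3} and Example~\ref{ex:BS}, \ref{item:BM2} amounts to $\arg S_\mu(u)\equiv(\rho-1)\pi$ being constant, i.e.\ $S_\mu/S_{\mathbf{b}_{\alpha,\rho}}$ being positive; this is the $S$-transform symmetry promised by the section title, but it is cleaner to argue at the level of $\eta$.) The implication \ref{item:BM1}$\Rightarrow$\ref{item:BM2} is then immediate: for $z=e^{\iu\rho\pi}t$, $t>0$, one has $\eta_{\mathbf{b}_{\alpha,\rho}}(z)=-(e^{-\iu\rho\pi}z)^\alpha=-t^\alpha\in(-\infty,0)$, and $\eta_\nu$ carries $(-\infty,0)$ into $(-\infty,0)$ for $\nu\in\Prob(\RR_{\ge0})$ (equivalently $\psi_\nu((-\infty,0))\subseteq(-1,0)$, as used in the proof of Theorem~\ref{thm:mixture}), so \eqref{eq:eta_b} yields $\eta_\mu(\mathcal{L}_\rho)\subseteq(-\infty,0)$.

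For \ref{item:BM2}$\Rightarrow$\ref{item:BM1} I would construct $\nu$ by inverting the composition. Writing $\Phi\coloneqq\eta_{\mathbf{b}_{\alpha,\rho}}$, the principal-branch formula shows $\Phi$ is a conformal bijection from $\Hup$ onto the open sector $\Omega=\{w:\pi-\alpha\rho\pi<\arg w<\pi+\alpha(1-\rho)\pi\}$, sending $\mathcal{L}_\rho$ onto $(-\infty,0)$ and the half-lines $(0,+\infty),(-\infty,0)$ onto the two bounding rays of $\Omega$. Put $h\coloneqq\eta_\mu\circ\Phi^{-1}\colon\Omega\to\CC\setminus[0,+\infty)$. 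Because $\Phi^{-1}((-\infty,0))=\mathcal{L}_\rho$, hypothesis \ref{item:BM2} forces $h$ to be real (indeed negative) on the segment $(-\infty,0)$, which lies in the \emph{interior} of $\Omega$; thus $h$ coincides with $w\mapsto\conj{h(\conj w)}$ on the component of $\Omega\cap\conj{\Omega}$ meeting $(-\infty,0)$, and these glue to an analytic real-symmetric function $\tilde h$ on $\Omega\cup\conj{\Omega}$ with $\eta_\mu=\tilde h\circ\Phi$ on $\Hup$. The arithmetic heart of the choice $\alpha=\alpha(\rho)$ is that this maximal index is exactly the one making $\max\{\alpha\rho,\alpha(1-\rho)\}=1$; consequently $\Omega\cup\conj{\Omega}=\CC\setminus[0,+\infty)$, so $\tilde h$ is defined on the whole slit plane. (For $\alpha<\alpha(\rho)$ the sector and its reflection leave a gap along the slit, which is why the maximal index is forced.)

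It remains to identify $\tilde h$ with $\eta_\nu$ for some $\nu\in\Prob(\RR_{\ge0})$; granting this, $\eta_\mu=\eta_\nu\circ\Phi=\eta_{\mathbf{b}_{\alpha,\rho}\circledast(\nu^{1/\alpha})}$ by \eqref{eq:eta_b}, and injectivity of $\mu\mapsto\eta_\mu$ gives \ref{item:BM1}. One checks readily that $\tilde h$ is an analytic self-map of $\CC\setminus[0,+\infty)$ with $\nontanglim_{w\to0}\tilde h(w)=0$; the converse part of Proposition~\ref{prop:prop_of_eta} then produces a measure on $\RR$ as soon as the angular bound $\arg z\le\arg\tilde h(z)\le\arg z+\pi$ (that is, $\tilde h(z)/z\in\cl{\Hup}$) is verified on $\Hup$, and the reality of $\tilde h$ on $(-\infty,0)$ upgrades the resulting measure to $\Prob(\RR_{\ge0})$, since it makes $G_\nu$ extend analytically and real across $(-\infty,0)$, whence $\nu((-\infty,0))=0$ by the Stieltjes inversion (Theorem~\ref{thm:ct_nontang}). \textbf{The main obstacle is precisely this angular bound.} The generic inequality $\arg z\le\arg\eta_\mu(z)\le\arg z+\pi$ is too weak to survive transport through $\Phi^{-1}$ and the reflection, so one must use \ref{item:BM2} globally. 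The route I would pursue is a minimum-principle argument: split $\Omega$ along $(-\infty,0)$ into $\Omega^{+}=\Omega\cap\Hup$ and $\Omega^{-}=\Omega\cap\CC^-$ and study the harmonic functions $\Im h$ (and, for the lower bound, $\Im(h(w)/w)$) on each half, using that the nontangential boundary values of $\eta_\mu$ on $(0,+\infty)$ lie in $\cl{\Hup}$ and on $(-\infty,0)$ in $\cl{\CC^-}$ (by the angular bound and Fatou's theorem, Theorem~\ref{thm:fatou}), while on the middle ray $\mathcal{L}_\rho$ they are real by \ref{item:BM2}. The delicate point, where I expect the real work to lie, is controlling these harmonic functions at the two sector corners $0$ and $\infty$, at which $\eta_\mu$ may be unbounded, so as to license the minimum principle; this I would handle via the known nontangential limits of $\eta_\mu$ at $0$ and $\infty$ together with a Phragm\'en--Lindel\"of estimate.
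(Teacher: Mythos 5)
Your overall strategy is essentially the paper's: transport $\eta_\mu$ (equivalently $F_\mu$) through the inverse of $\eta_{\mathbf{b}_{\alpha(\rho),\rho}}$, i.e.\ through the power map, use hypothesis (2) to get reality of the transported function on a ray, and then verify that the candidate is the transform of a measure in $\Prob(\RR_{\ge0})$ by a maximum-principle argument for its argument, finishing with Proposition \ref{prop:characF} and the Stieltjes inversion. (The paper phrases this with $F(z)\coloneq z^{1-\rho}F_\mu(z^\rho)$ for $\rho\in[1/2,1)$ rather than with $\eta_\mu\circ\Phi^{-1}$ on the sector, but these are the same object, and your computation that $\alpha(\rho)$ is exactly the index for which $\Omega\cup\conj{\Omega}=\CC\setminus[0,+\infty)$ matches the paper's reduction.) You have also correctly located the crux: proving the angular bound, and in particular controlling the harmonic function $\arg$ of the candidate near the corners $0$ and $\infty$.

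However, at precisely that point your proposal has a genuine gap rather than a proof. At $\infty$ the corner is harmless ($F(z)/z\to1$ by the Pick--Nevanlinna representation, as the paper notes), but at $0$ the phrase ``known nontangential limits of $\eta_\mu$ at $0$ together with a Phragm\'en--Lindel\"of estimate'' does not close the argument when $\mu(\{0\})=0$: in that case $z^\rho G_\mu(z^\rho)\to0$, so one only gets $F(z)/z\to\infty$ with no control whatsoever on $\arg F(z)$ along the small arc near the origin, and there is no a priori bound there to feed into a Phragm\'en--Lindel\"of or exceptional-point argument. The paper resolves this with a specific device you would need to supply (or replace): first assume $\mu(\{0\})>0$, so that Theorem \ref{thm:ct_nontang} \ref{item:G2} gives $F(z)=\frac{z}{\mu(\{0\})}(1+o(1))$ near $0$ and the maximum principle for $\arg F$ applies on the truncated domain; then treat $\mu(\{0\})=0$ by perturbing to $\mu_n\coloneq\frac1{n}\delta_0+(1-\frac1{n})\mu$, observing that hypothesis \ref{item:BM2} is preserved (since $\psi_{\mu_n}=(1-\frac1n)\psi_\mu/(1+\frac1n\psi_\mu)$ keeps $(-1,0)$ invariant), and passing to the limit $F_n\to F$. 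Without this step, or an equally concrete substitute, the implication \ref{item:BM2}$\Rightarrow$\ref{item:BM1} is not established. A secondary, smaller issue: your minimum-principle plan relies on nontangential boundary values of $\eta_\mu$ on $(0,+\infty)$ that exist only almost everywhere (Fatou), so you would also need to justify why a.e.\ control of boundary values suffices (e.g.\ via the Poisson representation of the bounded harmonic function $\arg$), whereas the paper's version of the boundary estimate on the ray $\{xe^{\iu\ee}:x>0\}$ holds pointwise for every $x$ and avoids this.
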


\begin{proof}
It is easy from \eqref{eq:eta_b} to see that \ref{item:BM1} implies \ref{item:BM2}. 
For the converse implication, we may assume that $\rho\in[1/2,1)$; the other case $\rho\in(0,1/2)$ can be treated by the symmetry $x\mapsto -x$. For $\rho\in[1/2,1)$ we have $\alpha(\rho)=1/\rho$. Then we define 
\[
F(z)\coloneq z^{-\rho+1} F_\mu(z^\rho), \qquad z \in \CC^+.  
\]
We show that $F$ is a reciprocal Cauchy transform of a measure $\nu \in \Prob(\RR_{\ge0})$. Once this is proved then we have $F_\mu(z) = z^{1-\frac1{\rho}} F_\nu(z^{\frac1{\rho}})$ or equivalently $\eta_\mu(z) = \eta_\nu(z^\frac1{\rho})$, which implies from \eqref{eq:eta_b} that $\mu = \mathbf{b}_{1/\rho,\rho} \circledast (\nu^{\rho})$ as desired.  

To find such $\nu$, the main task is to show that $F(\Hup)\subseteq \Hup\cup\RR$. This can be achieved as follows. 

\vspace{2mm}
\noindent
\textbf{Case 1.} For a technical reason, first we assume $\mu(\{0\})>0$. We study the behavior of $F$ on the curve $\gamma:=  (-R,-\delta) \cup\{\delta e^{\iu\theta}: \ee \le \theta \le \pi\} \cup \{x e^{\iu\ee}: \delta \le x \le R \} \cup  \{Re^{\iu\theta}: \ee \le \theta \le \pi\}$ for $\ee \in (0,\pi)$  and $0<\delta<R$ as follows.  

\begin{itemize}[itemsep=0.2em, topsep=0.3em]
\item First observe that $F$ extends continuously to $(-\infty,0)$. 

\item Since assumption \ref{item:BM2} implies $F_\mu(\mathcal{L}_{\rho}) \subseteq \mathcal{L}_{\rho}$, we have, for all $x>0$,  
\begin{equation}\label{eq:F_real}
    F(-x) = x^{1-\rho}e^{\iu (1-\rho) \pi} F_\mu(x^{\rho}e^{\iu \rho \pi}) \in (-\infty,0).
\end{equation}

\item For $x>0$, we have $F(xe^{\iu \ee}) = x^{1-\rho} e^{\iu \ee(1-\rho)} F_\mu(x^\rho e^{\iu \rho \ee})$, so that 
\[\ee(1-\rho) <
\arg F(xe^{\iu \ee})< \ee(1-\rho)+\pi.\] 

\item Using Proposition \ref{prop:characF} \ref{item:F3} we see that 
\begin{equation}\label{eq:F_infty}
\lim_{\substack{z\to\infty\\\arg z \in [\ee, \pi]}} \frac{F(z)}{z}=\lim_{\substack{w\to\infty\\ \arg w \in [\ee\rho, \pi \rho]} }\frac{F_\mu(w)}{w}=1.
\end{equation}

\item Using Theorem  \ref{thm:ct_nontang} \ref{item:G2}, we see that 
\[
F(z) =  \frac{z}{z^\rho G_\mu(z^\rho)} = \frac{z}{\mu(\{0\})}(1+o(1))\quad \text{as}\quad  z \to 0~(\arg z \in [\ee, \pi]).
\] 

\end{itemize}
Combining the above behaviors, we have 
$\arg F(z) \in [(1-\rho)\ee, (1-\rho)\ee +\pi]$ on the curve $\gamma$ for sufficiently large $R>0$ and small $\delta>0$. Since $z\mapsto \arg F(z) = \Im[\log F(z)]$ is harmonic, it takes the maximum and minimum on the boundary, so that $\arg F(z) \in [(1-\rho)\ee, (1-\rho)\ee +\pi]$ for all $z$ in the bounded domain surrounded by $\gamma$. By letting $\delta,\ee\to0^+$ and $R\to+\infty$, we conclude that $F(z) \in \Hup \cup \RR$ for all $z \in \Hup$ as desired. 

\vspace{2mm}\noindent
\textbf{Case 2.} If $\mu(\{0\})=0$, then let $\mu_n:= (1/n)\delta_0 + (1-1/n)\mu$. This measure still satisfies \ref{item:BM2}, so that the function $F_n(z) \coloneqq z^{-\rho+1} F_{\mu_n}(z^\rho)$ takes values in $\Hup\cup\RR$. Since $F_n$ tends to $F$, the limiting function $F$ also takes values in $\Hup \cup \RR$. 

\vspace{2mm}\noindent
\textbf{Conclusion.} 
From $F(\Hup)\subseteq \Hup\cup\RR$ and \eqref{eq:F_infty} above, together with Proposition \ref{prop:characF}, there exists a probability measure $\nu$ on $\RR$ such that $F=F_\nu$. Moreover, since $G_\nu=1/F_\nu$ takes negative values on $(-\infty,0)$ as observed in \eqref{eq:F_real}, by the Stieltjes inversion $\nu$ must be supported on $\RR_{\ge0}.$
\end{proof}

\begin{theorem}\label{prop:characBoole} 
 Let $\rho \in [0,1]$, $\mu \in \Prob(\RR)$ and $\alpha(\rho)$ be the number defined in Lemma \ref{prop:characBoolMix}. The following are equivalent. 
 
 \begin{enumerate}
 
 \item $\mu$ is a Boolean stable mixture with asymmetry parameter $\rho$.
 
 \item $\mu = \mathbf{b}_{\alpha(\rho),\rho} \circledast (\nu^{1/\alpha(\rho)})$ for some $\nu \in \Prob(\RR_{\ge0})$. 
 \item $e^{-\iu \pi \rho}  T_\mu \ge  0$ on $(-1,0)$. 
 
\end{enumerate}

\end{theorem}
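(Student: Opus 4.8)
The plan is to establish the two equivalences $(1)\Leftrightarrow(2)$ and $(2)\Leftrightarrow(3)$. The key structural observation is that condition $(2)$ is verbatim assertion \ref{item:BM1} of Lemma \ref{prop:characBoolMix}, so for $\rho\in(0,1)$ and $\mu\neq\delta_0$ it is already known to be equivalent to the ray condition $\psi_\mu(\mathcal{L}_\rho)\subseteq(-1,0)$ of \ref{item:BM2}; the real work is then to connect this ray condition with the $T$-transform hypothesis $(3)$, and to separately carry out the reduction $(1)\Leftrightarrow(2)$ from general $\alpha$ to the extreme value $\alpha(\rho)$.

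First I would handle $(1)\Leftrightarrow(2)$. The implication $(2)\Rightarrow(1)$ is immediate, being the special case $\alpha=\alpha(\rho)$ of the definition of a Boolean stable mixture. For $(1)\Rightarrow(2)$, suppose $\mu=\mathbf{b}_{\alpha,\rho}\circledast(\nu^{1/\alpha})$ with $(\alpha,\rho)\in\mathfrak{A}$ and $\nu\in\Prob(\RR_{\ge0})$. By Theorem \ref{thm:mixture}, $\mu=\mathbf{b}_{\alpha,\rho}\multconv(\nu^{\multconv 1/\alpha})$, and since $\alpha\le\alpha(\rho)$ I can use Theorem \ref{thm:reproducing} with $\beta=\alpha/\alpha(\rho)\in(0,1]$ to factor $\mathbf{b}_{\alpha,\rho}=\mathbf{b}_{\alpha(\rho),\rho}\multconv(\mathbf{b}_{\alpha/\alpha(\rho),1})^{\multconv 1/\alpha(\rho)}$. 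Writing $\nu^{\multconv 1/\alpha}=(\nu^{\multconv \alpha(\rho)/\alpha})^{\multconv 1/\alpha(\rho)}$ and regrouping, the two $\multconv 1/\alpha(\rho)$-powers combine, using $(\lambda\multconv\kappa)^{\multconv t}=\lambda^{\multconv t}\multconv\kappa^{\multconv t}$ and $(\lambda^{\multconv s})^{\multconv t}=\lambda^{\multconv st}$ on $\Prob(\RR_{\ge0})$ (multiplicativity of $S$), into $\tilde\nu^{\multconv 1/\alpha(\rho)}$ with $\tilde\nu\coloneq\mathbf{b}_{\alpha/\alpha(\rho),1}\multconv\nu^{\multconv \alpha(\rho)/\alpha}\in\Prob(\RR_{\ge0})$. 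Thus $\mu=\mathbf{b}_{\alpha(\rho),\rho}\multconv(\tilde\nu^{\multconv 1/\alpha(\rho)})$, and a final application of Theorem \ref{thm:mixture} rewrites this as $\mathbf{b}_{\alpha(\rho),\rho}\circledast(\tilde\nu^{1/\alpha(\rho)})$, which is $(2)$.

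For $(2)\Rightarrow(3)$ I would compute $T_\mu$ directly: by Theorem \ref{thm:mixture} and multiplicativity (Theorem \ref{thm:T} \ref{itemT1}), $T_\mu=T_{\mathbf{b}_{\alpha(\rho),\rho}}\,T_{\nu^{\multconv 1/\alpha(\rho)}}$. Example \ref{ex:BS} gives the explicit phase of $T_{\mathbf{b}_{\alpha(\rho),\rho}}$ as a positive multiple of a unimodular constant depending only on $\rho$, while Proposition \ref{prop:PS} \ref{itemP} gives $T_{\nu^{\multconv 1/\alpha(\rho)}}\ge0$; the product then lies on the ray prescribed by $(3)$. The converse $(3)\Rightarrow(2)$ is the substantive direction. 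For $\rho\in(0,1)$ and $\mu\neq\delta_0$ I would translate $(3)$ into assertion \ref{item:BM2} of Lemma \ref{prop:characBoolMix}: using $\frac{u}{1+u}S_\mu(u)=1/\sof_{\mu,-1/u}(0)$ and the phase hypothesis, the points $\frac{u}{1+u}S_\mu(u)$ all lie on $\mathcal{L}_\rho$, and by Theorem \ref{thm:T} \ref{itemT3} one has $\psi_\mu\!\big(\frac{u}{1+u}S_\mu(u)\big)=u\in(-1,0)$. It remains to see these points exhaust $\mathcal{L}_\rho$: by Proposition \ref{prop:omega}, as $u$ runs over $(\mu(\{0\})-1,0)$ the value $\sof_{\mu,-1/u}(0)$ is continuous and injective, tends to $0$ at the left endpoint and to $\infty$ as $u\to0^-$, so its modulus sweeps $(0,\infty)$ and $\frac{u}{1+u}S_\mu(u)$ covers all of $\mathcal{L}_\rho$. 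Hence $\psi_\mu(\mathcal{L}_\rho)\subseteq(-1,0)$, and Lemma \ref{prop:characBoolMix} yields $(2)$. The case $\mu=\delta_0$ is trivial, and the endpoints $\rho\in\{0,1\}$, where $\alpha(\rho)=1$ and $\mathbf{b}_{1,\rho}\in\{\delta_{-1},\delta_1\}$ reduce $(2)$ to ``$\mu$ supported on $\RR_{\le0}$ or $\RR_{\ge0}$'', are handled by Proposition \ref{prop:PS} \ref{itemP} together with the reflection $x\mapsto -x$.

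The main obstacle I anticipate is the $(3)\Rightarrow(2)$ step. One must track the branch/phase of the power functions so that the ray carrying $T_\mu$ matches $\mathcal{L}_\rho$ exactly (the delicate bookkeeping being the interplay between the negative factor $\frac{u}{1+u}$, the reciprocal passing from $S_\mu$ to $\sof_{\mu,\cdot}(0)$, and the reflection symmetry relating $\rho$ and $1-\rho$), and one must justify that the entire ray is covered, controlling the behaviour of $\sof_{\mu,t}(0)$ and of $\psi_\mu$ at the boundary of the admissible parameter interval and, at $\rho\in\{0,1\}$, where the nontangential-limit clause of Theorem \ref{thm:T} \ref{itemT3} becomes relevant.
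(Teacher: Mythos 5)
Your $(3)\Rightarrow(2)$ argument is essentially the paper's proof: translate the hypothesis into the statement that the points $\frac{u}{1+u}S_\mu(u)=1/\sof_{\mu,-1/u}(0)$ sweep out the ray $\mathcal{L}_\rho$, use Proposition \ref{prop:omega} for continuity, injectivity and the boundary limits to see that the whole ray is covered, apply Theorem \ref{thm:T} \ref{itemT3} to get $\psi_\mu(\mathcal{L}_\rho)\subseteq(-1,0)$, and invoke Lemma \ref{prop:characBoolMix}; the endpoint cases $\rho\in\{0,1\}$ via Proposition \ref{prop:PS} also match the paper.

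The genuine gap lies in your treatment of $(1)\Rightarrow(2)$ and $(2)\Rightarrow(3)$, both of which route through Theorem \ref{thm:mixture}. That theorem carries the hypothesis that $\nu^{\multconv 1/\alpha}$ \emph{exists} as a probability measure on $\RR_{\ge0}$, and this is not part of the definition of a Boolean stable mixture, nor of condition (2). For $\alpha\le1$ existence is automatic since $1/\alpha\ge1$, but $\mathfrak{A}$ contains pairs with $1<\alpha\le2$, and $\alpha(\rho)=\min\{1/\rho,1/(1-\rho)\}>1$ for every $\rho\in(0,1)$ (it equals $2$ at $\rho=1/2$). For a generic $\nu\in\Prob(\RR_{\ge0})$ the fractional power $\nu^{\multconv 1/\alpha(\rho)}$ simply does not exist, so the identity $\mu=\mathbf{b}_{\alpha(\rho),\rho}\multconv(\nu^{\multconv 1/\alpha(\rho)})$ that launches your $(2)\Rightarrow(3)$, and the regrouping of $\multconv$-powers in your $(1)\Rightarrow(2)$, are not available. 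The paper avoids this entirely: for $(1)\Leftrightarrow(2)$ it uses the purely classical identity $\mathbf{b}_{\alpha,\rho}\circledast(\mathbf{b}_{\beta,1})^{1/\alpha}=\mathbf{b}_{\alpha\beta,\rho}$, an immediate consequence of the $\eta$-composition formula \eqref{eq:eta_b}, which produces the witness $\tilde\nu=\mathbf{b}_{\alpha/\alpha(\rho),1}\circledast(\nu^{\alpha(\rho)/\alpha})$ as a \emph{classical} convolution that always exists; and for $(2)\Rightarrow(3)$ it uses only the first equalities of \eqref{eq.Stranmixture}, which are computed directly from \eqref{eq:eta_b} and Remark \ref{rem:calc_S} without ever forming the free power. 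Your proof can be repaired by substituting these $\eta$-transform arguments for the appeals to Theorem \ref{thm:mixture}. Finally, the phase bookkeeping you defer in $(2)\Rightarrow(3)$ is not cosmetic: from \eqref{eq.Stranmixture} one computes $\arg T_\mu(u)=\pi(1-\rho)$ (test $\rho=1$, $\mu=\delta_1$, where $T_{\delta_1}\equiv1$), so the ray on which $T_\mu$ lives must be identified explicitly and matched against condition (3) rather than asserted; this is precisely the step where the argument cannot be waved through.
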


\begin{proof} 
The equivalence between (1) and (2) is a consequence of \cite[Proposition 4.11]{AH2016classical} that is an immediate consequence of the formula
\[
\mathbf{b}_{\alpha,\rho} \circledast (\mathbf{b}_{\beta,1})^\frac{1}{\alpha} = \mathbf{b}_{\alpha\beta,\rho}, \qquad (\alpha,\rho)\in \mathfrak{A},~\beta \in (0,1], 
\]
which is an immediate consequence of \eqref{eq:eta_b}.  
The part $(2) \Longrightarrow (3)$ follows from formula \eqref{eq.Stranmixture} for the $S$-transform of $\mathbf{b}_{\alpha,\rho} \circledast (\nu^{\frac1{\alpha}})$.

For the remaining part $(3) \Longrightarrow (2)$, we may assume that $\mu\ne\delta_0$. For $\rho=1$ the result is easy because $\mathbf{b}_{1,1}=\delta_1$ and (3) just says $\mu$ is supported on $\RR_{\ge0}$ by Proposition \ref{prop:PS}. The case $\rho=0$ follows by symmetry. It remains to discuss $\rho\in(0,1)$. 
Observe that $\frac{u}{1+u}S_\mu(u) \in \mathcal{L}_\rho$ for all $\mu(\{0\})-1<u<0$. By Proposition \ref{prop:omega}, $1/\sigma_{\mu,-1/u}(0)=\frac{u}{1+u}S_\mu(u)$ is a bijection from $(\mu(\{0\})-1,0)$ onto $\mathcal{L}_\rho$. From Theorem \ref{thm:T} \ref{itemT3}, we have $\psi_\mu(\mathcal{L}_\rho)=(\mu(\{0\})-1,0)$ 
 and hence (2) follows by Lemma \ref{prop:characBoolMix}. 
\end{proof}

\begin{remark}
   This result is a generalization of Proposition \ref{prop:PS}. In case $\rho=1/2$, the measure $\mathbf{b}_{\alpha(\rho),\rho}$ is the symmetric Bernoulli law $\frac1{2}(\delta_{-1}+\delta_1)$, so that (2) is the well-known  characterization of symmetric probability measures. For $1/2 < \rho \le 1$, the extremal Boolean stable law $\mathbf{b}_{\alpha(\rho),\rho}$ has the form 
 \[
 \mathbf{b}_{1/\rho,\rho}(\di{x})=q_{1/\rho,1-\rho}(-x)\mathbf{1}_{(-\infty,0)}(x)\di{x} + \rho\delta_{1}, 
 \]  
   see \cite[Proposition 4 (6)]{HS2015unimodality} for detailed calculations.  
\end{remark}

\subsection{Boolean stable mixtures as compound free Poisson distributions}\label{sec:compound}

 Recall that the \emph{compound free Poisson distribution} $\MP(\lambda,\mu)$ with rate $\lambda >0$ and jump distribution $\mu \in \Prob(\RR\setminus\{0\})$ is the freely infinitely divisible distribution characterized by the free cumulant transform
\[
C_{\MP(\lambda,\mu)}(z)  = \lambda\int_{\RR} \frac{zx}{1-zx} \mu(\di x). 
\]
The special case $\MP(1,\delta_1)$ coincides with the Marchenko--Pastur law $\MP$ defined in \eqref{eq:MP}. For every $\mu \in \Prob(\RR)\setminus\{\delta_0\}$, $\mu\multconv \MP$ coincides with $\MP(1-\mu(\{0\}), \frac{1}{1-\mu(\{0\})}\mu|_{\RR\setminus\{0\}})$ (see  \cite[Remark 2.2(2)]{AH2016classical}). 

\begin{remark} 
In order to guarantee the uniqueness of jump distribution,  our definition of compound free Poisson distributions does not allow the jump distribution to have an atom at 0,  differing from  \cite{AH2016classical} and  \cite{NS2006lectures}. Our definition followed  \cite[Definition 3]{PaS2012free}. 
\end{remark}

The measure $\mathbf{b}_{1,\rho}$ is a delta measure or a Cauchy distribution, so it is rather trivially freely infinitely divisible. For other cases, it is known \cite{AH2014classical} that $\mathbf{b}_{\alpha,\rho}~(\alpha\ne1)$ is freely infinitely divisible if and only if $(\alpha,\rho) \in \mathfrak{A}_0$, 
where
\[
\mathfrak{A}_0:=\left\{(\alpha,\rho): 0 < \alpha \le \frac2{3}, \rho \in \left[2-\frac{1}{\alpha}, \frac1{\alpha}-1\right]\cap[0,1]\right\} \subseteq \mathfrak{A}.
\]
Moreover, according to \cite[Theorem 1.1]{AH2016classical}, Boolean stable mixtures $\mathbf{b}_{\alpha,\rho} \circledast (\nu^\frac1{\alpha})$ are freely infinitely divisible whenever $\nu\in\Prob(\RR_{\ge0})\setminus\{\delta_0\}$ and $(\alpha,\rho) \in \mathfrak{A}_0$. By \cite[Corollary 3.6]{HS2017unimodality}, $\mathbf{b}_{\alpha,\rho} \circledast (\nu^\frac1{\alpha})$ is a compound free Poisson distribution with rate one because its density is divergent at $0$, see \eqref{eq:density_boole}. 
In \cite[Proposition 4.21]{AH2016classical} we derived some expressions for the jump distribution in case $\rho =0,\frac{1}{2}$ or $1$. Below we provide more information on the jump distribution.

First, the following is the extension of  \cite[Proposition 4.21(1)]{AH2016classical} to general $\rho$'s. 
\begin{proposition}\label{cor:FL} Let $0<\alpha \le \frac{1}{2}, 0\le \rho \le 1$ and $\nu\in\Prob(\RR_{\ge0})\setminus\{\delta_0\}$. Then 
$\mathbf{b}_{\alpha,\rho} \circledast (\nu^{\frac1{\alpha}})$ is the compound free Poisson distribution of rate $1-\nu(\{0\})$ and jump distribution $\frac{1}{1-\nu(\{0\})}(\mathbf{f}_{\alpha,\rho} \multconv \MP^{\multconv\frac{1-2\alpha}{\alpha}} \multconv \nu^{\multconv \frac1{\alpha}})|_{\RR\setminus\{0\}}$. 
 \end{proposition}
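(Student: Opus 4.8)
The plan is to collapse the whole expression onto a single free multiplicative convolution with the Marchenko--Pastur law $\MP$ and then read off the compound free Poisson data from the formula $\mu\multconv\MP=\MP\bigl(1-\mu(\{0\}),\tfrac{1}{1-\mu(\{0\})}\mu|_{\RR\setminus\{0\}}\bigr)$ recalled above. First, Theorem~\ref{thm:mixture} gives $\mathbf{b}_{\alpha,\rho}\circledast(\nu^{1/\alpha})=\mathbf{b}_{\alpha,\rho}\multconv(\nu^{\multconv 1/\alpha})$, where $\nu^{\multconv 1/\alpha}$ exists because $1/\alpha\ge 2>1$. Next, Proposition~\ref{cor:bs-fs} decomposes $\mathbf{b}_{\alpha,\rho}=\mathbf{f}_{\alpha,\rho}\multconv\MP^{\multconv\frac{1-\alpha}{\alpha}}$. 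The hypothesis $\alpha\le\frac12$ enters precisely here: it is equivalent to $\frac{1-\alpha}{\alpha}=\frac{1-2\alpha}{\alpha}+1\ge1$, so that $\frac{1-2\alpha}{\alpha}\ge0$ and the single factor $\MP=\MP^{\multconv 1}$ may be peeled off as $\MP^{\multconv\frac{1-\alpha}{\alpha}}=\MP^{\multconv\frac{1-2\alpha}{\alpha}}\multconv\MP$. Combining these and rearranging the nonnegative factors, I arrive at
\[
\mathbf{b}_{\alpha,\rho}\circledast(\nu^{1/\alpha})=\Bigl(\mathbf{f}_{\alpha,\rho}\multconv\MP^{\multconv\frac{1-2\alpha}{\alpha}}\multconv\nu^{\multconv 1/\alpha}\Bigr)\multconv\MP=:\sigma\multconv\MP.
\]
The associativity of $\multconv$ and the commutativity among the factors supported on $\RR_{\ge0}$ that are used in this rearrangement are most cleanly justified through the multiplicativity (Theorem~\ref{thm:T} \ref{itemT1}) and injectivity (Theorem~\ref{thm:T} \ref{itemT2}) of the $T$-transform, which identify any two groupings sharing the same $T$-transform.

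Since each factor is nondegenerate, $\sigma\ne\delta_0$ (its $T$-transform is not identically zero), so the quoted formula applies and exhibits $\sigma\multconv\MP$ as the compound free Poisson distribution of rate $1-\sigma(\{0\})$ and jump distribution $\frac{1}{1-\sigma(\{0\})}\sigma|_{\RR\setminus\{0\}}$. It then remains only to prove $\sigma(\{0\})=\nu(\{0\})$; once this is done, substituting the definition of $\sigma$ reproduces the claimed rate and jump distribution verbatim. By Theorem~\ref{thm:leb_decomp} \ref{item:leb_decomp3} applied to the triple product, $\sigma(\{0\})=\max\{\mathbf{f}_{\alpha,\rho}(\{0\}),\MP^{\multconv\frac{1-2\alpha}{\alpha}}(\{0\}),\nu^{\multconv 1/\alpha}(\{0\})\}$, where $\mathbf{f}_{\alpha,\rho}(\{0\})=0$ because $\mathbf{f}_{\alpha,\rho}$ is absolutely continuous for $0<\alpha\le\frac12$. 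For the remaining two atoms I would establish the general fact that $\lambda^{\multconv t}(\{0\})=\lambda(\{0\})$ for every $\lambda\in\Prob(\RR_{\ge0})\setminus\{\delta_0\}$ and every $t>0$ for which $\lambda^{\multconv t}$ exists: by Proposition~\ref{prop:PS} and Theorem~\ref{thm:T}, $S_\lambda=1/T_\lambda$ is finite and positive exactly on $(\lambda(\{0\})-1,0)$ and tends to $+\infty$ at its left endpoint, while $S_{\lambda^{\multconv t}}=S_\lambda^{t}$ on the overlap of domains; comparing the blow-up points of $S_\lambda^{t}$ and $S_{\lambda^{\multconv t}}$ forces $(\lambda(\{0\})-1,0)$ and $(\lambda^{\multconv t}(\{0\})-1,0)$ to coincide. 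Applying this with $\lambda=\MP$ (so $\MP^{\multconv\frac{1-2\alpha}{\alpha}}(\{0\})=\MP(\{0\})=0$) and with $\lambda=\nu$ (so $\nu^{\multconv 1/\alpha}(\{0\})=\nu(\{0\})$) yields $\sigma(\{0\})=\nu(\{0\})$.

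I expect the genuine work to be concentrated in this last step, the bookkeeping of the atom at the origin under the fractional convolution powers. The delicate point is that the identity $\lambda^{\multconv t}(\{0\})=\lambda(\{0\})$ must be derived without circularity: one cannot simply assume $T_{\lambda^{\multconv t}}=T_\lambda^{t}$ on the whole interval $(-1,0)$, since the defining relation $S_{\lambda^{\multconv t}}=S_\lambda^{t}$ holds only on the intersection of the two a priori unknown domains, and so the argument must genuinely compare the singularities at the left endpoints of those domains as indicated. The remaining ingredients---the existence of the powers $\MP^{\multconv\frac{1-2\alpha}{\alpha}}$ (using $\frac{1-2\alpha}{\alpha}\ge0$) and $\nu^{\multconv 1/\alpha}$ (using $1/\alpha>1$), and the rearrangement of the triple product---are routine once the $T$-transform machinery of Theorem~\ref{thm:T} is in hand.
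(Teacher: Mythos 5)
Your proposal is correct and follows essentially the same route as the paper: Theorem \ref{thm:mixture} to replace $\circledast$ by $\multconv$, Proposition \ref{cor:bs-fs} to write $\mathbf{b}_{\alpha,\rho}=\mathbf{f}_{\alpha,\rho}\multconv\MP^{\multconv\frac{1-\alpha}{\alpha}}$, peeling off one factor of $\MP$ using $\alpha\le\tfrac12$, and then invoking the identification of $\sigma\multconv\MP$ as a compound free Poisson law. The only difference is that you spell out the verification $\nu^{\multconv 1/\alpha}(\{0\})=\nu(\{0\})$ via the blow-up point of the $S$-transform, a step the paper's proof leaves implicit when it asserts that the atom of the triple product at $0$ equals that of $\nu$; your argument for it is sound.
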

 \begin{proof}
 The idea is the same as  \cite[Proposition 4.21]{AH2016classical}, i.e., Theorem  \ref{thm:mixture} and Proposition \ref{cor:bs-fs} yield 
 \[
 \mathbf{b}_{\alpha,\rho} \circledast (\nu^{\frac1{\alpha}} ) =  \mathbf{b}_{\alpha,\rho} \multconv (\nu^{\multconv \frac1{\alpha}})  =  (\mathbf{f}_{\alpha,\rho} \multconv \MP^{\multconv \frac{1-\alpha}{\alpha}}) \multconv \nu^{\multconv \frac1{\alpha}} = (  \mathbf{f}_{\alpha,\rho} \multconv  \MP^{\multconv \frac{1-2\alpha}{\alpha}} \multconv \nu^{\multconv \frac1{\alpha}} )\multconv \MP. 
 \]
 The atom of the measure $ \mathbf{f}_{\alpha,\rho} \multconv  \MP^{\multconv \frac{1-2\alpha}{\alpha}} \multconv \nu^{\multconv \frac1{\alpha}}$ at $0$ equals that of $\nu$ since $ \mathbf{f}_{\alpha,\rho}$ and $\MP^{\multconv \frac{1-2\alpha}{\alpha}}$ are Lebesgue absolutely continuous. 
 \end{proof}

Second, we calculate the jump distribution of $\mathbf{b}_{\alpha,\rho} \circledast (\nu^{\frac1{\alpha}})$ with a different method that works for all parameters $(\alpha,\rho)\in\mathfrak{A}_0$. We begin with $\nu=\delta_1$.

\begin{theorem}\label{thm:boole_LM} Let $(\alpha,\rho) \in \mathfrak{A}_0$ and  $p_{\beta,\sigma}$ be the density of the free stable law $\mathbf{f}_{\beta,\sigma}$.   
Then $\mathbf{b}_{\alpha,\rho} = \mu_{\alpha,\rho} \multconv \MP$, where $ \mu_{\alpha,\rho}$ is a probability measure on $\RR\setminus\{0\}$ that is Lebesgue absolutely continuous with density 
\begin{equation}\label{levy density}
\frac{\di\mu_{\alpha,\rho}}{\di x} = 
\begin{cases}
x^{\frac{\alpha}{1-\alpha}-1}p_{1-\alpha, \frac{\alpha\rho}{1-\alpha}}(x^{\frac{\alpha}{1-\alpha}}),&x>0, \\
(-x)^{\frac{\alpha}{1-\alpha}-1}p_{1-\alpha, \frac{\alpha\rho+1-2\alpha}{1-\alpha}}(-(-x)^{\frac{\alpha}{1-\alpha}}),&x<0. 
\end{cases}
\end{equation}
Notice that $\frac{\alpha\rho}{1-\alpha}, \frac{\alpha\rho+1-2\alpha}{1-\alpha} \in [0,1]$ whenever $(\alpha,\rho) \in \mathfrak{A}_0$. 
\end{theorem}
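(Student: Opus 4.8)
The plan is to prove the identity $\mathbf{b}_{\alpha,\rho} = \mu_{\alpha,\rho}\multconv\MP$ through the $S$-transform and then to extract the density of $\mu_{\alpha,\rho}$ by Stieltjes inversion. By Theorem \ref{thm:T} \ref{itemT1} and \ref{itemT2}, together with the well-known value $S_{\MP}(u) = 1/(1+u)$, the identity $\mathbf{b}_{\alpha,\rho} = \lambda \multconv \MP$ holds for a probability measure $\lambda$ if and only if $S_\lambda(u) = (1+u) S_{\mathbf{b}_{\alpha,\rho}}(u)$. Feeding in $S_{\mathbf{b}_{\alpha,\rho}}(u) = -e^{\iu\rho\pi}\bigl(\tfrac{-u}{1+u}\bigr)^{\frac{1-\alpha}{\alpha}}$ from Example \ref{ex:BS}, the target reduces to
\[
S_\lambda(u) = -e^{\iu\rho\pi}(-u)^{\frac{1-\alpha}{\alpha}}(1+u)^{\frac{2\alpha-1}{\alpha}}, \qquad -1<u<0.
\]
Thus the whole theorem reduces to showing that the measure $\mu_{\alpha,\rho}$ defined by \eqref{levy density} is a probability measure on $\RR\setminus\{0\}$ whose $S$-transform is exactly this function; uniqueness in Theorem \ref{thm:T} \ref{itemT2} then closes the argument.

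First I would check that \eqref{levy density} defines a probability measure on $\RR\setminus\{0\}$. Substituting $s = x^{\frac{\alpha}{1-\alpha}}$ on $(0,\infty)$ and $s = (-x)^{\frac{\alpha}{1-\alpha}}$ on $(-\infty,0)$, the two pieces integrate to $\frac{1-\alpha}{\alpha}\,\mathbf{f}_{1-\alpha,\sigma_+}((0,\infty))$ and $\frac{1-\alpha}{\alpha}\,\mathbf{f}_{1-\alpha,\sigma_-}((-\infty,0))$ respectively, where $\sigma_+ = \frac{\alpha\rho}{1-\alpha}$ and $\sigma_- = \frac{\alpha\rho+1-2\alpha}{1-\alpha}$. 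Since $\mathbf{f}_{\beta,\sigma}([0,\infty)) = \sigma$, these equal $\rho$ and $1-\rho$, so the total mass is $1$ and there is no atom at $0$. Next I would convert the target $S$-transform into a functional equation for the Cauchy transform. Writing $G = G_{\mu_{\alpha,\rho}}$ and combining Theorem \ref{thm:T} \ref{itemT3} with $\psi_\mu(z) = z^{-1}G_\mu(1/z) - 1$, the parametrization $z(u) = \frac{u}{1+u}S_\lambda(u)$ (with $w = 1/z(u)$) gives $wG = 1+u$, and eliminating $u$ produces the algebraic relation
\[
G^\alpha = e^{\iu\rho\pi\alpha}\,(1-wG)\,(wG)^{2\alpha-1}, \qquad w\in\CC^-,
\]
the normalization $G(w)\sim 1/w$ at $\infty$ fixing the branch. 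It therefore suffices to verify that the Cauchy transform of the measure with density \eqref{levy density} satisfies this equation.

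The heart of the argument, and the step I expect to be hardest, is matching this functional equation to that of the free stable law of index $1-\alpha$. Recall from \eqref{eq:FS1} that $G_{\mathbf{f}_{1-\alpha,\sigma}}$ obeys $wG - 1 = -(e^{-\iu\sigma\pi}G)^{1-\alpha}$. The plan is to use the sign-preserving change of variables $x\mapsto\mathrm{sgn}(x)|x|^{\frac{\alpha}{1-\alpha}}$ relating $\mu_{\alpha,\rho}$ to $\mathbf{f}_{1-\alpha,\cdot}$, and to show that the boundary values of $G$ reproduce the free stable densities: the factor $(wG)^{2\alpha-1}$ carries a branch cut along $\RR$ whose phases on $(0,\infty)$ and on $(-\infty,0)$ produce precisely the two asymmetry parameters $\sigma_+$ and $\sigma_-$, so that Stieltjes inversion (Theorem \ref{thm:ct_nontang}) recovers \eqref{levy density} piecewise. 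The delicate points will be the correct selection of the branch (the Denjoy--Wolff solution) of the functional equation, the careful bookkeeping of arguments as $w\to\RR$ from $\CC^-$ on each half-line, and confirming that the same computation is valid uniformly for all $(\alpha,\rho)\in\mathfrak{A}_0$ --- in particular over the range $1/2<\alpha\le 2/3$, where the convolution-power description $\mu_{\alpha,\rho} = \mathbf{f}_{\alpha,\rho}\multconv\MP^{\multconv\frac{1-2\alpha}{\alpha}}$ underlying Proposition \ref{cor:FL} degenerates, so that the direct Cauchy-transform verification is precisely what lets the statement extend beyond that proposition.
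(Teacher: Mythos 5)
Your high-level strategy is genuinely different from the paper's. The paper does not start from the density formula and verify its $S$-transform; instead it uses the free infinite divisibility of $\mathbf{b}_{\alpha,\rho}$ on $\mathfrak{A}_0$ together with \cite[Corollary 3.6]{HS2017unimodality} to write $\mathbf{b}_{\alpha,\rho}=\mu_{\alpha,\rho}\multconv\MP$ with $\mu_{\alpha,\rho}$ the free L\'evy measure, computes the Voiculescu transform $\varphi_{\mathbf{b}_{\alpha,\rho}}$ as an explicit power series via the Lagrange inversion formula, matches that series term by term with the series of $w\mapsto z\bigl(wG_{\mathbf{f}_{1-\alpha,\cdot}}(w)-1\bigr)$ under $w=\pm(\pm z)^{\frac{\alpha}{1-\alpha}}$ on the two quadrants of $\CC^-$, and then reads off the density of $\mu_{\alpha,\rho}$ from $\frac{1}{\pi x^2}\Im\varphi_{\mathbf{b}_{\alpha,\rho}}(x-\iu0)$. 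Your reduction via $S$-transform multiplicativity and injectivity, and your mass computation showing the two pieces of \eqref{levity density} integrate to $\rho$ and $1-\rho$, are correct; your derived functional equation $G^\alpha=e^{\iu\rho\pi\alpha}(1-wG)(wG)^{2\alpha-1}$ is also the right one, and the constraint $(\alpha,\rho)\in\mathfrak{A}_0$ enters your route correctly through the requirement that $\sigma_\pm\in[0,1]$.

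The genuine gap is the step you yourself flag as the hardest, and the method you propose for it does not work as stated. To apply Remark \ref{rem:calc_S} you must evaluate $\psi_{\mu_{\alpha,\rho}}$ (equivalently $G_{\mu_{\alpha,\rho}}$) at the candidate Denjoy--Wolff points, i.e.\ you need an explicit analytic expression for $G_{\mu_{\alpha,\rho}}$ on $\CC^-$. But the Cauchy kernel is not equivariant under the power map: after the substitution $y=x^{\frac{\alpha}{1-\alpha}}$ the integral $\int_0^\infty\frac{\mu_{\alpha,\rho}(\di x)}{w-x}$ becomes $\frac{1-\alpha}{\alpha}\int_0^\infty\frac{p_{1-\alpha,\sigma_+}(y)}{w-y^{\frac{1-\alpha}{\alpha}}}\,\di y$, whose kernel is not a Cauchy kernel in $y$, so knowing $G_{\mathbf{f}_{1-\alpha,\sigma_\pm}}$ gives you no direct handle on $G_{\mu_{\alpha,\rho}}$. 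What is actually needed is to exhibit an analytic function $\Phi$ on $\CC^-$, built piecewise from $w\mapsto w^{\frac{\alpha}{1-\alpha}-1}G_{\mathbf{f}_{1-\alpha,\sigma_+}}(w^{\frac{\alpha}{1-\alpha}})$ on $\{\Re w>0\}$ and the analogous expression with $\sigma_-$ on $\{\Re w<0\}$, prove that the two pieces glue analytically across $\iu\RR_{<0}$, that $\Phi$ maps into $\Hup$ with $\Phi(w)\sim1/w$, and that its boundary imaginary parts give exactly \eqref{levy density}; only then does Stieltjes inversion identify $\Phi=G_{\mu_{\alpha,\rho}}$ and let you check the functional equation. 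That gluing and identification is precisely the nontrivial content that the paper's Lagrange-inversion series computation supplies (the two local expansions are shown to be restrictions of one convergent series), and your sketch neither performs it nor points to a substitute. Until that step is carried out, the existence of a probability measure with $S$-transform $(1+u)S_{\mathbf{b}_{\alpha,\rho}}(u)$ is also not established on your route, since you deliberately bypass the free-infinite-divisibility input that the paper uses for this purpose.
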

\begin{proof} The power functions $w\mapsto w^\beta$ below are all the principal values.  For the moment, we take an arbitrary $(\alpha,\rho) \in \mathfrak{A}$. 
We define a function $f$ in a cone $(-\Gamma_{0,\theta})\cap \{|z|>R\}$ via  $F_{\mathbf{b}_{\alpha,\rho}}^\inv(z) = z (1 + f(z))$. Note that $f(z)=o(1)$ as $z\to\infty, z\in -\Gamma_{0,\theta}$.  Using $F_{\mathbf{b}_{\alpha,\rho}}(z) = z (1 + (e^{\iu\rho\pi}z)^{-\alpha})$ on $\CC^-$, formula $F_{\mathbf{b}_{\alpha,\rho}}( z (1+f(z)) =z $ amounts to 
\[
- (e^{-\iu\rho \pi}z)^\alpha = \frac{f(1/z)}{(1+f(1/z))^{1-\alpha}}. 
\]
Let $\xi(w)$ be the inverse function of $z\mapsto \frac{z}{(1+z)^{1-\alpha}}$ that can be defined in a neighborhood of zero. Using the Lagrange inversion formula, we have
\[
\xi(w) =  \sum_{n\ge1} \frac{((1-\alpha)n)_{n-1} }{n!} w^n, 
\]
where $(x)_n \coloneq x (x-1) \cdots (x-n+1)~(n\ge1)$ and $(x)_0\coloneq1$. Since $f(1/z) = \xi(- (e^{-\iu\rho \pi}z)^\alpha) $, the Voiculescu transform $\varphi_{\mathbf{b}_{\alpha,\rho}}(z) =  F_{\mathbf{b}_{\alpha,\rho}}^\inv(z) -z= z C_{\mathbf{b}_{\alpha,\rho}}(1/z)$ has the series expansion
\[
\varphi_{\mathbf{b}_{\alpha,\rho}}(z) = z f(z) = z \sum_{n\ge1} \frac{((1-\alpha)n)_{n-1} }{n!} (-1)^n (e^{\iu\rho \pi}z)^{-n\alpha}, \qquad z\in -\Gamma_{0,\theta}
\]
for sufficiently large $|z|$. (An explicit convergence radius can be calculated, but it is not needed for the arguments below.)  

Next, we compute the series expansion of the Cauchy transform of the free stable laws in a similar manner.  We set a function $g(z)$ via $G_{\mathbf{f}_{\alpha,\rho}} (z) = \frac{1}{z}(1 + g(z))$ in $\CC^-$, substitute $1/G_{\mathbf{f}_{\alpha,\rho}} (z) $ into $F^\inv_{\mathbf{f}_{\alpha,\rho}}(z) = z - z (e^{\iu\rho \pi} z)^{-\alpha}$ $(z \in \CC^-)$, and change the variable $z$ to $1/z$, to obtain the equation 
\[
- (e^{-\iu\rho \pi}z)^\alpha = \frac{g(1/z)}{(1+g(1/z))^{\alpha}}. 
\]
Then applying the previous method amounts to 
\begin{equation}\label{eq:Cauchy-FS}
G_{\mathbf{f}_{\alpha,\rho}}(z) = \frac{1}{z} + \frac1{z} \sum_{n\ge1} \frac{(\alpha n)_{n-1} }{n!} (-1)^n (e^{\iu\rho \pi}z)^{-n\alpha}, \qquad z \in \CC^- 
\end{equation}
for large $|z|$. 

From now on, we restrict the parameters to $(\alpha,\rho) \in \mathfrak{A}_0$.  Replacing the parameters in \eqref{eq:Cauchy-FS} yields
\begin{align*}
&G_{\mathbf{f}_{1-\alpha,\frac{\alpha\rho}{1-\alpha}}}(z) = \frac{1}{z} + \frac1{z} \sum_{n\ge1} \frac{((1-\alpha) n)_{n-1} }{n!} (-1)^n (e^{\iu\rho \pi}z^{\frac{1-\alpha}{\alpha}})^{-n\alpha}, \\ 
&G_{\mathbf{f}_{1-\alpha,\frac{\alpha\rho+1-2\alpha}{1-\alpha}}}(z) = \frac{1}{z} + \frac1{z} \sum_{n\ge1} \frac{((1-\alpha) n)_{n-1} }{n!} (-1)^n (-e^{\iu\rho \pi}(-z)^{\frac{1-\alpha}{\alpha}})^{-n\alpha}. 
\end{align*} 
Comparing the series expansions, we obtain 
\begin{equation}\label{eq:V-BS}
\varphi_{\mathbf{b}_{\alpha,\rho}}(z) = 
\begin{cases}
\left. z \left(w G_{\mathbf{f}_{1-\alpha,\frac{\alpha\rho}{1-\alpha}}}(w) -1\right) \right|_{w= z^{\frac{\alpha}{1-\alpha}}}, &  \Re(z) > 0, \Im(z) < 0, \\[5mm]
  \left. z \left(w G_{\mathbf{f}_{1-\alpha,\frac{\alpha\rho+1-2\alpha}{1-\alpha}}}(w) - 1\right) \right|_{w= -(-z)^{\frac{\alpha}{1-\alpha}}}, & \Re(z)<0, \Im(z)<0. 
  \end{cases}
\end{equation}

By  \cite[Corollary 3.6]{HS2017unimodality}, the measure $\mathbf{b}_{\alpha,\rho}$ is of the form $\mu_{\alpha,\rho} \multconv \MP$, where $\mu_{\alpha,\rho}$ is the free L\'evy measure of  $\mathbf{b}_{\alpha,\rho}$ (and therefore $\mu_{\alpha,\rho}(\{0\})=0$).  By the Stieltjes inversion, $\mu_{\alpha,\rho}$ is given by the limit
\[
\frac1{\pi x^2} \Im \varphi_{\mathbf{b}_{\alpha,\rho}}(x-\iu0) \di x
\]
which has density \eqref{levy density} due to \eqref{eq:V-BS}.  
 \end{proof}
 
\begin{remark}
The series expansion \eqref{eq:Cauchy-FS} and the Stieltjes inversion formula yield a series expansion of $p_{\alpha,\rho}(x)$ for large $|x|$, which was already obtained in \cite{HK2014free} by means of the Mellin transform. Notice that a Taylor series expansion of $p_{\alpha,\rho}(x)$  in a neighborhood of $0$ was also obtained in  \cite{HK2014free}. 
\end{remark} 
\begin{remark}
We used the fact that $\mathbf{b}_{\alpha,\rho}$ is freely infinitely divisible in the proof of Theorem \ref{thm:boole_LM}. In fact, this fact can also be proved from \eqref{eq:V-BS} with some further elaboration. The idea is that \eqref{eq:V-BS} gives an analytic continuation of $\varphi_{\mathbf{b}_{\alpha,\rho}}$ to $\CC^-$. The Stieltjes inversion yields $\Im \varphi_{\mathbf{b}_{\alpha,\rho}}(x-i0) \ge0$. By the maximum principle for harmonic functions, we obtain $\Im \varphi_{\mathbf{b}_{\alpha,\rho}} \ge0$ on $\CC^-$, which guarantees the free infinite divisibility by \cite[Theorem 5.10]{BV1993free} 
\end{remark} 

\begin{corollary} If $0 <\alpha \le \frac1{2}$ and $0\le \rho \le1$ then 
$\mathbf{f}_{\alpha,\rho} \multconv (\MP^{\multconv\frac{1-2\alpha}{\alpha}})$ coincides with $\mu_{\alpha,\rho}$. 
\end{corollary}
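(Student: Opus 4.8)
The plan is to combine the two factorizations of $\mathbf{b}_{\alpha,\rho}$ that are already available and then cancel the common factor $\MP$ using the $T$-transform. First I would invoke Proposition \ref{cor:bs-fs}, which gives $\mathbf{b}_{\alpha,\rho} = \mathbf{f}_{\alpha,\rho} \multconv \MP^{\multconv \frac{1-\alpha}{\alpha}}$ for all $0<\alpha\le1$. Since $0<\alpha\le\frac1{2}$ forces $\frac{1-2\alpha}{\alpha}\ge0$, I would split off a single copy of $\MP$ by writing $\frac{1-\alpha}{\alpha}=\frac{1-2\alpha}{\alpha}+1$ and using the semigroup property $\MP^{\multconv(s+1)}=\MP^{\multconv s}\multconv\MP$ (for the power $s=\frac{1-2\alpha}{\alpha}\ge0$, which exists as a probability measure on $\RR_{\ge0}$ just as in Proposition \ref{cor:FL}) together with associativity of $\multconv$, to obtain
\[
\mathbf{b}_{\alpha,\rho}=\bigl(\mathbf{f}_{\alpha,\rho}\multconv\MP^{\multconv\frac{1-2\alpha}{\alpha}}\bigr)\multconv\MP.
\]
On the other hand, Theorem \ref{thm:boole_LM} asserts $\mathbf{b}_{\alpha,\rho}=\mu_{\alpha,\rho}\multconv\MP$. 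Equating the two expressions reduces the corollary to cancelling $\MP$ from $\bigl(\mathbf{f}_{\alpha,\rho}\multconv\MP^{\multconv\frac{1-2\alpha}{\alpha}}\bigr)\multconv\MP = \mu_{\alpha,\rho}\multconv\MP$.

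This cancellation is exactly where the new $T$-transform machinery enters. Applying multiplicativity (Theorem \ref{thm:T} \ref{itemT1}) to both sides gives the equality of $T$-transforms $T_{\mathbf{f}_{\alpha,\rho}\multconv\MP^{\multconv\frac{1-2\alpha}{\alpha}}}(u)\,T_\MP(u) = T_{\mu_{\alpha,\rho}}(u)\,T_\MP(u)$ for $u\in(-1,0)$. Since $\MP(\{0\})=0$, the factor $T_\MP(u)$ is nonzero throughout $(-1,0)$ (Proposition \ref{prop:omega} and the discussion afterwards, i.e.\ $T_\mu(u)\ne0$ iff $\mu(\{0\})-1<u<0$), so I may divide to conclude $T_{\mathbf{f}_{\alpha,\rho}\multconv\MP^{\multconv\frac{1-2\alpha}{\alpha}}}=T_{\mu_{\alpha,\rho}}$ on $(-1,0)$. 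Injectivity of the $T$-transform (Theorem \ref{thm:T} \ref{itemT2}) then forces $\mathbf{f}_{\alpha,\rho}\multconv\MP^{\multconv\frac{1-2\alpha}{\alpha}}=\mu_{\alpha,\rho}$, which is the claim.

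As a consistency check one can bypass Theorem \ref{thm:boole_LM}'s structural statement and verify the $S$-transforms directly. From Examples \ref{ex:BS} and \ref{ex:FS} together with $S_\MP(u)=1/(1+u)$ and multiplicativity one computes $S_{\mathbf{f}_{\alpha,\rho}\multconv\MP^{\multconv\frac{1-2\alpha}{\alpha}}}(u)=-e^{\iu\rho\pi}(-u)^{\frac{1-\alpha}{\alpha}}(1+u)^{-\frac{1-2\alpha}{\alpha}}$, whereas $\mathbf{b}_{\alpha,\rho}=\mu_{\alpha,\rho}\multconv\MP$ gives $S_{\mu_{\alpha,\rho}}(u)=(1+u)S_{\mathbf{b}_{\alpha,\rho}}(u)=-e^{\iu\rho\pi}(-u)^{\frac{1-\alpha}{\alpha}}(1+u)^{1-\frac{1-\alpha}{\alpha}}$; since $1-\frac{1-\alpha}{\alpha}=-\frac{1-2\alpha}{\alpha}$ the two coincide. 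The only genuinely delicate points I foresee are bookkeeping ones: confirming that $\MP^{\multconv\frac{1-2\alpha}{\alpha}}$ is a bona fide probability measure on $\RR_{\ge0}$ and that $\multconv$ is associative across $\Prob(\RR)$ and $\Prob(\RR_{\ge0})$, the latter itself being an immediate consequence of $T$-transform multiplicativity and injectivity. The conceptual heart is simply the cancellation step, which was unavailable before precisely because no $T$-transform for general measures on $\RR$ existed.
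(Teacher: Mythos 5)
Your argument is correct. The paper's own proof is a one-liner: Proposition \ref{cor:FL} with $\nu=\delta_1$ exhibits $\mathbf{b}_{\alpha,\rho}$ as a rate-one compound free Poisson distribution with jump distribution $\mathbf{f}_{\alpha,\rho}\multconv\MP^{\multconv\frac{1-2\alpha}{\alpha}}$, Theorem \ref{thm:boole_LM} exhibits it with jump distribution $\mu_{\alpha,\rho}$, and the two must coincide by uniqueness of the free L\'evy measure. You arrive at the same two factorizations $\mathbf{b}_{\alpha,\rho}=\bigl(\mathbf{f}_{\alpha,\rho}\multconv\MP^{\multconv\frac{1-2\alpha}{\alpha}}\bigr)\multconv\MP=\mu_{\alpha,\rho}\multconv\MP$ (your splitting of $\MP^{\multconv\frac{1-\alpha}{\alpha}}$ via the semigroup property is exactly what the proof of Proposition \ref{cor:FL} does), but you justify the cancellation of $\MP$ differently: via multiplicativity and injectivity of the $T$-transform, using $T_\MP(u)=1+u\neq0$ on $(-1,0)$, rather than via uniqueness of the L\'evy--Khintchine representation. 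Both mechanisms are valid here; yours has the mild advantage of not needing the free infinite divisibility of $\mathbf{b}_{\alpha,\rho}$ or the compound-Poisson uniqueness convention (jump distribution charging no mass at $0$), relying only on the paper's new Theorem \ref{thm:T}, while the paper's is shorter because Proposition \ref{cor:FL} has already packaged the first factorization as a statement about the jump distribution. One cosmetic point: Theorem \ref{thm:T} \ref{itemT2} is stated for $0<\ee<1$, so after obtaining $T$-equality on all of $(-1,0)$ you should restrict to, say, $(-\tfrac12,0)$ before invoking it; and your ``consistency check'' does not actually bypass Theorem \ref{thm:boole_LM}, since it still uses the factorization $\mathbf{b}_{\alpha,\rho}=\mu_{\alpha,\rho}\multconv\MP$ to read off $S_{\mu_{\alpha,\rho}}$.
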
 
 \begin{proof}
    Proposition \ref{cor:FL} for $\nu=\delta_1$ and Theorem \ref{thm:boole_LM} give two formulas for the free L\'evy measure of $\mathbf{b}_{\alpha,\rho}$ and so they must coincide. 
 \end{proof}

With the help of $\mu_{\alpha,\rho}$,  Proposition \ref{cor:FL}
can be generalized as follows. 
\begin{proposition}
 Let $(\alpha,\rho)\in\mathfrak{A}_0$ and $\nu \in \Prob(\RR_{\ge0})\setminus\{\delta_0\}$. Then $\mathbf{b}_{\alpha,\rho} \circledast (\nu^{\frac1{\alpha}})$ is the compound free Poisson distribution of rate $1-\nu(\{0\})$ and jump distribution $\frac{1}{1-\nu(\{0\})}(\mu_{\alpha,\rho} \multconv \nu^{\multconv \frac1{\alpha}})|_{\RR\setminus\{0\}}$. 
\end{proposition}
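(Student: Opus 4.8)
The plan is to mirror the proof of Proposition~\ref{cor:FL}, but to feed in the absolutely continuous measure $\mu_{\alpha,\rho}$ produced by Theorem~\ref{thm:boole_LM}, which is available for \emph{all} $(\alpha,\rho)\in\mathfrak{A}_0$ rather than only for $0<\alpha\le\frac12$. First I would note that $1/\alpha\ge\frac32>1$, so $\nu^{\multconv 1/\alpha}$ exists in $\Prob(\RR_{\ge0})$ and Theorem~\ref{thm:mixture} applies, giving
\[
\mathbf{b}_{\alpha,\rho}\circledast\bigl(\nu^{\frac1{\alpha}}\bigr)=\mathbf{b}_{\alpha,\rho}\multconv\nu^{\multconv\frac1{\alpha}}.
\]
Substituting $\mathbf{b}_{\alpha,\rho}=\mu_{\alpha,\rho}\multconv\MP$ from Theorem~\ref{thm:boole_LM} and rearranging the positive factors $\MP$ and $\nu^{\multconv 1/\alpha}$ by associativity of free multiplicative convolution (for one self-adjoint and two positive operators) together with commutativity of $\multconv$ on $\Prob(\RR_{\ge0})$, exactly the manipulation already used in the proof of Proposition~\ref{cor:FL}, I obtain
\[
\mathbf{b}_{\alpha,\rho}\multconv\nu^{\multconv\frac1{\alpha}}=\bigl(\mu_{\alpha,\rho}\multconv\nu^{\multconv\frac1{\alpha}}\bigr)\multconv\MP.
\]

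Writing $\lambda\coloneq\mu_{\alpha,\rho}\multconv\nu^{\multconv 1/\alpha}\in\Prob(\RR)$, I would then apply the factorization $\mu\multconv\MP=\MP\bigl(1-\mu(\{0\}),\frac{1}{1-\mu(\{0\})}\mu|_{\RR\setminus\{0\}}\bigr)$ recalled at the start of this subsection with $\mu=\lambda$; since $\mu_{\alpha,\rho}$ is absolutely continuous and $\nu\ne\delta_0$, $\lambda$ is not $\delta_0$, so this is legitimate and identifies $\mathbf{b}_{\alpha,\rho}\circledast(\nu^{1/\alpha})$ as the compound free Poisson distribution $\MP\bigl(1-\lambda(\{0\}),\frac{1}{1-\lambda(\{0\})}\lambda|_{\RR\setminus\{0\}}\bigr)$. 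It only remains to evaluate $\lambda(\{0\})$. By Theorem~\ref{thm:leb_decomp}~\ref{item:leb_decomp3},
\[
\lambda(\{0\})=\max\{\mu_{\alpha,\rho}(\{0\}),\nu^{\multconv 1/\alpha}(\{0\})\}=\nu^{\multconv 1/\alpha}(\{0\}),
\]
because $\mu_{\alpha,\rho}(\{0\})=0$.

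The only mildly delicate point, which I would isolate as a short inline computation, is that taking a free multiplicative power preserves the atom at the origin: $\nu^{\multconv 1/\alpha}(\{0\})=\nu(\{0\})$. This follows from the power law $S_{\nu^{\multconv t}}=S_\nu^t$ (valid for $t=1/\alpha\ge1$ and $\nu\ne\delta_0$), hence $T_{\nu^{\multconv 1/\alpha}}=T_\nu^{1/\alpha}$, so that $T_{\nu^{\multconv 1/\alpha}}$ and $T_\nu$ vanish on exactly the same subinterval of $(-1,0)$; the characterization from Proposition~\ref{prop:omega} that $T_\sigma(u)\ne0$ precisely when $\sigma(\{0\})-1<u<0$ then forces the two atoms to agree. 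Substituting $\lambda(\{0\})=\nu(\{0\})$ into the compound free Poisson description yields exactly the claimed rate $1-\nu(\{0\})$ and jump distribution $\frac{1}{1-\nu(\{0\})}(\mu_{\alpha,\rho}\multconv\nu^{\multconv 1/\alpha})|_{\RR\setminus\{0\}}$. I expect no genuine obstacle beyond this atom bookkeeping, since the substantive content is already packaged in Theorems~\ref{thm:mixture} and~\ref{thm:boole_LM}.
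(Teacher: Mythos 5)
Your proposal is correct and follows essentially the same route as the paper, which simply combines Theorem~\ref{thm:mixture}, the factorization $\mathbf{b}_{\alpha,\rho}=\mu_{\alpha,\rho}\multconv\MP$ from Theorem~\ref{thm:boole_LM}, and the identification of $\lambda\multconv\MP$ as a compound free Poisson distribution, exactly as in Proposition~\ref{cor:FL}. Your explicit $T$-transform verification that $\nu^{\multconv 1/\alpha}(\{0\})=\nu(\{0\})$ only spells out a step the paper leaves implicit.
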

 \begin{proof}
     This follows by Theorem \ref{thm:boole_LM} and calculations similar to Proposition \ref{cor:FL}. 
 \end{proof}

\section{Regularity of free  convolution: proof of Theorem  \ref{thm:leb_decomp}}
\label{sec:cont_ext}

We now focus on the Lebesgue decomposition of the free multiplicative convolution. The subordination functions derived in the preceding section illuminate this enigmatic mathematical operation. Indeed, once these functions are obtained, the task of establishing regularity properties in terms of the Lebesgue decomposition reduces to a purely analytic problem.  

In this section, we show that the subordination function can be continuously extended to the boundary, specifically to the real line excluding the origin. Furthermore, we demonstrate that it can be analytically continued in a neighborhood of any point where the imaginary part of the subordination function is positive. To reduce the emergence of unnecessary reciprocals in the subsequent arguments and simplify expressions, we make extensive use of the $M$-transform introduced in \cite{J2021regularity}. This approach does not fundamentally alter the underlying arguments but rather clarifies their connection to the Cauchy transform, thereby improving the overall exposition.

We define the $M$-transform of the nondegenerate probability measures $\mu\in\Prob(\RR)$ and $\nu\in\Prob(\RR_{\ge0})$, and their corresponding subordination functions as 
\begin{align*}
   M_{\mu}(z)&\coloneqq 1/\eta_\mu(1/z),\quad M_{\nu}(z)\coloneqq 1/\eta_\nu(1/z),\\
  \Omega_1(z)&\coloneqq 1/\omega_1(1/z),\quad\Omega_2(z)\coloneqq 1/\omega_2(1/z).
\end{align*}
In this section, we define $\eta_\mu$ on $\CC^-$ by the reflection $\eta_\mu(z)\coloneq\conj{\eta_{\mu}(\conj{z})}$, and correspondingly consider the functions $M_\mu$ and $\Omega_1,\Omega_2$ on $\Hup$. 
  Theorem \ref{thm:subordination}, Propositions  \ref{prop:prop_of_eta} and \ref{prop:prop_of_eta0} can be translated into the language of $M_\mu$, $\Omega_1$ and $\Omega_2$; for example we have for $z\in\Hup$ 
 \begin{enumerate}
     \item $\arg z\leq M_\mu(z)\leq\arg z+\pi$, i.e., $M_\mu(z)/z \in \Hup$, 
     \item $\arg z\leq M_\nu(z)<\pi$,
     \item $\Omega_1\colon\Hup\to\Hup$
     \item $\Omega_2\colon\Hup\to\CC\setminus[0,+\infty)$,
     \item $\arg z\leq\Omega_2(z)\leq\arg z +\pi$, i.e., $\Omega_2(z)/z \in \Hup$, 
     \item $M_\mu(\Omega_1(z))=M_\nu(\Omega_2(z))=M_{\mu\multconv\nu}(z)$,
      \item $\Omega_1(z)\Omega_2(z)=zM_{\mu\multconv\nu}(z)$, 
 \end{enumerate}
  and 
   \begin{align}
     \nontanglim_{z\to 0} M_\mu(z) &=\frac{\mu(\{0\})}{\mu(\{0\}) - 1},\quad\nontanglim_{z\to\infty} M_\mu(z)  =\infty.\label{eq:M_infty}
 \end{align}
  Additionally, defining
 \begin{align*}
      &H_\mu(z)\coloneqq\frac{M_\mu(z)}{z}=\frac{1}{h_\mu(1/z)}, \qquad H_\nu(z)\coloneqq\frac{M_\nu(z)}{z}=\frac{1}{h_\nu(1/z)},\\
      &F_z(w)\coloneqq zH_\nu(zH_\mu(w)),  
     \!\qquad\qquad G_z(w)\coloneqq H_\mu(zH_\nu(zw)),
 \end{align*}
  we can conclude that $\Omega_1(z)$ and $\Omega_2(z)/z$ are the DW-points of $F_z(w)$ and $G_z(w)$, respectively. Observe that by Proposition \ref{prop:not_conf} both $F_z$ and $G_z$ are analytic self-maps of $\Hup$ for any $z\in(\Hup\cup\RR)\setminus\{0\}$ because we are assuming $\mu,\nu$ are nondegenerate.
  
  When considering the DW-point of a function, we need to divide the arguments into cases based on whether it is a conformal automorphism or not. The functions considered here are $F_z(w)$ and $G_z(w)$, and therefore we need a claim concerning $z$ that makes these functions conformal automorphisms. However, if such a complex number $z\in(\Hup\cup\RR)\setminus\{0\}$ exists, it is not only restricted to $z\in\RR$ (by Proposition \ref{prop:not_conf}), but it can also be concluded that $\mu$ and $\nu$ are supported on two points. More precisely, we can establish the following proposition.
 \begin{proposition}
     \label{prop:conf_auto}
     \begin{enumerate}
         \item\label{item:conf_auto1} Let $\mu\in\Prob(\RR)$ and $\nu\in\Prob(\RR_{\ge0})$ be nondegenerate probability measures. Suppose that there exists an $x\in\RR\setminus\{0\}$ such that $F_x(w)=xH_\nu(xH_\mu(w))$ is a conformal automorphism. Then, $\mu$ and $\nu$ are probability measures consisting of two atoms, i.e., there exist some real numbers $\lambda_\mu,\lambda_\nu\in(0,1)$ and $a_\mu,a_\nu,b_\mu,b_\nu\in\RR$ satisfying
         \begin{align*}
             \mu=\lambda_\mu\delta_{a_\mu}+(1-\lambda_\mu)\delta_{b_\mu} 
             \qquad\text{and}\qquad\nu=\lambda_\nu\delta_{a_\nu}+(1-\lambda_\nu)\delta_{b_\nu}.
         \end{align*}
          
          Similarly, if there exists an $x\in\RR\setminus\{0\}$ such that $G_x(w)=H_\mu(xH_\nu(xw))$ is a conformal automorphism, then $\mu$ and $\nu$ are probability measures consisting of two atoms.
         \item\label{item:conf_auto2} Under the conditions of \ref{item:conf_auto1}, if $\Omega_1(z)\in\Hup$ or $\Omega_2(z)\in\CC\setminus\RR$ for $z\in\RR\setminus\{0\}$, then $\Omega_1$, $\Omega_2$, and $M_{\mu\multconv\nu}$ can be analytically continued in a neighborhood of $z$.
     \end{enumerate}
 \end{proposition}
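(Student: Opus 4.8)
The plan is to prove \ref{item:conf_auto1} by factoring the self-maps and applying Lemma \ref{lem:auto}. Setting $K_x(w):=xH_\nu(xw)$, one has the factorizations $F_x=K_x\circ H_\mu$ and $G_x=H_\mu\circ K_x$, where $H_\mu$ and $K_x$ are analytic self-maps of $\Hup$ for every $x\in\RR\setminus\{0\}$ (for $K_x$ one distinguishes the signs of $x$, using the reflection $H_\nu(\conj z)=\conj{H_\nu(z)}$ to cover $x<0$). Hence, whenever $F_x$ or $G_x$ is a conformal automorphism, Lemma \ref{lem:auto} forces \emph{both} $H_\mu$ and $K_x$ to be automorphisms of $\Hup$.

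It then remains to show that $H_\mu\in\Aut(\Hup)$ forces $\mu$ to have exactly two atoms, and symmetrically for $K_x$ and $\nu$. Since $\Aut(\Hup)$ consists of the real Möbius maps \eqref{eq:auto}, the automorphism property of $H_\mu$ means $h_\mu(w)=1/H_\mu(1/w)$ is a real Möbius map, so $\eta_\mu(w)=w\,h_\mu(w)$ is rational and a short computation gives $G_\mu(z)=(\alpha z+\beta)/(\alpha z^2+(\beta-\gamma)z-\delta)$, a rational function whose denominator has degree exactly two (degree one would make $\mu$ a single atom, contradicting nondegeneracy). By the Stieltjes inversion (Theorem \ref{thm:ct_nontang}), $\mu$ is supported on the two real poles, i.e. $\mu=\lambda_\mu\delta_{a_\mu}+(1-\lambda_\mu)\delta_{b_\mu}$. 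For $\nu$ I would observe that $H_\nu(u)=x^{-1}K_x(x^{-1}u)$ is a composition of real Möbius maps, hence $H_\nu\in\Aut(\Hup)$, and the same computation yields the two-atom form of $\nu$.

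For \ref{item:conf_auto2} the crucial consequence of the two-atom structure (guaranteed by the hypothesis ``under the conditions of \ref{item:conf_auto1}'') is that $H_\mu,H_\nu$ are rational, so $F_z(w)$ and $G_z(w)$ are rational in $(z,w)$ and thus jointly analytic in a \emph{full} complex neighborhood of $(z_0,\cdot)$ for real $z_0\ne0$; this is exactly what permits continuation across $\RR$. Assume first $\Omega_1(z_0)=w_0\in\Hup$, so that $w_0$ is an interior fixed point of the self-map $F_{z_0}$. The plan is to apply the implicit function theorem to $\phi(z,w):=F_z(w)-w$ at $(z_0,w_0)$: the resulting analytic solution agrees with $\Omega_1$ on the $\Hup$-side near $z_0$ because, for $z\in\Hup$, $F_z$ is a strict contraction whose unique interior fixed point is its Denjoy--Wolff point $\Omega_1(z)$ (Proposition \ref{prop:not_conf} \ref{item:not_auto4}). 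This produces the analytic continuation of $\Omega_1$. The alternative hypothesis $\Omega_2(z_0)\in\CC\setminus\RR$ means $\Omega_2(z_0)/z_0\in\Hup$ is an interior fixed point of $G_{z_0}$, and the identical argument applied to $G_z$ continues $\Omega_2$.

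The hard part will be verifying the nondegeneracy condition $\partial_w\phi(z_0,w_0)=F_{z_0}'(w_0)-1\ne0$. By Theorem \ref{thm:denjoy_wolff}, the interior fixed point $w_0$ of the self-map $F_{z_0}$ satisfies $|F_{z_0}'(w_0)|\le1$, with equality only if $F_{z_0}$ is an automorphism; an automorphism fixing an interior point is elliptic with multiplier $e^{\iu\theta}$, so $F_{z_0}'(w_0)=1$ can happen only if $F_{z_0}$ is the identity. I would rule this last case out directly from the explicit Möbius forms — for instance by checking that $F_{z_0}(\infty)=z_0H_\nu(z_0H_\mu(\infty))$ is finite so that $F_{z_0}\ne\mathrm{id}$ — the point being that the requirement $\Omega_1(z_0)\in\Hup$ keeps us away from the parabolic (double fixed point) configuration, where the two fixed points would collide on the boundary. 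Once $\Omega_1$ (resp. $\Omega_2$) is continued, the remaining functions follow at once: $M_{\mu\multconv\nu}(z)=M_\mu(\Omega_1(z))$ (resp. $M_\nu(\Omega_2(z))$) is analytic because $\Omega_1(z_0)\in\Hup$ (resp. $\Omega_2(z_0)\in\CC\setminus\RR$) avoids the singularities of $M_\mu$ (resp. $M_\nu$), and the companion subordination function is recovered analytically from $\Omega_1(z)\Omega_2(z)=zM_{\mu\multconv\nu}(z)$ since $w_0\ne0$.
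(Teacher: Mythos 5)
Your part \ref{item:conf_auto1} is essentially the paper's argument: factor through Lemma \ref{lem:auto} to conclude $H_\mu$ and $K_x$ (hence $H_\nu$) are real M\"obius automorphisms of $\Hup$, then read off from the resulting rational form of $G_\mu$ (degree-two denominator, real poles, Stieltjes inversion) that $\mu$ and $\nu$ are two-atom measures; the paper delegates this last computation to \cite{B2008lebesgue} rather than writing it out, but the content is the same.

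Part \ref{item:conf_auto2} is where you diverge, and where there is a genuine gap. The paper does \emph{not} use the implicit function theorem here; it writes out the fixed-point equation explicitly using the M\"obius forms of $H_\mu,H_\nu$ and observes that $\Omega_1(z)$ (and likewise $\Omega_2(z)/z$) satisfies a \emph{quadratic equation whose coefficients are real polynomials in $z$}, valid identically on $\Hup$; analytic continuation across any real $z$ with $\Omega_1(z)\in\Hup$ then follows because the two roots of a real quadratic are complex conjugates, so a non-real root is automatically simple and varies analytically. Your IFT route instead requires $F_{z_0}'(w_0)\ne1$, and the only obstruction you cannot dispose of is $F_{z_0}=\mathrm{id}$. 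You propose to exclude it by checking $F_{z_0}(\infty)\ne\infty$, but this case cannot be excluded: it genuinely occurs. For example, take $\mu=\nu=\tfrac12\delta_1+\tfrac12\delta_4$, so $H_\mu(w)=H_\nu(w)=(w-\tfrac52)/(\tfrac52 w-4)$; a direct computation gives $K_4(w)=4H_\nu(4w)=(8w-5)/(5w-2)=H_\mu^{-1}(w)$, hence $F_4=K_4\circ H_\mu=\mathrm{id}$ (and indeed $F_4(\infty)=\infty$, so your proposed check detects nothing). This is precisely the degenerate configuration that Proposition \ref{prop:conf_auto}\ref{item:conf_auto2} exists to handle — the paper's Proposition \ref{prop:analytic_cont} already runs your IFT argument in the non-automorphism case and explicitly defers ``$F_z(w)\equiv w$'' to this proposition — so your proof of \ref{item:conf_auto2} is circular in spirit and incomplete in substance. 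To repair it you should replace the IFT step (or at least the identity case) by the paper's algebraic argument: the quadratic relation with polynomial coefficients continues $\Omega_1$ through $z_0$ regardless of whether $F_{z_0}$ degenerates to the identity. The remainder of your part \ref{item:conf_auto2} (recovering $M_{\mu\multconv\nu}$ and the companion subordination function from $M_\mu\circ\Omega_1$ and $\Omega_1\Omega_2=zM_{\mu\multconv\nu}$) is fine.
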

 \begin{proof}
 \ref{item:conf_auto1}\,
  The proof is almost identical to that in \cite{B2008lebesgue}, but we will provide it here for completeness. Suppose that $w\mapsto xH_\nu(xH_\mu(w))$ is a conformal automorphism of $\Hup$. 
  By Lemma \ref{lem:auto}, both  $H_\mu$ and $H_\nu$ are conformal automorphisms of $\Hup$. By \eqref{eq:M_infty}, we have $\lim_{y\to +\infty}\iu yH_\mu(\iu y) =\lim_{y\to +\infty}\iu y H_\mu(\iu y) =\infty$ and so, in view of \eqref{eq:auto}, there exist real numbers $b_\mu, c_\mu, d_\mu, b_\nu, c_\nu, d_\nu$ such that $d_\mu - b_\mu c_\mu > 0,  d_\nu - b_\mu c_\nu > 0$ and 
      \begin{gather*}
           H_\mu(z) =\frac{z + b_\mu}{c_\mu z + d_\mu},\quad H_\nu(z) =\frac{z + b_\nu}{c_\nu z + d_\nu},\quad z\in\Hup. 
      \end{gather*}
       It also follows that $\eta_\mu(z) = 1/H_\mu(1/z)$ and $\eta_\nu(z)=1/H_\nu(1/z)$ are conformal automorphisms. Thus, the argument reduces to the case where $a = 0$ in Proposition 3.1 of \cite{B2008lebesgue} concerning $h_\mu$ and $h_\nu$. Therefore, $\mu$ and $\nu$ are both convex combinations of two atoms. The same reasoning leads to a similar conclusion for $G_z(w)$.

\vspace{2mm}
\noindent
 \ref{item:conf_auto2}\,  The fixed point equation for $\Omega_1(z)$ is 
     \begin{align*}
         \Omega_1(z)&=zH_\nu(zH_\mu(\Omega_1(z))) =z\frac{zH_\mu(\Omega_1(z))+b_\nu}{c_\nu zH_\mu(\Omega_1(z))+d_\nu}\\
          &=z\frac{z\frac{\Omega_1(z)+b_\mu}{c_\mu\Omega_1(z)+d_\mu}+b_\nu}{c_\nu z\frac{\Omega_1(z)+b_\mu}{c_\mu\Omega_1(z)+d_\mu}+d_\nu} =\frac{z^2(\Omega_1(z)+b_\mu)+b_\nu z(c_\mu\Omega_1(z)+d_\mu)}{c_\nu z(\Omega_1(z)+b_\mu)+d_\nu(c_\mu\Omega_1(z)+d_\mu)}.
     \end{align*}
      From this formula, for $z\in\Hup$, we have
     \begin{align*}
         \Omega_1(z)^2(c_\nu z+d_\nu c_\mu)-\Omega_1(z)\left(z^2+(b_\nu c_\mu-c_\nu b_\mu)z-d_\mu d_\nu\right)-z(zb_\mu+b_\nu d_\mu)=0,
     \end{align*}
      which shows that $\Omega_1(z)$ satisfies a quadratic equation in terms of $z$ and the real parameters $b_\mu, c_\mu, d_\mu, b_\nu,c_\nu, d_\nu$. 
      Hence, $\Omega_1(z)$ can be continuously extended to the real line, taking values in $\Hup\cup\RR\cup\{\infty\}$. 
      Moreover, when $\Omega_1(z)\in\Hup$ for $z\in\RR\setminus\{0\}$, it immediately follows that $\Omega_1(z)$ can be analytically continued in a neighborhood of $z$. 
      In this case, using the relations $zM_{\mu\multconv\nu}(z)=\Omega_1(z)\Omega_2(z)$ and $M_\mu(\Omega_1(z))=M_\nu(\Omega_2(z))=M_{\mu\multconv\nu}(z)$, we can also conclude that both $M_{\mu\multconv\nu}(z)$ and $\Omega_2(z)$ can be analytically continued around $z\in\RR\setminus\{0\}$. 
      
      Next, for $\Omega_2(z)$, simplifying the expression by letting $w_z\coloneqq\Omega_2(z) / z$, we obtain
     \begin{align*}
          w_z&=H_\mu(zH_\nu(zw_z)) =\frac{zH_\nu(zw_z)+b_\mu}{c_\mu zH_\nu(zw_z)+d_\mu}\\
          &=\frac{z\frac{zw_z+b_\nu}{c_\nu zw_z+d_\nu}+b_\mu}{c_\mu z\frac{zw_z+b_\nu}{c_\nu zw_z+d_\nu}+d_\mu}
          =\frac{z(zw_z+b_\nu)+b_\mu(c_\nu w_z z+d_\nu)}{c_\mu z(zw_z+b_\nu)+d_\mu(c_\nu zw_z+d_\nu)}.
     \end{align*}
      Thus, we arrive at the equation
     \begin{align*}
         \left(\frac{\Omega_2(z)}{z}\right)^2(c_\mu z^2+d_\mu c_\nu z)+(c_\mu b_\nu z+d_\mu d_\nu-z^2-b_\mu c_\nu z)\frac{\Omega_2(z)}{z} - zb_\nu-b_\mu d_\nu=0,
     \end{align*}
      and similar conclusions reached above hold for $\Omega_2(z)/z$ as well. 
 \end{proof}
  
  Using the above discussion, we can compute the free convolution of two probability measures with two atoms. Several specific examples are calculated in Appendix \ref{app:specific_example}. These computations can also be performed using the well-known $S$-transform. Since probability measures with two atoms are the simplest among nontrivial measures, the fact that their free convolution requires such elementary but complicated calculations suggests that the free convolution is intrinsically enigmatic.

  With these preparations in place, we now show that the subordination functions can be extended continuously to the real line, except at the origin. The behavior at the origin will be addressed in Appendix \ref{app:ext_at_origin}.
  
 \begin{proposition}
     \label{prop:analytic_cont}
      Let $\mu\in\Prob(\RR)$ and $\nu\in\Prob(\RR_{\geq0})$ be nondegenerate probability measures. Then, the functions $\Omega_1$ and $\Omega_2$ can be continuously extended to $(\Hup\cup\RR)\setminus\{0\}$ as functions taking values in $\Hup\cup\RR\cup\{
     \infty\}$.
      Moreover, if $\Omega_1(x)\in\Hup$ or $\Omega_2(x)\in\CC\setminus\RR$ for $x\in\RR\setminus\{0\}$, then $\Omega_1$, $\Omega_2$, and $M_{\mu\multconv\nu}$ have analytic continuation in a neighborhood of $x$.
 \end{proposition}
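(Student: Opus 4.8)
The plan is to realize $\Omega_1$ and $\Omega_2$ as boundary values of the Denjoy--Wolff points of the parametrized self-maps $F_z$ and $G_z$, and then to upgrade continuity to analyticity at any boundary point where the relevant Denjoy--Wolff point stays in the interior $\Hup$. Recall from the discussion preceding the statement that for every $z\in(\Hup\cup\RR)\setminus\{0\}$ the maps $F_z$ and $G_z$ are analytic self-maps of $\Hup$, that for $z\in\Hup$ the point $\Omega_1(z)$ is the DW-point of $F_z$ while $\Omega_2(z)/z$ is the DW-point of $G_z$, and that $H_\mu$ and $H_\nu$ are continuous on the relevant domains (Proposition \ref{prop:not_conf}). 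In particular, for a fixed $w\in\Hup$ and any sequence $z_n\to x\in\RR\setminus\{0\}$ with $z_n\in(\Hup\cup\RR)\setminus\{0\}$, one gets $F_{z_n}(w)\to F_x(w)$ and $G_{z_n}(w)\to G_x(w)$.

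First I would treat the case in which, for every $x\in\RR\setminus\{0\}$, neither $F_x$ nor $G_x$ is a conformal automorphism of $\Hup$; in particular neither is the identity map. Then Theorem \ref{thm:conv_of_dwp}, transferred to $\Hup$ by the conformal equivalence of Remark \ref{rem:conf_equiv}, shows that the DW-points of $F_{z_n}$ and $G_{z_n}$ converge to those of $F_x$ and $G_x$. Defining $\Omega_1(x)$ and $\Omega_2(x)/x$ as the DW-points of $F_x$ and $G_x$, which lie in $\Hup\cup\RR\cup\{\infty\}$, produces continuous extensions of $\Omega_1$ and $\Omega_2$ to $(\Hup\cup\RR)\setminus\{0\}$ that are independent of the approximating sequence.

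For the analyticity in this case, suppose $\Omega_1(x)\in\Hup$. Being an interior DW-point of the non-automorphism $F_x$, it satisfies $|F_x'(\Omega_1(x))|<1$ by Theorem \ref{thm:denjoy_wolff}, whence $F_x'(\Omega_1(x))\ne1$. Since $(z,w)\mapsto F_z(w)=zH_\nu(zH_\mu(w))$ is jointly analytic near $(x,\Omega_1(x))$, the implicit function theorem applied to $F_z(w)-w=0$ yields an analytic solution $z\mapsto\Omega_1(z)$ near $x$ that coincides with the original $\Omega_1$ on $\Hup$ by uniqueness of the fixed point, hence is its analytic continuation. The relations $M_\mu(\Omega_1(z))=M_{\mu\multconv\nu}(z)$ and $\Omega_1(z)\Omega_2(z)=zM_{\mu\multconv\nu}(z)$ then continue $M_{\mu\multconv\nu}$ and $\Omega_2$ analytically as well; the hypothesis $\Omega_2(x)\in\CC\setminus\RR$, which is equivalent to $\Omega_2(x)/x\in\Hup$ in view of the bound $\arg z\le\arg\Omega_2(z)\le\arg z+\pi$, is handled symmetrically starting from $G_x$.

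It remains to consider the case where $F_{x_0}$ or $G_{x_0}$ is a conformal automorphism for some $x_0\in\RR\setminus\{0\}$; this is in fact the main obstacle, since there the Denjoy--Wolff continuity theorem fails and the limiting DW-point is no longer pinned down by the interior dynamics. It is resolved by Proposition \ref{prop:conf_auto}\ref{item:conf_auto1}, which forces both $\mu$ and $\nu$ to be two-atom measures, after which Proposition \ref{prop:conf_auto}\ref{item:conf_auto2} applies directly: the fixed-point equations reduce to quadratics in $\Omega_1(z)$ and $\Omega_2(z)/z$ with real-polynomial coefficients in $z$, simultaneously delivering the continuous extension to $\RR\setminus\{0\}$ with values in $\Hup\cup\RR\cup\{\infty\}$ and the analytic continuation wherever $\Omega_1(x)\in\Hup$ or $\Omega_2(x)\in\CC\setminus\RR$. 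Combining the two cases completes the proof; the only other point requiring care is the non-degeneracy $F_x'(\Omega_1(x))\ne1$ used for the implicit function theorem above, which is just the strict Schwarz--Pick contraction at an interior fixed point of a non-automorphism.
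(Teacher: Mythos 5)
Your proposal is correct and follows essentially the same route as the paper: continuity of the Denjoy--Wolff point (Theorem \ref{thm:conv_of_dwp}) for the boundary extension, the analytic implicit function theorem at an interior DW-point with $|F_x'(\Omega_1(x))|<1$ for the analytic continuation, and Proposition \ref{prop:conf_auto} for the exceptional case where $F_x$ or $G_x$ is a conformal automorphism. The only cosmetic difference is that you split off all conformal automorphisms into the exceptional case, whereas the continuity theorem only fails for the identity map; since Proposition \ref{prop:conf_auto} covers the larger class anyway, this changes nothing.
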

  
 \begin{proof}
      Observe that $F_z(w)\in\Hup$ for  $z\in(\Hup\cup\RR)\setminus\{0\}$ and $w\in\Hup$. For every $z\in\Hup$, the point $\Omega_1(z)$ was the DW-point of $F_z$. Since the map $z\mapsto F_z$ is continuous on $(\Hup\cup\RR)\setminus\{0\}$ with $w\in\Hup$ fixed, by applying Theorem \ref{thm:conv_of_dwp}, the DW-point of $F_z$ is continuous with respect to $z$ on $(\Hup\cup\RR)\setminus\{0\}$ if there is no $z\in\Hup$ such that $F_z(w)\equiv w$, and hence $\Omega_1(z)$ can be continuously extended. If there exists a point $z\in\RR\setminus\{0\}$ such that $F_z(w)\equiv w$, the claim follows directly from Proposition \ref{prop:conf_auto}. The same reasoning applies to $\Omega_2(z)/z$ and $G_z$, leading to the result for $\Omega_2(z)$.
      
      Next, consider the case where $\Omega_1(x)\in\Hup$ or $\Omega_2(x)\in\CC\setminus\RR$ for some $x\in\RR$. Recall that $\Omega_1(z)$ and $\Omega_2(z)/z$ are the DW-points of the functions $F_z$ and $G_z$, respectively. Suppose that $\Omega_1(x)\in\Hup$ and $F_x$ is not a conformal automorphism. Then, by Theorem \ref{thm:denjoy_wolff}, we have $|F'_x(\Omega_1(x))|<1$. By the analytic implicit function theorem \cite[p.~34]{FG02}, there exists an analytic map $\Omega_x(z)$ from a neighborhood $\mathsf{B}_x$ of $x$ to a neighborhood $\mathsf{B}_{\Omega_1(x)}$ of $\Omega_1(x)$ such that $F_z(\Omega_x(z))=\Omega_x(z)$. By uniqueness of the DW-point stated in Theorem \ref{thm:dwfix}, it follows that $\Omega_1(z)=\Omega_x(z)$ for $z\in\mathsf{B}_x\cap\Hup$, and hence $\Omega_1(x)$ admits analytic continuation near the point $x$. Using $M_{\mu\multconv\nu}(z)=M_\mu(\Omega_1(z))$ and $zM_{\mu\multconv\nu}(z)=\Omega_1(z)\Omega_2(z)$, we can deduce that $M_{\mu\multconv\nu}(z)$ and $\Omega_2(z)$ also admit analytic continuation. In the case where $F_x$ is a conformal automorphism, the result follows directly from Proposition \ref{prop:conf_auto}.
  
      A similar argument applies when $\Omega_2(z)\in\CC\setminus\RR$. If $G_x$ is not a conformal automorphism, then $\Omega_2(z)/z$ admits analytic continuation near the point $x$, and using this claim, $\Omega_1(z)$ and $M_{\mu\multconv\nu}(z)$ are also analytically continued near the point $x$. If $G_x$ is a conformal automorphism, the result follows from Proposition \ref{prop:conf_auto} again.
 \end{proof}

To complete the proof of Theorem \ref{thm:leb_decomp}, we utilize the following lemma, which is implicitly used in \cite{B2008lebesgue} and \cite{J2021regularity} to detect atoms in the original measures.
  
\begin{lemma}
   \label{lem:singJC}
    Let $\mu\in\Prob(\RR)$ and $\nu\in\Prob(\RR_{\geq0})$ be nondegenerate probability measures. Suppose that $\nontanglim_{z\to x} M_{\mu\multconv\nu}(z) = 1$ for $x\in\RR\setminus\{0\}$. Then, $\Omega_1(x)\in\RR\setminus\{0\}$, $\Omega_2(x)\in\RR\setminus\{0\}$ and $x =\Omega_1(x)\Omega_2(x)$ hold. Moreover, when $x > 0$, we have $\Omega_1(x)>0$, $\Omega_2(x)>0$ and 
       \begin{align}
            0&<\frac{1}{\nu(\{\Omega_2(x)\})}-1\leq\liminf_{y\to0^+}\frac{\arg\Omega_1(x+\iu y)}{\arg\Omega_2(x+\iu y)}<+\infty,  \label{eq:muomegaa} \\
            0&<\frac{1}{\mu(\{\Omega_1(x)\})}-1\leq\liminf_{y\to0^+}\frac{\arg\Omega_2(x+\iu y)}{\arg\Omega_1(x+\iu y)}<+\infty, \label{eq:nuomegaa}
       \end{align}
        and when $x < 0$, we have $\Omega_1(x)<0$, $\Omega_2(x)>0$ and 
       \begin{align}
            0&<\frac{1}{\nu(\{\Omega_2(x)\})}-1\leq\liminf_{y\to 0^+}\frac{\pi-\arg\Omega_1(x+\iu y)}{2\pi-\arg\Omega_2(x+\iu y)}<+\infty, \label{eq:muomegab}  \\
            0&<\frac{1}{\mu(\{\Omega_1(x)\})}-1\leq\liminf_{y\to 0^+}\frac{2\pi-\arg\Omega_2(x+\iu y)}{\pi-\arg\Omega_1(x+\iu y)}<+\infty. \label{eq:nuomegab}
       \end{align}
        In particular, it follows that $\mu(\{\Omega_1(x)\})>0$, $\nu(\{\Omega_2(x)\})>0$, and
       \begin{align*}
           \left(\frac{1}{\mu(\{\Omega_1(x)\})}-1\right)\left(\frac{1}{\nu(\{\Omega_2(x)\})}-1\right)\leq1.
       \end{align*}
        Consequently, $\mu(\{\Omega_1(x)\})+\nu(\{\Omega_2(x)\})\geq1$.
\end{lemma}

 \begin{proof}
        We first show that $\Omega_1(x)\in\RR\setminus\{0\}$ and $\Omega_2(x)\in\RR\setminus\{0\}$. The subordination relation $M_{\mu\multconv\nu}(z)=M_\mu(\Omega_1(z))=M_\nu(\Omega_2(z))$ and $\nontanglim_{z\to x}M_{\mu\multconv\nu}(z)=1$ show that $\Omega_1(x)\in\RR\cup\{\infty\}$ and $\Omega_2(x)\in\RR\cup\{\infty\}$.  The fact that $\nontanglim_{z\to\infty}M_\nu(z)=\nontanglim_{z\to\infty}M_\mu(z)=\infty$, together with Theorem \ref{thm:Lind}, implies that $\Omega_1(x)\neq\infty$ and $\Omega_2(x)\neq\infty$. Consequently, $\Omega_1(x)\neq0$ and $\Omega_2(x)\neq0$ follow from the relation $zM_{\mu\multconv\nu}(z)=\Omega_1(z)\Omega_2(z)$, leading to the claim.


         We now proceed to the second part using an argument similar to that in \cite{J2021regularity}.  Let us consider the case $x>0$. Note that $\Omega_1(x)<0,\Omega_2(x)<0$ or $\Omega_1(x)>0,\Omega_2(x)>0$ follow from $\Omega_1(x)\Omega_2(x)=x>0$. The case where $\Omega_1(x)<0$ and $\Omega_2(x)<0$ leads to a contradiction since $1=M_{\mu\multconv\nu}(x)=M_\nu(\Omega_2(x))<0$. Thus, we must have $\Omega_1(x)>0$ and $\Omega_2(x)>0$. 
          Observe then that for any nondegenerate probability measure $\lambda$ on $\RR$ and $a>0$ with $\nontanglim_{z\to a}M_{\lambda}(z)=1$,
       \begin{align}
           \frac{1}{\lambda(\{a\})}&=a\lim_{z\nontangtou a}\frac{M_{\lambda}(z)-1}{z-a}=a\lim_{z\nontangtou a}\frac{(\sqrt{M_{\lambda}(z)}-1)(\sqrt{M_{\lambda}(z)}+1)}{z-a} \notag \\
           &=2a\lim_{z\nontangtou a}\frac{\sqrt{M_{\lambda}(z)}-1}{z-a}
           = 2a\lim_{z\nontangtou a}\frac{\log \sqrt{M_{\lambda}(z)}}{z-a}
           = 2a\liminf_{z\to a}\frac{\Im\log \sqrt{M_{\lambda}(z)}}{\Im z} \notag \\ 
           &=a \liminf_{z\to a}\frac{\arg M_\lambda(z)}{|z|\sin \arg z} = \liminf_{z\to a}\frac{\arg M_\lambda(z)}{\arg z}. 
           \label{eq:atom}
       \end{align}
       Here, the branch of $\sqrt{M_\lambda(z)}$ is chosen so that the argument is halved,  Theorem \ref{thm:JC} is applied on the fifth equality, and recall that $\arg z=0$ for $z\in (0,+\infty)$, and it increases counterclockwise in $[0, 2\pi)$. Since $\nontanglim_{z\to \Omega_1(x)}M_\mu(z)=1$ by Lindel\"of's theorem, we may use \eqref{eq:atom} to obtain
       \begin{align}
           \frac{1}{\mu(\{\Omega_1(x)\})}-1
            &=\liminf_{z\to\Omega_1(x)}\frac{\arg M_\mu(z)-\arg z}{\arg z}
           \leq\liminf_{y\to0^+}\frac{\arg M_\mu(\Omega_1(x+\iu y))-\arg\Omega_1(x+\iu y)}{\arg\Omega_1(x+\iu y)} \notag \\
            &=\liminf_{y\to0^+}\frac{\arg\Omega_2(x+\iu y)-\arg(x+\iu y)}{\arg\Omega_1(x+\iu y)}\leq\liminf_{y\to0^+}\frac{\arg\Omega_2(x+\iu y)}{\arg\Omega_1(x+\iu y)}. \label{eq:muomega1}
       \end{align}
        By a similar reasoning, we deduce that
       \begin{align}
           \frac{1}{\nu(\{\Omega_2(x)\})}-1\leq\liminf_{y\to0^+}\frac{\arg\Omega_1(x+\iu y)}{\arg\Omega_2(x+\iu y)}.\label{eq:nuomega2}
       \end{align}
        Since $\mu$ and $\nu$ are nondegenerate measures, the left-most sides of \eqref{eq:muomega1} and \eqref{eq:nuomega2} are positive. This also implies that the two liminf's on the right-most sides are finite, and thus the desired \eqref{eq:muomegaa} and \eqref{eq:nuomegaa} follow.    
    
        Finally, we consider the case $x<0$. This case can be reduced to the case $x>0$ by the flip transform $\mu\mapsto \widehat{\mu}$ defined by $\widehat{\mu}(B):= \mu(-B)$ for Borel subsets $B \subseteq\RR$. Let $\widehat \Omega_1, \widehat\Omega_2$ be the subrodination functions for $\widehat{\mu}\boxtimes\nu$. Since $\widehat{\mu\boxtimes\nu}=\widehat{\mu}\boxtimes\nu$, we have $M_{\widehat\mu}(z)=\conj{M_\mu(-\conj{z})}$, $M_{\widehat\mu \boxtimes \nu }(z)=\conj{M_{\mu\boxtimes\nu}(-\conj{z})}$, 
        $\widehat \Omega_1(z)=-\conj{\Omega_1(-\conj{z})}$, and $\widehat{\Omega}_2(z)=\conj{\Omega_2(-\conj{z})}$ for $z\in \Hup$. From these relations we see that $\arg \widehat \Omega_1(-x+\iu y)= \pi- \arg \Omega_1(x+\iu y)$ and $\arg \widehat\Omega_2(-x+\iu y) =2\pi - \arg \Omega_2(x+\iu y)$ for $y>0$. With this observation,    \eqref{eq:muomegaa} and \eqref{eq:nuomegaa} obviously imply  \eqref{eq:muomegab}  and \eqref{eq:nuomegab}, respectively. 
         \end{proof}

 \begin{remark}
     \label{rem:argineq}
      From this proof, under the assumptions of Lemma \ref{lem:singJC}, we see that for $x>0$, we have $\arg\Omega_1(x+\iu y)>\arg(x+\iu y)$ as $y\to0^+$, and for $x<0$, we have $\arg\Omega_1(x+\iu y)<\arg(x+\iu y)$ as $y\to0^+$.
 \end{remark}
  
 \begin{proof}[\textbf{Proof of Theorem \ref{thm:leb_decomp}}]
     \ref{item:leb_decomp1}\,  
    This is already proved in Proposition  \ref{prop:analytic_cont}. 

\vspace{2mm}
\noindent
 \ref{item:leb_decomp2}\,  
          As mentioned earlier, this statement has already been established in \cite[Corollary 6.6]{ACSY2024universality}. It can also be shown similarly to the discussion in the proof of Theorem 4.1 (a) in \cite{B2003Atoms}, together with Remark \ref{rem:argineq}.

 \vspace{2mm}
\noindent
\ref{item:leb_decomp3}\,  
    This relation follows immediately from the properties of the $T$-transform; see Remark \ref{rem:regularity}.

\vspace{2mm}
\noindent
 \ref{item:leb_decomp4}\,  
              Assume to the contrary that the singular continuous part $(\mu\multconv\nu)^\scp$ is nonzero. Then, there exists an uncountable set $E\subseteq\RR\setminus\{0\}$ such that $(\mu\multconv\nu)^\scp(E)>0$, $(\mu\multconv\nu)^\acp(E)=(\mu\multconv\nu)^\ppp(E)=0$, and for any $x\in E$, $\nontanglim_{z\to x}M_{\mu\multconv\nu}(z)=1$. 
              By the relation $zM_{\mu\multconv\nu}(z)=\Omega_1(z)\Omega_2(z)$ and Lemma \ref{lem:singJC}, it follows that $\{\Omega_1(x)\colon x\in E\}$ or $\{\Omega_2(x)\colon x\in E\}$ is uncountable and these sets are contained in the pure point part of $\mu$ and $\nu$, respectively. 
              The cardinality of atoms of a probability measure is necessarily countable, leading to a contradiction. Therefore, we conclude that $(\mu\multconv\nu)^\scp=0$.

\vspace{2mm}
\noindent
 \ref{item:leb_decomp5}\, 
              When $\lim_{y\to0^+}G_{\mu\multconv\nu}(x+\iu y)\in\CC^-$ for $x\in\RR\setminus\{0\}$, the relation $xM_{\mu\multconv\nu}(x)=\Omega_1(x)\Omega_2(x)$ shows that either $\Omega_1(x)\in\Hup$ or $\Omega_2(x)\in\CC\setminus\RR$ holds. 
              In either case, $M_{\mu\multconv\nu}(x)$ is analytically continued in a neighborhood of $x$ by Proposition \ref{prop:analytic_cont}, and thus $G_{\mu\multconv\nu}(z)=M_{\mu\multconv\nu}(z)/(z(M_{\mu\multconv\nu}(z)-1))$ can also be analytically continued in the neighborhood of $x$. 
              Therefore, $\frac{\di{(\mu\multconv\nu)^{\acp}}}{\di{x}}$ has the  analytic version $-\frac1{\pi}\Im[G_{\mu\multconv\nu}]$ near the point $x$.
 \end{proof}


\section*{Acknowledgments}

The authors are grateful to Ricardo Guerrero and Daniel Perales for discussions in the early stage of work, and to the anonymous referees for helpful suggestions to improve the readability of the paper. This work was supported by the Japan Society for the Promotion of
Science (JSPS) through the Bilateral Joint Research Program (Open
Partnership Joint Research Projects) 120249936. O. A. thanks the support from CONACYT Grant CB-2017-2018-A1-S-9764. Part of this work was done during T. H.'s stays in CIMAT. Y. K. acknowledges the support of the Research Institute for Mathematical Sciences, an International Joint Usage/Research Center located in Kyoto University.

\appendix

\section{Continuity and boundedness of the density}
\label{app:ext_at_origin}

The following concepts and theorems are utilized in the discussion of cluster sets of analytic functions. For further details, see  \cite{CL2004theory}. These results will be employed in the proof of Theorem  \ref{thm:extorigin}.

\begin{definition}
    Given a function $f\colon\Hup\to\Hup$ and $x\in\RR\cup\{\infty\}$, we define the cluster set of $f$ at $x$ as 
    \begin{align*}
        C(f,x)\coloneqq \left\{z\in\CC\cup\{\infty\}\colon \exists{\{z_n\}_{n=1}^\infty}\subseteq \Hup,\ \lim_{n\to\infty}z_n=x,\ \lim_{n\to\infty}f(z_n)=z\right\}.
    \end{align*}
\end{definition}

\begin{proposition}
    \label{prop:conn}
    For a continuous function $f\colon\Hup\to\Hup$ and $x\in\RR\cup\{\infty\}$, $C(f,x)$ is connected.
\end{proposition}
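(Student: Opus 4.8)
The plan is to realize $C(f,x)$ as a nested intersection of compact connected subsets of the Riemann sphere $\CC\cup\{\infty\}$ and then invoke the standard fact that such an intersection is connected. Throughout I would work in $\CC\cup\{\infty\}$ equipped with the spherical metric, so that it is a compact metric space and all closures below are taken with respect to this topology.

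First I would introduce shrinking neighborhoods of $x$ inside $\Hup$. For $x\in\RR$ set $U_r\coloneqq\{z\in\Hup\colon|z-x|<r\}$, and for $x=\infty$ set $U_r\coloneqq\{z\in\Hup\colon|z|>1/r\}$, where $r>0$. In either case $U_r$ is a half-disk (respectively the part of $\Hup$ lying outside a disk), hence path-connected, and the family $\{U_r\}_{r>0}$ is decreasing with $\bigcap_{r>0}U_r=\emptyset$ while $x\in\cl{U_r}$ for every $r$. I then define $E_r\coloneqq\cl{f(U_r)}$, the closure taken in $\CC\cup\{\infty\}$.

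The elementary identity to verify is
\[
C(f,x)=\bigcap_{r>0}E_r .
\]
The inclusion $\subseteq$ holds because any sequence $z_n\to x$ in $\Hup$ eventually lies in $U_r$, so the limit of $f(z_n)$ lies in $\cl{f(U_r)}=E_r$; the reverse inclusion follows by a diagonal argument, choosing for a given point $w\in\bigcap_r E_r$ some $w_n\in U_{1/n}$ with $f(w_n)$ within spherical distance $1/n$ of $w$, whence $w_n\to x$ and $f(w_n)\to w$. Since $f$ is continuous and $U_r$ is connected, $f(U_r)$ is connected, and therefore so is its closure $E_r$; moreover $E_r$ is a closed subset of the compact space $\CC\cup\{\infty\}$, hence compact. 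Thus $C(f,x)$ is a decreasing intersection of nonempty compact connected sets.

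The one genuinely topological step, which is also the main point to handle carefully, is the lemma that a decreasing intersection $K=\bigcap_n K_n$ of nonempty compact connected subsets of a compact Hausdorff space is connected. I would prove it by contradiction: a separation $K=A\sqcup B$ into disjoint nonempty closed sets can be thickened, using normality, to disjoint open sets $U\supseteq A$ and $V\supseteq B$; the nested compacta $K_n\setminus(U\cup V)$ then have empty intersection, so by the finite intersection property some $K_N\subseteq U\cup V$, which contradicts the connectedness of $K_N$, since $K_N$ meets both $U$ and $V$ (as $A,B\subseteq K\subseteq K_N$). Applying this lemma with $K_n=E_{1/n}$ yields the connectedness of $C(f,x)$, completing the proof. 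The only subtlety worth flagging is bookkeeping for the case $x=\infty$, but it is absorbed entirely into the choice of $U_r$ above, after which the argument is identical.
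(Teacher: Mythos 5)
Your proof is correct. The paper itself gives no proof of this proposition --- it is stated as a known fact with a pointer to Collingwood--Lohwater \cite{CL2004theory} --- and your argument is essentially the standard one from that theory: write $C(f,x)=\bigcap_{r>0}\cl{f(U_r)}$ for a decreasing family of connected neighborhoods $U_r$ of $x$ in $\Hup$, note each $\cl{f(U_r)}$ is a nonempty compact connected subset of the sphere, and apply the nested-continua lemma. All the steps check out: the two inclusions in the identity $C(f,x)=\bigcap_r E_r$ are argued correctly (the diagonal extraction for $\supseteq$ is exactly what is needed), the half-disk and exterior-of-a-disk neighborhoods are indeed connected in $\Hup$, the intersection over $r>0$ reduces to the countable cofinal family $E_{1/n}$ by monotonicity, and the separation-plus-normality proof of the nested-intersection lemma is the standard one. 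No gaps.
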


\begin{proposition}[{\cite[Lemma 2.18]{B2008lebesgue}}]
    \label{prop:good_seq}
    Let $f$ be an analytic self-map of $\Hup$ and $x\in\RR\cup\{\infty\}$. Suppose that $C(f,x)\subseteq\RR\cup\{\infty\}$ and $C(f,x)$ contains at least two points. Then, there exists an open interval $I$ such that for any $c\in I$, there exists a sequence $\{z_n^{(c)}\}\subseteq\Hup$ such that $\lim_{n\to\infty}{z_n^{(c)}}=x, f(z_n^{(c)})=c+\iu y_n$ and $y_n\to 0$ as $n\to\infty$. 
\end{proposition}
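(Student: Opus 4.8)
The plan is to reformulate the conclusion and then realize the desired points as solutions of $\Re f = c$ produced by an intermediate value argument, the delicate point being to control the imaginary parts. Note first that ``$f(z_n^{(c)}) = c + \iu y_n$ with $y_n \to 0$'' is equivalent to $f(z_n^{(c)}) \to c$ together with the \emph{exact} constraint $\Re f(z_n^{(c)}) = c$; thus the real content is to approach $x$ through points of the level set $L_c := \{ z \in \Hup : \Re f(z) = c \}$ while keeping $\Im f$ small. Since $C(f,x)$ is connected by Proposition \ref{prop:conn} and is a subset of the circle $\RR \cup \{\infty\}$ with at least two points, it is a nondegenerate arc and hence contains a nondegenerate open interval of real numbers; I take $I=(a,b)$ to be (a subinterval of) such an interval, so that $[a,b] \subseteq C(f,x)$. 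Because $C(f,x)$ has more than one point, $f$ is nonconstant, hence open, and maps into the open upper half-plane, so $\Im f > 0$ throughout.

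Fix $c \in I$ and choose $c_1, c_2$ with $a < c_1 < c < c_2 < b$; both lie in $C(f,x)$. By definition of the cluster set there are sequences $u_n \to x$ and $v_n \to x$ in $\Hup$ with $f(u_n) \to c_1$ and $f(v_n) \to c_2$; in particular $\Re f(u_n) < c < \Re f(v_n)$ for large $n$, while $\Im f(u_n), \Im f(v_n) \to 0$ since $c_1, c_2$ are real. Joining $u_n$ to $v_n$ by a path $\gamma_n$ inside the connected set $\Hup \cap B(x,\rho_n)$ (or $\Hup \cap \{ |z| > R_n \}$ when $x = \infty$), with $\rho_n \to 0$ chosen so that the half-disk contains both endpoints, the continuous function $t \mapsto \Re f(\gamma_n(t))$ takes a value $<c$ and a value $>c$, so by the intermediate value theorem there is a crossing point $z_n \in \gamma_n$ with $\Re f(z_n) = c$. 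By construction $z_n \to x$, and I write $f(z_n) = c + \iu y_n$ with $y_n > 0$.

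It remains to arrange $y_n \to 0$, which is the crux. Here the hypothesis $C(f,x) \subseteq \RR \cup \{\infty\}$ enters in the following dichotomous form: every subsequential limit of $f(z_n) = c + \iu y_n$ must lie in $\RR \cup \{\infty\}$, and a point $c + \iu y^*$ with $0 \le y^* < \infty$ lies in $\RR \cup \{\infty\}$ only if $y^* = 0$. Consequently, along any subsequence on which $\{y_n\}$ is bounded we automatically obtain $y_n \to 0$ and the construction succeeds; the only way it can fail is $y_n \to \infty$, i.e.\ $f$ escaping to $\infty$ along $L_c$ as one approaches $x$. Thus the entire difficulty reduces to excluding, for $c$ in a suitable subinterval of $(a,b)$, this escape scenario, and I expect this to be the main obstacle.

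I would dispose of it by reconstructing the separation argument of Belinschi \cite[Lemma 2.18]{B2008lebesgue}. In the escape scenario the level set $L_c$ lies, near $x$, entirely in $\{ \Im f > M \}$ for every $M$, so the two low sets $\{ \Im f < \eta,\ \Re f < c \}$ and $\{ \Im f < \eta,\ \Re f > c \}$ are separated in a neighborhood of $x$. Using that $C(f,x) \subseteq \RR \cup \{\infty\}$ forces $f$ to leave every compact subset of $\Hup$ as $z \to x$ (so that the image concentrates near the boundary circle), together with the connectedness of $C(f,x)$ and the analyticity of the curves $L_c$, one shows that this configuration cannot persist for a whole interval of levels $c$; discarding the exceptional escape levels then leaves an open interval on which the bounded alternative always holds. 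A diagonal choice of $\rho_n$ and of the relevant subsequence finally delivers, for each $c$ in that interval, a single sequence $z_n^{(c)} \to x$ with $f(z_n^{(c)}) = c + \iu y_n$ and $y_n \to 0$. The case $x = \infty$ is covered throughout by replacing balls around $x$ with neighborhoods $\{ |z| > R \}$, or equivalently by passing to $z \mapsto f(-1/z)$.
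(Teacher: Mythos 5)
The paper does not prove this proposition; it is quoted verbatim from \cite[Lemma 2.18]{B2008lebesgue}, so there is no in-paper proof to compare against and your proposal must stand on its own. Your setup is correct: extracting a real interval from the connected cluster set, producing points with $\Re f = c$ by joining $u_n$ to $v_n$ inside $\Hup\cap B(x,\rho_n)$ and applying the intermediate value theorem, and noting that any bounded subsequence of the resulting imaginary parts must tend to $0$ because every finite cluster value is real. You also correctly identify the crux: excluding the possibility that all crossings of the level $\Re f=c$ near $x$ have imaginary part tending to $\infty$.

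But the resolution of that crux is where the whole proof lives, and your final paragraph does not supply it. The claim that the escape configuration ``cannot persist for a whole interval of levels $c$,'' so that ``discarding the exceptional escape levels leaves an open interval,'' is neither proved nor an accurate picture of what can happen. Make the hypothesis quantitative: there are $\epsilon_n\to0$ and $R_n\to\infty$ with $f(\Hup\cap B(x,\rho_n))\subseteq\{\Im w<\epsilon_n\}\cup\{|w|>R_n\}$, so the image curve $f\circ\gamma_n$ can leave the low region $\{\Im w<\epsilon_n\}$ only through points with $|\Re w|\ge\sqrt{R_n^2-\epsilon_n^2}$. It is then entirely possible, for a given $n$, that the curve exits the low region to the far left after starting near $c_1$ and re-enters from the far right before ending near $c_2$; in that case \emph{every} level $c\in(c_1,c_2)$ is crossed only at height $\ge\epsilon_n$, i.e.\ the escape scenario holds simultaneously on the whole interval, and ``removing bad levels from $(a,b)$'' leaves nothing. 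What saves the argument is that in this situation the low portions of the curve necessarily sweep all levels in $(-M,c_1)$ and in $(c_2,M)$ for any fixed $M$ (for $n$ large), so that for each large $n$ at least one of the three intervals $(-M,c_1)$, $(c_1,c_2)$, $(c_2,M)$ is entirely crossed at height $<\epsilon_n$; a pigeonhole over $n$ then fixes a single interval $I$ — possibly disjoint from your original $(a,b)$ — that works along a subsequence. Without an argument of this kind (or Belinschi's own), the proof has a genuine gap at its only nontrivial step.
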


\begin{lemma}\label{lem:infty}
     Let $\mu \in \Prob(\RR)$ and $\nu \in \Prob(\RR_{\geq0})$ be nondegenerate probability measures. Suppose that there exists a sequence $\{z_n\}_{n=1}^\infty\subseteq\Hup$ such that $M_{\mu\multconv\nu}(z_n)\to1$ and $z_n\to x$ for $x \in \RR \setminus \{0\}$ as $n\to\infty$, and we have
        \begin{align*}
            \lim_{\substack{z\to\infty, z\in\Hup}}M_\mu(z)=\lim_{\substack{z\to\infty, z\in\Hup}}M_\nu(z)=\infty.
        \end{align*}
        Then, the results of Lemma  \ref{lem:singJC} hold.
\end{lemma}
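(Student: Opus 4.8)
The plan is to deduce Lemma \ref{lem:infty} from Lemma \ref{lem:singJC} by promoting the sequential information $M_{\mu\multconv\nu}(z_n)\to 1$ to the full nontangential statement $\nontanglim_{z\to x}M_{\mu\multconv\nu}(z)=1$. The one feature that distinguishes this situation from Lemma \ref{lem:singJC} is that $z_n$ is allowed to approach $x$ tangentially; this is precisely why the two unrestricted limits at $\infty$ must be assumed here, and controlling them will be the main point.

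First I would use Proposition \ref{prop:analytic_cont} to extend $\Omega_1,\Omega_2$ continuously to $(\Hup\cup\RR)\setminus\{0\}$, so that $\Omega_1(z_n)\to\Omega_1(x)$ and $\Omega_2(z_n)\to\Omega_2(x)$ in $\Hup\cup\RR\cup\{\infty\}$. The crucial step is to show that both boundary values are finite. If $\Omega_1(x)=\infty$, then $\Omega_1(z_n)\to\infty$ inside $\Hup$, and the hypothesis $\lim_{z\to\infty,\,z\in\Hup}M_\mu(z)=\infty$ forces $M_{\mu\multconv\nu}(z_n)=M_\mu(\Omega_1(z_n))\to\infty$, contradicting $M_{\mu\multconv\nu}(z_n)\to1$. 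The argument for $\Omega_2$ is the same when $\Omega_2(z_n)$ stays in $\Hup$; if instead $\Omega_2(z_n)$ escapes through $\CC^-$ or along $(-\infty,0)$ (which can happen because $\Omega_2$ takes values in $\CC\setminus[0,+\infty)$), the reflection symmetry $M_\nu(\conj z)=\conj{M_\nu(z)}$ together with the vanishing $\lim_{z\to0,\,z\in\Delta_\theta}\eta_\nu(z)=0$ again yields $M_\nu(\Omega_2(z_n))\to\infty$, a contradiction. Hence $\Omega_1(x)$ and $\Omega_2(x)$ are both finite. This is the step that genuinely requires the unrestricted limits at $\infty$, since a tangentially approaching sequence cannot be caught by the nontangential limit at $\infty$ used in Lemma \ref{lem:singJC}.

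Finally, passing to the limit along $z_n$ in the relation $\Omega_1(z)\Omega_2(z)=zM_{\mu\multconv\nu}(z)$ gives $\Omega_1(x)\Omega_2(x)=x\ne0$, so neither factor vanishes. Because $\Omega_1,\Omega_2$ are continuous at $x$ with finite nonzero limits, the identity $M_{\mu\multconv\nu}(z)=\Omega_1(z)\Omega_2(z)/z$ yields the unrestricted limit $\lim_{z\to x,\,z\in\Hup}M_{\mu\multconv\nu}(z)=\Omega_1(x)\Omega_2(x)/x=1$, which is a fortiori nontangential. With $\nontanglim_{z\to x}M_{\mu\multconv\nu}(z)=1$ in hand, the hypothesis of Lemma \ref{lem:singJC} is satisfied, and all of its conclusions follow at once.
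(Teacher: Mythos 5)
Your proof is correct and follows essentially the same route as the paper's (which is a one-line reference back to the argument of Lemma \ref{lem:singJC}): in both, the unrestricted limits at infinity are used precisely to pin down $\Omega_1(x)$ and $\Omega_2(x)$ as finite, nonzero boundary values along the possibly tangential sequence $\{z_n\}$. Your packaging --- deducing the full limit $\lim_{z\to x}M_{\mu\multconv\nu}(z)=1$ from the continuity of the extended subordination functions and then invoking Lemma \ref{lem:singJC} verbatim --- is a clean, explicit way of carrying out what the paper leaves as ``a similar argument.''
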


\begin{proof}
    By a similar argument to that in Lemma  \ref{lem:singJC}, the assumption $\lim_{\substack{z\to\infty}}M_\mu(z)=\lim_{\substack{z \to \infty}}M_\nu(z)=\infty$ implies that $\Omega_1(x)\in\RR\setminus\{0\}$ and $\Omega_2(x)\in\RR\setminus\{0\}$, and ultimately leads to the desired conclusion.
\end{proof}

In the multiplicative setting, various functions employed in the preceding arguments, especially the subordination functions, necessitate additional assumptions to preserve their regularity near the origin. Theorem  \ref{thm:extorigin} addresses the continuous extension of these subordination functions and provides a result analogous to  \cite[Corollary 8]{B2014infty} and  \cite[Theorem 3.2]{J2021regularity}.

\begin{theorem}
    \label{thm:extorigin}
    Let $\mu\in\Prob(\RR)$ and $\nu\in\Prob(\RR_{\ge0})$ be nondegenerate probability measures and let $M_\mu$ and $M_\nu$ be their $M$-transforms. 
    \begin{enumerate}
        \item\label{item:extorigin1}  Suppose that 
        \begin{align}
            \lim_{{z\to\infty, z\in\Hup}}M_\mu(z)=\lim_{{z\to\infty, z\in\Hup}}M_\nu(z)=\infty, \label{ass:m_inf_cond}
        \end{align}
        holds. Then, 
        \begin{enumerate}[label=\rm(\alph*)]
            \item\label{item:extorigin1a}  $\lim_{z\to\infty}\Omega_1(z)=\lim_{z\to\infty}\Omega_2(z)=\infty$, 
            \item\label{item:extorigin1b} $M_{\mu\multconv\nu}$ extends to a   continuous map from $(\Hup\cup\RR)\setminus\{0\}$ into $  \Hup\cup\RR\cup\{\infty\}$, and
            \item\label{item:extorigin1c} additionally, if $\mu(\{a\})+\nu(\{b\})<1$ for any $a,b\in\RR\setminus\{0\}$, then $x\cdot\frac{\di{(\mu\multconv\nu)}}{\di{x}}$ has a bounded and continuous version on $\RR\setminus(-\ee,\ee)$ for any $\ee>0$.
        \end{enumerate}
        \item\label{item:extorigin2} In addition to \eqref{ass:m_inf_cond}, suppose that 
        \begin{align*}
            \lim_{{z\to0, z\in\Hup}}M_\mu(z)=\frac{\mu(\{0\})}{\mu(\{0\})-1}, \quad \lim_{{z\to0, z\in\Hup}}M_\nu(z)=\frac{\nu(\{0\})}{\nu(\{0\})-1}.
        \end{align*}
        Then, 
        \begin{enumerate}[label=\rm(\alph*)]
            \item\label{item:extorigin2a} $\Omega_1, \Omega_2$ and $M_{\mu\multconv\nu}$ extend to  continuous maps from $\Hup\cup\RR$ into $\Hup\cup \RR \cup\{\infty\}$, and
            \item\label{item:extorigin2c} additionally, if $\mu(\{a\})+\nu(\{b\})<1$ for any $a,b\in\RR$, then $x\cdot\frac{\di{(\mu\multconv\nu)}}{\di{x}}$ has a bounded and continuous version on $\RR$.
        \end{enumerate}
    \end{enumerate}
\end{theorem}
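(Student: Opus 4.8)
The plan is to exploit the $M$-transform reformulation of Section~\ref{sec:cont_ext}, in which $\Omega_1(z)$ and $\Omega_2(z)/z$ are the Denjoy--Wolff points of the parametrized self-maps $F_z(w)=zH_\nu(zH_\mu(w))$ and $G_z(w)=H_\mu(zH_\nu(zw))$, and to treat the two boundary ends $z=\infty$ and $z=0$ separately. For part~\ref{item:extorigin1} I would first prove \ref{item:extorigin1a} by contradiction: assume $z_n\to\infty$ in $\Hup$ while $\Omega_1(z_n)$ stays bounded away from $\infty$; evaluating the fixed-point identity in the form $\Omega_1(z_n)=F_{z_n}(\Omega_1(z_n))=M_\nu(z_nH_\mu(\Omega_1(z_n)))/H_\mu(\Omega_1(z_n))$, the inner argument $z_nH_\mu(\Omega_1(z_n))$ tends to $\infty$, so \eqref{ass:m_inf_cond} forces the right-hand side to $\infty$, a contradiction; the corresponding statement for $\Omega_2$ then follows from the subordination relations $M_\nu(\Omega_2)=M_\mu(\Omega_1)=M_{\mu\multconv\nu}$ and $\Omega_1\Omega_2=zM_{\mu\multconv\nu}$. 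Part~\ref{item:extorigin1b} combines the continuous extension of $\Omega_1,\Omega_2$ to $(\Hup\cup\RR)\setminus\{0\}$ already established in Proposition~\ref{prop:analytic_cont} with the identity $M_{\mu\multconv\nu}=\Omega_1\Omega_2/z$; the only point requiring \eqref{ass:m_inf_cond} is to exclude an indeterminate product $0\cdot\infty$ on the boundary, which the subordination relations rule out (for instance $\Omega_1(x)=0$ makes $M_{\mu\multconv\nu}(x)$ finite and hence forbids $\Omega_2(x)=\infty$).

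For the regularity of the density in \ref{item:extorigin1c} I would pass to $zG_{\mu\multconv\nu}(z)=M_{\mu\multconv\nu}(z)/(M_{\mu\multconv\nu}(z)-1)$, so that the candidate version of $x\,\frac{\di{(\mu\multconv\nu)}}{\di{x}}$ is $-\frac1\pi\Im\!\big[M_{\mu\multconv\nu}(x+\iu0)/(M_{\mu\multconv\nu}(x+\iu0)-1)\big]$, a quantity that is bounded and continuous except where the boundary value of $M_{\mu\multconv\nu}$ equals $1$. The crux is therefore to show that $M_{\mu\multconv\nu}(x)\ne1$ for every $x\in\RR\setminus\{0\}$: if it were $1$, then by continuity the nontangential limit is $1$, and Lemma~\ref{lem:singJC} produces $\Omega_1(x),\Omega_2(x)\in\RR\setminus\{0\}$ with $\mu(\{\Omega_1(x)\})+\nu(\{\Omega_2(x)\})\ge1$, contradicting the atom hypothesis; Lemma~\ref{lem:infty} together with the cluster-set Propositions~\ref{prop:conn} and~\ref{prop:good_seq} handle the reduction from an unbounded density to such a boundary value. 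Boundedness as $x\to\pm\infty$ is free, since $M_{\mu\multconv\nu}(x)\to\infty$ by \ref{item:extorigin1a} and \eqref{ass:m_inf_cond}, whence the quotient tends to $1$.

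The heart of the theorem, and the step I expect to be the main obstacle, is the behaviour at the origin in part~\ref{item:extorigin2}. Here the extra hypotheses provide the \emph{unrestricted} limits $M_\mu(z)\to\mu(\{0\})/(\mu(\{0\})-1)$ and $M_\nu(z)\to\nu(\{0\})/(\nu(\{0\})-1)$ as $z\to0$, and the plan is to apply the Denjoy--Wolff continuity Theorem~\ref{thm:conv_of_dwp} to $(F_z)$ and $(G_z)$ along sequences $z_n\to0$. The difficulty is that these families degenerate: writing $F_z(w)=M_\nu(zH_\mu(w))/H_\mu(w)$ gives the limiting map $F_0(w)=\big(\nu(\{0\})/(\nu(\{0\})-1)\big)/H_\mu(w)$, whose nature (a nonconstant self-map, or the constant map $0$ when $\nu(\{0\})=0$) depends on the atoms of $\mu,\nu$ at $0$, and the analogous computation for $G_z$ produces several cases as well. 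In each case one must check that the limiting map is not the identity so that Theorem~\ref{thm:conv_of_dwp} applies and yields continuous extensions of $\Omega_1,\Omega_2$ at $0$, dealing with the conformal-automorphism exceptions via Proposition~\ref{prop:conf_auto}; the extension of $M_{\mu\multconv\nu}$ is then read off from $\Omega_1\Omega_2/z$ and shown to agree with the nontangential value $(\mu\multconv\nu)(\{0\})/((\mu\multconv\nu)(\{0\})-1)$.

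Granting the continuous extension to the origin, part~\ref{item:extorigin2c} follows along the lines of \ref{item:extorigin1c}. The extended value $M_{\mu\multconv\nu}(0)=(\mu\multconv\nu)(\{0\})/((\mu\multconv\nu)(\{0\})-1)$ lies in $(-\infty,0]$ and is in particular never equal to $1$, so $zG_{\mu\multconv\nu}=M_{\mu\multconv\nu}/(M_{\mu\multconv\nu}-1)$ remains bounded and continuous as $z\to0$, giving control of $x\,\frac{\di{(\mu\multconv\nu)}}{\di{x}}$ near the origin; the strengthened atom hypothesis, now permitting $a=0$ or $b=0$, again excludes boundary values of $M_{\mu\multconv\nu}$ equal to $1$ on all of $\RR$ through Lemma~\ref{lem:singJC}, completing the argument.
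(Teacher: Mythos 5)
Your overall architecture matches the paper's: Denjoy--Wolff continuity at the boundary, exclusion of the indeterminate product $\Omega_1\Omega_2=0\cdot\infty$ in \ref{item:extorigin1}\ref{item:extorigin1b}, Lemma \ref{lem:singJC}/Lemma \ref{lem:infty} to bound $M_{\mu\multconv\nu}$ away from $1$ for the density, and the degeneration of $F_z$ as $z\to0$. However, there is a genuine gap at the step $\lim_{z\to\infty}\Omega_2(z)=\infty$ in \ref{item:extorigin1}\ref{item:extorigin1a}, which you dismiss as ``follows from the subordination relations.'' Knowing $M_\nu(\Omega_2(z))=M_{\mu\multconv\nu}(z)\to\infty$ does \emph{not} force $\Omega_2(z)\to\infty$ for an unrestricted approach $z\to\infty$: the boundary function $M_\nu$ can take the value $\infty$ at finite real points (wherever $\eta_\nu$ has boundary value $0$, e.g.\ off the support of $\nu$), so $\Omega_2(z_n)$ could a priori cluster at such a point. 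Likewise $\Omega_2(z)=zH_\mu(\Omega_1(z))$ does not settle it, since $H_\mu(\Omega_1(z))$ could decay like $1/z$ when $\Omega_1(z)$ drifts toward the real axis. The paper closes exactly this hole with the cluster-set machinery you only invoke later for the density: $\nontanglim_{z\to\infty}\Omega_2(z)=\infty$ is known, $C(\Omega_2,\infty)$ is connected and contained in $\RR_{\ge0}\cup\{\infty\}$, and if it contained a second point it would contain an interval on which Proposition \ref{prop:good_seq} forces $\nontanglim M_\nu=\infty$, contradicting Privalov's theorem. The same issue recurs for the continuity of $\Omega_2$ at the origin in \ref{item:extorigin2}\ref{item:extorigin2a}: your route through $G_z$ only controls $\Omega_2(z)/z$, whereas the paper again runs the cluster-set argument for $\Omega_2$ itself.

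A secondary, fixable defect: your contradiction argument for $\lim_{z\to\infty}\Omega_1(z)=\infty$ asserts that the inner argument $z_nH_\mu(\Omega_1(z_n))$ tends to $\infty$, but if $\Omega_1(z_n)$ stays bounded it may approach the real line, where $H_\mu$ can tend to $0$ (or to $\infty$, spoiling the division by $H_\mu(\Omega_1(z_n))$ in the next step). The paper avoids this entirely by noting that for each fixed $w\in\Hup$ one has $F_z(w)=M_\nu(zH_\mu(w))/H_\mu(w)\to\infty$ as $z\to\infty$ by \eqref{ass:m_inf_cond}, and then applying the continuity of Denjoy--Wolff points (Theorem \ref{thm:conv_of_dwp}); you should replace your pointwise fixed-point manipulation by that argument.
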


Note that the condition  \eqref{ass:m_inf_cond} holds when $\mu$ and $\nu$ have compact support.

\begin{proof}
    \ref{item:extorigin1}\ref{item:extorigin1a}
         By Theorem  \ref{thm:conv_of_dwp}, together with the condition \eqref{ass:m_inf_cond}, we have $\Omega_1(z)\to\infty$ as $z\to\infty$, and hence $M_{\mu\multconv\nu}(z)=M_\mu(\Omega_1(z))\to\infty$ as $z\to\infty$. 
         Since $\nontanglim_{z\to\infty}\Omega_2(z)=\infty$ by Theorem  \ref{thm:subordination}, it is sufficient to show that $|C(\Omega_2,\infty)|=1$. 
         Suppose to the contrary that  $|C(\Omega_2,\infty)|\ge2$ holds. By the subordination relation $M_\nu(\Omega_2(z))=M_{\mu\multconv\nu}(z)$, we deduce that $C(\Omega_2,\infty)\subseteq\RR_{\geq0}\cup\{\infty\}$. Furthermore, in the light of Proposition  \ref{prop:good_seq}, we have some interval $I\subseteq\RR$ such that for all $x\in I$ at which the nontangential limit of $M_\nu$ exists,
            \begin{align*}
                \nontanglim_{z\to x}M_\nu(z)=\lim_{n\to\infty}M_{\nu}(\Omega_2(z_n^{(x)}))=\lim_{n\to\infty}M_{\mu\multconv\nu}(z_n^{(x)})=\infty.
            \end{align*}
            By Theorems  
            \ref{thm:fatou} and \ref{thm:priv}, it follows that $M_\nu$ is a constant function taking the value $\infty$, which is a contradiction.

    \vspace{2mm}
    \noindent    \ref{item:extorigin1}\ref{item:extorigin1b}
   Assume that there exists $c\in\RR\setminus\{0\}$ such that $\Omega_1(c)=0, \Omega_2(c)=\infty$. Using Theorem  \ref{thm:Lind} and Proposition  \ref{prop:analytic_cont}, we deduce that
            \begin{align*}
                \infty =\lim_{z\to c}M_\nu(\Omega_2(z)) = \lim_{z\to c}M_\mu(\Omega_1(z)) =  \nontanglim_{z\to c}M_\mu(\Omega_1(z))= 
                \frac{\mu(\{0\})}{\mu(\{0\})-1},
            \end{align*}
            which is a contradiction. The case where $\Omega_2(c)=0$ and $\Omega_1(c)=\infty$ results in a contradiction as well. Therefore, by virtue of the relation $M_{\mu\multconv \nu}(z)=\Omega_1(z)\Omega_2(z)/z$, the function $M_{\mu\multconv \nu}(z)$ extends continuously to $(\Hup\cup\RR)\setminus\{0\}$. 

\vspace{2mm}
    \noindent        
 \ref{item:extorigin1}\ref{item:extorigin1c}
            To show that $x\frac{\di{(\mu\multconv\nu)}}{\di{x}}$ is bounded on $\RR\setminus(-\ee,\ee)$, it suffices to demonstrate that there exists a positive real number $\delta$ such that $|M_{\mu\multconv\nu}(z)-1|>\delta$ holds uniformly for all $z\in\Hup\setminus B_\ee(0)$. 
            By applying Lemma  \ref{lem:infty}, for any sequence $\{z_n\}_{n=1}^\infty\subseteq\Hup$ such that $M_{\mu\multconv\nu}(z_n)\to1$ and $z_n\to x$ for $x \in \RR \setminus \{0\}$ as $n\to\infty$, we have $\Omega_1(x)\in\RR\setminus\{0\}$ and $\Omega_2(x)\in\RR\setminus\{0\}$ satisfying $\mu(\{\Omega_1(x)\})+\nu(\{\Omega_2(x)\})\geq1$, which again leads to a contradiction. 
            Furthermore, as $z\to\infty$, we have $M_{\mu\multconv\nu}(z)=M_\mu(\Omega_1(z))\to\infty$ by assumption. Thus, from the relation $zG_{\mu\multconv\nu}(z)=M_{\mu\multconv\nu}(z)/(M_{\mu\multconv\nu}(z)-1)$, it follows that $x\frac{\di{(\mu\multconv\nu)}}{\di{x}}$ is bounded. The continuity follows from that of $M_{\mu\multconv\nu}(z)$.

\vspace{2mm}  \noindent        
 \ref{item:extorigin2}\ref{item:extorigin2a} For $\Omega_1, \Omega_2$, in view of Proposition  \ref{prop:analytic_cont}, it is sufficient to show that $\Omega_1$ and $\Omega_2$ can be extended continuously at the origin. The continuity of $\Omega_1(z)$ at the origin follows directly from Theorem  \ref{thm:conv_of_dwp} since the function $z\mapsto F_z(w)=zH_\nu(zH_\mu(w))$ is continuous on the real line. A reasoning similar to   \ref{item:extorigin1}\ref{item:extorigin1a} shows the continuity of $\Omega_2$ at the origin.

 For $M_{\mu\multconv \nu}$, since  $(\mu\multconv\nu)(\{0\})<1$ we have 
            \begin{align*}
                \nontanglim_{z\to0}M_{\mu\multconv\nu}(z)=\frac{(\mu\multconv\nu)(\{0\})}{(\mu\multconv\nu)(\{0\})-1}\neq \infty,
            \end{align*}
            which implies that $\Omega_1(0)=0$ or $\Omega_2(0)=0$. In either case, we obtain
            \begin{align}
                \lim_{z\to 0}M_{\mu\multconv\nu}(z)=\lim_{z\to 0}M_{\mu}(\Omega_1(z))=\frac{\mu(\{0\})}{\mu(\{0\})-1}, \label{eq:zero1}\\
                \lim_{z\to 0}M_{\mu\multconv\nu}(z)=\lim_{z\to 0}M_{\nu}(\Omega_2(z))=\frac{\nu(\{0\})}{\nu(\{0\})-1},\label{eq:zero2}
            \end{align}
            and find that $M_{\mu\multconv\nu}$ can be continuously extended at the origin.

    \vspace{2mm}
   \noindent   \ref{item:extorigin2}\ref{item:extorigin2c}  
      First, from the calculations \eqref{eq:zero1} and \eqref{eq:zero2}, we have $\lim_{z\to 0}M_{\mu\multconv\nu}(z)\neq 1$. Thus, from the relation $zG_{\mu\multconv\nu}(z)=M_{\mu\multconv\nu}(z)/(M_{\mu\multconv\nu}(z)-1)$, together with  \ref{item:extorigin1}\ref{item:extorigin1c} and  \ref{item:extorigin2}\ref{item:extorigin2a}, it follows that $x\frac{\di{(\mu\multconv\nu)}}{\di{x}}$ is bounded and continuous on $\RR$. 
\end{proof}

\section{Examples: free multiplicative convolution of two-point measures}\label{app:specific_example}
Using the argument presented in the proof of Proposition  \ref{prop:analytic_cont}, the free convolution of probability measures with two atoms can be explicitly calculated. As noted in the subsequent remark to Proposition  \ref{prop:analytic_cont}, these results can also be derived using the  $S$-transform, as shown in  \cite{V1987multiplication}. Here, we employ the subordination functions to compute the Cauchy transform of the free convolution. Similar calculations can be found in \cite{B2005complex}, where the derivation is carried out in the same spirit.

We consider two probability measures given by
\begin{align*}
    \mu&\coloneqq \lambda_\mu\delta_{1}+(1-\lambda_\mu)\delta_{\alpha_\mu},\\
    \nu&\coloneqq \lambda_\nu\delta_{1}+(1-\lambda_\nu)\delta_{\alpha_\nu},
\end{align*}
where $\lambda_\mu,\lambda_\nu\in(0,1)$, $\alpha_\mu\in\RR$, and $\alpha_\nu\in\RR_{\geq0}$.
For simplicity, we assume that one of the atoms is located at $1$, which does not result in any loss of generality, as this condition can always be achieved by a simple scalar multiplication. In this case, the corresponding analytic functions can be readily computed as follows:
\begin{align*}
    H_\mu(z)&=\frac{z-((1-\lambda_\mu)+\lambda_\mu\alpha_\mu)}{z(\lambda_\mu+(1-\lambda_\mu)\alpha_\mu)-\alpha_\mu}\eqqcolon\frac{z+b_\mu}{c_\mu z+d_\mu},\\
    H_\nu(z)&=\frac{z-((1-\lambda_\nu)+\lambda_\nu\alpha_\nu)}{z(\lambda_\nu+(1-\lambda_\nu)\alpha_\nu)-\alpha_\nu}\eqqcolon\frac{z+b_\nu}{c_\nu z+d_\nu},
\end{align*}
\begin{align*}
    \Omega_1(z)&=\frac{z^2+(b_\nu c_\mu-c_\nu b_\mu)z-d_\mu d_\nu+\sqrt{P(z)}}{2(c_\nu z + d_\nu c_\mu)},\\
    \frac{\Omega_2(z)}{z}&=\frac{z^2+b_\mu c_\nu z-c_\mu b_\nu z-d_\mu d_\nu+\sqrt{P(z)}}{2(c_\mu z^2+d_\mu c_\nu z)}, 
\end{align*}
where
\begin{align*}
    P(z)&=(z^2+(b_\nu c_\mu-c_\nu b_\mu)z-d_\mu d_\nu)^2+4z(c_\nu z+d_\nu c_\mu)(zb_\mu + b_\nu d_\mu)\\
    &=(z^2+(b_\mu c_\nu -c_\mu b_\nu) z-d_\mu d_\nu)^2+4z(c_\mu z+d_\mu c_\nu)(zb_\nu+b_\mu d_\nu).
\end{align*}
Observe that the polynomials inside the square root appearing in $\Omega_1(z)$ and $\Omega_2(z)/z$ are identical.

\begin{figure}[htbp]
    \centering
    \includegraphics[width=9cm]{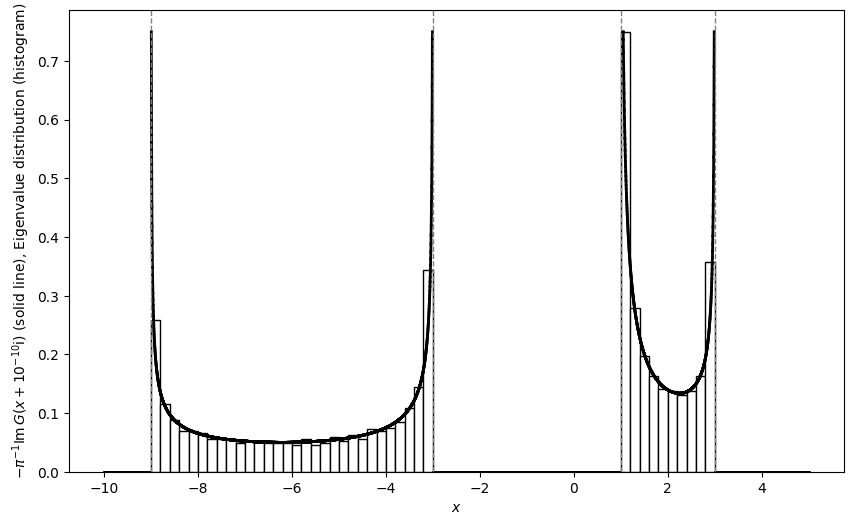}
    \caption{The density function of $(\frac{1}{2}\delta_1+\frac{1}{2}\delta_{-3})\multconv(\frac{1}{2}\delta_1+\frac{1}{2}\delta_3)$ and the scaled eigenvalue distribution of $BU^*AUB$, where $U$ is a realization of the Haar unitary random matrix, $A$ is a diagonal matrix with half of its eigenvalues equal to $1$ and the other half equal to $-3$, and $B$ is a diagonal matrix with half of its eigenvalues equal to $1$ and the other half equal to $\sqrt{3}$. The size of all matrices is $1500$. The black line represents $-\pi^{-1}\Im G_{\mu\multconv\nu}(x+10^{-10}\iu)$ for $x$. To improve clarity, we excluded data from a small neighborhood around the singularities.}
    \label{fig:two_atoms1}
\vspace{10mm}
    \centering
    \includegraphics[width=9cm]{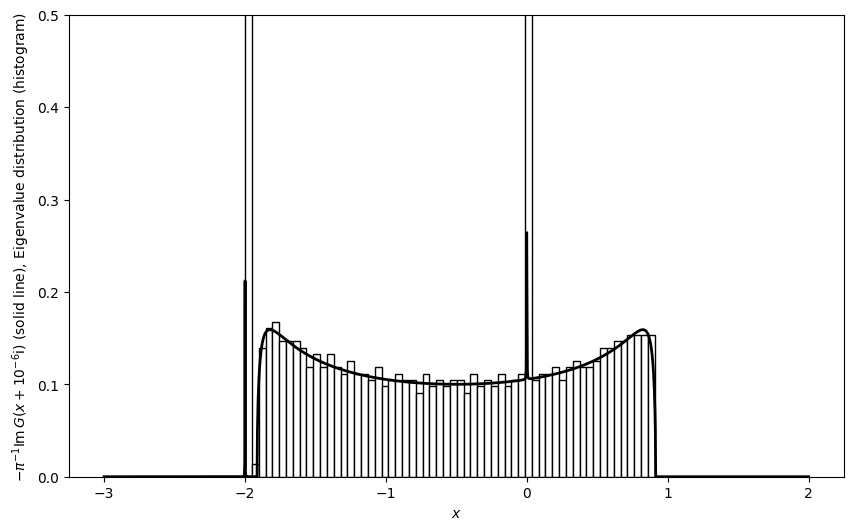}
    \caption{The density function of $(\frac{1}{3}\delta_1+\frac{2}{3}\delta_{-2})\multconv(\frac{1}{2}\delta_1+\frac{1}{2}\delta_0)$ and the scaled eigenvalue distribution of $BU^*AUB$, where $U$ is a realization of the Haar unitary random matrix, $A$ is a diagonal matrix with one-third of its eigenvalues equal to $1$ and the rest equal to $-2$, and $B$ is a diagonal matrix with half of its eigenvalues equal to $1$ and the other half equal to $0$. The size of all matrices is $3000$. The black line represents $-\pi^{-1}\Im G_{\mu\multconv\nu}(x+10^{-6}\iu)$ for $x$.}
    \label{fig:two_atoms2}
\end{figure}

For example, when $\lambda_\mu=\lambda_\nu=1/2$ and $\alpha_\mu=-3, \alpha_\nu=3$, we can deduce that 
\begin{align*}
    \Omega_1(z)&=\frac{z^2+9+\sqrt{(z-1)(z-3)(z+3)(z-9)}}{4z+6},\\
    \Omega_2(z)&=\frac{z^2+9+\sqrt{(z-1)(z-3)(z+3)(z-9)}}{12-2z},
\end{align*}
where the branch of the square root is chosen appropriately. From the relations $zM_{\mu\multconv\nu}(z)=\Omega_1(z)\Omega_2(z)$ and $zG_{\mu\multconv\nu}(z)=M_{\mu\multconv\nu}(z)/(M_{\mu\multconv\nu}(z)-1)$, we obtain
\begin{align*}
    G_{\mu\multconv\nu}(z)=\frac{1}{z}\frac{(z^2+9-\sqrt{Q(z)})^2}{(z^2+9-\sqrt{Q(z)})^2-4z(2z+3)(6-z)},
\end{align*}
where $Q(z)\coloneqq(z-1)(z-3)(z+3)(z-9)$ and the branch of the square root is again chosen appropriately. Using this formula, we can plot the density function of $\mu\multconv\nu$, as shown in Figure  \ref{fig:two_atoms1}, where it is compared with the histogram of the eigenvalue distribution of the corresponding random matrix model. The singularities observed in this graph arise from the zeros in the denominator of $G_{\mu\multconv\nu}$. Indeed, one can easily check that $-9,-1,1$ and $3$ are zeros of $(z^2+9-\sqrt{Q(z)})^2-4z(2z+3)(6-z)$.

Next, assume that $\lambda_\mu=\frac{1}{2}$, $\lambda_\nu=1/3$, $\alpha_\mu=0$ and $\alpha_\nu=-2$. In this case, we have 
\begin{align*}
    \Omega_1(z)&=\frac{2z+1+\sqrt{4z^2+4z-7}}{2},\\
    \frac{\Omega_2(z)}{z}&=\frac{2z-1+\sqrt{4z^2+4z-7}}{2(2-2z)},\\
\end{align*}
and 
\begin{align*}
    G_{\mu\multconv\nu}(z)=\frac{1}{z}\frac{(2z-1+\sqrt{4z^2+4z-7})(2z+1+\sqrt{4z^2+4z-7})}{(2z-1+\sqrt{4z^2+4z-7})(2z+1+\sqrt{4z^2+4z-7})-8(1-z)}.
\end{align*}

The density function in this case, along with the eigenvalue distribution of the corresponding random matrix model, is depicted  in Figure  \ref{fig:two_atoms2}. In this setting, atoms arise at $0$ and $-2$, which can also be observed as singularities of the Cauchy transform at these points.

\printbibliography
\end{document}